\documentclass[a4paper,oneside,11pt]{article}
 
\usepackage[a4paper]{geometry}
\usepackage{aeguill}                 
\usepackage{graphicx}
\usepackage{amsmath,amsfonts,amssymb,amsthm}
\usepackage{pstricks} 
\usepackage{epstopdf}
\DeclareGraphicsRule{.tif}{png}{.png}{`convert #1 `dirname #1`/`basename #1 .tif`.png}
\usepackage{srcltx}
\usepackage{comment}
\usepackage[utf8]{inputenc} 
\usepackage[T1]{fontenc}

\newtheorem{definition}{Definition} [section]
\newtheorem{de}[definition]{Definition}
 
\newtheorem{theo}[definition]{Theorem}
\newtheorem{proposition}[definition]{Proposition}
\newtheorem{prop}[definition]{Proposition}
\newtheorem{lemma}[definition]{Lemma}
\newtheorem{corollary}[definition]{Corollary}
\newtheorem{cor}[definition]{Corollary}

\newtheorem{remark}[definition]{Remark}

\newtheorem*{oquestion}{Open question}
\newtheorem{theointro}{Theorem}
\newtheorem*{defintro}{Definition}
\newtheorem{corintro}[theointro]{Corollary}
\newtheorem{ex}[definition]{Example}

\def\aut{{\rm{Aut}}}
\def\out{{\rm{Out}}}
\def\G{\Gamma}

\def\stab{{\rm{Stab}}}

\newcommand{\Out}{\mathrm{Out}}
\newcommand{\Comm}{\mathrm{Comm}}
\newcommand{\Stab}{\mathrm{Stab}}
\newcommand{\calf}{\mathcal{F}}

\newcommand{\rk}{\mathrm{rk}}
\newcommand{\pro}{\mathrm{prod}}

\newcommand{\Aut}{\mathrm{Aut}}
\newcommand{\ad}{\mathrm{ad}}
\newcommand{\calt}{\mathcal{T}}

\newcommand{\calz}{\mathcal{Z}}
\newcommand{\Mod}{\text{Mod}}
\newcommand{\zmax}{\calz_{\max}}
\newcommand{\ia}{\mathrm{IA}_N(\mathbb{Z}/3\mathbb{Z})}

\newcommand{\calc}{\mathcal{C}}
\newcommand{\dunion}{\sqcup}

\newcommand{\Fix}{\mathrm{Fix}}
\newcommand{\ens}{\mathrm{FS}^{\mathrm{ens}}}
\newcommand{\id}{\mathrm{id}}
\newcommand{\calp}{\mathcal{P}}
\newcommand{\ns}{\mathrm{FS}^{\mathrm{ns}}}
\newcommand{\calq}{\mathcal{Q}}
\newcommand{\FF}{\mathrm{FF}}
\newcommand{\Wh}{\mathrm{Wh}}
\newcommand{\calb}{\mathcal{B}}
\newcommand{\GL}{\mathrm{GL}}

\newcommand{\iat}{\mathrm{IA}_3(\mathbb{Z}/3\mathbb{Z})}

\newcommand{\cala}{\mathcal{A}}

\title{Rigidity of the Torelli subgroup in $\Out(F_N)$}
\author{Sebastian Hensel, Camille Horbez and Richard D. Wade}

\begin{document}

\maketitle

\begin{abstract}
Let $N\ge 4$. We prove that every injective homomorphism from the Torelli subgroup $\mathrm{IA}_N$ to $\mathrm{Out}(F_N)$ differs from the inclusion by a conjugation in $\mathrm{Out}(F_N)$. This applies more generally to the following subgroups of $\Out(F_N)$: every finite-index subgroup of $\mathrm{Out}(F_N)$ (recovering a theorem of Farb and Handel); every subgroup of $\mathrm{Out}(F_N)$ that contains a finite-index subgroup of one of the groups in the Andreadakis--Johnson filtration of $\mathrm{Out}(F_N)$; every subgroup that contains a power of every linearly-growing automorphism; more generally, every \emph{twist-rich} subgroup of $\Out(F_N)$ -- those are subgroups that contain sufficiently many twists in an appropriate sense.

Among applications, this recovers the fact that the abstract commensurator of every group above is equal to its relative commensurator in $\mathrm{Out}(F_N)$; it also implies that all subgroups in the Andreadakis--Johnson filtration of $\Out(F_N)$ are co-Hopfian.

We also prove the same rigidity statement for subgroups of $\Out(F_3)$ which contain a power of every Nielsen transformation. This shows in particular that $\Out(F_3)$ and all its finite-index subgroups are co-Hopfian, extending a theorem of Farb and Handel to the $N=3$ case.
\end{abstract}

\section*{Introduction}

Let us start with the following question, that we learned from Farb, which to our knowledge is still open.

\begin{oquestion}
Let $g\ge 2$, and let $\Sigma_g$ be a closed oriented surface of genus $g$. Let $\Mod(\Sigma_g)$ be the mapping class group of $\Sigma_g$, and $\calt_g\subseteq\Mod(\Sigma_g)$ be its Torelli subgroup, i.e.\ the kernel of the action on homology with $\mathbb{Z}$ coefficients.  

Is every injective homomorphism $\calt_g\to\Mod(\Sigma_g)$ induced from conjugation by an element in $\Mod^{\pm}(\Sigma_g)$? Or do there exist `exotic' embeddings of the Torelli subgroup?  
\end{oquestion}

While the question for mapping class groups is still open to our knowledge, the goal of the present paper is to answer the analogous question for the outer automorphism group $\Out(F_N)$ of a finitely generated free group $F_N$ with $N\ge 4$, by showing that there are no exotic embeddings of the kernel $\mathrm{IA}_N$ of the natural map $\Out(F_N)\to\GL_N(\mathbb{Z})$ into $\Out(F_N)$. Actually, as will be explained below, our result applies to a much wider collection of subgroups of $\Out(F_N)$ than just $\mathrm{IA}_N$.

This is initially quite surprising, as the study of $\Out(F_N)$ is traditionally harder than that of mapping class groups. However,  the subgroup  structure of $\Out(F_N)$ is more intricate, and this can be leveraged to our advantage for rigidity questions. In particular, following a strategy that was suggested by Martin Bridson and initiated by the last two named authors in their previous work \cite{HW} on commensurations of subgroups of $\Out(F_N)$, we will take advantage of certain direct products of free groups in $\Out(F_N)$ that arise as groups of twists associated to free splittings of $F_N$. These subgroups do not have a natural analogue in mapping class groups. 

The study of algebraic rigidity in $\Out(F_N)$ started with works of Khramtsov \cite{Khr} and Bridson--Vogtmann \cite{BV2} who proved that for every $N\ge 3$, every automorphism of $\Out(F_N)$ is inner. Their work is closely related to a more geometric rigidity statement regarding the symmetries of Outer space: Bridson--Vogtmann showed in \cite{BV} that the group of simplicial automorphisms of the spine of reduced Outer space is precisely $\Out(F_N)$. Later, Farb--Handel proved in \cite{FH} that for every $N\ge 4$, the group $\Out(F_N)$ is equal to its own abstract commensurator: this means that every isomorphism between two finite index subgroups of $\Out(F_N)$ is given by conjugation by an element of $\Out(F_N)$. Meanwhile, many other simplicial complexes equipped with an $\Out(F_N)$-action and closely related to Outer space were proven to be rigid, i.e.\ to have $\Out(F_N)$ as their automorphism group \cite{AS,Pan,HW2,BB,HW}. While Farb and Handel's proof of the rigidity of commensurations was algebraic, a new proof, which is more geometric and relies on those various complexes, was recently given by the last two named authors in \cite{HW}: this new proof extended the Farb--Handel theorem to the $N=3$ case, and also computed the abstract commensurator of many interesting subgroups, including $\mathrm{IA}_N$ and, when $N \geq 4$, all subgroups from the Andreadakis--Johnson filtration (the $k^{\text{th}}$ term of the Andreadakis--Johnson filtration is the kernel of the natural map from $\Out(F_N)$ to the outer automorphism group of a free nilpotent group of order $N$ and of class $k$). 

In their work, Farb and Handel did not only study commensurations: they showed that (for $N\ge 4$) every injective homomorphism from a finite-index subgroup $\Gamma\subseteq\Out(F_N)$ to $\Out(F_N)$ differs from the inclusion by an inner automorphism of $\Out(F_N)$. As a consequence, every finite-index subgroup $\Gamma\subseteq\Out(F_N)$ is \emph{co-Hopfian}, i.e.\ every injective homomorphism from $\Gamma$ to itself is in fact an automorphism. 

The present paper aims at extending this to some infinite-index subgroups of $\Out(F_N)$. Our methods apply to all \emph{twist-rich} subgroups of $\Out(F_N)$. These include $\mathrm{IA}_N$, all subgroups from the Andreadakis--Johnson filtration of $\Out(F_N)$, and also all subgroups of $\Out(F_N)$ that contain a power of every linearly-growing automorphism (an example of such a subgroup is the kernel of the natural map to the outer automorphism group of a free Burnside group of rank $N$ and any exponent). Twist-rich subgroups satisfy the following stability properties: finite-index subgroups of twist-rich subgroups are twist-rich, and every subgroup of $\Out(F_N)$ that contains a twist-rich subgroup is also twist-rich. The definition is the following (see Section~\ref{sec:twists-background} for the definition of twists associated to a splitting).

\begin{defintro}[Twist-rich subgroups]
Let $N\ge 4$. A subgroup $\Gamma\subseteq\ia$ is \emph{twist-rich} if for every free splitting $S$ of $F_N$ such that $S/F_N$ is a rose with nonabelian vertex stabilizer $G_v$, and every half-edge $e$ adjacent to $v$, the intersection of $\Gamma$ with the group of twists about $e$ is nonabelian and viewed as a subgroup of $G_v$, it is not contained in any proper free factor of $G_v$. 
\end{defintro}

The main theorem of the present paper is the following.

\begin{theointro}\label{theo:intro-main}
Let $N\ge 4$, and let $\Gamma$ be a twist-rich subgroup of $\Out(F_N)$. Then every injective homomorphism $\Gamma\to\Out(F_N)$ differs from the inclusion by an inner automorphism of $\Out(F_N)$.
\end{theointro}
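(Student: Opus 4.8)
The strategy is to reconstruct the canonical action of $\Gamma$ on a rigid simplicial complex purely algebraically, and then to invoke rigidity of that complex. Concretely, suppose $\varphi\colon\Gamma\to\Out(F_N)$ is injective; I want to produce $g\in\Out(F_N)$ with $\varphi=\mathrm{ad}_g$. The key is to recognize, inside $\Gamma$, the groups of twists associated to free splittings of $F_N$ whose quotient graph is a rose with nonabelian vertex group — these are the ``visible'' direct products of (subgroups of) free groups that twist-richness guarantees live in $\Gamma$ in an essentially maximal way. The first step is a \emph{commutation-pattern} analysis: one shows that a subgroup of $\Gamma$ of the form $\Gamma\cap(\text{group of twists about a half-edge }e)$ can be characterized, up to finite-index issues, by internal algebraic features (it is a large subgroup of a free group $G_v$ embedded in a specific way, it commutes with certain other such subgroups, and its commutation with partial conjugations detects the edge $e$ and the vertex $v$). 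This is where the hypothesis ``$N\ge 4$'' and the precise form of the twist-rich definition (nonabelian, not inside a proper free factor of $G_v$) are used: they ensure these subgroups are rigid enough to be pinned down by commutation relations and cannot be confused with ``smaller'' or degenerate configurations.

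The second step is to use this characterization to build a $\varphi$-equivariant injection from the relevant combinatorial structure — the poset of free splittings of rose-type with nonabelian vertex group, or more precisely the associated complex of (conjugacy classes of) twist subgroups — to itself. The point is that $\varphi$ must send a ``twist direction'' in $\Gamma$ to something with the same commutation pattern, hence (by the recognition result) to a twist direction coming from an honest free splitting of $F_N$; and it must preserve incidence (two half-edges sharing a vertex, two splittings refining a common one, etc.). This produces a simplicial self-map of a rigid complex — here one would invoke the rigidity results cited in the introduction (e.g.\ of the type in \cite{HW, HW2, BB}) asserting that the automorphism group of the relevant complex is exactly $\Out(F_N)$, together with an argument that our self-map is in fact an automorphism (injectivity of $\varphi$ forces it to be ``full'' in the needed sense). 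One then obtains $g\in\Out(F_N)$ realizing the action of $\varphi$ on the complex.

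The third step is the upgrade from ``$\varphi$ and $\mathrm{ad}_g$ agree on the combinatorial data'' to ``$\varphi=\mathrm{ad}_g$ on all of $\Gamma$.'' Replacing $\varphi$ by $\mathrm{ad}_{g^{-1}}\circ\varphi$, one may assume $\varphi$ fixes (setwise, compatibly) every twist subgroup coming from a rose-type splitting with nonabelian vertex group. Now a generation argument finishes the proof: twist-rich subgroups contain enough twists that these twist subgroups, together with the constraints imposed by $\varphi$ fixing them all, force $\varphi$ to be the identity — one pins down $\varphi$ first on each $\Gamma\cap(\text{twist group})$ (using that an injective self-map of such a subgroup commuting with the ambient $\Out(F_N)$-normalizer structure must be identity, perhaps after noting $\varphi$ preserves the $G_v$-action on the twist group by conjugation), and then on the subgroup these generate, which by the twist-rich hypothesis is large enough (indeed, one expects that twists of this kind generate a finite-index subgroup of $\Gamma$, or at least a subgroup whose centralizer data determines the rest).

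The main obstacle I expect is the recognition step: giving a purely group-theoretic characterization of the intersections $\Gamma\cap(\text{twist group})$ that is robust under an arbitrary injection $\varphi$. Unlike in the finite-index setting of Farb--Handel, here $\Gamma$ has infinite index, so one cannot directly quote structure of $\Out(F_N)$; one must instead extract everything from commutation and centralizer patterns among elements of $\Gamma$ itself, and control the possibility that $\varphi$ maps a twist to an automorphism of a genuinely different dynamical type (e.g.\ mixing in exponential growth) — ruling this out, presumably via a careful analysis of centralizers of direct products of free groups inside $\Out(F_N)$ and the constraint that such centralizers be large, is the technical heart of the argument.
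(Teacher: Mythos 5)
Your high-level strategy — algebraically recognize splitting-related subgroups of $\Gamma$, build a self-map of a rigid $\Out(F_N)$-complex, invoke rigidity, then upgrade — is indeed the correct template, and matches the overall architecture of the paper. But there are three places where your plan either diverges materially from the paper or leaves a gap that would need to be closed.

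First, the recognition step. You propose to characterize the \emph{intersections of $\Gamma$ with groups of twists} via commutation patterns. The paper instead characterizes the \emph{stabilizers} $\Stab_\Gamma(S)$ of one-edge nonseparating free splittings $S$, via three properties: $(P_1)$ containing a direct product of $2N-4$ nonabelian free groups, $(P_2)$ containing two nonabelian free subgroups that are normal and centralize each other, and $(P_3)$ any two infinite commuting normal subgroups are free. The hard direction — that any subgroup of $\ia$ satisfying these conditions stabilizes a one-edge nonseparating free splitting — is not a commutator computation; it goes through dynamics on the free factor graph (Gromov hyperbolic), the tree-of-cylinders construction of Guirardel--Horbez producing a canonical invariant splitting from commuting normal subgroups, and the Bridson--Wade classification of maximal direct products of free groups in $\Out(F_N)$ to rule out non-free invariant splittings. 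You correctly sense that controlling dynamical type under $\varphi$ is the technical heart, but you do not name the mechanism, and the commutation-pattern approach as stated would not by itself rule out $\varphi$ sending a twist to something with a different invariant splitting (or none).

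Second, the rigidity of the complex. You cite the prior rigidity results (automorphism group of the relevant complex is $\Out(F_N)$) and then say, in passing, that injectivity of $\varphi$ forces the induced self-map to be ``full in the needed sense.'' This dramatically underestimates the difficulty: the paper must prove a genuinely stronger statement, that every \emph{injective graph map} of the edgewise nonseparating free splitting graph $\ens$ is an automorphism. This is Theorem~\ref{theo:graph-maps-ens}, which occupies all of Section~\ref{sec:complexes} and requires new combinatorial machinery (cagey pairs, the rigid blow-up lemma, roses being sent to roses via normal form for sphere systems, and surjectivity via local finiteness of a rose graph). Simply quoting $\Aut(\ens)\cong\Out(F_N)$ is not enough, because an a priori non-surjective graph map is not an automorphism.

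Third, the final upgrade. You propose a generation argument — that the twist subgroups generate a finite-index subgroup of $\Gamma$, or at least determine the rest via centralizer data — and hedge (rightly) that you are not sure this holds. In fact, twist-richness does not imply any such generation statement, and the paper does not need one. The paper's Proposition~\ref{prop:blueprint} gives a much cleaner conclusion: after conjugating so that the induced graph map is the identity, the uniqueness of the associated splitting $\theta(v)$ together with the faithfulness of the $\Out(F_N)$-action on the vertices of $\ens$ forces $f(\gamma)v=\gamma v$ for all $\gamma\in\Gamma$ and all vertices $v$, hence $f(\gamma)=\gamma$. This step only needs the stabilizer conditions to hold for a \emph{normal} finite-index subgroup $\Gamma'\unlhd\Gamma$ (taken to be $\Gamma\cap\ia\cap f^{-1}(\ia)$), not a generation claim. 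You should replace your step 3 with something of this form, as the generation claim as stated is likely false in general.
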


Two subgroups $H,H'\subseteq\Out(F_N)$ are \emph{commensurable} in $\Out(F_N)$ if the intersection $H\cap H'$ is finite-index in both $H$ and $H'$. A first consequence of our main theorem is the following.

\begin{corintro}\label{cor-co-hopfian}
Let $N\ge 4$, and let $H\subseteq\Out(F_N)$ be a subgroup which is commensurable in $\Out(F_N)$ to a normal twist-rich subgroup of $\Out(F_N)$. Then $H$ is co-Hopfian.
\end{corintro}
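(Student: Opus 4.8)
The plan is to deduce co-Hopfianity from Theorem \ref{theo:intro-main} by a standard commensurability argument. Let $H\subseteq\Out(F_N)$ be commensurable to a normal twist-rich subgroup $\Gamma_0\trianglelefteq\Out(F_N)$, and let $\varphi\colon H\to H$ be an injective homomorphism; we must show $\varphi$ is surjective. Set $\Gamma:=H\cap\Gamma_0$, which is finite-index in both $H$ and $\Gamma_0$. Since finite-index subgroups of twist-rich subgroups are twist-rich (one of the stated stability properties), $\Gamma$ is itself twist-rich. The first step is to replace $H$ by $\Gamma$: I would like to argue that after passing to a suitable finite-index subgroup, $\varphi$ restricts to an injective homomorphism defined on (a finite-index subgroup of) $\Gamma$ with image in $\Gamma_0$, so that Theorem \ref{theo:intro-main} applies.

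Concretely, first I would consider $\varphi|_\Gamma\colon\Gamma\to H$. Composing with the inclusion $H\hookrightarrow\Out(F_N)$ gives an injective homomorphism $\Gamma\to\Out(F_N)$, so by Theorem \ref{theo:intro-main} there is $w\in\Out(F_N)$ with $\varphi(\gamma)=w\gamma w^{-1}$ for all $\gamma\in\Gamma$. In particular $\varphi(\Gamma)=w\Gamma w^{-1}$. Because $\Gamma_0$ is normal in $\Out(F_N)$, $w\Gamma w^{-1}\subseteq w\Gamma_0 w^{-1}=\Gamma_0$, and since conjugation is an automorphism, $w\Gamma w^{-1}$ has the same (finite) index in $\Gamma_0$ as $\Gamma$ does; likewise it has finite index in $w\Gamma_0 w^{-1}\cap H$. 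The key point is now that $\varphi(\Gamma)\subseteq\varphi(H)\subseteq H$, so $w\Gamma w^{-1}$ is a finite-index subgroup of $\Out(F_N)$-commensurable groups $\Gamma_0$ and $H$ that is simultaneously contained in $H$; hence $\varphi(H)\supseteq w\Gamma w^{-1}$ has finite index in $H$. It remains to upgrade ``finite index'' to ``equality.''

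For the final step, I would use that $\Gamma$ (being twist-rich, hence infinite and in fact containing nonabelian free subgroups of twists) is residually finite — this holds since $\Out(F_N)$ is residually finite — and that a residually finite group cannot have a proper finite-index subgroup that is also the image of an injective self-endomorphism with the same index pattern; more precisely, I would run the argument on the finite-index subgroup $\Gamma_1:=\varphi^{-1}(\Gamma)\cap\Gamma$ of $\Gamma$. Iterating $\varphi$ and comparing indices $[H:\varphi^k(H)]$, which is non-increasing and bounded, forces $[H:\varphi(H)]=[\varphi(H):\varphi^2(H)]=\cdots$; combined with the Hopfian property of the finitely generated residually finite group $H$ (every surjective endomorphism of such a group is injective, and dually one controls injective endomorphisms via finite quotients), one concludes $\varphi(H)=H$. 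The cleanest route: $H$ is Hopfian, and $\varphi(H)$ has finite index $m$ in $H$; the restriction $\varphi\colon H\to\varphi(H)$ is an isomorphism, so $H\cong\varphi(H)$, and a finitely generated residually finite group cannot be isomorphic to a proper finite-index subgroup unless $m=1$ — indeed such an isomorphism would make $H$ non-co-Hopfian, but one shows directly that the number of index-$m$ subgroups is finite and $\varphi$ permutes them injectively, giving a contradiction with properness by a counting/fixed-point argument. Hence $m=1$ and $\varphi$ is surjective.

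The main obstacle I anticipate is the very last step: passing from ``$\varphi(H)$ has finite index in $H$'' to ``$\varphi(H)=H$.'' Theorem \ref{theo:intro-main} gives the conjugator $w$ only on the twist-rich subgroup $\Gamma$, not on all of $H$, so one must track carefully how $w$-conjugation interacts with the ambient finite-index inclusions $\Gamma\subseteq H$ and $\Gamma\subseteq\Gamma_0$, using normality of $\Gamma_0$ crucially to keep $\varphi(\Gamma)$ inside a group commensurable with $H$. The cleanest formulation is probably to first show $\varphi$ extends (via $w$-conjugation) to an injective homomorphism $H\to\Out(F_N)$ agreeing with $\mathrm{ad}_w$ on $\Gamma$, deduce $\mathrm{ad}_{w^{-1}}\circ\varphi$ fixes $\Gamma$ pointwise, and then exploit that the centralizer of a twist-rich subgroup in $\Out(F_N)$ is trivial (which should follow from the rigidity input, or be citable) to conclude $\varphi=\mathrm{ad}_w$ on all of $H$; then $\varphi(H)=wHw^{-1}$, and since $wHw^{-1}$ and $H$ are both finite-index over $w\Gamma_0w^{-1}=\Gamma_0$ with $wHw^{-1}\subseteq$ something forcing equality, surjectivity follows. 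I would double-check whether an extra hypothesis (such as $H$ itself being twist-rich, which does hold here since $H\supseteq\Gamma$ and supergroups of twist-rich subgroups are twist-rich) lets one apply Theorem \ref{theo:intro-main} directly to $H$, which would streamline the whole argument: indeed $H$ contains the twist-rich group $\Gamma$, hence $H$ is twist-rich, so Theorem \ref{theo:intro-main} applies to $\varphi\colon H\to H\subseteq\Out(F_N)$ directly, yielding $w$ with $\varphi=\mathrm{ad}_w|_H$ and $\varphi(H)=wHw^{-1}$; then normality of $\Gamma_0$ and the commensurability $H\sim\Gamma_0$ give $wHw^{-1}\subseteq\Gamma_0\cdot(\text{finite})$ and a direct index comparison yields $wHw^{-1}=H$, i.e.\ $\varphi$ is onto.
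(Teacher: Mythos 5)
Your ``cleanest route'' --- apply Theorem~\ref{theo:intro-main} directly to $\varphi\colon H\to\Out(F_N)$ (licit because $H$ contains the finite-index twist-rich subgroup $\Gamma=H\cap\Gamma_0$, hence is itself twist-rich), obtain $\varphi=\mathrm{ad}_w|_H$ for some $w\in\Out(F_N)$, and then use normality of $\Gamma_0$ to deduce $wHw^{-1}=H$ --- is exactly the paper's approach. However, you leave the closing ``direct index comparison'' unwritten, and the inclusion ``$wHw^{-1}\subseteq\Gamma_0\cdot(\text{finite})$'' is not a meaningful statement. Here is the precise argument your sketch needs: $w\Gamma w^{-1}=\varphi(\Gamma)\subseteq H$, while $w\Gamma w^{-1}\subseteq w\Gamma_0 w^{-1}=\Gamma_0$ by normality, so $w\Gamma w^{-1}\subseteq H\cap\Gamma_0=\Gamma$; since conjugation by $w$ is an automorphism of $\Gamma_0$ preserving indices, $[\Gamma_0:w\Gamma w^{-1}]=[\Gamma_0:\Gamma]<\infty$, forcing $w\Gamma w^{-1}=\Gamma$. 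Then $\Gamma=w\Gamma w^{-1}\subseteq wHw^{-1}\subseteq H$ with $[wHw^{-1}:\Gamma]=[wHw^{-1}:w\Gamma w^{-1}]=[H:\Gamma]$, whence $[H:wHw^{-1}]=1$. The paper establishes the same equality $gH_0g^{-1}=H_0$ and then iterates the nested chain $H_0\subseteq g^{k+1}Hg^{-(k+1)}\subseteq g^kHg^{-k}$ until it stabilizes and pulls back; the one-step index comparison above is marginally shorter, but both work once $w\Gamma w^{-1}=\Gamma$ is in hand.

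Two of the assertions in the middle of your write-up are false and should be discarded, not kept as fallbacks. A finitely generated residually finite group can certainly be isomorphic to a proper finite-index subgroup of itself: already $\mathbb{Z}\cong 2\mathbb{Z}$. And residual finiteness yields the Hopf property (surjective endomorphisms are injective), not co-Hopfianity --- it does not by itself control injective self-maps in the way your second paragraph suggests. You correctly sense this route is unreliable and move past it, but the claims as stated are simply wrong.
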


We denote by $N_{\Out(F_N)}(\Gamma)$ the normalizer of $\Gamma$ in $\Out(F_N)$, and by $\Comm_{\Out(F_N)}(\Gamma)$ its \emph{relative commensurator} in $\Gamma$, defined as the subgroup made of all elements $\Phi\in\Out(F_N)$ such that $\Gamma\cap\Phi\Gamma\Phi^{-1}$ has finite index in both $\Gamma$ and $\Phi\Gamma\Phi^{-1}$. The following consequence of Theorem~\ref{theo:intro-main} was essentially established in \cite{HW}, although we would like to stress that our definition of twist-rich subgroups in the present paper is \emph{a priori} slightly more general (and less technical) than the one from \cite{HW}.

\begin{corintro}
Let $N\ge 4$, and let $\Gamma$ be a twist-rich subgroup of $\Out(F_N)$. Then
\begin{enumerate}
\item The natural map $N_{\Out(F_N)}(\Gamma)\to\Aut(\Gamma)$ is an isomorphism.
\item The natural map $\Comm_{\Out(F_N)}(\Gamma)\to\Comm(\Gamma)$ is an isomorphism.
\end{enumerate}
\end{corintro}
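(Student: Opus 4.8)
The plan is to deduce both statements from Theorem~\ref{theo:intro-main}, the only non-formal extra ingredient being the triviality of the centralizer $Z_{\Out(F_N)}(\Gamma)$ of a twist-rich subgroup $\Gamma$ inside $\Out(F_N)$. Throughout I would use the two stability properties recorded above: every finite-index subgroup of a twist-rich subgroup is twist-rich, and every subgroup of $\Out(F_N)$ containing a twist-rich subgroup is twist-rich. In particular, all the finite-index subgroups of $\Gamma$ that arise in the definition of $\Comm(\Gamma)$ are themselves twist-rich, so that $Z_{\Out(F_N)}(\Gamma')=1$ for every such $\Gamma'$ once we know it for twist-rich groups.

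For part (1): for $\Phi\in N_{\Out(F_N)}(\Gamma)$ one has $\Phi\Gamma\Phi^{-1}=\Gamma$, so conjugation $c_\Phi\colon g\mapsto\Phi g\Phi^{-1}$ is a genuine automorphism of $\Gamma$, and $\Phi\mapsto c_\Phi$ is the map in question. Its kernel is exactly $Z_{\Out(F_N)}(\Gamma)$, so injectivity is precisely the centralizer statement. For surjectivity, take $\alpha\in\Aut(\Gamma)$ and regard it as an injective homomorphism $\Gamma\to\Out(F_N)$; Theorem~\ref{theo:intro-main} produces $\Phi\in\Out(F_N)$ with $\alpha=c_\Phi$ on $\Gamma$. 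Since $\alpha$ is onto $\Gamma$ we get $\Phi\Gamma\Phi^{-1}=\alpha(\Gamma)=\Gamma$, hence $\Phi\in N_{\Out(F_N)}(\Gamma)$ and $\alpha$ lies in the image.

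For part (2): the natural map sends $\Phi\in\Comm_{\Out(F_N)}(\Gamma)$ to the class of the isomorphism $c_\Phi\colon\Gamma\cap\Phi^{-1}\Gamma\Phi\to\Gamma\cap\Phi\Gamma\Phi^{-1}$ between finite-index subgroups of $\Gamma$; one checks routinely that this is a well-defined homomorphism. An element of its kernel is a $\Phi$ whose conjugation restricts to the identity on some finite-index subgroup $\Gamma'\le\Gamma$; then $\Phi\in Z_{\Out(F_N)}(\Gamma')$, and since $\Gamma'$ is again twist-rich this forces $\Phi=1$, giving injectivity. For surjectivity, represent a class in $\Comm(\Gamma)$ by an isomorphism $\alpha\colon\Gamma_1\to\Gamma_2$ between finite-index subgroups of $\Gamma$. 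Then $\Gamma_1$ is twist-rich and $\alpha\colon\Gamma_1\to\Out(F_N)$ is injective, so Theorem~\ref{theo:intro-main} gives $\Phi$ with $\alpha=c_\Phi$ on $\Gamma_1$. Now $\Phi\Gamma_1\Phi^{-1}=\Gamma_2\subseteq\Gamma$, while $\Gamma_1$ (hence $\Gamma_2=\Phi\Gamma_1\Phi^{-1}$) has finite index in $\Gamma$ (resp.\ in $\Phi\Gamma\Phi^{-1}$); therefore $\Gamma\cap\Phi\Gamma\Phi^{-1}\supseteq\Gamma_2$ has finite index in both $\Gamma$ and $\Phi\Gamma\Phi^{-1}$, i.e.\ $\Phi\in\Comm_{\Out(F_N)}(\Gamma)$. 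Finally $c_\Phi$ and $\alpha$ agree on $\Gamma_1$, a finite-index subgroup of $\Gamma\cap\Phi^{-1}\Gamma\Phi$, so $\Phi$ maps to the given class.

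The one step I expect to be the real obstacle is the centralizer computation $Z_{\Out(F_N)}(\Gamma)=1$ for twist-rich $\Gamma$; everything else is formal manipulation of conjugations, finite-index subgroups, and the definitions of $N_{\Out(F_N)}(\Gamma)$, $\Comm_{\Out(F_N)}(\Gamma)$, $\Aut$, and $\Comm$. I would obtain the centralizer statement from the analysis of groups of twists developed for Theorem~\ref{theo:intro-main}: a twist-rich $\Gamma$ contains nonabelian groups of twists about half-edges of many rose free splittings, and an element of $\Out(F_N)$ commuting with all of these is constrained to preserve the associated collection of free splittings and to act trivially on the relevant combinatorial data, which — via the rigidity of the free-splitting and free-factor complexes already exploited in the paper — leaves only the identity.
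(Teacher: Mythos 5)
Your proposal is correct and follows essentially the same route as the paper: injectivity of both maps reduces to triviality of the centralizer of a twist-rich subgroup in $\Out(F_N)$, and surjectivity follows by applying Theorem~\ref{theo:intro-main} to $\alpha$ (resp.\ to the isomorphism $\Gamma_1\to\Gamma_2$ viewed as an injection into $\Out(F_N)$, using that finite-index subgroups are again twist-rich). The centralizer triviality you flag as the main obstacle is exactly Corollary~\ref{cor:twist-rich-centerless} in the paper (which in fact gives the stronger statement that the \emph{weak} centralizer is trivial), and its proof uses precisely the mechanism you sketch: Lemma~\ref{lemma:twist-rich-unique-splitting} shows a weakly centralizing element must preserve every one-edge nonseparating free splitting, and faithfulness of the $\Out(F_N)$-action on $\ens$ then forces it to be the identity.
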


We also prove a version of our main theorem in rank $3$.

\begin{theointro}\label{theo:intro-rank-3}
Let $\Gamma\subseteq\Out(F_3)$ be a subgroup that contains a power of every Nielsen automorphism. Then every injective homomorphism $\Gamma\to\Out(F_3)$ differs from the inclusion by an inner automorphism of $\Out(F_3)$.  
\end{theointro}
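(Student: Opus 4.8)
The plan is to adapt the proof strategy of Theorem~\ref{theo:intro-main}, reconstructing a conjugating element of $\Out(F_3)$ from the way an injective homomorphism permutes a rich family of ``twist'' subgroups. The point is that free splittings of $F_3$ with nonabelian vertex stabilizer are far too scarce for Theorem~\ref{theo:intro-main} to apply directly, so the geometric input must instead be the theory of \emph{cyclic} splittings of $F_3$, that is, splittings with infinite cyclic edge groups. The starting observation is that a nontrivial power $\rho^n$ of a Nielsen automorphism $\rho\colon x_i\mapsto x_ix_j$ (fixing the other two generators, with $\{i,j,k\}=\{1,2,3\}$) is a twist about a cyclic one-edge splitting of $F_3$: such a $\rho$ lies in the copy of $\Aut(\langle x_i,x_j\rangle)$ stabilizing the free factorization $F_3=\langle x_i,x_j\rangle*\langle x_k\rangle$, and inside that rank-two free factor it is a Dehn twist about the primitive class $x_j$. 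So the hypothesis says exactly that $\Gamma$ contains a power of every Dehn twist about a cyclic splitting of $F_3$; in particular $\Gamma$ contains rank-two abelian subgroups (twists about a common splitting, or about a pair of disjoint splittings) and nonabelian free subgroups (twists about a filling pair), so any injective homomorphism $\varphi\colon\Gamma\to\Out(F_3)$ has image that is not virtually abelian and, by the Tits alternative and the classification of subgroups of $\Out(F_3)$, fixes neither a proper free factor nor a conjugacy class.

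The first main step, which I expect to be the main obstacle, is to prove that $\varphi$ carries twists to twists. I would characterize, purely in terms of the abstract subgroup $\Gamma$, the intersections of $\Gamma$ with the groups of Dehn twists about cyclic splittings of $F_3$ --- as distinguished (products of) abelian subgroups, singled out through their centralizers in $\Gamma$ and through the incidence pattern of the commuting families they form --- and then argue that any injective homomorphism $\Gamma\to\Out(F_3)$ must send such a subgroup to a subgroup of $\Out(F_3)$ carrying the same algebraic signature, which by the structure of abelian and of linearly growing subgroups of $\Out(F_3)$ forces the image to be, up to finite index, a group of Dehn twists about a cyclic splitting. Running this over nested and commuting families then produces a bijection between the cyclic splittings of $F_3$ ``seen by $\Gamma$'' and a sub-collection of all cyclic splittings of $F_3$, compatible with commutation and refinement. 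The difficulty is that in rank $3$ the relevant abelian subgroups have small rank and there is very little room to separate the various splittings by algebraic means alone, so one cannot simply transplant the high-rank combinatorics; the argument must exploit features specific to $\Out(F_3)$, such as the precise description of centralizers of Dehn twists, and most likely the commensuration results for subgroups of $\Out(F_3)$ from \cite{HW}, which already package much of the required rigidity of the pattern of cyclic splittings.

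Next I would convert this combinatorial data into an element of $\Out(F_3)$. The bijection above should extend to a simplicial automorphism of a rigid $\Out(F_3)$-complex --- for instance one of the complexes of splittings or free factors of $F_3$ whose automorphism group is known to be $\Out(F_3)$ \cite{AS,HW} --- and this yields $\Psi\in\Out(F_3)$ such that, after replacing $\varphi$ by $\ad(\Psi^{-1})\circ\varphi$, the homomorphism $\varphi$ fixes (the conjugacy class of) every cyclic splitting of $F_3$ visible to $\Gamma$. With this normalization, $\varphi$ restricted to the twists of $\Gamma$ about a fixed such splitting is an injective endomorphism of an infinite cyclic group that lands inside the twists about the same splitting and is compatible with the boundary data, hence is multiplication by $\pm 1$; a short bookkeeping argument with the Nielsen relations forces all these signs to be $+1$, so $\varphi$ is the identity on a generating family consisting of powers of Nielsen automorphisms.

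Finally, for an arbitrary $\gamma\in\Gamma$, the automorphism $\varphi(\gamma)$ must induce on the rigid complex above the same permutation as $\gamma$ does, since $\varphi$ is already the identity on all the twist subgroups involved while $\gamma$ conjugates these twist subgroups in accordance with its action on cyclic splittings; hence $\varphi(\gamma)$ and $\gamma$ act identically on the rigid complex and so coincide in $\Out(F_3)$. Undoing the normalization, the original $\varphi$ differs from the inclusion by the inner automorphism $\ad(\Psi)$, as desired.
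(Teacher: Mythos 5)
Your proposal is built on a different geometric framework than the paper's, and that framework introduces gaps that are not resolved.

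The paper's central observation is that a Nielsen automorphism is a \emph{twist about a one-edge nonseparating free splitting} $F_3 = A\ast$ (see Section~\ref{sec:twists-background}): with $A=\langle x_2,x_3\rangle$ and stable letter $x_1$, the map $x_1\mapsto x_1 x_2$ is the right twist by $x_2$. Consequently the full group of twists about such a splitting is $A\times A$, a product of two \emph{nonabelian} free groups, and the hypothesis on $\Gamma$ guarantees $\Stab_\Gamma(S)$ meets each factor nonabelianly. This gives the crucial algebraic signature used in the paper: $\Stab_\Gamma(S)$ contains two commuting, normal, nonabelian free subgroups (Proposition~\ref{prop:property-stabilizer-rank-3}), and, conversely, any subgroup of $\iat$ with this signature fixes a \emph{unique} one-edge nonseparating free splitting (Proposition~\ref{prop:characterization-stabilizer-rank-3}, via Theorem~\ref{theo:product-rank-out-aut} on maximal products in $\Out(F_3)$). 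That characterization is robust precisely because the invariants involved are nonabelian. Your proposal instead packages Nielsen automorphisms as Dehn twists about \emph{cyclic} splittings and hopes to characterize the intersection of $\Gamma$ with cyclic twist groups. But those twist groups are abelian of rank at most two, and you yourself flag the difficulty of isolating them; the paper explicitly warns (Remark after Lemma~\ref{lemma:rank3-3} and Figure~\ref{f:arc_example}) that there are copies of $\mathbb{Z}^2$ in $\Out(F_3)$ fixing infinitely many one-edge nonseparating free splittings. So the ``algebraic signature'' you envision for cyclic twist subgroups is not available, and nothing in your sketch substitutes for the product-of-free-groups characterization.

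Two further points. First, the intermediate claim that an injective image of $\Gamma$ ``fixes neither a proper free factor nor a conjugacy class'' by the Tits alternative is false as argued: being non--virtually abelian does not preclude fixing a proper free factor (e.g., $\Out(F_2)\subseteq\Out(F_3)$ fixing $\langle x_1,x_2\rangle$). In the paper this kind of conclusion is never needed for $\Gamma$ itself; what is needed (and proved) is the precise description of which free splittings are stabilized. Second, your appeal to a rigid complex of cyclic splittings is unsupported: the rigidity inputs available (from \cite{AS,BV,HW} and Section~\ref{sec:complexes} of this paper) concern free splitting complexes and free factor complexes, and in rank $3$ the paper has to work with the decorated nonseparating free splitting graph $(\ns,\calc_r)$, whose strong rigidity (Corollary~\ref{cor:strongly-rigid}) requires the extra input of Theorem~\ref{theo:graph-maps-ens-2} (extension to $\ns$) and the compatibility Lemmas~\ref{lemma:rank3-1} and~\ref{lemma:rank3-3}. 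Your sketch does not provide or reference a corresponding rigidity statement for cyclic splitting complexes. The overall blueprint --- recover a conjugating element from the way splitting stabilizers are transported, then push back via rigidity --- matches the paper's Proposition~\ref{prop:blueprint}, but the core analytic steps are missing in your version, and the free-splitting route the paper takes is what makes them tractable in rank $3$.
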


\begin{corintro}\label{cor:intro-co-hopf-3}
Let $\Gamma\subseteq\Out(F_3)$ be a subgroup which is commensurable in $\Out(F_N)$ to a normal subgroup that contains a power of every Nielsen automorphism. Then $\Gamma$ is co-Hopfian.
\end{corintro}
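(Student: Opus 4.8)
The plan is to derive Corollary~\ref{cor:intro-co-hopf-3} from Theorem~\ref{theo:intro-rank-3} by a short commensurability argument. Write $M\trianglelefteq\Out(F_3)$ for the normal subgroup to which $\Gamma$ is commensurable, and set $\Delta:=\Gamma\cap M$, so that $\Delta$ has finite index in both $\Gamma$ and $M$. The first step is to observe that $\Gamma$ \emph{itself} — not merely a finite-index subgroup — contains a power of every Nielsen automorphism. Indeed, given a Nielsen automorphism $\rho$, some power $\rho^k$ lies in $M$; since $\rho$ has infinite order in $\Out(F_3)$ (no nontrivial power of $x_1\mapsto x_1x_2$ is inner), the subgroup $\langle\rho^k\rangle$ is infinite cyclic, so $\langle\rho^k\rangle\cap\Delta$ has finite index in it and hence equals $\langle\rho^{k\ell}\rangle$ for some $\ell\ge 1$. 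Thus $\rho^{k\ell}\in\Delta\subseteq\Gamma$, and $\Gamma$ satisfies the hypothesis of Theorem~\ref{theo:intro-rank-3}.

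Next, let $f\colon\Gamma\to\Gamma$ be an injective homomorphism. Viewing it as an injection $\Gamma\to\Out(F_3)$ and invoking Theorem~\ref{theo:intro-rank-3} (legitimate by the first step), we obtain $\Phi\in\Out(F_3)$ with $f(\gamma)=\Phi\gamma\Phi^{-1}$ for all $\gamma\in\Gamma$; in particular $\Phi\Gamma\Phi^{-1}=f(\Gamma)\subseteq\Gamma$. It now remains only to promote this containment to an equality, i.e.\ to show that $\Phi$ normalizes $\Gamma$, since then $f$ is surjective and therefore an automorphism of $\Gamma$.

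For this surjectivity the normality of $M$ enters. From $\Phi M\Phi^{-1}=M$ we get $\Phi\Delta\Phi^{-1}=(\Phi\Gamma\Phi^{-1})\cap M\subseteq\Gamma\cap M=\Delta$, while $[M:\Phi\Delta\Phi^{-1}]=[M:\Delta]<\infty$; a finite-index subgroup of $M$ contained in another of the same index must equal it, so $\Phi\Delta\Phi^{-1}=\Delta$. Now $f$ restricts to an isomorphism $\Gamma\xrightarrow{\ \sim\ }\Phi\Gamma\Phi^{-1}$ carrying $\Delta$ onto $\Phi\Delta\Phi^{-1}=\Delta$, whence $[\Phi\Gamma\Phi^{-1}:\Delta]=[\Gamma:\Delta]$; on the other hand $\Delta\subseteq\Phi\Gamma\Phi^{-1}\subseteq\Gamma$ gives $[\Gamma:\Delta]=[\Gamma:\Phi\Gamma\Phi^{-1}]\cdot[\Phi\Gamma\Phi^{-1}:\Delta]$. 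Since $[\Gamma:\Delta]$ is finite and nonzero, we conclude $[\Gamma:\Phi\Gamma\Phi^{-1}]=1$, so $\Phi\Gamma\Phi^{-1}=\Gamma$ and $f$ is an automorphism.

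I do not anticipate a genuine obstacle in this corollary: all of the mathematical weight sits in Theorem~\ref{theo:intro-rank-3}, and the deduction above is essentially formal (it parallels Corollary~\ref{cor-co-hopfian} in higher rank). The only point demanding a little attention is the surjectivity step, which is precisely where the hypothesis \emph{commensurable to a normal subgroup} — as opposed to merely commensurable to a subgroup containing powers of all Nielsen automorphisms — is used, to turn the a priori inclusion $\Phi\Gamma\Phi^{-1}\subseteq\Gamma$ into an equality. Were $\Gamma$ itself assumed normal or of finite index, that step would be even shorter.
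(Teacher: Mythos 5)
Your proof is correct, and it takes essentially the same overall route as the paper's argument for Corollary~\ref{cor-co-hopfian} (the paper handles the rank-$3$ case by saying it ``follows in the same way as in higher rank''): apply the rigidity theorem to obtain $\Phi$ with $f=\mathrm{ad}_\Phi$, so $\Phi\Gamma\Phi^{-1}\subseteq\Gamma$, then use normality of $M$ to deduce $\Phi\Delta\Phi^{-1}=\Delta$, and finally promote the inclusion to an equality. Two small points where you diverge from or improve on the paper's exposition. First, you make explicit the preliminary check that $\Gamma$ itself contains a power of every Nielsen automorphism (so that Theorem~\ref{theo:intro-rank-3} actually applies); the paper does not spell this out, and it does require the observation that Nielsen automorphisms have infinite order in $\Out(F_3)$ so that $\langle\rho^k\rangle\cap\Delta$ is nontrivial. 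Second, for the surjectivity step the paper iterates conjugation by $\Phi$ to get a nested chain $\Phi^{k+1}\Gamma\Phi^{-(k+1)}\subseteq\Phi^k\Gamma\Phi^{-k}$ pinned above $\Delta$, argues that the chain must stabilize, and extracts surjectivity from the stable term; you instead apply the multiplicativity of index directly in the tower $\Delta\subseteq\Phi\Gamma\Phi^{-1}\subseteq\Gamma$, using $[\Phi\Gamma\Phi^{-1}:\Delta]=[\Gamma:\Delta]$ to force $[\Gamma:\Phi\Gamma\Phi^{-1}]=1$. Your version is shorter and equally valid; both rest on the same two facts (normality of $M$ forcing $\Phi\Delta\Phi^{-1}=\Delta$, and finiteness of $[\Gamma:\Delta]$).
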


In particular $\Out(F_3)$, and more generally every finite-index subgroup of $\Out(F_3)$, is co-Hopfian. This result is already new to our knowledge.

\paragraph*{A word about the proof.} 
The broad outline of the paper follows that of \cite{HW} on commensurator rigidity: to deduce that an injection $f:\G \to G$ is induced by conjugation we want to combine \emph{combinatorial rigidity} of a graph  equipped with a $G$-action with \emph{algebraic characterizations} of the vertex stabilizers of this graph in $G$.  The blueprint in this general context is described in Section~\ref{sec:blueprint}. Typically, when passing from commensurator rigidity to the study of arbitrary injective homomorphisms, one needs the following:

\begin{itemize}
 \item The rigidity of the graph needs to be improved to show that arbitrary \emph{injective graph maps} are actually automorphisms.
 \item One loses control over centralizers and normalizers of subgroups under $f$: algebraic characterizations of vertex stabilizers need to be independent of these notions.
\end{itemize}

The proof splits into two main parts to tackle the above. As in \cite{HW}, the graph we work with is the \emph{edgewise nonseparating free splitting graph} $\ens$, whose vertices are one-edge nonseparating free splittings of $F_N$, where two splittings are joined by an edge whenever they have a common refinement whose quotient graph is a two-petal rose. 

First, we show that every injective graph map of $\ens$ is an automorphism. The automorphism group of this graph was shown in \cite{HW} to coincide with $\Out(F_N)$, building on the pioneering work of Bridson and Vogtmann \cite{BV}.  To show that injective maps are also automorphisms requires a delicate analysis of links in $\ens$. This is carried out in Section~\ref{sec:complexes}. Much of the intuition for this analysis comes from viewing splittings as sphere systems. This is combined with the combinatorial language of Culler and Vogtmann's work \cite{CV}, which we found to give clean and precise proofs to back up this intuition (and pictures). 

Second, we show that every injective homomorphism $f:\Gamma\to\Out(F_N)$ sends the $\Gamma$-stabilizer of a one-edge nonseparating free splitting $S$ into the $\Out(F_N)$-stabilizer of a unique one-edge nonseparating free splitting $S'$. In fact the assignment $S\mapsto S'$ determines an injective graph map of $\ens$, and this enables us to conclude by combining the general framework with rigidity of $\ens$. A key idea in our proof of this second point is to observe that, when $\Gamma\subseteq\ia$, the $\Gamma$-stabilizer of $S$ contains a normal subgroup -- contained in the group of twists about the splitting -- isomorphic to a direct product of two nonabelian free groups. On the other hand, building on \cite{HW,GH}, we get that conversely, every subgroup $H\subseteq\Out(F_N)$ which contains a direct product of two nonabelian free groups that are both normal in $H$, must preserve a nontrivial splitting $U$ of $F_N$ (which might have nontrivial edge stabilizers, but we get some control on its structure). This relies on the idea that direct products act badly on hyperbolic spaces (while many subgroups of $\Out(F_N)$ have nonelementary actions on hyperbolic complexes), together with a construction from \cite{GH} which canonically associates a nontrivial splitting of $F_N$ to every collection of free splittings with infinite elementwise stabilizer. We finally need a few extra arguments to exclude the possibility that $U$ could be a non-free splitting: these include taking advantage of maximal direct products of $2N-4$ nonabelian free groups in $\Out(F_N)$, that were extensively studied in \cite{BW}.

\paragraph*{Structure of the paper.} The paper is organized in the following way. Section~\ref{sec:background} contains background material about $\Out(F_N)$. In Section~\ref{sec:blueprint}, we set up a general framework for using rigidity of graphs to establish rigidity of embeddings. In Section~\ref{sec:complexes}, we show that when $N\ge 4$, every injective graph map of $\ens$ is a graph automorphism (with a slight adaptation in rank $3$). In Section~\ref{sec:commuting-normal}, we show that subgroups of $\Out(F_N)$ that contain normal direct products stabilize splittings. In Section~\ref{sec:twist-rich}, we introduce twist-rich subgroups and discuss some of their properties. In Section~\ref{sec:algebra}, we show that when $N\ge 4$, every injective homomorphism $\Gamma\to\Out(F_N)$ induces an injective graph map of $\ens$ and conclude our proof of Theorem~\ref{theo:intro-main}. Finally, the case of subgroups of $\Out(F_3)$ (Theorem~\ref{theo:intro-rank-3}) is tackled separately in Section~\ref{sec:rank-3}. 

\paragraph*{Acknowledgments.} We thank Kai-Uwe Bux for pointing us the importance of the normality assumption in Corollary~\ref{cor-co-hopfian}.

The second named author acknowledges support from the Agence Nationale de la Recherche under Grant ANR-16-CE40-0006.

\setcounter{tocdepth}{1}
\tableofcontents

\section{Background}\label{sec:background}

\emph{In this section we introduce relevant background for the paper. This covers: the finite-index subgroup $\ia$ and its role in avoiding periodic behaviour; relative automorphism groups and actions on free factor graphs; stabilizers of splittings and their associated groups of twists; and the behaviour of maximal direct products of free groups in $\out(F_N)$.}

\subsection{Properties of $\ia$}

Let $N\ge 3$. We recall that $\ia$ is the kernel of the natural map from $\Out(F_N)$ to $\mathrm{GL}(N,\mathbb{Z}/3\mathbb{Z})$ given by the action on homology with coefficients in $\mathbb{Z}/3\mathbb{Z}$. Outer automorphisms of $F_N$ that belong to $\ia$ satisfy several nice properties that will enable us to avoid dealing with some finite-order phenomena.

\begin{theo}[{Handel--Mosher \cite[Theorem~3.1]{HM2}}]\label{theo:hm}
Let $H\subseteq\ia$ be a subgroup, and let $A\subseteq F_N$ be a free factor whose conjugacy class is invariant by some finite-index subgroup of $H$. Then the conjugacy class of $A$ is $H$-invariant.
\end{theo}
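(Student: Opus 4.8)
The plan is to reduce the statement to a dynamical fact about the $\ia$-action on conjugacy classes of free factors, and then to prove that fact by induction on $N$ with relative train track technology entering only in one (filling) case.

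\emph{Reduction.} Let $H_0\subseteq H$ be a finite-index subgroup fixing $[A]$. The $H$-orbit $\Omega$ of $[A]$ is finite of size at most $[H:H_0]$, so the kernel $K$ of the action $H\to\mathrm{Sym}(\Omega)$ is a finite-index (normal) subgroup of $H$ fixing every element of $\Omega$, in particular $[A]$. Since $H$ acts transitively on $\Omega$, it suffices to prove that every $\phi\in H$ fixes $[A]$, and since $\phi^{[H:K]}\in K$ this follows once we know the following claim (applied with $n=[H:K]$).

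\textbf{Key claim.} If $\phi\in\ia$ and $[\phi^n(A)]=[A]$ for some $n\ge 1$, then $[\phi(A)]=[A]$. Equivalently, writing $[A_i]=[\phi^i(A)]$ and letting $d\ge 1$ be the minimal period of $[A]$ under $\langle\phi\rangle$, we have $d=1$.

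\emph{Proof of the Key claim by induction on $N$.} The cases $\rk(A)\in\{0,N\}$ are trivial, since then $[A]$ is the only conjugacy class of free factor of its rank; so assume $0<\rk(A)<N$. As $\phi\in\ia$ acts trivially on $H_1(F_N;\mathbb{Z}/3\mathbb{Z})$, and conjugation acts trivially on homology, all the $[A_i]$ have the same image $W\subseteq H_1(F_N;\mathbb{Z}/3\mathbb{Z})$, a subspace of dimension $\rk(A)$. Let $\mathcal{F}$ be the free factor support of the finite family $\{[A_0],\dots,[A_{d-1}]\}$, i.e.\ the smallest free factor system carrying all the $[A_i]$; this is canonically attached to the family, hence $\phi$-invariant. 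Suppose first that $\mathcal{F}$ is a \emph{proper} free factor system. Then $\phi$ permutes its components; two components in a common $\phi$-cycle have the same (nonzero) image in $H_1(F_N;\mathbb{Z}/3\mathbb{Z})$ by the remark above, yet the homology images of distinct components of a free factor system are linearly independent, so every $\phi$-cycle is a single component. In particular $[A]$ is carried by a $\phi$-invariant component $[B]$ with $\rk(B)<N$, and one checks (conjugating so that all $A_i\le B$, which is legitimate since free factors of $F_N$ contained in $B$ are free factors of $B$ and are $F_N$-conjugate in $B$ exactly when they are $B$-conjugate) that $A_i$ is an $n$-periodic free factor of $B$ under the induced outer automorphism $\phi_B\in\Out(B)$; moreover $\phi_B$ lies in the corresponding group $\mathrm{IA}(\mathbb{Z}/3\mathbb{Z})$ of $B$, because the inclusion $H_1(B;\mathbb{Z}/3\mathbb{Z})\hookrightarrow H_1(F_N;\mathbb{Z}/3\mathbb{Z})$ is $\phi$-equivariant and $\phi$ is the identity on the target. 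By the inductive hypothesis applied to $B$, $\phi_B$ fixes $[A]$, hence so does $\phi$, and $d=1$.

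\emph{The filling case — the main obstacle.} It remains to treat the case where $\mathcal{F}$ is improper, i.e.\ the family $\{[A_0],\dots,[A_{d-1}]\}$ fills $F_N$. Here the homological argument is no longer sufficient, and this is the heart of the matter. The plan is to pass to the power $\psi=\phi^d\in\ia$, which fixes each $[A_i]$, and to use relative train track / CT structure for $\psi$: since $\ia$ is torsion-free, a further power of $\psi$ is rotationless, so its attracting laminations and its invariant free factor systems are canonically associated to it. Since $\phi$ normalizes $\langle\psi\rangle$ and cyclically permutes the $[A_i]$, it permutes these canonical structures; as $A$ is a proper free factor, these structures are nontrivial, and combining this with the constraint that $\phi$ acts trivially on $H_1(F_N;\mathbb{Z}/3\mathbb{Z})$ forces the permutation to be trivial, contradicting $d\ge 2$ (so in fact the filling case does not occur). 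Making this last step precise is the principal difficulty; it is precisely where the choice of the deep congruence subgroup $\ia$ — ruling out all finite-order phenomena — is essential, and where Handel--Mosher's relative train track machinery is genuinely needed.
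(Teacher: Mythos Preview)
This theorem is not proved in the paper; it is quoted from Handel--Mosher \cite[Theorem~3.1]{HM2} and used as a black box, so there is no proof here to compare your attempt against.

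On its own merits: your reduction to the Key Claim is correct, and the non-filling branch of the induction is essentially fine --- the homological independence of the components of a free factor system, together with $\phi$ acting trivially on $H_1(F_N;\mathbb{Z}/3\mathbb{Z})$, does force $\phi$ to fix each component, and the passage to $\phi_B\in\mathrm{IA}(B,\mathbb{Z}/3\mathbb{Z})$ is legitimate. The gap is exactly where you place it, in the filling case, but what you have written there is not yet an argument. You assert that $\phi$ permutes certain ``canonical structures'' attached to $\psi=\phi^d$ (attracting laminations, invariant free factor systems, CT strata) and that triviality on mod-$3$ homology forces this permutation to be trivial; however you supply no mechanism linking $H_1(F_N;\mathbb{Z}/3\mathbb{Z})$ to laminations or strata, and in general these objects are not distinguished by their homological shadows. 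This is precisely the step where Handel and Mosher carry out substantial work with CT representatives, and no elementary bypass is known. What you have produced is the standard easy reduction; the filling case is not a detail to be filled in later but the entire content of the theorem.
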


We recall that a \emph{splitting} of $F_N$ is a minimal, simplicial $F_N$-action on a simplicial tree. It is a \emph{free splitting} if all edge stabilizers are trivial. A consequence of the above theorem of Handel and Mosher is the following, see \cite[Lemma~2.6]{HW}.

\begin{theo}\label{theo:ia}
Let $H\subseteq\ia$ be a subgroup, and let $S$ be a free splitting of $F_N$ which is invariant by some finite-index subgroup of $H$. Then $S$ is $H$-invariant and $H$ acts as the identity on the quotient graph $S/F_N$ (in particular, all collapses of $S$ are also $H$-invariant).
\end{theo}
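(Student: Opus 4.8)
The plan is to reduce to one-edge free splittings, where Theorem~\ref{theo:hm} applies directly, and then reassemble. Fix a finite-index subgroup $H_0\le H$ for which $S$ is invariant. We may assume $S$ is reduced: its reduction is canonical, hence $H_0$-invariant, and once $H$ is known to preserve the reduced splitting and to act trivially on its quotient graph, the subdivision data recovering $S$ is $H$-invariant as well. The plan uses the standard fact that a reduced free splitting is the least common refinement of its one-edge collapses $S_1,\dots,S_m$ --- the free splittings obtained by collapsing all but one edge orbit, i.e.\ the collapses corresponding to the vertices of the simplex of $S$ in Outer space --- and that this assignment is $\Out(F_N)$-equivariant.

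I would first treat a one-edge free splitting $S_i$. Replacing $H_0$ by the kernel of its (finite) action on $S_i/F_N$, it fixes each conjugacy class in the free factor system $\mathcal F(S_i)$: a single class if $S_i$ is non-separating, an unordered pair of distinct classes if $S_i$ is separating (distinctness being a one-line homology count). By Theorem~\ref{theo:hm}, each of these classes is $H$-invariant. Since a one-edge free splitting is determined by its free factor system --- non-separating ones corresponding to the conjugacy class of their corank-one vertex group, separating ones $F_N=A*B$ to the pair $\{[A],[B]\}$ --- every $\Phi\in H$ sends $S_i$ to a one-edge free splitting with the same free factor system, so $\Phi(S_i)=S_i$.

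Then, for general reduced $S$, the kernel $H'$ of the finite action $H_0\to\Aut(S/F_N)$ preserves each $S_i$, so by the previous step each $S_i$ is $H$-invariant, and therefore $H$ preserves their least common refinement $S$. It remains to show $H$ acts trivially on $\Gamma:=S/F_N$. Given $\Phi\in H$, let $\phi$ be the induced automorphism of $\Gamma$. Since $H_1(\Gamma;-)$ is a natural ($\Out(F_N)$-equivariant) quotient of $H_1(F_N;-)$, the automorphism $\phi_*$ of $H_1(\Gamma;\mathbb{Z})$ is induced by the action of $\Phi$ on $H_1(F_N;\mathbb{Z})$, hence is congruent to the identity mod $3$; and $\phi_*$ has finite order since $\phi$ does, so $\phi_*=\id$ by torsion-freeness of the mod-$3$ congruence subgroup. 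Moreover $\Phi$ preserves the conjugacy class of every vertex group (again by Theorem~\ref{theo:hm}), so $\phi$ fixes every vertex of $\Gamma$ carrying a non-trivial group; hence any vertex moved by $\phi$ has trivial group and, $S$ being reduced, valence at least $3$. If $\phi$ moved a vertex, then the equality $\chi(\Gamma^\phi)=\chi(\Gamma)$ coming from the Lefschetz fixed-point formula (using $\phi_*=\id$) would contradict the valence bound, since re-inserting the moved vertices together with their incident edges strictly decreases $\chi$; and if $\phi$ fixed every vertex but moved an edge, it would non-trivially permute a family of parallel edges or reverse a loop, acting non-trivially on $H_1(\Gamma;\mathbb{Z})$. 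So $\phi=\id$, and the parenthetical statement follows because $H$ then preserves every sub-forest of $\Gamma$.

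The main obstacle will be this last step: excluding homologically invisible symmetries of $S/F_N$ requires combining torsion-freeness of congruence subgroups, the faithfulness of the action on parallel edges and loops, and the Lefschetz-versus-valence count (with a routine barycentric subdivision to handle edge flips). The other inputs --- the reduction to reduced splittings, the common-refinement description of $S$, and the classification of one-edge free splittings by their free factor systems --- are standard, and Theorem~\ref{theo:hm} is exactly what prevents the relevant free-factor conjugacy classes from being lost under passage to a finite-index subgroup.
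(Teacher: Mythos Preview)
The paper does not give its own proof of this statement; it simply records it as a consequence of Handel--Mosher's theorem and refers to \cite[Lemma~2.6]{HW} for the details. So there is no ``paper's proof'' to compare against in the usual sense.

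Your argument is correct and gives a self-contained proof. The reduction to one-edge collapses via Handel--Mosher is the natural strategy, and the Lefschetz/valence-count step can indeed be made rigorous: under $\phi_*=\id$ on $H_1$ the trace formula gives $V_m = 2E'_- + M$ (moved vertices, flipped edges, genuinely moved edges), while the valence bound $\sum_{v\text{ moved}}\deg(v)\ge 3V_m$ together with $\sum_{v\text{ moved}}\deg(v)\le 2E'_- + 2M$ forces $4E'_-+M\le 0$, hence $\phi=\id$. (Flipped loops are already excluded since they would send $[\ell]\mapsto -[\ell]$.) So your sketch is sound, even if the phrase ``re-inserting the moved vertices together with their incident edges'' is a little loose.

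That said, the Lefschetz detour is avoidable. You already show, for each separating one-edge collapse $S_i$, that $H$ fixes the two vertex classes $[A],[B]$ separately, i.e.\ $H$ acts trivially on $S_i/F_N$. For a \emph{nonseparating} $S_i$ the quotient is a single loop, and the only possible nontrivial action is the flip; but this acts as $-1$ on $H_1(S_i/F_N;\mathbb Z)$, which is a quotient of $H_1(F_N;\mathbb Z)$, so any $\Phi\in\ia$ acts as $+1$ there. Thus $H$ already acts trivially on every $S_i/F_N$. Since the oriented edges of $S/F_N$ are in $\Out(F_N)$-equivariant bijection with the one-edge collapses together with their quotient orientation, this immediately gives that $\phi$ fixes every oriented edge of $S/F_N$, hence every vertex, hence $\phi=\id$ --- no Euler-characteristic bookkeeping required. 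This is presumably closer to the short argument in \cite{HW}.
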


\subsection{Relative automorphism groups}

A \emph{free factor} of $F_N$ is a subgroup generated by a subset of a free basis of $F_N$. A \emph{free factor system} of $F_N$ is a finite collection $\calf=\{[A_1],[A_2], \ldots, [A_k]\}$ of conjugacy classes of free factors that arise as the collection of conjugacy classes of nontrivial point stabilizers in a nontrivial free splitting of $F_N$. 

A free factor system $\calf$ is \emph{sporadic} if either $\calf$ consists of the conjugacy class of a single corank one free factor, or else $\calf$ consists of the conjugacy classes of two free factors $A$ and $B$ satisfying $F_N=A\ast B$.

Given a free factor system $\calf$, we denote by $\Out(F_N,\calf)$ the subgroup of $\Out(F_N)$ made of all outer automorphisms that preserve all the conjugacy classes of subgroups in $\calf$.

A splitting $S$ of $F_N$ is said to be \emph{relative to $\calf$} if every subgroup of $F_N$ whose conjugacy class belongs to $\calf$ is elliptic in $S$.

An \emph{$(F_N,\calf)$-free factor} is a subgroup of $F_N$ that arises as a point stabilizer in some free splitting of $F_N$ relative to $\calf$. It is \emph{proper} if it is nontrivial, not conjugate to one of the subgroups of $\calf$, and not equal to $F_N$.

An element $g\in F_N$ is \emph{$\calf$-peripheral} if it is contained in a subgroup of $F_N$ whose conjugacy class belongs to $\calf$. 

The \emph{free factor graph} $\FF(F_N,\calf)$ is the graph whose vertices are the $F_N$-equivariant homeomorphism classes of free splittings of $F_N$ relative to $\calf$, where two splittings are joined by an edge if they are compatible or share a common non-$\calf$-peripheral elliptic element. We refer the reader to \cite[Section~2.2.1]{GH1} for a comparison with other quasi-isometric models in the literature. By \cite{BF,HM,GH1}, the free factor graph $\FF(F_N,\calf)$ is hyperbolic.

The following fact is of central importance in the present paper. It essentially relies on \cite[Proposition~5.1]{GH1}, together with the theorem of Handel--Mosher \cite[Theorem~3.1]{HM2} mentioned above. See \cite[Proposition~2.5]{HW}.

\begin{prop}\label{prop:unbounded-orbits-in-ff}
Let $H\subseteq\ia$ be a subgroup, and let $\calf$ be a maximal $H$-invariant free factor system. Assume that $\calf$ is nonsporadic.

Then the $H$-action on $\FF(F_N,\calf)$ has unbounded orbits.
\end{prop}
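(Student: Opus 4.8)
The plan is to argue by contradiction: assume that the $H$-action on $\FF(F_N,\calf)$ has bounded orbits. Since $\calf$ is nonsporadic, $\FF(F_N,\calf)$ is a genuine relative free factor graph (hyperbolic, of infinite diameter), so this is a nonvacuous assumption. The first step is to feed it into \cite[Proposition~5.1]{GH1}: that result says, for a subgroup $H\subseteq\Out(F_N,\calf)$ with $\calf$ nonsporadic, that either the $H$-action on $\FF(F_N,\calf)$ has unbounded orbits, or else some finite-index subgroup $H_0\subseteq H$ preserves the conjugacy class of a proper $(F_N,\calf)$-free factor $A$. By our standing assumption the first alternative fails, so we are in the second.

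The second step is to turn this periodic proper free factor into a strictly larger invariant free factor system. Since $A$ is a proper $(F_N,\calf)$-free factor, it is nontrivial, not conjugate into any member of $\calf$, and distinct from $F_N$; hence the free factor support of the family $\calf\cup\{[A]\}$ is a free factor system $\calf'$ that strictly dominates $\calf$ in the usual partial order on free factor systems. The free factor support is a canonical construction, and $H_0$ preserves both $\calf$ and the conjugacy class $[A]$, so $\calf'$ is $H_0$-invariant.

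The third and last step invokes the hypothesis $H\subseteq\ia$. By the Handel--Mosher theorem (Theorem~\ref{theo:hm}), any conjugacy class of free factor that is invariant under the finite-index subgroup $H_0$ is automatically $H$-invariant; applying this to each member of $\calf'$ shows that $\calf'$ is $H$-invariant. As $\calf'$ strictly dominates $\calf$, this contradicts the maximality of $\calf$ among $H$-invariant free factor systems. Hence the $H$-action on $\FF(F_N,\calf)$ has unbounded orbits, as claimed.

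The step that requires the most care is the first: one must extract from \cite[Proposition~5.1]{GH1} exactly the statement that bounded orbits force an $H_0$-periodic \emph{proper} $(F_N,\calf)$-free factor, and check that nonsporadicity of $\calf$ is precisely the hypothesis that rules out the degenerate possibilities (and guarantees $\FF(F_N,\calf)$ is the right object). The remaining two steps -- forming the free factor support equivariantly, and promoting finite-index invariance to full invariance via Handel--Mosher -- are routine.
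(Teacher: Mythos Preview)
Your proof is correct and follows exactly the approach the paper indicates: the paper does not give a self-contained proof but states that the result ``essentially relies on \cite[Proposition~5.1]{GH1}, together with the theorem of Handel--Mosher'' and refers to \cite[Proposition~2.5]{HW} for details. Your argument --- bounded orbits force a finite-index subgroup to fix a proper $(F_N,\calf)$-free factor via \cite[Proposition~5.1]{GH1}, then Handel--Mosher promotes this to $H$-invariance, contradicting maximality of $\calf$ --- is precisely the intended proof.
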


The Gromov boundary of $\FF(F_N,\calf)$ has been described in terms of relatively arational trees \cite{BR,Ham,GH1}. An \emph{$(F_N,\calf)$-tree} is an $\mathbb{R}$-tree equipped with an $F_N$-action by isometries, in which every subgroup of $F_N$ whose conjugacy class belongs to $\calf$ is elliptic. An $(F_N,\calf)$-tree $T$ is \emph{arational} if no proper $(F_N,\calf)$-free factor acts elliptically on $T$, and for every proper $(F_N,\calf)$-free factor $A$, the $A$-minimal invariant subtree of $T$ is a simplicial $A$-tree in which every nontrivial point stabilizer is conjugate into one of the subgroups in $\calf$. Two arational $(F_N,\calf)$-trees are \emph{equivalent} if they are $F_N$-equivariantly homeomorphic when equipped with the \emph{observers' topology}: this is the topology on a tree $T$ for which a basis of open sets is given by the connected components of the complements of points in $T$.  

\begin{theo}[{\cite{BR,Ham,GH1}}]
Let $\calf$ be a nonsporadic free factor system of $F_N$. The Gromov boundary of $\FF(F_N,\calf)$ is $\Out(F_N,\calf)$-equivariantly homeomorphic to the space of all equivalence classes of arational $(F_N,\calf)$-trees.
\end{theo}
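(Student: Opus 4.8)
The plan is to adapt to the relative setting Klarreich's identification of the Gromov boundary of the curve complex with the space of minimal filling laminations, following the template used for $\Out(F_N)$ in the absolute case by Bestvina--Reynolds and Hamenst\"adt. The key geometric input I would set up is an $\Out(F_N,\calf)$-equivariant coarse projection $\pi$ from relative Outer space $\mathcal{O}(F_N,\calf)$ --- the space of minimal, free, simplicial $(F_N,\calf)$-trees up to equivariant homothety --- to $\FF(F_N,\calf)$, sending a tree $T$ to any one-edge free splitting relative to $\calf$ obtained by collapsing all but one orbit of edges of $T$. First I would check that $\pi$ is coarsely well defined and coarsely Lipschitz (a Masur--Minsky / Bestvina--Feighn type estimate: a single folding move moves the image a bounded amount) and coarsely surjective (every vertex of $\FF(F_N,\calf)$ is refined by a point of $\mathcal{O}(F_N,\calf)$). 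Since $\FF(F_N,\calf)$ is hyperbolic by the results recalled above, it then remains to: (i) determine which sequences $(T_n)$ in $\mathcal{O}(F_N,\calf)$ have $\pi(T_n)$ converging to a point of $\partial\FF(F_N,\calf)$; (ii) identify when two such sequences have the same limit; and (iii) check that the resulting bijection is an equivariant homeomorphism.

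For (i) I would pass to the closure $\overline{\mathcal{O}}(F_N,\calf)$, which is compact and consists of (projective classes of) very small relative $F_N$-trees, so after extracting a subsequence we may assume $T_n\to T_\infty$. The claim to prove is that $\pi(T_n)$ escapes every bounded set precisely when $T_\infty$ is arational, and that in this case it converges in $\partial\FF(F_N,\calf)$. If $T_\infty$ is not arational, there is a proper $(F_N,\calf)$-free factor $A$ that is either elliptic in $T_\infty$, or has non-simplicial minimal subtree, or has simplicial minimal subtree with a non-$\calf$-peripheral point stabilizer; in each case the structure of $T_\infty$ (through its decomposition as a graph of actions, or a dual surface) lets one trap all the $\pi(T_n)$ in a bounded neighbourhood of a fixed free splitting in which an appropriate witness is elliptic. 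Conversely, if $T_\infty$ is arational no such witness exists, and a pigeonhole argument rules out boundedness of $\pi(T_n)$; the substantive point is that $\pi(T_n)$ is then Cauchy for the Gromov product, which I would derive from Reynolds' structure theory of relatively arational trees --- such trees are indecomposable (mixing) and have minimal dual lamination --- so that the directions along which $\pi(T_n)$ escapes are organised consistently by $T_\infty$.

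For (ii) the claim is that two arational $(F_N,\calf)$-trees $T,T'$ determine the same point of $\partial\FF(F_N,\calf)$ if and only if they are equivalent in the sense of the statement, i.e.\ $F_N$-equivariantly homeomorphic for the observers' topology. One direction is that $\pi$ depends only on the observers'-topology class of the limit tree, since the relevant data --- the dual lamination, equivalently the poset of reducing free factors --- is an invariant of that class; the other is that an arational tree is recovered, up to observers'-topology equivalence, from its dual lamination (Reynolds), which is in turn detected by the boundary point. Here I would use the dichotomy for arational $(F_N,\calf)$-trees (relatively free of ``pure'' type, or dual to an arational measured foliation on a compact surface carrying the peripheral structure): in the surface case the observers'-topology class is the underlying topological foliation and the boundary point is the corresponding point of the curve complex of the surface, recovering Klarreich's theorem there. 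For (iii), equivariance of the bijection is immediate from equivariance of $\pi$, and it is a homeomorphism because the map from the arational locus of the compact space $\overline{\mathcal{O}}(F_N,\calf)$ to $\partial\FF(F_N,\calf)$ is continuous by (i) and has compact source, hence is closed.

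The hard part will be step (i): showing that $\pi(T_n)$ genuinely \emph{converges} in $\partial\FF(F_N,\calf)$ --- not merely that it is unbounded --- when $T_n\to T_\infty$ with $T_\infty$ arational, together with the injectivity in (ii). Both require the delicate structure theory of relatively arational trees and their dual laminations, and a careful understanding of how folding paths in relative Outer space are tracked by the free factor graph; this is precisely the content of \cite{BR,Ham,GH1}.
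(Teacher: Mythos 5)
The paper does not prove this theorem; it is stated as a citation to \cite{BR,Ham,GH1}, and your sketch correctly recounts the Klarreich-style strategy that those works carry out: build a coarsely Lipschitz, coarsely surjective, equivariant projection $\pi$ from relative Outer space to $\FF(F_N,\calf)$, pass to the compactification by very small trees, show that $\pi(T_n)$ converges in $\partial_\infty\FF(F_N,\calf)$ exactly when the (subsequential) limit $T_\infty$ is arational, and match equivalence classes of arational trees with boundary points via the dual lamination. So the outline is on the right track and mirrors the references; the hard content you correctly flag (convergence of $\pi(T_n)$, injectivity on equivalence classes, the dichotomy between free-type and surface-type arational trees) is exactly what those papers establish.

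However, step (iii) as written is not correct. You claim the bijection is a homeomorphism because ``the map from the arational locus of the compact space $\overline{\mathcal{O}}(F_N,\calf)$ to $\partial_\infty\FF(F_N,\calf)$ \dots has compact source, hence is closed.'' But the arational locus is \emph{not} compact: it is a proper, non-closed subset of the boundary of relative Outer space (limits of arational trees are frequently non-arational, e.g.\ simplicial trees arise as limits), so a continuous map from it need not be closed. Indeed $\partial_\infty\FF(F_N,\calf)$ is itself non-compact, since $\FF(F_N,\calf)$ is a non-proper hyperbolic space; this already rules out the argument. To obtain a homeomorphism one must argue continuity of the inverse directly --- in the references this is done by analyzing convergent sequences in $\partial_\infty\FF(F_N,\calf)$ and showing the corresponding limit trees converge in the observers' topology, or by establishing a properness-type statement for the map, not by appealing to compactness of the source.
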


We also require the following fact, which relies on the description of equivalence classes of arational trees \cite[Proposition~13.5]{GH2}. 

\begin{lemma}\label{lemma:boundary-tree}
Let $\calf$ be a nonsporadic free factor system of $F_N$, and let $H\subseteq\Out(F_N,\calf)$ be a subgroup. If $H$ fixes a point in $\partial_\infty\FF(F_N,\calf)$, then $H$ has a finite-index subgroup that fixes the homothety class of an arational $(F_N,\calf)$-tree.
\end{lemma}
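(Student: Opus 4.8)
The plan is to combine the description of $\partial_\infty\FF(F_N,\calf)$ recalled above with the structural result \cite[Proposition~13.5]{GH2} on equivalence classes of arational trees, and then to run a routine ``finite orbit'' argument.

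First I would use the boundary theorem above: a point of $\partial_\infty\FF(F_N,\calf)$ is precisely an equivalence class $[T]$ of arational $(F_N,\calf)$-trees, two such trees being equivalent when they are $F_N$-equivariantly homeomorphic for the observers' topology. Since the identification of $\partial_\infty\FF(F_N,\calf)$ with the space of equivalence classes of arational $(F_N,\calf)$-trees is $\Out(F_N)$-equivariant and $H\subseteq\Out(F_N,\calf)$ fixes the given boundary point, the group $H$ preserves the corresponding class $[T]$, and hence acts on the set of homothety classes of arational $(F_N,\calf)$-trees lying in $[T]$.

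The key point, supplied by \cite[Proposition~13.5]{GH2}, is that this set is tightly controlled: the arational trees equivalent to $T$ are obtained from a fixed topological model by choosing a transverse invariant measure on its dual lamination, the cone of such measures is finite-dimensional, and it has only finitely many extremal rays, corresponding to the \emph{ergodic} arational trees in the class; moreover this finite set of ergodic trees is nonempty. As ergodicity of the transverse measure is invariant under $F_N$-equivariant homeomorphism and under the $\Out(F_N)$-action, the finite nonempty set $E$ of homothety classes of ergodic arational $(F_N,\calf)$-trees in $[T]$ is canonically attached to $[T]$, and in particular it is $H$-invariant. The resulting homomorphism from $H$ to the symmetric group on $E$ then has a kernel $H_0$ of finite index, and $H_0$ fixes the homothety class of every member of $E$; in particular $H_0$ fixes the homothety class of an arational $(F_N,\calf)$-tree, as required.

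The substantive ingredient is thus entirely contained in \cite[Proposition~13.5]{GH2}: that an equivalence class of arational $(F_N,\calf)$-trees contains only finitely many homothety classes of ergodic trees, and that those are again arational. This is the relative-free-product analogue of the finiteness of ergodic transverse measures on a measured lamination, and I would quote it rather than reprove it. On our side there is no real obstacle here: once the finiteness is in hand, the statement reduces to the elementary fact that a group acting on a finite set has a finite-index subgroup acting trivially.
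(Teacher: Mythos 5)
Your proposal is correct and is essentially the argument the paper has in mind: the paper gives no explicit proof of Lemma~\ref{lemma:boundary-tree}, only the pointer to \cite[Proposition~13.5]{GH2}, and what you have written out is precisely the intended unpacking of that citation. Identifying the fixed boundary point with an equivalence class $[T]$ of arational trees, invoking the finiteness of ergodic homothety classes within $[T]$ from \cite[Proposition~13.5]{GH2}, and then passing to the finite-index kernel of the induced permutation action is the right route, and you correctly flag that the entire substance lies in the cited proposition.
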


\subsection{Twists associated to splittings} \label{sec:twists-background}

In the present paper, we will often take advantage of automorphisms that preserve certain splittings of $F_N$. Stabilizers of splittings have been extensively studied by Bass--Jiang \cite{BJ}, and Levitt \cite{Lev}.  The stabilizer of every splitting has a subgroup called the \emph{group of twists}. Roughly speaking, each separating half-edge has an associated group of partial conjugations in $\Out(F_N)$, and each nonseparating half-edge has an associated group of transvections. We now review the precise definition.

Let $S$ be a splitting of $F_N$, let $v\in S$ be a vertex and let $e$ be a half-edge of $S$ incident on $v$. We denote by $G_v$ the $F_N$-stabilizer of $v$ and by $G_e$ the $F_N$-stabilizer of $e$. Assume that $G_e$ is either trivial or cyclic, and let $z\in G_v$ be an element that centralizes $G_e$. The \emph{twist by $z$ around $e$} is the automorphism $D_{e,z}$ of $F_N$ (stabilizing $S$) defined as follows. Let $\overline{S}$ be the splitting obtained from $S$ by collapsing all half-edges outside of the orbit of $e$. Let $\overline{v}$ and $\overline{e}$ be the images of $v$ and $e$ in $\overline{S}$, respectively, and let $\overline{w}$ be the other extremity of $\overline{e}$. 

If $\overline{v}$ and $\overline{w}$ belong to distinct $F_N$-orbits, then $\overline{S}$ determines a decomposition of $F_N$ as an amalgam, and $D_{e,z}$ is defined as the (unique) automorphism that acts as the identity on $G_{\overline{v}}$, and as conjugation by $z$ on $G_{\overline{w}}$. 

If $\overline{v}$ and $\overline{w}$ belong to the same $F_N$-orbit, then $\overline{S}$ determines a decomposition of $F_N$ as an HNN extension. We then let $t\in F_N$ be such that $\overline{w}=t\overline{v}$, and $D_{e,z}$ is defined as the (unique) automorphism that acts as the identity on $G_{\overline{v}}$ and sends $t$ to $zt$ -- notice that this definition does not depend on the choice of $t$ as above.

The \emph{group of twists} of the splitting $S$ is the subgroup of $\Out(F_N)$ generated by all twists around half-edges of $S$. 

\begin{ex}
The following will be the most crucial example throughout the paper. Let $S$ be a one-edge nonseparating free splitting of $F_N$. Then $S$ is the Bass--Serre tree of a decomposition of $F_N$ as an HNN extension $F_N=A\ast$ for some corank one free factor $A$ of $F_N$. The group of twists of the splitting $S$ decomposes as a direct product $K_1\times K_2$, where each $K_i$ is a nonabelian free group corresponding to the group of twists about one of the two half-edges of the quotient graph $S/F_N$. Choosing a stable letter gives a natural way to identify each $K_i$ with $A$. Algebraically, let $t$ be a stable letter for the splitting. Then the elements in the group of twists of $S$ are precisely the outer automorphisms of $F_N$ which have a representative in $\Aut(F_N)$ acting as the identity on $A$ and sending $t$ to $a_1ta_2$ with $a_1,a_2\in A$. 
\end{ex}

We can see how the group of twists fits inside the stabilizer of the splitting with the following result of Levitt:

\begin{proposition}[{\cite[Proposition~4.1]{Lev}}]\label{prop:Levitt} 
Let $S$ be a free splitting of $F_N$ and let $V$ be a set of representatives of $F_N$-orbits of vertices in $S$. The subgroup $\Stab^0(S)$ of the $\Out(F_N)$-stabilizer of $S$ made of all automorphisms that act trivially on the quotient graph $S/F_N$ fits in the exact sequence \[ 1 \to \mathcal{T} \to \stab^0(S) \to \oplus_{v \in V} \Out(G_v) \to 1, \] 
where $\mathcal{T}$ is the group of twists of the splitting and $G_v$ is the stabilizer of the vertex $v \in V$. The group of twists is isomorphic to \[ \mathcal{T} \cong \oplus_{v \in V} (G_v)^{d_v}/Z(G_v), \] where the center $Z(G_v)$ of $G_v$ is embedded diagonally in $(G_v)^{d_v}$. \end{proposition}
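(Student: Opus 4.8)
The plan is to study $\Stab^0(S)$ through the graph-of-groups decomposition $\mathbb{G}:=S/F_N$, which has trivial edge groups since $S$ is a free splitting. Choosing a spanning tree $T_0$ of the underlying graph of $\mathbb{G}$ identifies $F_N$ with the free product $\bigl(\ast_{v\in V}G_v\bigr)\ast F_r$, where $F_r$ is free on one stable letter $t_e$ for each edge $e$ outside $T_0$ (so $r$ is the first Betti number of $S/F_N$). Every $\Phi\in\Stab^0(S)$ is represented by an automorphism $\phi\in\Aut(F_N)$ together with a $\phi$-equivariant simplicial automorphism $f$ of $S$ descending to the identity of $\mathbb{G}$; equivalently, $\phi$ comes from an automorphism of the graph of groups $\mathbb{G}$ inducing the identity on its underlying graph. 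Modifying $\phi$ by an inner automorphism of $F_N$ amounts to precomposing $f$ with a translation of $S$, and I will use this freedom to move the vertex images of $f$ around.

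First I would build the homomorphism $\rho\colon\Stab^0(S)\to\bigoplus_{v\in V}\Out(G_v)$. For each $v\in V$, pick a representative $\phi$ of $\Phi$ whose equivariant automorphism $f$ fixes a preferred lift $\widetilde v$ of $v$; then $\phi(G_v)=G_v$, so $\phi|_{G_v}\in\Aut(G_v)$. This restriction is well defined up to an inner automorphism of $G_v$: two admissible $\phi$ differ by $\ad_h$ with $h\in N_{F_N}(G_v)$, and $N_{F_N}(G_v)=G_v$ since $G_v$, when nontrivial, is a proper free factor stabilising a unique vertex of $S$ (trivial vertex groups contribute nothing to the statement). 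Set $\rho_v(\Phi)=[\phi|_{G_v}]$ and $\rho=(\rho_v)_{v\in V}$; this is a homomorphism. Surjectivity is immediate in the free-splitting case: given $(\psi_v)_{v\in V}$ with $\psi_v\in\Aut(G_v)$, the endomorphism of $F_N=\bigl(\ast_vG_v\bigr)\ast F_r$ acting as $\psi_v$ on each $G_v$ and fixing every $t_e$ is an automorphism (there are no edge-group relations to respect), it preserves $S$ and acts trivially on $S/F_N$, and $\rho$ sends it to $(\psi_v)_{v\in V}$.

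The core of the argument is to identify $\ker\rho$ with the group of twists $\mathcal{T}$. One inclusion is easy: a twist $D_{e,z}$ about a half-edge $e$ incident on $v$ is by definition the identity on the vertex group $G_{\overline v}$ of the collapsed splitting, which contains $G_v$, and conjugation by $z\in G_v$ on $G_{\overline w}$, which contains every other $G_u$; hence $\rho(D_{e,z})=1$ and $\mathcal{T}\subseteq\ker\rho$. For the reverse inclusion, take $\Phi\in\ker\rho$ and a representative $\phi$ whose equivariant automorphism fixes the preferred lift $\widetilde{v_0}$ of a base vertex $v_0$. Working outward along $T_0$, I would successively compose $\phi$ with twists about half-edges of $T_0$-edges (at the endpoint nearer $v_0$) so that, at each stage, one more vertex image $f(\widetilde v)$ equals $\widetilde v$ without disturbing the images already fixed; after finitely many steps $f$ fixes $\widetilde v$ for every $v\in V$. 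Next, for each edge $e$ outside $T_0$, collapsing all other edge-orbits displays $F_N$ as an HNN extension in which the corresponding twists fix all vertex groups pointwise and alter $t_e$ only by left and right multiplication by elements of the adjacent vertex groups, so composing with such twists one arranges $\phi(t_e)=t_e$ for every such $e$. At this point $\phi$ fixes each $G_v$, restricting (since $\Phi\in\ker\rho$) to an inner automorphism $\ad_{d_v}$ with $d_v\in G_v$, and fixes $F_r$ pointwise. The residual automorphism acting as $\ad_{d_v}$ on each $G_v$ and trivially on $F_r$ differs from the identity of $\Out(F_N)$ by a product of twists — conjugating it globally by $d_v^{-1}$ turns the $v$-component into the product over all half-edges at $v$ of the twist by $d_v^{-1}$. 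Hence $\Phi\in\mathcal{T}$, which completes the exactness.

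It remains to pin down the isomorphism type of $\mathcal{T}$. Twists about distinct half-edges commute in $\Aut(F_N)$, and for each of the $d_v$ half-edges $e$ at $v$ the map $z\mapsto D_{e,z}$ is a homomorphism $G_v\to\Aut(F_N)$; assembling these gives a surjection $\theta\colon\bigoplus_{v\in V}(G_v)^{d_v}\twoheadrightarrow\mathcal{T}$, where $d_v$ denotes the valence of $v$ in $S/F_N$. Its kernel contains the copy of $Z(G_v)$ embedded diagonally in each factor $(G_v)^{d_v}$: for $z\in Z(G_v)$ one checks on the collapsed splittings that $\prod_{e\ni v}D_{e,z}=\ad_z$ in $\Aut(F_N)$, hence trivial in $\Out(F_N)$. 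Conversely, restricting a relation $\prod D_{e,z}=\ad_u$ in $\Aut(F_N)$ to each vertex group and using again that $G_v$ is self-normalising in $F_N$ forces, for each $v$, all coordinates over $v$ to coincide and to be central in $G_v$; so $\ker\theta=\bigoplus_{v\in V}Z(G_v)$ and $\mathcal{T}\cong\bigoplus_{v\in V}(G_v)^{d_v}/Z(G_v)$. I expect the main obstacle to be precisely the combinatorial bookkeeping in the two kernel computations — controlling lifts and the interaction between twists, partial conjugations, and inner automorphisms of $F_N$ — but this is purely Bass–Serre combinatorics once the graph-of-groups framework is set up; no new idea is needed beyond it.
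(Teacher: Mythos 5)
The paper states this as a direct citation of Levitt's Proposition~4.1 in \cite{Lev} and gives no proof of its own, so there is no in-text argument to compare against; your proof reconstructs the standard graph-of-groups/Bass--Serre argument, which is also the route Levitt takes, and it is essentially correct. One small inaccuracy worth fixing: in the step showing $\mathcal{T}\subseteq\ker\rho$, the collapsed vertex group $G_{\overline w}$ does \emph{not} contain every $G_u$ with $u\neq v$ --- the $G_u$'s on the $\overline v$ side sit inside $G_{\overline v}$, where $D_{e,z}$ acts as the identity, while those on the $\overline w$ side sit inside $G_{\overline w}$, where it acts as $\ad_z$. The conclusion $\rho(D_{e,z})=1$ still holds, since for a vertex $u$ on the $\overline w$ side the representative adapted to $\widetilde u$ is $\ad_{z^{-1}}\circ D_{e,z}$, which restricts to the identity on $G_u$; you should also note that in the HNN case all vertex groups lie in $G_{\overline v}$ and the twist only moves the stable letter. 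Likewise, in the last step it is cleaner to factor the residual automorphism as $\prod_v\psi_v$, with $\psi_v$ acting as $\ad_{d_v}$ on $G_v$ and trivially elsewhere, and observe $\psi_v=\ad_{d_v}\circ\chi_v$ where $\chi_v=\prod_{e\ni v}D_{e,d_v^{-1}}$; this exhibits $[\psi_v]\in\mathcal{T}$ directly and replaces the slightly vague phrase about ``conjugating globally''. These are the bookkeeping points you yourself flag; the overall structure is sound and matches the cited source.
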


\subsection{Product rank}

Following \cite[Section~6]{HW}, we define the \emph{product rank} of a group $H$, denoted as $\rk_{\pro}(H)$, to be the maximal integer $k$ such that a direct product of $k$ nonabelian free groups embeds in $H$. The following estimate regarding product rank and group extensions will be crucial throughout the paper.

\begin{lemma}[{\cite[Lemma~6.3]{HW}}]\label{lemma:rank-product-extension}
Let $1\to N\to G\to Q\to 1$ be a short exact sequence of groups. Then $\rk_{\pro}(G)\le\rk_{\pro}(N)+\rk_{\pro}(Q)$.
\end{lemma}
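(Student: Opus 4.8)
The plan is to argue by contradiction, assuming that $G$ contains an embedded direct product $P = P_1 \times \cdots \times P_k$ of $k = \rk_{\pro}(N) + \rk_{\pro}(Q) + 1$ nonabelian free groups, and to derive from this a contradiction by splitting the factors according to how they meet $N$. Let $\pi\colon G \to Q$ be the quotient map. For each $i$ we consider $P_i \cap N$, a normal subgroup of $P_i$; since $P_i$ is a nonabelian free group, a normal subgroup is either trivial or itself nonabelian free (indeed of infinite index only if trivial, but in any case noncyclic if nontrivial, as nontrivial normal subgroups of nonabelian free groups are nonabelian). The key dichotomy is: either $P_i \cap N$ is trivial, in which case $\pi$ restricts to an embedding of $P_i$ into $Q$; or $P_i \cap N$ is a nonabelian free group sitting inside $N$.

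The heart of the argument is then a counting/separation step. First I would observe that the factors $P_i$ with $P_i \cap N \neq 1$ generate, together, a direct product of nonabelian free groups (one noncyclic normal subgroup from each such factor) that embeds in $N$; hence there can be at most $\rk_{\pro}(N)$ such indices. Symmetrically, for the remaining indices $j$ — those with $P_j \cap N = 1$ — the restriction of $\pi$ is injective on each $P_j$, but one must be slightly careful: $\pi$ need not be injective on the product of all such $P_j$ simultaneously, since a nontrivial element of the product could lie in $N$ even if no individual factor meets $N$. The clean way around this is to pass to quotients: consider the image of $P$ in $Q$. We have $\pi(P) \cong P / (P \cap N)$, and $P \cap N$ is a normal subgroup of $P = P_1 \times \cdots \times P_k$; a normal subgroup of a direct product of nonabelian free groups, after projecting to each factor, has image a normal (hence trivial or nonabelian free) subgroup of that factor, and one checks that $\pi(P)$ still contains a direct product of $k - \rk_{\pro}(N)$ nonabelian free groups (those factors disjoint from $N$ survive with noncyclic image). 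This forces $k - \rk_{\pro}(N) \le \rk_{\pro}(Q)$, i.e.\ $k \le \rk_{\pro}(N) + \rk_{\pro}(Q)$, contradicting the choice of $k$.

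The main obstacle I anticipate is making the two "at most" bounds genuinely simultaneous rather than one-factor-at-a-time: a single factor $P_i$ can fail to lie in $N$ yet contribute to $P \cap N$ through a diagonal-type subgroup, so one cannot naively split $\{1,\dots,k\}$ into two blocks. The right technical tool is the structure of normal subgroups of a finite direct product of nonabelian free groups: if $M \trianglelefteq P_1 \times \cdots \times P_k$, then for the set $I$ of coordinates $i$ where the projection $M \to P_i$ is \emph{nontrivial} (equivalently where $M \cap P_i \neq 1$, using normality), $M$ contains $\prod_{i \in I}(M \cap P_i)$, a direct product of $|I|$ nonabelian free groups, while $P/M$ retains the factors in the complement $I^c$ and hence has product rank at least $k - |I|$. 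Applying this to $M = P \cap N$ gives $|I| \le \rk_{\pro}(N)$ and $k - |I| \le \rk_{\pro}(Q)$, and adding finishes the proof. Everything else — that noncyclic normal subgroups of nonabelian free groups are themselves nonabelian free, that direct products of these embed where the ambient product embeds — is routine.
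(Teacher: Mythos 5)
Your argument is correct, and it is complete once you reach the final paragraph where you state the structure lemma for normal subgroups of a finite direct product of nonabelian free groups. Note that this paper does not reproduce a proof of the lemma — it cites \cite[Lemma~6.3]{HW} — so there is no in-paper argument to compare against; but your route is the natural one and I see no gaps.

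Two small points worth making explicit, both of which you have implicitly right. First, the equivalence ``projection of $M$ to $P_i$ is nontrivial $\iff$ $M \cap P_i \neq 1$'' does all the work, and it is exactly here that nonabelianness of the $P_i$ is used: given $m \in M$ with $m_i \neq 1$, pick $p \in P_i$ not centralizing $m_i$ (possible since centralizers of nontrivial elements of a nonabelian free group are cyclic, hence proper), and observe that $[p,m]$ is a nontrivial element of $M \cap P_i$ by normality. Second, once you know $M \le \prod_{i \in I} P_i$, the worry you raised in the middle of the write-up — that $\pi$ might fail to be simultaneously injective on $\prod_{j \notin I} P_j$ — evaporates: $\bigl(\prod_{j \notin I} P_j\bigr) \cap M \subseteq \bigl(\prod_{j \notin I} P_j\bigr) \cap \bigl(\prod_{i \in I} P_i\bigr) = 1$, so the ``naive'' splitting of the index set into $I$ and $I^c$ is in fact the clean splitting. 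The rest (nontrivial normal subgroups of a nonabelian free group are nonabelian free; a sub-direct-product embeds where the ambient product embeds) is routine, as you say.
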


We say that two one-edge nonseparating free splittings are \emph{rose compatible} if they admit a common refinement $U$ such that $U/F_N$ is a two-petal rose. The following statement gathers results established in \cite[Theorem~6.1]{HW} and in \cite[Proposition~5.1]{BW}.

\begin{theo}\label{theo:product-rank-out-aut} 
The following hold.
\begin{enumerate}
\item For every $N \geq 2$, the product rank of $\aut(F_N)$ is $2N-3$. For every $N \ge 3$, the product rank of $\Out(F_N)$ is $2N-4$. 
\item Every subgroup of $\ia$ isomorphic to a direct product of $2N-4$ nonabelian free groups fixes a one-edge nonseparating free splitting of $F_N$.
\item Let $G \subseteq \Out(F_N)$ with $\rk_{\pro}(G)=2N-4$. Then any two one-edge free splittings fixed by $G$ are nonseparating and rose compatible. 
\end{enumerate}
\end{theo}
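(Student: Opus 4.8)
The three items combine \cite[Theorem~6.1]{HW} with \cite{BW}; here is the shape of the argument I would give. \emph{Lower bounds.} Fix a free basis $a,b,x_1,\dotsc,x_{N-2}$ of $F_N$. For each $i$ let $L_i$ (resp.\ $R_i$) be the subgroup of $\Aut(F_N)$ consisting of the automorphisms fixing every basis element except $x_i$ and sending $x_i\mapsto wx_i$ (resp.\ $x_i\mapsto x_iw$) with $w\in\langle a,b\rangle$; each is isomorphic to $\langle a,b\rangle\cong F_2$. These $2N-4$ subgroups pairwise commute and generate their direct product, and no nontrivial element of this product is inner (an inner automorphism fixing $a$ and $b$ is the identity), so it survives in $\Out(F_N)$; hence $\rk_{\pro}(\Out(F_N))\ge 2N-4$. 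Adjoining the group of inner automorphisms by elements of $\langle a,b\rangle$ -- a copy of $F_2$ commuting with every $L_i$ and $R_i$ and meeting their product trivially -- yields $(F_2)^{2N-3}\subseteq\Aut(F_N)$, so $\rk_{\pro}(\Aut(F_N))\ge 2N-3$ (for $N=2$ one keeps only the $\mathrm{Inn}(F_2)\cong F_2$ factor).

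\emph{Upper bounds and item (2).} Since $\mathrm{Inn}(F_N)\cong F_N$ has product rank $1$, Lemma~\ref{lemma:rank-product-extension} reduces the $\Aut$ bound to the $\Out$ bound, so I would prove by induction on $N$ -- simultaneously with item (2), the small cases $N\le 3$ for $\Out$ and $N=2$ for $\Aut$ handled directly -- that $\rk_{\pro}(\Out(F_N))\le 2N-4$. Let $G=H_1\times\dotsb\times H_k\subseteq\Out(F_N)$ with each $H_i$ nonabelian free; replacing $G$ by $G\cap\ia$ leaves $\rk_{\pro}(G)$ unchanged, so assume $G\subseteq\ia$ and fix a maximal $G$-invariant free factor system $\calf$. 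If $\calf$ is nonsporadic, Proposition~\ref{prop:unbounded-orbits-in-ff} makes $G$ act on the hyperbolic graph $\FF(F_N,\calf)$ with unbounded orbits; analysing which direct factors can carry a loxodromic element and using that, by Lemma~\ref{lemma:boundary-tree}, a subgroup fixing a point of $\partial_\infty\FF(F_N,\calf)$ virtually fixes the homothety class of an arational tree (whose stabilizer has product rank $\le 1$), one gets $k\le 3<2N-4$. If $\calf$ is sporadic, then $G$ fixes a one-edge free splitting $S$ -- the Bass--Serre tree of $F_N=A\ast B$ (separating case) or of $F_N=C\ast$ with $C$ of corank one (nonseparating case) -- so by Theorem~\ref{theo:ia} $G\subseteq\Stab^0(S)$, and Proposition~\ref{prop:Levitt} places $G$ in the extension $1\to\calt\to\Stab^0(S)\to\oplus_v\Out(G_v)\to 1$ with $\rk_{\pro}(\calt)\le 2$ and vertex groups $G_v$ free, of total rank $N-1$ in the HNN case and decomposing $F_N$ as a nontrivial free product in the amalgam case. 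Lemma~\ref{lemma:rank-product-extension} and the inductive bound then give $\rk_{\pro}(G)\le 2N-4$, and tracking the two cases shows equality can occur only in the HNN case $F_N=C\ast$, i.e.\ $S$ is a one-edge nonseparating free splitting, which proves item (2).

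\emph{Item (3).} Again assume $G\subseteq\ia$. If some one-edge separating free splitting were $G$-fixed, the amalgam estimate above would force $\rk_{\pro}(G)<2N-4$; hence every one-edge free splitting fixed by $G$ is nonseparating. Now let $S\neq S'$ be two such splittings; by Theorem~\ref{theo:ia}, $G\subseteq\Stab^0(S)\cap\Stab^0(S')$. Writing $S$ as the HNN splitting $F_N=A\ast$ and using $1\to\calt_S\to\Stab^0(S)\to\Out(A)\to 1$ with $\rk_{\pro}(\calt_S)=2$, Lemma~\ref{lemma:rank-product-extension} forces the image $\bar G$ of $G$ in $\Out(A)\cong\Out(F_{N-1})$ to have product rank $\ge 2N-6$, hence exactly $2N-6$ by induction. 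The $A$-minimal subtree of the $F_N$-tree $S'$ is a $\bar G$-invariant free splitting of $A$, and it is nontrivial (otherwise $A$ is elliptic in $S'$, and since $A$ and the vertex group of $S'$ both have corank one this forces $S=S'$). By the inductive version of item (3) applied to $\bar G$, every one-edge collapse of this splitting of $A$ is nonseparating, and reassembling such a collapse with the stable letter of $S$ exhibits a common refinement of $S$ and $S'$ whose quotient graph is a two-petal rose, so $S$ and $S'$ are rose compatible.

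\emph{Main obstacle.} The crux is the sporadic step of the induction: Lemma~\ref{lemma:rank-product-extension} is only an inequality, so the real content of items (2) and (3) -- and of the base cases -- is to decide precisely when $\rk_{\pro}(\Stab^0(S))$ attains $2N-4$, which forces the rigid one-edge nonseparating picture. Excluding equality in the amalgam case (which reduces to the single configuration $N=4$, $F_N=F_2\ast F_2$) and, in item (3), controlling which common refinements of $S$ and $S'$ can occur are the two points where genuine work is required.
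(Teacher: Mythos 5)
The paper does not prove this theorem: the sentence preceding it says ``The following statement gathers results established in \cite[Theorem~6.1]{HW} and in \cite{BW},'' so what you are being asked to reconstruct is the content of those references, not an argument in this paper. With that understood, your sketch has the right overall architecture (reduce to $\ia$, pass to the maximal $G$-invariant free factor system, split on sporadic/nonsporadic, induct via Lemma~\ref{lemma:rank-product-extension}, and recognize that the hard part is pinning down when equality occurs), and you correctly name that hard part in your closing paragraph. That said, there are specific inaccuracies worth flagging.

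In the nonsporadic case, the parenthetical ``(whose stabilizer has product rank $\le 1$), one gets $k\le 3$'' does not hold for relative arational trees. When $\calf$ is nontrivial, the $\Out(F_N,\calf)$-stabilizer of the homothety class of an $(F_N,\calf)$-arational tree is generally not virtually cyclic (there is room for automorphisms supported on the elliptic free factors in $\calf$), and the statement one actually uses in this situation is \cite[Lemma~6.5]{HW} as invoked in Proposition~\ref{prop:splitting-stabilized}: that this stabilizer does not contain a direct product of $2N-4$ nonabelian free groups. That is weaker, more delicate, and is itself one of the real ingredients; the shortcut ``$k\le 3$'' is not available as stated. Relatedly, in the sporadic amalgam case the crude Lemma~\ref{lemma:rank-product-extension} bound $\rk_{\pro}(\Stab^0(S))\le\rk_{\pro}(\calt)+\rk_{\pro}(\oplus_v\Out(G_v))$ can reach $2N-4$ (e.g.\ $N=4$ with $F_4=F_2\ast F_2$, where $\Stab^0(S)\cong\Aut(F_2)\times\Aut(F_2)$: the crude bound gives $4$ even though the actual product rank is $2$); so one really cannot conclude from the inequality alone, exactly as you warn. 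Finally, the ``reassembling'' step for item (3) -- taking a one-edge collapse of the $A$-minimal subtree of $S'$ and adjoining the stable letter of $S$ to produce a common two-petal-rose refinement -- is not justified: the corank-one factor obtained that way has no a priori reason to be conjugate to the vertex group of $S'$, and this compatibility statement is precisely where the finer structure theory of \cite{BW} (classifying maximal direct products via groups of twists of roses, rather than product-rank inequalities) does its work. So the plan is sound and the obstacle is correctly identified, but the arguments at both the nonsporadic step and the compatibility step are asserted rather than proved and in places overstated.
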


\section{Strongly rigid graphs and rigidity of subgroups}\label{sec:blueprint}

\emph{In this section we give a general framework to show that a subgroup $\G$ is rigid in a group $G$,  provided that one has what we call a `strongly rigid' action of $G$ on a graph.}
\\

\begin{de}
Let $G$ be a group. A subgroup $\Gamma\subseteq G$ is \emph{rigid in $G$} if every injective homomorphism $\Gamma\to G$ is conjugate to the inclusion of $\Gamma$ into $G$.
\end{de}

The goal of the present section is to present a general criterion that ensures that $\Gamma$ is rigid in $G$ (Proposition~\ref{prop:blueprint} below). We start with a few more definitions.

\begin{de}[Decorated $G$-graph]
Let $G$ be a group. A \emph{$G$-graph} is a graph equipped with an action of $G$ by graph automorphisms. A \emph{decorated $G$-graph} is a pair $(X,\calc)$, where $X$ is a $G$-graph and $\calc$ is a collection of subsets of the edge set of $X$ that are invariant under the $G$-action. We say that a graph map $\theta$ \emph{preserves the decoration} if $\theta(E) \subseteq E$ for every set of edges $E \in \calc$. 
\end{de}

The decoration $\calc$ can be thought of a coloring of the edge set of $X$, although in this generality we allow some edges to have no color, some edges to have multiple colors, and vertices may have many outgoing edges of the same color. 

\begin{de}[Strongly rigid decorated $G$-graph]
Let $G$ be a group. A $G$-graph $X$ is \emph{strongly rigid} if every injective  graph map $\theta : X \to X$ coincides with the action of a unique element of $G$. A decorated $G$-graph $(X,\calc)$ is \emph{strongly rigid} if every injective graph map $\theta : X \to X$ preserving $\calc$ coincides with the action of a unique element of $G$. 
\end{de}

If a $G$-graph $X$ is strongly rigid then $X$ is strongly rigid as a decorated $G$-graph with the trivial decoration $\calc=\emptyset$. 

\begin{remark}\label{rk:faithful} 
The $\out(F_N)$-graphs we consider in this paper are \emph{simple}: they do not have loops or multiple edges between vertices. Maps between simple graphs are determined by where they send vertices. In particular, if $X$ is strongly rigid then $G$ acts faithfully on the vertex set of $X$.
\end{remark}

\begin{prop}\label{prop:blueprint}
Let $G$ be a group, let $\Gamma\subseteq G$ be a subgroup and let $(X,\calc)$ be a simple, strongly rigid decorated $G$-graph. Assume that for every injective homomorphism $f:\Gamma\to G$, there exists an injective graph map $\theta: X \to X$ preserving the decoration and a normal subgroup $\Gamma'\unlhd \Gamma$ such that for every vertex $v \in X$, the following hold: 
\begin{itemize}
\item $f(\Stab_{\Gamma'}(v))\subseteq\Stab_G(\theta(v))$, and
\item if $f(\Stab_{\Gamma'}(v))\subseteq\Stab_G(w)$ then $w = \theta(v)$ (in other words, $\theta(v)$ is unique).
\end{itemize}
Then $\Gamma$ is rigid in $G$.
\end{prop}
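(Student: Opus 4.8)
The plan is to show that an arbitrary injective homomorphism $f:\Gamma\to G$ is conjugate to the inclusion by leveraging the strong rigidity of $(X,\calc)$. First I would take the injective graph map $\theta:X\to X$ furnished by the hypothesis. Since $(X,\calc)$ is strongly rigid and $\theta$ preserves the decoration, there exists a unique element $\Phi\in G$ whose action on $X$ coincides with $\theta$. The candidate for the conjugating element is this $\Phi$; the goal is to prove that $f=\ad_\Phi\circ\iota$, where $\iota:\Gamma\hookrightarrow G$ is the inclusion and $\ad_\Phi(g)=\Phi g\Phi^{-1}$. Equivalently, setting $f'=\ad_{\Phi^{-1}}\circ f$, one wants $f'=\iota$, i.e.\ $f'(\gamma)=\gamma$ for all $\gamma\in\Gamma$.

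The key observation is that $f'$ also satisfies the hypotheses of the proposition, but now with the \emph{identity} graph map: indeed, for every vertex $v$, we have $f(\Stab_{\Gamma'}(v))\subseteq\Stab_G(\theta(v))=\Stab_G(\Phi v)$, so $f'(\Stab_{\Gamma'}(v))=\Phi^{-1}f(\Stab_{\Gamma'}(v))\Phi\subseteq\Phi^{-1}\Stab_G(\Phi v)\Phi=\Stab_G(v)$; and the uniqueness clause transfers likewise, so the graph map associated to $f'$ is forced to be $\id_X$. Now fix $\gamma\in\Gamma$ and an arbitrary vertex $v\in X$. For any $\sigma\in\Stab_{\Gamma'}(v)$, since $\Gamma'$ is normal in $\Gamma$, the element $\gamma\sigma\gamma^{-1}$ lies in $\Gamma'$ and stabilizes $\gamma v$. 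Applying the first bullet of the hypothesis to the vertex $\gamma v$ (whose associated graph map for $f'$ is the identity) gives $f'(\gamma\sigma\gamma^{-1})\in\Stab_G(\gamma v)$. On the other hand $f'(\gamma\sigma\gamma^{-1})=f'(\gamma)f'(\sigma)f'(\gamma)^{-1}$, and $f'(\sigma)\in\Stab_G(v)$, so $f'(\gamma)f'(\sigma)f'(\gamma)^{-1}\in f'(\gamma)\Stab_G(v)f'(\gamma)^{-1}=\Stab_G(f'(\gamma)v)$. Thus $\Stab_G(\gamma v)$ and $\Stab_G(f'(\gamma)v)$ share the subgroup $f'(\gamma)\,\Stab_{\Gamma'}(v)\,f'(\gamma)^{-1}=f'(\Stab_{\Gamma'}(\gamma^{-1}\cdot\gamma v))$ conjugated appropriately; more cleanly, $f'(\Stab_{\Gamma'}(\gamma v))\subseteq\Stab_G(\gamma v)\cap\Stab_G(f'(\gamma)v)$ once one checks $\Stab_{\Gamma'}(\gamma v)=\gamma\Stab_{\Gamma'}(v)\gamma^{-1}$ (which holds because $\Gamma'$ is normal) and uses $f'$ on this stabilizer.

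Consequently the uniqueness bullet of the hypothesis, applied to $f'$ at the vertex $\gamma v$ with $w=f'(\gamma)v$, forces $f'(\gamma)v=\gamma v$. Since $v$ was an arbitrary vertex of $X$, the elements $f'(\gamma)$ and $\gamma$ of $G$ act identically on the vertex set of $X$. Because $(X,\calc)$ is simple and strongly rigid, $G$ acts faithfully on the vertex set of $X$ (Remark~\ref{rk:faithful}), hence $f'(\gamma)=\gamma$. As $\gamma\in\Gamma$ was arbitrary, $f'=\iota$, i.e.\ $f=\ad_\Phi\circ\iota$ is conjugate to the inclusion, so $\Gamma$ is rigid in $G$.

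The main point requiring care — the step I expect to be the only genuine subtlety — is the bookkeeping with $\Stab_{\Gamma'}$ under conjugation: one must use the normality of $\Gamma'$ in $\Gamma$ to ensure that $\gamma\,\Stab_{\Gamma'}(v)\,\gamma^{-1}$ really equals $\Stab_{\Gamma'}(\gamma v)$ (rather than merely being contained in $\Stab_\Gamma(\gamma v)$), so that the hypothesis at the vertex $\gamma v$ can be invoked for the \emph{image under} $f'$ of this whole stabilizer. Everything else is a formal unwinding of the two bullet conditions together with faithfulness of the action on vertices. No control over centralizers, normalizers, or the structure of $\Gamma$ beyond the normality of $\Gamma'$ is needed, which is exactly the point of packaging the argument this way.
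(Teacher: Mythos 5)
Your proof is correct and follows essentially the same argument as the paper's: use strong rigidity to realize $\theta$ by some $\Phi\in G$, conjugate by $\Phi^{-1}$ to reduce to the case $\theta=\id$, then exploit normality of $\Gamma'$ and the uniqueness clause to conclude that $f'(\gamma)$ and $\gamma$ act identically on vertices, and finish by faithfulness. The only (minor) difference is that you spell out more explicitly why $f'$ satisfies the two bullet conditions with the identity graph map, which the paper leaves slightly implicit in its reduction step.
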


\begin{proof}
	Let $f: \Gamma \to G$ be an injective homomorphism and let $\theta$ be the map given by our hypothesis. As $\theta$ is injective and preserves the decoration and $(X,\calc)$ is strongly rigid, there exists $g\in G$ so that for all $v\in X$, one has $\theta(v) = gv$.
	Conjugating $f$ by $g^{-1}$ yields a new map $f' = \ad_{g^{-1}}\circ f$ with the property that for every $v\in X$, one has
	\[ f'(\Stab_{\Gamma'}(v)) \subseteq g^{-1}\Stab_G(\theta(v))g = g^{-1}\Stab_G(gv)g = \Stab_G(v). \]
	Hence, it suffices to show the proposition in the case where $\theta = \id$.	In this case, observe that for every $\gamma\in \Gamma$ and every $v\in X$, we have
	\[ f(\Stab_{\Gamma'}(\gamma v)) \subseteq \Stab_G(\gamma v). \]
	On the other hand, using normality of $\Gamma'$ inside $\Gamma$, we have
	$$f(\Stab_{\Gamma'}(\gamma v))  = f(\gamma\Stab_{\Gamma'}(v)\gamma^{-1}),$$
from which we deduce that	
	\begin{displaymath}
	\begin{array}{rl}
	f(\Stab_{\Gamma'}(\gamma v)) & = f(\gamma)f(\Stab_{\Gamma'}(v))f(\gamma)^{-1} \\
	& \subseteq f(\gamma)\Stab_G(v)f(\gamma)^{-1} \\
	& = \Stab_G(f(\gamma)v).
	\end{array}
	\end{displaymath}
	The uniqueness assumption for $\theta$ therefore ensures that for all $\gamma\in \Gamma$ and all $v\in X$, one has
	\[ \gamma v = f(\gamma)v. \]
	Since $G$ acts on the vertex set of $X$ faithfully (see Remark~\ref{rk:faithful}),
	this implies that $f(\gamma)=\gamma$ for all $\gamma\in \Gamma$, which concludes our proof.
\end{proof}

\section{Strong rigidity of an $\Out(F_N)$-graph}\label{sec:complexes}

\emph{In this section we show that for $N \geq 4$, the edgewise nonseparating free splitting graph is stongly rigid as an $\Out(F_N)$-graph. When $N=3$, we show that a decorated version of the nonseparating free splitting graph is also strongly rigid.}
\\

The \emph{nonseparating free splitting graph}  is the graph $\ns$  whose vertices are the one-edge nonseparating free splittings of $F_N$, where two splittings are joined by an edge if they have a common refinement. Adjacent splittings in $\ns$ are either \emph{rose compatible} (the quotient graph of their common refinement is a two-petal rose), or are \emph{circle compatible} (the quotient graph of their common refinement is a two-edge loop). 

The \emph{edgewise nonseparating free splitting graph}  is the subgraph $\ens$ of $\ns$ with the same vertex set (the one-edge nonseparating free splittings of $F_N$), however we only add an edge between two splittings if they are rose compatible. In \cite{HW}, the last two named authors proved the following statement.

\begin{theo}[{\cite[Theorem~3.5]{HW}}]\label{theo:auto-ens}
For every $N\ge 3$, the natural map $$\Out(F_N)\to\Aut(\ens)$$ is an isomorphism.
\end{theo}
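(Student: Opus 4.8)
The plan is to establish the two halves of the isomorphism $\Out(F_N)\to\Aut(\ens)$ separately: injectivity is elementary, and surjectivity is the substantial point. Injectivity holds because $\Out(F_N)$ acts faithfully on the set of one-edge nonseparating free splittings of $F_N$; this is standard, since distinct outer automorphisms are already distinguished by their action on the conjugacy classes of corank-one free factors (equivalently, one may use the faithful $\Out(F_N)$-action on the free splitting complex, recalling that every proper free factor is an intersection of corank-one free factors). The same remark shows that whenever an element of $\Out(F_N)$ realizes a given automorphism of $\ens$, that element is unique, so it remains only to prove surjectivity.

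So fix $\phi\in\Aut(\ens)$; I would produce a realizing element of $\Out(F_N)$ in three steps. \emph{Step 1: the topological dictionary.} Following Hatcher, one-edge nonseparating free splittings of $F_N$ are isotopy classes of non-separating embedded $2$-spheres in the doubled handlebody $M_N=\#_N(S^1\times S^2)$, and two vertices of $\ens$ are adjacent precisely when the corresponding spheres are disjoint and their union completes to a sphere system with connected complementary piece of two-petal-rose type. This picture is used only to locate the correct statements; the proofs themselves are made rigorous in the marked-graph formalism of Culler--Vogtmann \cite{CV}, in the spirit of Bridson--Vogtmann's analysis of the spine of Outer space \cite{BV}.

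\emph{Step 2: recovering finer structure.} This is the heart of the argument: a careful study of links in $\ens$ showing that its pure graph structure already determines, for any family of pairwise rose-compatible one-edge nonseparating free splittings, whether the family admits a common refinement and, if so, the isomorphism type of the quotient graph of that refinement. In particular one singles out, among all cliques of $\ens$, those that assemble into a single marked $N$-petal rose --- characterized by a prescribed local link pattern distinguishing them from the other maximal cliques --- and more generally recovers when two one-edge splittings lie inside a common larger free splitting and how. Feeding this analysis into $\phi$ produces an incidence-preserving bijection of the vertex set of a complex whose automorphism group is already known to equal $\Out(F_N)$ --- the spine $K_N$ of reduced Outer space \cite{BV}, or the free splitting complex \cite{AS} --- i.e. a simplicial automorphism of that complex.

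\emph{Step 3: conclusion.} By Bridson--Vogtmann (resp. Aramayona--Souto), this simplicial automorphism is induced by an element $\Phi\in\Out(F_N)$; and since the reconstruction carried out in Step~2 is compatible with collapsing every free splitting down to a single non-separating edge, $\Phi$ acts on $\ens$ exactly as $\phi$, which finishes the proof. The main obstacle is squarely Step~2: the graph $\ens$ remembers only rose-compatibility of one-edge nonseparating splittings, so one must reconstruct the finer splittings --- and the way one-edge splittings embed in them --- from very limited data, which forces one to distinguish, for instance, distinct refinements that collapse to the same pair of one-edge splittings, and to pin down exactly which maximal cliques are $N$-roses. The sphere-system viewpoint makes the target statements transparent, but proving each of them as a purely combinatorial fact about $\ens$ is where the real work lies.
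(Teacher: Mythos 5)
This theorem is not proved in the present paper: it is quoted verbatim from \cite[Theorem~3.5]{HW}, so there is no in-paper proof to compare against. That said, your sketch does correctly reconstruct the strategy the cited reference uses: identify one-edge nonseparating free splittings with nonseparating spheres in $M_N=\sharp^N S^1\times S^2$, carry out a combinatorial analysis of links in $\ens$ so as to recognize $N$-petal roses and common refinements purely from the graph structure, upgrade a graph automorphism of $\ens$ to a simplicial automorphism of the spine of reduced Outer space, and then invoke Bridson--Vogtmann's rigidity of the spine (\cite{BV}). Your remark that injectivity follows from faithfulness of the $\Out(F_N)$-action on one-edge nonseparating free splittings is also correct and is the standard way this direction is handled.

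The one thing to flag is that your Step~2 is stated, not proved: you observe that one must recover from $\ens$ alone which cliques are $N$-roses and how one-edge splittings embed in larger free splittings, and correctly identify this as the heart of the matter, but you give no indication of how the link analysis actually goes. This is genuinely the hard part, and is not a formality; the present paper's Section~\ref{sec:complexes} has to redo and \emph{strengthen} exactly this kind of analysis (cagey pairs, the rigid blow-up lemma, ruling out valence-3 vertices) in order to handle injective graph maps rather than automorphisms. So as a proof proposal this is a faithful roadmap of the known argument, but the substance that would make it a proof is exactly the part left as a black box.
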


The goal of the present section is to prove the following theorem.

\begin{theo}\label{theo:graph-maps-ens}
For every $N \geq 4$, every injective graph map of $\ens$ is an automorphism.
\end{theo}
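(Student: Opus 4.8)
The plan is to upgrade the automorphism statement of Theorem~\ref{theo:auto-ens} to injective graph maps by showing that any injective graph map $\theta\colon\ens\to\ens$ is automatically surjective. Since $\ens$ is simple and connected, it suffices to show $\theta$ is \emph{locally surjective}, i.e.\ that for every vertex $S$ the induced map on the link of $S$ is onto the link of $\theta(S)$; then $\theta$ is a covering map onto a connected graph, and since it is also injective it is an isomorphism, hence coincides with an element of $\Out(F_N)$ by Theorem~\ref{theo:auto-ens}. So the whole problem reduces to a careful study of links in $\ens$ and showing that an injective self-embedding cannot ``miss'' part of a link.

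To analyze links, I would first set up a clean combinatorial model for one-edge nonseparating free splittings and their rose-compatible pairs. Two natural languages are available: sphere systems in the doubled handlebody $\#_N(S^1\times S^2)$ (so a one-edge nonseparating splitting is an isotopy class of non-separating embedded sphere, and rose-compatibility means two disjoint spheres whose complement is connected with the right Euler characteristic), and the Culler--Vogtmann normal-form description of splittings via marked graphs. I expect to work primarily with spheres for intuition and translate to the Culler--Vogtmann bookkeeping for rigorous arguments, exactly as the introduction advertises. The key local invariants I would try to pin down are: (a) the structure of $\mathrm{lk}(S)$ as spheres disjoint from a fixed non-separating sphere $s$ and cutting its complement appropriately; (b) small configurations inside a link, such as pairs, triangles, or ``squares'' of pairwise rose-compatible splittings, that can be characterized purely in terms of the incidence pattern in $\ens$ and are therefore preserved by \emph{any} injective graph map; and (c) how such configurations detect the geometry of the complementary piece of $F_N$ (e.g.\ its rank, or whether two spheres together still leave a non-separating sphere on a given side). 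With these combinatorial certificates in hand, the strategy is: given $S$ and a vertex $T\in\mathrm{lk}(\theta(S))$, I want to produce $T'\in\mathrm{lk}(S)$ with $\theta(T')=T$; one builds enough rigid structure around $S$ (a ``frame'' of vertices in $\mathrm{lk}(S)$ whose images are forced) so that the position of any would-be preimage of $T$ is uniquely determined, and then verifies that this candidate vertex indeed lies in $\ens$ and maps to $T$.

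The main obstacle, I expect, is exactly this local surjectivity argument: injectivity alone is weak, and one has to rule out the a priori possibility that $\theta$ maps $\ens$ isomorphically onto a proper ``sparse'' subgraph that still locally looks rich enough. The delicate point is that links in $\ens$ are themselves complicated (they are not joins or anything so tame), so one cannot simply quote a standard link-rigidity lemma; the Bridson--Vogtmann / \cite{HW} arguments for Theorem~\ref{theo:auto-ens} use surjectivity from the start in places, and those steps must be re-examined. Concretely, the crux will be identifying a small, combinatorially-characterized configuration (I would guess something like: a maximal collection of pairwise rose-compatible one-edge splittings refining to a rose, i.e.\ the image in $\ens$ of the set of petals of an $N$-petal rose) that (i) is detected by the graph structure alone, (ii) is sent by $\theta$ to a configuration of the same combinatorial type, and (iii) generates enough of the ambient structure (via the $\Out(F_N)$-action / change-of-marking) to force $\theta$ to agree with a graph automorphism on larger and larger pieces, ultimately everywhere. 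Once local surjectivity is in place, the rest is formal. The rank-$3$ case needs the extra decoration because in $\Out(F_3)$ the relevant complex is too thin for the bare argument to run, so one records circle-compatibility (or an analogous coloring) as part of the structure and repeats the analysis for decoration-preserving injective maps.
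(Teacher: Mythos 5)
Your high-level idea is right -- identify the $N$-petal rose configuration as the key combinatorially-detectable structure and show injective graph maps preserve it -- and you correctly anticipate that the paper works via spheres and the Culler--Vogtmann partition calculus. But the route you propose to finish, ``local surjectivity on links in $\ens$ implies covering map implies isomorphism,'' has a concrete gap: links in $\ens$ are infinite (there are infinitely many one-edge nonseparating splittings rose-compatible with a given one), so the usual argument that local injectivity promotes to local surjectivity does not run on $\ens$ itself. In fact, ``local surjectivity of $\theta$ on $\ens$'' is essentially equivalent to the theorem you are trying to prove, so treating it as a reduction begs the question.

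The paper's escape hatch, which you do not mention, is to pass to an auxiliary \emph{locally finite} graph $K$ of roses: vertices are $N$-element collections of pairwise rose-compatible splittings refining to an $N$-rose, edges record ``fat'' one-edge common refinements. Once you know roses go to roses, $\theta$ induces a map $\theta_K\colon K\to K$, and since $K$ has all links finite and isomorphic, local injectivity of $\theta_K$ does give local surjectivity, hence (with connectivity, coming from Nielsen generation of $\Out(F_N)$) global surjectivity of $\theta_K$. Vertex surjectivity of $\theta$ on $\ens$ then drops out because every one-edge splitting is a petal of some rose. The last step is not a covering-map argument but a short separate check that $\theta$ does not create new adjacencies, again using roses. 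Finally, the technical heart you leave unspecified -- proving that roses are actually sent to roses -- occupies most of the paper's Section~\ref{sec:complexes} and requires two non-trivial devices you would need to supply: a combinatorial characterization of \emph{cagey pairs} (intersecting spheres whose boundary splitting is a cage) which is visibly preserved by injective graph maps, and then the ``rigid blow-up'' lemma plus a pants-piece argument to rule out any trivalent vertex in the image of a rose. Without something in this role, steps (ii) and (iii) of your plan are unsupported.
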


We also establish a variation that will enable us to carry arguments in rank $3$.

\begin{theo}\label{theo:graph-maps-ens-2}
Let $N\ge 3$, and let $f$ be an injective graph map of $\ens$ which extends to a graph map of $\ns$. 

Then $f$ is an automorphism.
\end{theo}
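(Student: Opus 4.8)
The plan is to reduce the general-$N$ statement to the already-established Theorem~\ref{theo:graph-maps-ens} for $N\ge 4$, handling $N=3$ as the genuinely new content. So suppose $f$ is an injective graph map of $\ens$ that extends to a graph map $\bar f$ of $\ns$. The key observation is that the extra structure — compatibility with $\ns$ — records the difference between the two edge relations on the common vertex set: in $\ns$ two splittings can be adjacent either by being rose compatible (the edge survives in $\ens$) or by being circle compatible (it does not). An injective graph map of $\ens$ that extends to $\ns$ therefore cannot collapse ``rose-distance-$2$ via a circle'' configurations in an uncontrolled way, and this is exactly the rigidity we need to pin down links.

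First I would record that $\bar f$, being a graph map of $\ns$ extending the injective map $f$, is itself injective on vertices (the vertex set is the same, and $f$ already is), and that $\bar f$ sends rose-compatible pairs to rose-compatible pairs — this is precisely the statement that $f=\bar f|_{\ens}$ is a graph map of $\ens$. The additional information is that $\bar f$ sends circle-compatible pairs to \emph{adjacent} pairs in $\ns$, i.e.\ to pairs that are either rose or circle compatible. The next step is a local analysis at a vertex $S\in\ens$: I would show that the structure of the link of $S$ in $\ens$, together with which second-neighbours of $S$ are reachable through a circle-compatible intermediate splitting, determines enough of the local picture (in sphere-system terms, a one-edge nonseparating free splitting is a single nonseparating sphere, rose compatibility is disjointness with the complement connected, circle compatibility is disjointness with the complement disconnected) that $\bar f$ must take ``elementary moves'' to ``elementary moves.'' Concretely, I would identify a family of finite subgraphs of $\ns$ — built from a rose and a circle sharing an edge — whose images under $\bar f$ are forced to be isomorphic subgraphs, and use this to show $f$ is surjective on the link of every vertex; local surjectivity plus connectedness of $\ens$ (known from \cite{HW}) then upgrades $f$ to a surjection, hence by Theorem~\ref{theo:auto-ens} to an automorphism.

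For $N\ge 4$ this is of course redundant with Theorem~\ref{theo:graph-maps-ens}, so the content is really $N=3$: I would isolate exactly the place in the proof of Theorem~\ref{theo:graph-maps-ens} where $N\ge 4$ is used — presumably a step where one needs a proper free factor of corank $\ge 2$, or where a link argument needs enough ``room'' in $F_N$ — and show that the assumption that $f$ extends to $\ns$ supplies the missing combinatorial control in rank $3$. The mechanism should be that in rank $3$ the link of a vertex in $\ens$ alone is too small or too symmetric to rule out an injective-but-not-surjective self-map, but remembering the circle-compatibility edges breaks that symmetry. I expect \textbf{this localization-and-repair step to be the main obstacle}: it requires a careful re-reading of the $\ens$ link computation of Section~\ref{sec:complexes} to see precisely which case fails at $N=3$ and to verify that the circle-compatible neighbours provide a genuine obstruction there. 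Once that single case is handled, the rest of the argument — injectivity $\Rightarrow$ local surjectivity $\Rightarrow$ global surjectivity $\Rightarrow$ automorphism via Theorem~\ref{theo:auto-ens} — goes through verbatim as in the $N\ge 4$ proof.
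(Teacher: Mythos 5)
Your plan has the right broad shape -- analyze links, deduce surjectivity, then invoke Theorem~\ref{theo:auto-ens} -- but in its current form it hides rather than fills the main technical content, and the surjectivity step as stated does not go through.

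First, the surjectivity mechanism. You write that ``local surjectivity plus connectedness of $\ens$'' promotes $f$ to a surjection, but $\ens$ is not locally finite: the link of a one-edge nonseparating free splitting is an infinite graph. So the standard ``injective self-map of a finite link is surjective'' trick is not available, and you give no alternative argument for surjectivity onto these infinite links. The paper's actual route is to pass to the auxiliary graph $K$ whose vertices are $N$-roses (maximal cliques refining to an $N$-petal rose), which \emph{is} locally finite, observe that $f$ induces a graph map $f_K$ once one knows roses are sent to roses, and run the local-finiteness argument there. This in turn reduces the whole theorem to the key proposition that $N$-roses are sent to $N$-roses (Proposition~\ref{p:roses_sent_to_roses}), and nothing in your sketch addresses how to prove this. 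This is not a minor ``localization-and-repair'' step; it is where essentially all the combinatorial work (cagey pairs, Lemma~\ref{l:cagey_pairs_preserved}, excluding valence-$3$ vertices via the normal-form argument of Lemma~\ref{lemma:valence-3-excluded}) lives.

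Second, the place where the $N\ge 4$ hypothesis is used is misdiagnosed. It is not about free factors of corank $\ge 2$ or ``room in $F_N$'' in any abstract sense. In the proof that $3$-roses have no valence-$3$ vertex in their image (Proposition~\ref{prop:valence-3-excluded}), one needs auxiliary splittings $\tau_1,\tau_2$ forming cagey pairs with $\sigma_1,\sigma_2$ and \emph{compatible} with the other $\sigma_j$. When $N\ge 4$ these $\tau_i$ can be chosen rose compatible with the $\sigma_j$, so the whole configuration lives in $\ens$ and is preserved by any injective graph map of $\ens$. When $N=3$ there is not enough freedom: the $\tau_i$ are only circle compatible with the $\sigma_j$ (see Figure~\ref{fig:lemma-3-rose}), so one needs $f$ to also respect circle compatibility, i.e.\ to extend to $\ns$. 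That is the precise role of your extra hypothesis. Your sketch correctly guesses that the hypothesis supplies ``missing combinatorial control,'' but you would need to pinpoint this configuration and verify the normal-form/sphere argument of Lemma~\ref{lemma:valence-3-excluded}, not merely anticipate finding it. As written, the proposal is a plan, not a proof: the rose-to-rose step, the exact $N=3$ failure, and a correct surjectivity argument (via a locally finite auxiliary graph, not $\ens$ itself) are all still to be supplied.
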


In the context of the previous section, the above results can be rephrased as follows: 

\begin{corollary}\label{cor:strongly-rigid} 
For every $N\ge 4$, the edgewise nonseparating free splitting graph $\ens$ is strongly rigid with respect to the action of $\Out(F_N)$. 

Let $\calc_r=\{E_r\}$ be the decoration of $\ns$ given by the edges in $\ens$ (i.e. the edges between rose compatible splittings). When $N=3$, the decorated $\Out(F_3)$-graph $(\ns,\calc_r)$ is strongly rigid.  
\end{corollary}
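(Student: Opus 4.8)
The plan is to deduce Corollary~\ref{cor:strongly-rigid} directly from Theorems~\ref{theo:graph-maps-ens} and~\ref{theo:graph-maps-ens-2}, together with the identification of the automorphism group provided by Theorem~\ref{theo:auto-ens}. First, for $N\ge 4$: by Theorem~\ref{theo:graph-maps-ens}, every injective graph map $\theta:\ens\to\ens$ is in fact a graph automorphism. By Theorem~\ref{theo:auto-ens}, the natural map $\Out(F_N)\to\Aut(\ens)$ is an isomorphism, so $\theta$ coincides with the action of some element of $\Out(F_N)$. For uniqueness, note that $\ens$ is a simple graph (no loops, no multiple edges), so a graph automorphism is determined by its action on vertices; and since $\Out(F_N)\to\Aut(\ens)$ is an isomorphism, in particular it is injective, so $\Out(F_N)$ acts faithfully on the vertex set of $\ens$ (cf.\ Remark~\ref{rk:faithful}). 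Hence the element of $\Out(F_N)$ inducing $\theta$ is unique. This is precisely the statement that $\ens$ is strongly rigid as an $\Out(F_N)$-graph.

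For the rank~$3$ case, I would argue in the same way but with the decoration doing the work that Theorem~\ref{theo:graph-maps-ens} cannot do when $N=3$. Let $\theta:\ns\to\ns$ be an injective graph map preserving the decoration $\calc_r=\{E_r\}$, where $E_r$ is the set of edges of $\ns$ between rose compatible splittings. Since $\ns$ and $\ens$ have the same vertex set, $\theta$ is in particular a self-map of the vertex set of $\ens$; and since $\theta(E_r)\subseteq E_r$, whenever two vertices are joined by an edge in $\ens$ their $\theta$-images are again joined by an edge in $\ens$, so $\theta$ restricts to a graph map $\theta|_{\ens}:\ens\to\ens$, which is injective because $\theta$ is injective on vertices. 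Moreover $\theta|_{\ens}$ extends to the graph map $\theta$ of $\ns$ by construction, so Theorem~\ref{theo:graph-maps-ens-2} applies and shows that $\theta|_{\ens}$ is an automorphism of $\ens$. By Theorem~\ref{theo:auto-ens} this automorphism is induced by a (unique, by faithfulness on vertices) element $\Phi\in\Out(F_3)$. It remains to check that $\Phi$, acting on all of $\ns$, agrees with $\theta$: both act on the common vertex set of $\ns$ and $\ens$, and there they agree since $\theta|_{\ens}=\Phi|_{\ens}$ on vertices; since $\ns$ is also simple, its automorphisms are determined by their vertex action, and $\theta$ maps edges of $\ns$ to edges of $\ns$ exactly when $\Phi$ does (the edge set of $\ns$ is $\Out(F_3)$-invariant and determined by the rose/circle compatibility relations, which are $\Out(F_3)$-invariant), so $\theta$ coincides with the action of $\Phi$ on $\ns$. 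Uniqueness of $\Phi$ again follows from faithfulness of the vertex action. Hence $(\ns,\calc_r)$ is strongly rigid.

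The only subtle point — and the step I would be most careful about — is the passage from ``$\theta$ preserves the decoration'' to ``$\theta|_{\ens}$ is a well-defined injective graph map extending to $\ns$'', and the verification that the element $\Phi$ recovered on $\ens$ genuinely induces $\theta$ on the larger graph $\ns$ rather than merely on the subgraph $\ens$. This hinges on two facts: that $\ens$ and $\ns$ share the same vertex set, so that ``determined on vertices'' lets us compare the two maps; and that all the relevant edge relations (rose compatibility, circle compatibility) are $\Out(F_N)$-invariant, so that an element of $\Out(F_N)$ matching $\theta$ on vertices automatically matches it on edges of $\ns$ too. Both facts are already built into the definitions recalled in this section, so no new input is needed; the corollary is genuinely a formal repackaging of the two preceding theorems in the language of Section~\ref{sec:blueprint}. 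I do not expect any real obstacle here beyond bookkeeping.
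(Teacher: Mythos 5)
Your argument is correct and follows the paper's proof essentially verbatim: for $N\ge 4$ combine Theorems~\ref{theo:auto-ens} and~\ref{theo:graph-maps-ens}, and for $N=3$ observe that a decoration-preserving injective graph map of $\ns$ restricts to an injective graph map of $\ens$ extending to $\ns$, so Theorems~\ref{theo:auto-ens} and~\ref{theo:graph-maps-ens-2} apply. The extra care you take in checking that the resulting element of $\Out(F_3)$ induces $\theta$ on all of $\ns$ (not just $\ens$) is a correct but routine consequence of $\ns$ being simple with the same vertex set as $\ens$ (cf.\ Remark~\ref{rk:faithful}), and is left implicit in the paper.
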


\begin{proof}
The first statement follows from Theorems~\ref{theo:auto-ens} and~\ref{theo:graph-maps-ens}. For the second statement, if $f: \ns \to \ns$ is an injective graph map preserving $\calc_r$, then $f$ restricts to an injective graph map of $\ens$. As $N=3$, the result follows by combining Theorems~\ref{theo:auto-ens} and~\ref{theo:graph-maps-ens-2}.
\end{proof}

The proofs of Theorems~\ref{theo:graph-maps-ens} and \ref{theo:graph-maps-ens-2} follow the same strategy. We show that injective graph maps send $N$-petal roses to $N$-petal roses, and then use the local finiteness of the spine of Outer space to prove surjectivity. We will first show that incompatibility is preserved for certain pairs of splittings that we call \emph{cagey pairs}; these are defined and studied in the next section.

\subsection{Cagey pairs are sent to cagey pairs.}

In the rest of this section, we will always assume that $N\ge 3$ is fixed. We fix once and for all an identification of $F_N$ with the fundamental group of a doubled handlebody $M_N:=\sharp^NS^1\times S^2$. Under this identification, one-edge free splittings of $F_N$ are in one-to-one correspondence with homotopy classes of essential embedded $2$-spheres in $M_N$. More generally, if $\Sigma$ is a collection of pairwise disjoint, pairwise nonhomotopic spheres in $M_N$, one obtains a splitting $T_\Sigma$ by taking the dual tree to the lifts $\tilde \Sigma$  of the spheres in the universal cover of $M_N$. The map $\Sigma \mapsto T_\Sigma$ induces a bijection between homotopy classes of sphere systems in $M_N$ and free splittings of $F_N$ (see \cite[Lemma~2]{AS}). In the same way that we consider free splittings equivalent under  equivariant homeomorphism, we usually consider sphere systems in $M_N$ up to homotopy. The vertices in the quotient graph $T/F_N$ correspond to the connected components of $M_N - \Sigma$, and the elements of $\Sigma$ correspond to the edges of $T/F_N$. 

In this section we will use lower case notation such as $\sigma$ and $\tau$ to denote single spheres or one-edge splittings, and upper case notation such as $S$ for arbitrary splittings (equivalently collections of pairwise disjoint spheres). Due to the above correspondence, we shall sometimes blur the distinction between essential spheres in $M_N$ and their associated one-edge splittings of $F_N$. For any sphere $\sigma$ the one-edge free splitting $T_\sigma$ is nonseparating if and only if the manifold $M- \sigma$ is connected.

Two essential embedded $2$-spheres $\sigma$ and $\tau$ in $M_N$ that intersect in a single essential circle have an associated \emph{boundary splitting}. This is obtained by taking the boundary of the regular neighbourhood of the union $\sigma \cup \tau$, or equivalently cutting $\sigma$ and $\tau$ into halves $\sigma_1, \sigma_2$ and $\tau_1, \tau_2$ along the circle of intersection and taking the spheres corresponding to the unions $\sigma_i\cup \tau_j$ for $i,j \in \{1,2\}$. When $\sigma$ and $\tau$ are nonseparating the boundary splitting contains either 3 or 4 distinct spheres, and there are six different possibilities for the splitting. These are depicted in Figure~\ref{fig:boundary-spheres}, which was also given in \cite[Section~3]{HW}.

\begin{figure}
\centering \def\svgwidth{300pt} 
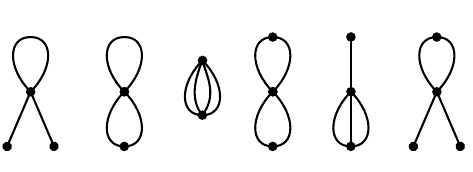
\caption{The six possibilities for the (quotient graph of groups of) the boundary splitting for spheres $\sigma$ and $\tau$ that intersect in a single circle. The splittings $\sigma$ and $\tau$ give refinements at the unlabelled vertex in each case, which has trivial stabilizer.}
\label{fig:boundary-spheres}
\end{figure}

A \emph{cage} is a bipartite free splitting $S$ of $F_N$ with two orbits of vertices such that every edge intersects both vertex orbits. Equivalently, the quotient graph $S/F_N$ is a graph with two vertices and no loop edges. A one-edge separating free splitting is a (slightly degenerate) example of a cage. 

\begin{definition}[Cagey pairs]
A pair $(\sigma, \tau)$ of essential embedded nonseparating $2$-spheres in $M_N$ is \emph{cagey} if $\sigma$ and $\tau$ intersect in a single essential circle, and the boundary splitting of the pair $(\sigma, \tau)$ is a cage (Case 3 in Figure~\ref{fig:boundary-spheres}).
\end{definition}

 The following lemma gives a characterization of cagey pairs given purely in terms of the combinatorics of the graph $\ens$; a direct consequence (Corollary~\ref{cor:cagey-pairs-preserved} below) will be that cagey pairs are sent to cagey pairs under injective graph maps of $\ens$.

\begin{lemma}\label{l:cagey_pairs_preserved}
Let $\sigma$ and $\tau$ be two essential nonseparating embedded $2$-spheres in $M_N$. The pair $(\sigma,\tau)$ is cagey if and only if there exists a clique $\Sigma$ in $\ens$ such that $\Sigma \cup \tau$ and $\Sigma \cup \sigma$ are both (3N-3)-cliques in $\ens$ (in particular, they are maximal). 
\end{lemma}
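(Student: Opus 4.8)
The plan is to translate the topological condition defining a cagey pair into a purely combinatorial statement about cliques in $\ens$, exploiting the correspondence between sphere systems and free splittings. First I would recall what the different boundary-splitting cases look like as graphs of groups (Figure~\ref{fig:boundary-spheres}): in Case 3 (the cagey case), the boundary splitting is a cage with two vertex orbits, one with stabilizer $F_{N-3}$ and one trivial, and $\sigma$ and $\tau$ refine it at the trivial vertex. The key numerical observation is that a one-edge nonseparating free splitting corresponds to a sphere, and a maximal sphere system in $M_N$ has $3N-3$ spheres (the number of edges in a trivalent graph on $2N-2$ vertices with fundamental group $F_N$, equivalently $\mathrm{rk}(F_N) - 1 + |E|$ relations; more concretely, reduced roses correspond to maximal sphere systems of size $3N-3$). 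A clique in $\ens$ of size $3N-3$ is therefore exactly a maximal system of pairwise-disjoint, pairwise rose-compatible nonseparating spheres — equivalently a maximal sphere system whose dual graph is a rose (all vertices of $S/F_N$ have trivial stabilizer, since rose-compatibility is about two-petal roses and disjointness of nonseparating spheres forces the quotient to be a rose when the system is maximal).

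The forward direction: given a cagey pair $(\sigma,\tau)$, I would construct the clique $\Sigma$ explicitly. Cut $M_N$ along $\sigma$ (or equivalently look at the $F_{N-3}$-vertex side together with the appropriate complementary pieces) and choose a maximal sphere system $\Sigma$ in the complement of $\sigma \cup \tau$ that, together with either $\sigma$ or $\tau$, fills out a rose. Because the pair is cagey, the circle of intersection sits so that $\sigma$ and $\tau$ play symmetric roles: the boundary splitting's two dual halves $\sigma_i \cup \tau_j$ together with a maximal system on the $F_{N-3}$ piece give a sphere system in which swapping $\sigma$ for $\tau$ keeps everything pairwise disjoint and the quotient a rose. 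Concretely I would take $\Sigma$ to be a maximal collection extending the three or four boundary spheres minus $\{\sigma,\tau\}$, and verify $\Sigma \cup \sigma$ and $\Sigma \cup \tau$ are each maximal rose-systems, hence $(3N-3)$-cliques in $\ens$. The crucial point is that $\sigma \cup \tau$ itself is \emph{not} a sphere system (they intersect), so the clique $\Sigma$ cannot contain both — but it can be arranged to be "compatible on either side" precisely when the boundary splitting is a cage rather than one of Cases 1,2,4,5,6, where the asymmetry or the vertex-group structure obstructs this.

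The reverse direction: suppose such a $\Sigma$ exists. Then $\sigma$ and $\tau$ each together with $\Sigma$ form maximal rose-systems; in particular $\sigma$ is disjoint from $\tau$ \emph{relative to $\Sigma$} in a controlled way. Since $\Sigma \cup \{\sigma\}$ and $\Sigma \cup \{\tau\}$ are both maximal, and $\Sigma$ has $3N-3$ spheres while a maximal system has $3N-3$ spheres... wait — that would force $\sigma \in \Sigma$. I would instead take $\Sigma$ of size $3N-4$: re-examine the statement — it says $\Sigma$ is a clique and $\Sigma \cup \tau$, $\Sigma \cup \sigma$ are $(3N-3)$-cliques, so $\Sigma$ has $3N-4$ elements and $\sigma,\tau \notin \Sigma$. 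Then $\sigma$ and $\tau$ are two distinct completions of the corank-one system $\Sigma$ to a maximal rose-system. Cutting along $\Sigma$ leaves a single "pair of pants" type piece (a sphere-with-holes complement realizing the last rank), and $\sigma,\tau$ are the two essential spheres in that piece; a direct case analysis of how two essential spheres in such a complementary piece can sit shows they must intersect in a single circle with cagey boundary splitting — the other boundary-splitting cases would either make $\sigma,\tau$ disjoint (contradicting that $\Sigma\cup\sigma\cup\tau$ is not a sphere system, which follows since it would have $3N-2 > 3N-3$ spheres) or produce a complementary piece incompatible with $\Sigma$ extending to a rose.

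\textbf{Main obstacle.} The hard part will be the reverse direction's case analysis: ruling out Cases 1, 2, 4, 5, 6 of Figure~\ref{fig:boundary-spheres} purely from the existence of the clique $\Sigma$. I expect to argue by computing, for each case, the structure of the boundary splitting and showing that no rose refinement can simultaneously refine it "through $\sigma$" and "through $\tau$" with a common corank-one subsystem — this is where the distinction between a cage (two vertices, nontrivial/trivial stabilizers, edges only between orbits) and the other graphs-of-groups really matters. Setting this up cleanly in the Culler–Vogtmann combinatorial language for sphere systems, rather than chasing pictures, is the technical heart; I would isolate a lemma describing exactly which one-edge splittings are rose-compatible with a fixed maximal rose-system minus one sphere, and feed the six cases through it.
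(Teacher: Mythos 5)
Your proposal matches the paper's proof in both directions: the forward implication is the cage blow-up you describe, and for the converse the paper likewise observes that the $3N-4$ splittings of $\Sigma$ determine a graph with a unique valence-$4$ vertex, so that $\sigma$ and $\tau$ are two distinct blow-ups there and therefore cross in a single circle. For the case analysis you flag as the technical heart, the paper resolves it without any further topological work: every sphere of the boundary splitting $S$ of $(\sigma,\tau)$ is disjoint from $\sigma$, $\tau$, and hence from $\Sigma$, so maximality of the clique $\Sigma \cup \sigma$ forces all spheres of $S$ into $\Sigma$; being part of a clique in $\ens$, these are pairwise rose compatible, and inspecting Figure~\ref{fig:boundary-spheres} then eliminates every non-cage type (each has a separating edge or a pair of edges whose union separates). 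This is the clean formulation of the lemma you propose isolating.
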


\begin{proof}
If $(\sigma,\tau)$ is a cagey pair, then one can blow up the boundary splitting of the pair $(\sigma,\tau)$ (which is a cage) to get a system $\Sigma$ such that $\Sigma \cup \tau$ and $\Sigma \cup \sigma$ are both (3N-3)-cliques in $\ens$, see \cite[Figure~3]{HW}. 

Conversely, suppose that such a system $\Sigma$ exists. Then the splitting associated to $\Sigma$ contains  $3N-4$ orbits of edges, and $\sigma$ and $\tau$ correspond to blow-ups of this splitting at its unique valence 4 vertex. This implies that $\sigma$ and $\tau$ intersect in a single essential circle.  Let $S$ be the boundary splitting of $\sigma$ and $\tau$. Then every sphere disjoint from both $\sigma$ and $\tau$ is also disjoint from every sphere in $S$. Maximality of $\sigma \cup \Sigma$ thus implies that every edge of the boundary splitting $S$ is contained in $\Sigma$. As any two one-edge collapses of $S$ are pairwise adjacent in $\ens$, an inspection of the possible types of boundary splitting given in Figure~\ref{fig:boundary-spheres} shows that $S$ has to be a cage (for all other potential boundary splittings, the quotient graph either  contains a separating edge or a pair of edges whose union separates). Hence $(\sigma, \tau)$ is a cagey pair.  
\end{proof}

\begin{cor}\label{cor:cagey-pairs-preserved}
Let $N\ge 3$, and let $f$ be an injective graph map of $\ens$. Let $\sigma,\tau$ be two essential nonseparating embedded $2$-spheres.

If the pair $(\sigma,\tau)$ is cagey, then so is the pair $(f(\sigma),f(\tau))$.
\end{cor}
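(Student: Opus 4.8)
The plan is to read this off directly from the purely combinatorial characterization of cagey pairs supplied by Lemma~\ref{l:cagey_pairs_preserved}; the substantive geometric work has already been done there, and what remains is a short formal argument about graph maps. The only two properties of $f$ we need are that it is a graph map — hence carries edges to edges, and therefore cliques to cliques — and that it is injective, which guarantees that the image of a $k$-clique is again a $k$-clique, with no collapsing of vertices.

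Concretely, I would first apply Lemma~\ref{l:cagey_pairs_preserved} to the cagey pair $(\sigma,\tau)$ to obtain a clique $\Sigma$ in $\ens$ such that both $\Sigma\cup\sigma$ and $\Sigma\cup\tau$ are $(3N-3)$-cliques in $\ens$; note that this forces $\Sigma$ to be a $(3N-4)$-clique not containing $\sigma$ or $\tau$, and $\sigma\neq\tau$. Next I would push everything forward by $f$: since $f$ is a graph map, $f(\Sigma)$ is a clique; and since $f$ is injective, $f$ restricts to a bijection of $\Sigma\cup\sigma$ onto $f(\Sigma)\cup\{f(\sigma)\}$ and of $\Sigma\cup\tau$ onto $f(\Sigma)\cup\{f(\tau)\}$, so these images are genuine $(3N-3)$-cliques, and in particular $f(\sigma),f(\tau)\notin f(\Sigma)$. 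Feeding the clique $f(\Sigma)$ together with the spheres $f(\sigma)$ and $f(\tau)$ into the converse direction of Lemma~\ref{l:cagey_pairs_preserved} then yields that $(f(\sigma),f(\tau))$ is cagey.

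I do not expect a real obstacle: everything of substance is contained in Lemma~\ref{l:cagey_pairs_preserved}, whose proof translates cageyness into the existence of a clique $\Sigma$ with two prescribed $(3N-3)$-clique extensions, using the enumeration of boundary splittings in Figure~\ref{fig:boundary-spheres}. The single point at which injectivity of $f$ (as opposed to merely being a graph map) is essential is the claim that $f(\Sigma)\cup\{f(\sigma)\}$ and $f(\Sigma)\cup\{f(\tau)\}$ still have exactly $3N-3$ vertices; without injectivity a graph map could shrink these cliques and the argument would break, which is precisely why the hypothesis cannot be weakened.
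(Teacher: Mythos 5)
Your argument is correct and is exactly the paper's approach: the paper condenses it into the one-line observation that ``the property of two spheres containing a common clique of size $3N-4$ in their link is preserved under the map $f$,'' and then cites Lemma~\ref{l:cagey_pairs_preserved}, whereas you spell out the same reasoning in full (including the point that injectivity, not just being a graph map, is what keeps the clique sizes from collapsing). No substantive difference.
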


\begin{proof}
The property of two spheres containing a common clique of size $3N-4$ in their link is preserved under the map $f$, so the conclusion follows from Lemma~\ref{l:cagey_pairs_preserved}.
\end{proof}

\subsection{Roses are sent to roses.}

If $f$ is an injective graph map of $\ens$, then a clique $S$ in $\ens$ is sent to another clique $f(S)$. However, it requires work to show that the splitting determined by $S$ has the same quotient graph $S/F_N$ to the splitting given by $f(S)$. A key step in our proof of Theorem~\ref{theo:graph-maps-ens} is to show the following statement.

\begin{prop}\label{p:roses_sent_to_roses}
Let $N\ge 3$, and let $f$ be an injective graph map of $\ens$. Assume that either $N\ge 4$, or else that $f$ extends to an injective graph map of $\ns$. 

Let $S=\{\sigma_1,\dots,\sigma_N\}$ be a collection of one-edge nonseparating free splittings such that the quotient graph of the splitting given by $S$ is an $N$-petal rose.

Then $f(\sigma_1),\dots,f(\sigma_N)$ are compatible and refine to a splitting whose quotient graph is an $N$-petal rose. 
\end{prop}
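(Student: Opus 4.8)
The plan is to work with the sphere picture in $M_N$ and proceed by upgrading compatibility and combinatorial data one step at a time. First I would record that a collection $S=\{\sigma_1,\dots,\sigma_N\}$ whose associated splitting is an $N$-petal rose is precisely a maximal clique in $\ens$ (it has $N$ edges, one orbit of a single valence-$2N$ vertex; any larger clique would force a splitting with more than $N$ edges and all quotient graphs with $N$ nonseparating edges and one vertex are roses). Since $f$ is an injective graph map, $f(S)$ is again a clique of size $N$ in $\ens$, hence the spheres $f(\sigma_1),\dots,f(\sigma_N)$ are pairwise rose-compatible and in particular pairwise disjoint (after isotopy); so they define \emph{some} free splitting $U$ with $U/F_N$ a graph on some number of vertices with $N$ nonseparating edges and no separating edge. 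The whole content is to show $U/F_N$ is a rose, i.e.\ has a single vertex.

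The key tool is Corollary~\ref{cor:cagey-pairs-preserved}: cagey pairs go to cagey pairs. In the rose $S$, for any pair $\sigma_i,\sigma_j$ one can find a third sphere so that $(\sigma_i,\sigma_j)$ sits inside a configuration realising a cagey pair — more precisely, I would identify, among the collections built from $S$ by elementary blow-ups, the cagey pairs of the rose, and check that cageyness of $(f(\sigma_i),f(\sigma_j))$ forces the two corresponding edges of $U/F_N$ to share \emph{both} endpoints (this is exactly the defining feature of Case~3 in Figure~\ref{fig:boundary-spheres}: the boundary splitting being a cage means the two one-edge collapses meet at two common vertices). Iterating this over enough pairs $(i,j)$ propagates the "same two endpoints" relation across all $N$ edges of $U/F_N$, showing $U/F_N$ has at most two vertices. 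This reduces the problem to ruling out that $U/F_N$ is a two-vertex graph — equivalently, a \emph{cage} with $N$ edges — which is where the rank hypotheses enter.

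To exclude the cage case I would use a maximality/count argument together with the two separate hypotheses. If $U/F_N$ were an $N$-edge cage on two vertices $a,b$, then $U$ has two orbits of vertices with free stabilizers of ranks $r_a,r_b$ satisfying $r_a+r_b+(N-1)=N$, so $r_a+r_b=1$: one vertex stabilizer is trivial and the other is $F_1=\mathbb{Z}$ — but a $\mathbb{Z}$ vertex stabilizer cannot occur in a \emph{free} splitting, contradiction. Hence the only obstruction is a cage in which \emph{one} vertex is trivial, i.e. a one-edge separating splitting with an $(N{-}1)$-petal rose blown up — but such a configuration has only $N-1$ edges in $\ens$-cliques attached appropriately, and a careful Euler-characteristic bookkeeping (as in Lemma~\ref{l:cagey_pairs_preserved}) shows it cannot carry $N$ pairwise rose-compatible nonseparating splittings. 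When $N\ge 4$ this count closes immediately; when $N=3$ one genuinely needs the hypothesis that $f$ extends to $\ns$, using circle-compatible edges to detect the extra incidence data that distinguishes a $3$-petal rose from a $3$-edge theta-graph or a $3$-cage, exactly as in the proof of Theorem~\ref{theo:graph-maps-ens-2}. I expect the main obstacle to be this last dichotomy — translating "the clique $f(S)$ cannot be a non-rose graph with $N$ nonseparating edges" into a purely combinatorial statement about links in $\ens$ (and, for $N=3$, in $\ns$), so that it follows formally from cageyness-preservation plus the rank count rather than from ad hoc surface surgery.
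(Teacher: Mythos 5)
Your proposal goes wrong almost immediately, and the errors compound in a way that makes the central difficulty of the proposition invisible.

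First, the opening claim that a rose $S=\{\sigma_1,\dots,\sigma_N\}$ is a \emph{maximal} clique in $\ens$ is false. Maximal cliques in $\ens$ correspond to maximal sphere systems and have $3N-3$ elements (this is built into Lemma~\ref{l:cagey_pairs_preserved}, which speaks of $(3N-3)$-cliques); $N$ is much smaller. A rose is instead a rather small clique, and $f(S)$ could a priori be a size-$N$ clique supported on a graph with many vertices. The proposition has genuine content precisely because cliques of size $N$ need not be roses.

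Second, and more seriously, the Euler-characteristic argument you use to exclude the cage case is wrong, and the cage is a genuine obstruction. You claim that a $\mathbb{Z}$ vertex stabilizer ``cannot occur in a free splitting,'' but the definition (trivial \emph{edge} stabilizers only) allows arbitrary vertex groups. In fact, for every $N\ge 3$ there is a free splitting $U$ whose quotient is an $N$-edge cage on two vertices with vertex groups $\mathbb{Z}$ and $\{1\}$; its $N$ one-edge collapses are all nonseparating, and any two of them are rose compatible (collapsing the remaining $N-2$ edges of the cage is a legitimate equivariant forest collapse and yields a two-petal rose with a $\mathbb{Z}$ vertex group). So such a cage \emph{is} a size-$N$ clique in $\ens$ that is not a rose, and no counting of ranks rules it out. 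This is exactly the configuration that the paper's argument is designed to exclude, and it requires real input: for $N\ge 4$ it is killed by the Rigid Blow-up Lemma~\ref{l:rigid_blow_up} together with Proposition~\ref{prop:higher-valence} (both vertices of the cage have valence $N>3$, contradicting ``at most one high-valence vertex''); for $N=3$, where both vertices have valence exactly $3$, it is killed by Proposition~\ref{prop:valence-3-excluded}, whose proof is a genuine piece of $3$-manifold topology (normal-form sphere systems and a pair-of-pants piece).

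Third, your proposed mechanism for ``propagating the same-two-endpoints relation'' via cagey pairs does not get off the ground: a cagey pair consists of two spheres that \emph{intersect} in one circle, whereas the $\sigma_i$ you want to analyze are pairwise disjoint. Saying that $(\sigma_i,\sigma_j)$ ``sits inside a configuration realising a cagey pair'' does not by itself yield information about the incidences of $f(\sigma_i)$ and $f(\sigma_j)$ in the image graph. The paper does use cagey-pair preservation, but in a different and essential way: Lemma~\ref{l:rigid_blow_up} produces a single auxiliary sphere $\tau$ that is disjoint from all of $S$ yet forms cagey pairs with every member of a blow-up system $\Sigma$; the preservation of these relations forces $f(\Sigma)$ to blow up $f(S)$ at a single vertex. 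That localization step, and the subsequent topological exclusion of trivalent vertices, are the real work here and are missing from your plan.

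In short: the overall goal (reduce to ruling out non-rose cliques) is right, and the instinct to lean on cagey-pair preservation is aligned with the paper, but the proposal's load-bearing steps (maximality of $S$, the Euler-characteristic exclusion of the cage, the endpoint-propagation argument) are incorrect or undefined, and the two substantive ingredients in the actual proof -- the rigid blow-up configuration and the sphere-system normal form argument -- are absent.
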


This is the most involved part of the strong rigidity statement. We will use the language of sphere systems following Hatcher~\cite{Hat}, coupled with the combinatorial methods for describing blow-ups of graphs developed by Culler and Vogtmann~\cite{CV}. The latter is particularly useful for describing examples precisely and studying boundary splittings of intersecting spheres.

\subsubsection{Blow-ups of splittings via ideal edges}\label{sec:blowup}

In this subsection we fix a set $\Sigma=\{x_1, \ldots, x_N\}$ of compatible one-edge free splittings that refine to an $N$-rose. Every free splitting $\tau$ disjoint from $\Sigma$ corresponds to a partition of the set of directions $D=\{x_1^+, x_1^-, \ldots, x_N^+, x_N^-\}$ at the vertex of the rose into two non-empty sets $\calp=P_1 \cup P_2$. If each set $P_i$ in the partition contains at least two elements then the partition is \emph{thick} and $\tau$ gives a one-edge blow-up of the rose. Combinatorially, the blow-up is obtained by attaching the half-edges in $P_1$ to one vertex of the new edge and the half-edges in $P_2$ to the other vertex. In the language of sphere systems, the cut manifold $M_N- \Sigma$ has $2N$ boundary components corresponding to $D$ and $\tau$ is the sphere in $M_N -\Sigma$ that partitions the boundary components according to $\calp$ (such a sphere is unique up to homotopy).  Each petal $x_i$ of the rose is represented by two partitions, one of which contains the singleton set $\{x_i^+\}$, and the other contains the singleton set $\{x_i^-\}$.  In this section, sphere systems are often convenient for visualizing splittings in diagrams whereas the combinatorial description via partitions is useful for precise proofs.

\begin{definition}
An \emph{ideal edge} is a partition $\calp$ of $\{x_1^+, x_1^-, \ldots, x_N^+, x_N^-\}$ separating some pair $\{x_i^+,x_i^-\}$.
\end{definition}

\begin{remark}
In this paper, ideal edges are very convenient as they are exactly the partitions that determine nonseparating splittings (see, for example, \cite[Section~8]{Vog}). Ideal edges in \cite[Section~2]{CV} were required to be thick, whereas \cite{Vog} admitted \emph{trivial ideal edges} corresponding to the petals of the rose. For our purposes we will also consider petals as ideal edges.
\end{remark}

 Following the language of \cite{CV}, two partitions $\calp=\{P_1,P_2\}$ and $\calq=\{Q_1,Q_2\}$ \emph{cross} if $P_i\cap Q_j \neq \emptyset$ for all $i,j \in \{1,2\}$ and are compatible otherwise (i.e. together they give a two-edge splitting). Compatibility is equivalent to a choice of indexing of the sides of $\calp$ and $\calq$ such that $P_1$ and $Q_1$ are disjoint. A maximal blow-up of the rose is given by a set of $2N-3$ distinct, pairwise compatible thick partitions. If all these partitions are ideal edges, then the corresponding graph has no separating edges and determines a maximal clique in $\ns$.

\begin{lemma}[Rose compatibility of ideal edges]\label{l:combinatorial_conditions_1}
Let $\calp=\{P_1,P_2\}$ and $\calq=\{Q_1,Q_2\}$ be two ideal edges of an $N$-rose. 

If $\calp$ and $\calq$ are compatible, and the sides of these partitions are chosen so that $P_1$ and $Q_1$ are disjoint, then $\calp$ and $\calq$ are rose compatible (i.e. they span an edge in $\ens$) if and only if $P_1 \cup Q_1$ separates some pair $\{x_i^+, x_i^-\}$.
\end{lemma}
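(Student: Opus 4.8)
The plan is to unwind both sides of the equivalence into statements about the quotient graph of the common refinement of $\calp$ and $\calq$. Since $\calp$ and $\calq$ are compatible ideal edges, their common refinement is a two-edge free splitting $U$ obtained from the $N$-rose by blowing up the two partitions; with the indexing chosen so that $P_1$ and $Q_1$ are disjoint, the quotient graph $U/F_N$ has three vertices $u,v,w$ along a path $u\,$---$\,v\,$---$\,w$ (edges $e_\calp$ with sides $P_1$ at $u$ and $P_2\setminus(P_2\cap Q_2)=\dots$, and $e_\calq$), where the directions in $P_1\cap Q_2$ sit at $u$, those in $P_1\cap Q_1=\emptyset$ force the middle vertex to carry $P_2\cap Q_2$, and those in $P_2\cap Q_1$ sit at $w$. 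More precisely the three vertex groups are supported by the directions in $P_1$, in $P_2\cap Q_2$, and in $P_2\setminus Q_2$ respectively — I would first write this out carefully (it is exactly the blow-up recipe recalled in Section~\ref{sec:blowup}), so that the combinatorics of the two petals $\{x_i^+,x_i^-\}$ can be read off the graph.

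Next I would recall what ``rose compatible'' means for two vertices of $\ens$: $\calp$ and $\calq$ span an edge of $\ens$ iff they have a common refinement whose quotient graph is a two-petal rose, i.e.\ iff the graph $U/F_N$ above is already (homotopy equivalent to) a two-petal rose after the collapse — equivalently, iff $U/F_N$ has first Betti number two \emph{and} the middle vertex $v$ has nontrivial vertex group, so that collapsing produces a genuine two-petal rose rather than a graph with a valence-one or separating feature. Since $\calp$ and $\calq$ are ideal edges, $U$ is nonseparating in each edge, so the only obstruction is that the middle vertex group could be trivial: $U/F_N$ is a two-petal rose precisely when the middle vertex carries a nontrivial free factor, and it is a ``theta graph'' (three parallel edges, not rose compatible but circle compatible) when the middle vertex group is trivial. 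Thus the lemma reduces to: \emph{the middle vertex group of $U$ is nontrivial iff $P_1\cup Q_1$ separates some pair $\{x_i^+,x_i^-\}$.}

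Finally I would identify the middle vertex group explicitly. In the blow-up, the middle vertex $v$ of $U/F_N$ is exactly the one whose attached directions are those in $P_2\cap Q_2$; so its vertex group is trivial iff $|P_2\cap Q_2|\le 1$, i.e.\ iff no pair $\{x_i^+,x_i^-\}$ is entirely contained in $P_2\cap Q_2$ together with enough room — more carefully, the vertex group is nontrivial iff some petal of the rose is ``used up'' at $v$, which happens iff both $x_i^+$ and $x_i^-$ lie in $P_2\cap Q_2$ for some $i$, equivalently iff some pair $\{x_i^+,x_i^-\}$ is \emph{not} separated by either $\calp$ or $\calq$, equivalently (taking complements, using $P_1\cap Q_1=\emptyset$) iff $P_1\cup Q_1$ does \emph{not} separate that pair. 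I would need to be slightly careful with the direction of this last equivalence: $\{x_i^+,x_i^-\}\subseteq P_2\cap Q_2$ says the pair is on the $P_2$-side of $\calp$ and the $Q_2$-side of $\calq$; its negation for \emph{all} $i$ is what makes $v$ trivial; so $v$ is nontrivial iff \emph{some} pair lies in $P_2\cap Q_2$, and since $P_1\cup Q_1 = (F_N\text{-directions})\setminus(P_2\cap Q_2)$, this is exactly the condition that $P_1\cup Q_1$ separates some pair — one of the two directions of that pair lies outside $P_1\cup Q_1$, and I claim (and would verify) the other must lie inside, since $P_1$ and $Q_1$ are themselves ideal edges each separating some pair, forcing the needed genericity. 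The main obstacle I anticipate is precisely this bookkeeping: correctly matching ``the middle vertex group is nontrivial'' with the partition-theoretic statement, keeping straight which side is which after the indexing normalization $P_1\cap Q_1=\emptyset$, and making sure the degenerate sub-cases (small $N$, several petals landing at $v$) are covered. Once the dictionary between $U/F_N$ and the partitions is written down cleanly, the equivalence is a short combinatorial check.
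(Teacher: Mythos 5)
There is a genuine conceptual gap in your proposal, stemming from a misidentification of what the quotient graph $U/F_N$ of the common refinement looks like, and this propagates into a false combinatorial criterion at the end.

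First, the graph $U/F_N$ is \emph{not} a three-vertex line. What has three vertices $p$---$v$---$q$ is the full blow-up $\Gamma$ of the rose (with $N+2$ edges: the two new edges $e_\calp, e_\calq$ and the $N$ petals $x_i$, all with trivial vertex groups). The quotient graph of the two-edge splitting $U$ is obtained from $\Gamma$ by collapsing each connected component of $\Gamma\setminus\{e_\calp,e_\calq\}$ to a single vertex carrying that component's fundamental group. Because $\calp$ and $\calq$ are ideal (nonseparating) edges, one checks that $\Gamma\setminus\{e_\calp,e_\calq\}$ has either one or two components, never three; so $U/F_N$ is a two-petal rose (one vertex) or a two-edge circle (two vertices). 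In particular, when it is a rose there is no ``middle vertex'' at all, and when it is a circle the middle vertex is precisely the component of $v$, which consists of $v$ together with all petals $x_i$ that are \emph{loops} at $v$. Moreover, the middle vertex group of the circle is then automatically nontrivial (if $v$ were both isolated in $\Gamma\setminus\{e_\calp,e_\calq\}$ and had no loops, one would have $P_2\cap Q_2 =\emptyset$ and hence $\calp=\calq$). So your proposed criterion ``rose compatible iff middle vertex group nontrivial'' is false in both directions.

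Second, the final combinatorial translation is reversed. The condition that some pair $\{x_i^+,x_i^-\}\subseteq P_2\cap Q_2$ says precisely that $P_1\cup Q_1$ \emph{fails} to separate that pair (both directions lie outside $P_1\cup Q_1$); such an $x_i$ is a loop at $v$ in $\Gamma$, not an edge joining $v$ to $p$ or $q$. The genericity argument you sketch to salvage the last step (``the other direction must lie inside $P_1\cup Q_1$'') does not hold. Concrete counterexamples: with $N=4$, take $P_1=\{x_1^+,x_4^+\}$ and $Q_1=\{x_1^-,x_4^-\}$; then $P_2\cap Q_2=\{x_2^\pm,x_3^\pm\}$ contains two full pairs (middle vertex group nontrivial in your sense), yet $P_1\cup Q_1$ separates no pair, so by the lemma the pair is circle compatible. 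Conversely, with $N=3$, take $P_1=\{x_1^+,x_3^-\}$ and $Q_1=\{x_2^+,x_3^+\}$; then $P_2\cap Q_2=\{x_1^-,x_2^-\}$ contains no full pair (middle vertex trivial in your sense), yet $P_1\cup Q_1$ separates $\{x_1^+,x_1^-\}$, so the pair is rose compatible.

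The paper's actual argument runs as follows, and you may want to align with it: in $\Gamma$, the tree $T$ formed by $e_\calp,e_\calq$ is a maximal tree; the splitting $U$ has quotient a rose iff $\Gamma\setminus T$ is connected, iff some petal $x_i$ joins $v$ to $p$ or $q$, iff some $x_i$ has one direction in $P_2\cap Q_2$ and the other in $P_1\cup Q_1$ --- which is precisely the statement that $P_1\cup Q_1$ separates the pair $\{x_i^+,x_i^-\}$. So the correct criterion is about petals \emph{crossing} from $v$ to the outer vertices, not about loops concentrated at $v$.
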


\begin{proof}
We first look at the most general case, which is when neither $\mathcal{P}$ or $\mathcal{Q}$ is a petal of the rose. Then the  joint blow-up $\Gamma$ of the rose by $\calp$ and $\calq$ has a maximal tree $T$ given by $\calp$ and $\calq$, which is just a two-edge line. We label the endpoints of this line $p$ and $q$ and label the central vertex $v$.  If the partitions are chosen so that $P_1$ and $Q_1$ are disjoint, then one finds the full blow-up $\Gamma$ by attaching the edges $x_1, \ldots, x_N$ to $T$ like so; we attach the half-edges in $P_1$  to the point $p$, the half-edges in $Q_1$ to the point $q$, and the remaining half-edges to the central vertex $v$. As $\calp$ is an ideal edge it separates some $\{x_i^+,x_i^-\}$, so there is an edge $x_i$  from $p$ to one of the other vertices in $T$. Similarly, as $\calq$ is an ideal edge there is an edge $x_j$ from $q$ to either $p$ or $v$. If we look at the different cases for how these edges can attach, the only time we attain a two-edge loop after collapsing all the $x_i$'s is when no outgoing edge from $p$ or $q$ is attached to $v$. This happens exactly when no pair $\{x_i^+,x_i^-\}$ is separated by $P_1 \cup Q_1$, otherwise $\calp$ and $\calq$ are rose compatible. If both $\mathcal{P}$ and  $\mathcal{Q}$ are petals of the rose then they are clearly rose compatible. Finally, we may assume that $\mathcal{P}$ is a petal and $\mathcal{Q}$ is a thick ideal edge. Then $\mathcal{P}$ and $\mathcal{Q}$ are rose-compatible unless the blow-up of the rose by $\mathcal{Q}$ has the ends of $\mathcal{P}$ on either side and does not split any of the ends of the other loops. Without loss of generality, if $P_1=\{x_1^+\}$, then this happens if and only if $Q_1$ consists of $x_1^-$ and a collection of $\{x_j^+, x_j^{-}\}$ pairs, which happens if and only if $P_1 \cup Q_1$ does not separate any pair $\{x_i^+, x_i^-\}$.
\end{proof}

If $\calp$ and $\calq$ cross, then their corresponding spheres intersect in a single circle, and the boundary splitting associated to these spheres is given by the sets of the form $P_i\cap Q_j$ (see Figure~\ref{f:boundary}).

\begin{figure}[ht]  \centering \def\svgwidth{200pt} 
\begingroup%
  \makeatletter%
  \providecommand\color[2][]{%
    \errmessage{(Inkscape) Color is used for the text in Inkscape, but the package 'color.sty' is not loaded}%
    \renewcommand\color[2][]{}%
  }%
  \providecommand\transparent[1]{%
    \errmessage{(Inkscape) Transparency is used (non-zero) for the text in Inkscape, but the package 'transparent.sty' is not loaded}%
    \renewcommand\transparent[1]{}%
  }%
  \providecommand\rotatebox[2]{#2}%
  \newcommand*\fsize{\dimexpr\f@size pt\relax}%
  \newcommand*\lineheight[1]{\fontsize{\fsize}{#1\fsize}\selectfont}%
  \ifx\svgwidth\undefined%
    \setlength{\unitlength}{145.27962392bp}%
    \ifx\svgscale\undefined%
      \relax%
    \else%
      \setlength{\unitlength}{\unitlength * \real{\svgscale}}%
    \fi%
  \else%
    \setlength{\unitlength}{\svgwidth}%
  \fi%
  \global\let\svgwidth\undefined%
  \global\let\svgscale\undefined%
  \makeatother%
  \begin{picture}(1,0.59586232)%
    \lineheight{1}%
    \setlength\tabcolsep{0pt}%
    \put(0,0){\includegraphics[width=\unitlength,page=1]{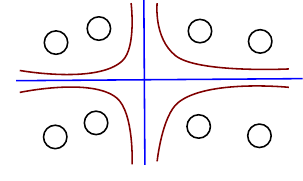}}%
    \put(-0.00214262,0.32928802){\color[rgb]{0,0,0}\makebox(0,0)[lt]{\lineheight{1.25}\smash{\begin{tabular}[t]{l}$\calp$\end{tabular}}}}%
    \put(0.47530042,0.00536915){\color[rgb]{0,0,0}\makebox(0,0)[lt]{\lineheight{1.25}\smash{\begin{tabular}[t]{l}$\calq$\end{tabular}}}}%
    \put(0.00966691,0.47775087){\color[rgb]{0,0,0}\makebox(0,0)[lt]{\lineheight{1.25}\smash{\begin{tabular}[t]{l}$P_1$\end{tabular}}}}%
    \put(0.01978938,0.15551906){\color[rgb]{0,0,0}\makebox(0,0)[lt]{\lineheight{1.25}\smash{\begin{tabular}[t]{l}$P_2$\end{tabular}}}}%
    \put(0.22223879,0.01211751){\color[rgb]{0,0,0}\makebox(0,0)[lt]{\lineheight{1.25}\smash{\begin{tabular}[t]{l}$Q_1$\end{tabular}}}}%
    \put(0.74354583,0.02055279){\color[rgb]{0,0,0}\makebox(0,0)[lt]{\lineheight{1.25}\smash{\begin{tabular}[t]{l}$Q_2$\end{tabular}}}}%
    \put(0,0){\includegraphics[width=\unitlength,page=2]{boundary.pdf}}%
  \end{picture}%
\endgroup%
 \caption{Crossing ideal edges $\calp$ and $\calq$ are given as blue lines, with the associated boundary spheres shown in red.} 
\label{f:boundary} 
\end{figure}

If $\calp$ and $\calq$ are nonseparating then this boundary splitting is cagey if and only if every pair of spheres in the boundary splitting span an edge in $\ens$ (this follows from the description of the possible boundary splittings given in Figure~\ref{fig:boundary-spheres}).

\begin{lemma}[A combinatorial description of cagey pairs] \label{l:combinatorial_conditions_2}
Suppose $\calp$ and $\calq$ are ideal edges in an $N$-rose that cross. Then $\calp$ and $\calq$ form a cagey pair if and only if 
\begin{itemize}\item the set $P_i \cap Q_j$ determines an ideal edge for all $i,j$ and,
 \item for each pair $K_{i,j}=P_i \cap Q_j$ and $K_{k,l}=P_k \cap Q_l$ with $(i,j) \neq (k,l)$, the set $K_{i,j} \cup K_{k,l}$ separates some pair of the form $\{x_m^+, x_m^-\}$.  
\end{itemize}

\end{lemma}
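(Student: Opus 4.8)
The plan is to translate the statement entirely into the combinatorics of partitions and then invoke Lemma~\ref{l:combinatorial_conditions_1} together with the characterization recalled just before the statement: for a crossing pair of nonseparating spheres, the boundary splitting is a cage if and only if every pair of its spheres spans an edge of $\ens$. Write $K_{ij}=P_i\cap Q_j$ and let $\overline{K_{ij}}$ be its complement in the set $D$ of $2N$ directions; since $\calp$ and $\calq$ cross, every $K_{ij}$ is nonempty, and $K_{ij}\cap K_{kl}=\emptyset$ whenever $(i,j)\ne(k,l)$, because $P_1\cap P_2=\emptyset$ and $Q_1\cap Q_2=\emptyset$. As recalled above, the boundary splitting $S$ of $(\calp,\calq)$ consists exactly of the spheres $s_{ij}$ associated to the partitions $\{K_{ij},\overline{K_{ij}}\}$. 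First I would record the dictionary: by the identification of ideal edges with nonseparating splittings, $s_{ij}$ is a vertex of $\ens$ (possibly one of the petals $x_m$) if and only if $K_{ij}$ separates some pair $\{x_m^+,x_m^-\}$; and when $s_{ij}$ and $s_{kl}$ are both vertices of $\ens$ with $(i,j)\ne(k,l)$, they are disjoint spheres of $S$, hence compatible, with $K_{ij}$ and $K_{kl}$ as disjoint sides, so Lemma~\ref{l:combinatorial_conditions_1} applies and says that $s_{ij}$ and $s_{kl}$ span an edge of $\ens$ if and only if $K_{ij}\cup K_{kl}$ separates some pair. Thus the first displayed condition says precisely that all four $s_{ij}$ are vertices of $\ens$, and the second is exactly the corresponding pairwise rose-compatibility condition.

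Granting this dictionary, both implications fall out of the recalled characterization. For the reverse implication I would assume both conditions: the first makes every $s_{ij}$ a vertex of $\ens$, and the second forces the four $s_{ij}$ to be pairwise distinct --- a coincidence $s_{ij}=s_{kl}$ with $(i,j)\ne(k,l)$ would force $\{K_{ij},\overline{K_{ij}}\}$ and $\{K_{kl},\overline{K_{kl}}\}$ to be the two degenerate partitions of a common petal $x_m$ (indeed $|\overline{K_{ij}}|\ge 2N-3\ge 3$, so $\overline{K_{ij}}$ is not a singleton, forcing $K_{ij}=\{x_m^{+}\}$ and $K_{kl}=\{x_m^{-}\}$), whence $K_{ij}\cup K_{kl}=\{x_m^+,x_m^-\}$ separates no pair, contradicting the second condition. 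Lemma~\ref{l:combinatorial_conditions_1} then gives that the four distinct spheres of $S$ pairwise span edges of $\ens$, so $S$ is a cage and $(\calp,\calq)$ is cagey. Conversely, if $(\calp,\calq)$ is cagey then $S$ is the cage of Case~3 in Figure~\ref{fig:boundary-spheres}; its four one-edge collapses are distinct, nonseparating, and pairwise rose-compatible, so the $s_{ij}$ are distinct vertices of $\ens$ (giving the first condition) and, applying Lemma~\ref{l:combinatorial_conditions_1} to each pair, every $K_{ij}\cup K_{kl}$ separates a pair (giving the second).

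The genuinely delicate point --- and the place where I expect the only real difficulty --- is the bookkeeping around degenerate boundary spheres: some $K_{ij}$ may be a single direction, so that $s_{ij}$ collapses onto one of the petals $x_m$, and two distinct partitions $\{K_{ij},\overline{K_{ij}}\}$ may represent the same one-edge splitting. This is why the first condition is phrased in terms of ideal edges (which by convention include the petals) rather than thick partitions, and why one must verify, as above, that the second condition already rules out two of the $s_{ij}$ coinciding. Keeping the combinatorics of partitions aligned with the geometry of the (possibly degenerate) boundary splitting is the only subtlety; everything else is a direct combination of Lemma~\ref{l:combinatorial_conditions_1} with the enumeration of boundary splittings in Figure~\ref{fig:boundary-spheres}.
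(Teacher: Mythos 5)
Your proof is correct and follows essentially the same path as the paper's: translate caginess into ``all boundary spheres are nonseparating and pairwise span an edge of $\ens$,'' then apply Lemma~\ref{l:combinatorial_conditions_1} to identify that with the two displayed combinatorial conditions. Your extra bookkeeping about possible coincidences $s_{ij}=s_{kl}$ (which the paper's proof leaves implicit) is a genuine point and is handled correctly, though the intermediate bound should read $|\overline{K_{ij}}|\ge 3$ (being a union of the three remaining nonempty parts) rather than $\ge 2N-3$; the conclusion that $\overline{K_{ij}}$ cannot be a singleton still follows.
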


\begin{proof}
The condition that $\calp$ and $\calq$ form a cagey pair is equivalent to the set of spheres in the boundary splitting forming a clique in $\ens$. Each boundary sphere is nonseparating if and only if each set $K_{i,j}=P_i \cap Q_j$ determines an ideal edge. Furthermore, by Lemma~\ref{l:combinatorial_conditions_1}, these splittings form a clique in $\ens$ if and only if for each choice $(i,j)\neq (k,l)$ the union $K_{i,j} \cup K_{k,l}$ separates some pair $\{x_m^+,x_m^-\}$. 
\end{proof}

\begin{lemma}[Rigid blow-up lemma] \label{l:rigid_blow_up} 
Let $S=\{\sigma_1, \ldots, \sigma_N\}$ form an $N$-rose. There exists a sphere system $\Sigma$ and a sphere $\tau$ such that \begin{itemize}
\item $S \cup \Sigma$ is a maximal clique in $\ens$ (consisting of $3N-3$ spheres).
\item The sphere $\tau$ is adjacent to every element of $S$ in $\ens$ and forms a cagey pair with every element of $\Sigma$.
\end{itemize}
\end{lemma}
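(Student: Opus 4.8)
The plan is to reduce to a single example and then work entirely in the combinatorial model of Section~\ref{sec:blowup}. Since the conclusion depends only on the $N$-rose $S$, and $\Out(F_N)$ acts transitively on $N$-roses (classically, it acts transitively on free bases of $F_N$ up to permutation, inversion and conjugacy) by automorphisms of $\ens$ that preserve rose compatibility, cliques and cagey pairs, it suffices to exhibit one suitable triple $(S,\Sigma,\tau)$. So I fix the rose with directions $D=\{x_1^+,x_1^-,\dots,x_N^+,x_N^-\}$, let $\sigma_i$ be the petal $x_i$ (represented by the partition with one side $\{x_i^+\}$), and produce $\Sigma$ and $\tau$ as explicit ideal edges, checking the two bullets against Lemmas~\ref{l:combinatorial_conditions_1} and~\ref{l:combinatorial_conditions_2}.

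For the first bullet I would take $S\cup\Sigma$ to be the set of one-edge collapses of a $3$-edge-connected trivalent graph $G$ of rank $N$ (concretely, the Möbius ladder on $2N-2$ vertices, i.e.\ the $(2N-2)$-cycle together with its $N-1$ main diagonals; for $N=3$ this is $K_4$ and for $N=4$ it is $K_{3,3}$). Its $3N-3$ one-edge collapses are then nonseparating, and by Lemma~\ref{l:combinatorial_conditions_1} two of them are rose compatible (rather than merely circle compatible) precisely when deleting the corresponding two edges keeps $G$ connected, which holds here; hence $S\cup\Sigma$ is a maximal $(3N-3)$-clique in $\ens$. Choosing the spanning tree of $G$ obtained by deleting one edge of the defining cycle, the $N$ non-tree edges are the petals (so they are the fixed rose $S$) and the $2N-3$ tree edges are $\Sigma$. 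A direct translation identifies these tree edges as the ideal edges with one side $\{x_1^+,\dots,x_k^+\}$ for $2\le k\le N$, together with the ideal edges with one side $\{x_1^+,\dots,x_N^+\}\cup\{x_2^-,\dots,x_\ell^-\}$ for $2\le\ell\le N-1$.

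For the second bullet I would take $\tau$ to be the ideal edge with one side $T_1=\{x_1^+,x_1^-,x_N^+\}\cup\{x_2^-,\dots,x_{N-1}^-\}$; equivalently $\tau$ separates every pair $\{x_m^+,x_m^-\}$ except $\{x_1^+,x_1^-\}$, placing $x_N^-$ and $x_2^+,\dots,x_{N-1}^+$ opposite to $x_1^\pm$. Rose compatibility of $\tau$ with each petal $\sigma_i$ follows from Lemma~\ref{l:combinatorial_conditions_1}: a short computation with that lemma shows $\tau$ and $\sigma_i$ fail to be rose compatible only if $\{x_i^+,x_i^-\}$ is the unique pair separated by $\tau$, whereas $\tau$ separates $N-1\ge 2$ pairs. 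That $\tau$ forms a cagey pair with each tree edge $\calp=\{P_1,P_2\}\in\Sigma$ is checked via Lemma~\ref{l:combinatorial_conditions_2}: among the six pairwise unions of the four sets $P_i\cap T_j$, the four ``row'' and ``column'' unions are just $P_1,P_2,T_1,T_2$, each of which separates a pair because $\calp$ and $\tau$ are ideal edges, while the two ``diagonal'' unions are complementary and a one-line computation shows they separate a pair if and only if $\calp$ and $\tau$ do not separate exactly the same set of pairs — which holds since every tree edge separates $\{x_1^+,x_1^-\}$ and $\tau$ does not. It then remains to verify that each of the $4(2N-3)$ sets $P_i\cap T_j$ is a nonempty ideal edge; this is a finite inspection over the $2N-3$ tree edges, made easy by the fact that for every tree edge one of its two sides consists only of ``$+$''-directions or only of ``$-$''-directions, so the two corresponding quadrants automatically separate a pair once nonempty, and a short case check disposes of the other two.

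The crux — and the only genuinely delicate point — is this last verification: one must arrange $G$, the spanning tree and $\tau$ so that, simultaneously for all $2N-3$ tree edges, $\tau$ crosses the tree edge and all four quadrant sets are ideal edges. For the Möbius ladder and spanning tree above, $\tau$ crossing every tree edge turns out to be essentially equivalent to $x_1^+,x_2^+$ lying on opposite sides of $\tau$ and $x_1^-,x_N^-$ lying on opposite sides, and the symmetric choice of $\tau$ displayed above is tailored so that this and the quadrant conditions hold uniformly. Everything else — transitivity on $N$-roses, the $3$-edge-connectivity of Möbius ladders, and the row/column/diagonal bookkeeping — is routine, and I would carry out the cases $N=3$ and $N=4$ explicitly as a sanity check.
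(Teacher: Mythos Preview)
Your approach is correct and follows the same overall strategy as the paper's: build $\Sigma$ as the tree edges of an explicit $3$-edge-connected trivalent blow-up of the rose, choose $\tau$ as an explicit ideal edge, and verify the cagey-pair conditions via Lemmas~\ref{l:combinatorial_conditions_1} and~\ref{l:combinatorial_conditions_2}. The specific choices, however, differ. The paper's $\Sigma$ consists of the size-two partitions $\{x_k^-,x_{k+1}^+\}$ (cyclically) together with certain longer partitions $\calq^k$, and its $\tau$ is the very simple partition $\{x_1^-,\dots,x_N^-\}\mid\{x_1^+,\dots,x_N^+\}$; with that $\tau$ every quadrant $P_i\cap T_j$ consists of directions of a single sign, so the ``each quadrant is an ideal edge'' check reduces to nonemptiness. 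Your M\"obius-ladder $\Sigma$ has each tree edge with one side consisting only of $+$-directions (or only of $-$-directions), so the paper's $\tau$ would not even cross them---hence your more elaborate $\tau$. What you gain in exchange is the clean XOR observation: the diagonal union $(P_1\cap T_1)\cup(P_2\cap T_2)$ separates $\{x_m^+,x_m^-\}$ iff exactly one of $\calp,\tau$ does, so both diagonal conditions in Lemma~\ref{l:combinatorial_conditions_2} hold uniformly because every tree edge separates $\{x_1^+,x_1^-\}$ while your $\tau$ does not. Both proofs leave the residual quadrant checks as a short case inspection.
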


\begin{figure}[ht]  \centering \def\svgwidth{300pt} 
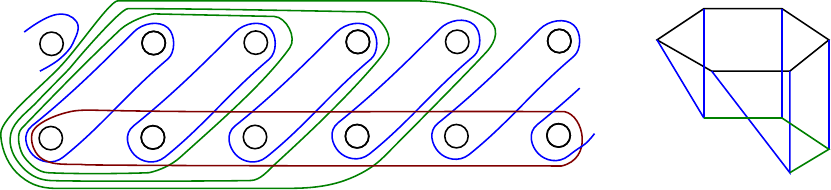 \caption{On the left is a description of the blow-up used in Lemma~\ref{l:rigid_blow_up} when $N=6$ in terms of spheres. The 12 black circles correspond to each side of the six spheres forming a rose, with $x_i^+$ and $x_i^-$ pictured vertically. The blue spheres correspond to the partitions $\calp^i$ and the green spheres correspond to the partitions $\calq^i$. The sphere $\tau$ is depicted in red. On the right is the graph given by the blow-up $S \cup \Sigma$.} 
\label{f:blow_up} 
\end{figure}

\begin{proof}
Let $D=\{x_1^-,x_1^+,\ldots, x_N^-, x_N^+\}$ be the set of directions at the unique vertex in the $N$-rose. We define partitions $\calp^k$ with $P^k_1=\{x_k^-, x^+_{k+1}\}$ and $P^k_2=D-P^k_1$ up to cyclic ordering of the petals, so that $P^N_1=\{x_N^-,x_1^{+}\}$. We may also define partitions $\mathcal{Q}^k$ by the sets \[ Q_1^k=\{x_1^-,x_2^+,x_2^-,\ldots, x_k^+,x_k^-,x_{k+1}^+ \} \] and $Q_2^k=D-Q_1^k$ for $k=2, \ldots, N-2$. One can show (through, e.g. Lemma~\ref{l:combinatorial_conditions_1}) that the partitions $\calp^1,\ldots, \calp^N$ and $\calq^2,\ldots, \calq^{N-2}$ are ideal edges and are pairwise rose compatible. We let $\Sigma$ be the collection of these splittings. In the case when $N=6$ the blow-up $S\cup \Sigma$ is depicted in Figure~\ref{f:blow_up}. Let $\tau$ be the splitting determined by the partition \[ \tau=\{T_1=\{x_1^-,\ldots, x_N^-\}, T_2=\{x_1^+,\ldots, x_N^+\} \}.\] Then $\tau$ is nonseparating (so determines an ideal edge) and adjacent to, or equivalently rose compatible with, each sphere in $S$ in $\ens$. Furthermore, $\tau$ forms a cagey pair with each $\calp^k$ and each $\calq^k$. We give more details for an ideal edge of the form $\calq^k$ and leave the proof for edges of the form $\calp^k$ to the reader. In this situation, we have: \begin{align*} T_1 \cap Q_1^k &=\{ x_1^-, \ldots, x_k^-\} \\ T_1 \cap Q_2^k &= \{x_{k+1}^-, \ldots, x_N^- \} \\ T_2 \cap Q_1^k &= \{ x_2^+,\ldots, x_{k+1}^+ \} \\ T_2 \cap Q_2^k &=\{x_1^+, x_{k+2}^+,\ldots, x_N^+ \} \end{align*} All of these sets determine ideal edges, and the union of any two of these sets separates either $\{x_1^+,x_1^-\}$ or $\{x_N^+,x_N^-\}$. By Lemma~\ref{l:combinatorial_conditions_2}, the boundary splitting of $\tau$ and $\calq^k$ (determined by these four sets) forms a clique in $\ens$, so that $\tau$ and $\calq^k$ form a cagey pair.
\end{proof}

The rigid blow-up lemma is the first step towards showing that roses are sent to roses:

\begin{proposition}\label{prop:higher-valence}
Let $N\ge 3$, and let $f$ be an injective graph map of $\ens$. 

Let $S=\{\sigma_1,\dots,\sigma_N\}$ be a collection of pairwise compatible one-edge nonseparating free splittings such that the quotient graph of the splitting given by $S$ is an $N$-petal rose.

Then $f(\sigma_1),\dots,f(\sigma_N)$ are compatible and refine to a splitting whose quotient graph contains at most one vertex of valence greater than three. 
\end{proposition}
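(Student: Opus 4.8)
The plan is to run the Rigid blow-up Lemma~\ref{l:rigid_blow_up} through $f$, translate the output into the language of sphere systems, and then read off the valence bound from the complementary pieces of the resulting \emph{maximal} sphere system. First I would apply Lemma~\ref{l:rigid_blow_up} to $S$ to obtain a sphere system $\Sigma$ (of size $2N-3$) and a sphere $\tau$ such that $S\cup\Sigma$ is a maximal clique of $\ens$ on $3N-3$ vertices, $\tau$ is rose compatible with every element of $S$, and $(\tau,\rho)$ is a cagey pair for every $\rho\in\Sigma$. Since $f$ is an injective graph map it carries cliques to cliques of the same cardinality, so $f(S)\cup f(\Sigma)$ is a clique of $\ens$ on $3N-3$ vertices; recalling that every clique of $\ens$ has at most $3N-3$ vertices (with equality for maximal cliques; see \cite{CV,HW}), this image is again a maximal clique, hence corresponds to a \emph{maximal} sphere system $\mathcal{M}$ in $M_N$. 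By \cite{Hat}, every complementary piece of a maximal sphere system in $M_N$ is a $3$-holed sphere --- a copy of $S^3$ with three disjoint open balls removed --- so it has exactly three boundary spheres and trivial fundamental group. Moreover $f(\tau)$ is compatible with every element of $f(S)$, while by Corollary~\ref{cor:cagey-pairs-preserved} the pair $(f(\tau),f(\rho))$ is cagey, and so by definition $f(\tau)$ and $f(\rho)$ are incompatible, for every $\rho\in\Sigma$.

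Next I would work in the complement of $f(S)$ alone. As $f(S)$ is a clique of $\ens$, its $N$ members are pairwise compatible, hence admit a common refinement, which under the sphere-system dictionary (\cite[Lemma~2]{AS}) is a free splitting with $N$ orbits of edges; let $G$ be its quotient graph. This already gives the ``compatible'' half of the statement, and the components of $M_N\setminus f(S)$ are indexed by the vertices $w$ of $G$, the boundary spheres of the piece $M_w$ corresponding to the half-edges of $G$ at $w$. Now $f(\tau)$, being essential in $M_N$ and (after isotopy) disjoint from $f(S)$, lies in the interior of a unique piece $M_v$ and is not boundary-parallel there. Likewise, for each $\rho\in\Sigma$ the sphere $f(\rho)$ is disjoint from $f(S)$ (as $S\cup\Sigma$ is a clique) and so lies in some piece $M_{v_\rho}$. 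If $v_\rho\neq v$ then $f(\tau)$ and $f(\rho)$ may be isotoped into the interiors of the disjoint pieces $M_v$ and $M_{v_\rho}$, hence are disjoint, contradicting that $(f(\tau),f(\rho))$ is cagey. Therefore $v_\rho=v$ for every $\rho\in\Sigma$: all spheres of $f(\Sigma)$ lie in the single piece $M_v$.

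To finish, observe that since $f(\Sigma)\subseteq M_v$, cutting $M_N$ along $\mathcal{M}=f(S)\cup f(\Sigma)$ only subdivides $M_v$, leaving every other piece $M_w$ with $w\neq v$ unchanged; that is, the complementary pieces of $\mathcal{M}$ are those of $M_v\setminus f(\Sigma)$ together with the pieces $M_w$, $w\neq v$. Since $\mathcal{M}$ is maximal each of these is a $3$-holed sphere, so in particular each $M_w$ with $w\neq v$ has exactly three boundary spheres; equivalently the vertex $w$ of $G$ has valence $3$. Hence $G$ has at most one vertex of valence greater than three, namely possibly $v$, which is the desired conclusion.

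The step I expect to require the most care --- though a relatively mild one, since this proposition chiefly sets up Proposition~\ref{p:roses_sent_to_roses} --- is the middle paragraph: making precise, inside the sphere-system/graph-of-groups formalism of Section~\ref{sec:blowup}, that a one-edge free splitting compatible with $f(S)$ is supported in a uniquely determined complementary piece $M_v$ of $M_N\setminus f(S)$ (the vertex of $G$ at which the associated blow-up takes place), and that two such splittings supported in distinct pieces are automatically compatible with each other. Granting this, the concentration of $f(\Sigma)$ at a single vertex of $G$ is forced by the fact that cagey pairs cross, and the valence bound follows at once; the remaining ingredients are direct invocations of Lemma~\ref{l:rigid_blow_up}, Corollary~\ref{cor:cagey-pairs-preserved}, and the standard structure theory of maximal sphere systems \cite{Hat}.
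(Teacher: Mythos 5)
Your argument is correct and follows essentially the same route as the paper's proof: apply the Rigid Blow-up Lemma, push the configuration through $f$ using the preservation of cagey pairs, observe that $f(\tau)$ meets every sphere of $f(\Sigma)$ while missing every sphere of $f(S)$, so that $f(\Sigma)$ is concentrated in a single component of $M_N\setminus f(S)$, and then invoke maximality of $f(S)\cup f(\Sigma)$ to force every other vertex to have valence~3. Your write-up is somewhat more explicit than the paper's (notably in spelling out the dictionary between complementary pieces, vertices of the dual graph, and boundary spheres, and in recalling Hatcher's three-holed-sphere description of complementary pieces of a maximal system), but the logic and the key lemmas used are the same.
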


\begin{proof}
The fact that $f(\sigma_1),\dots,f(\sigma_N)$ are compatible is a consequence of $f$ being a graph map. Let $\Sigma$ and $\tau$ be the splittings that are given to us by the rigid blow-up lemma. As $\tau$ forms a cagey pair with each element of $\Sigma$, the splitting $f(\tau)$ forms a cagey pair with every element of $f(\Sigma)$ by Corollary~\ref{cor:cagey-pairs-preserved}. In terms of spheres, $f(\tau)$ intersects every sphere in $f(\Sigma)$ and is disjoint from each sphere in $f(S)$. Hence each sphere in $f(\Sigma)$ lies in the same component of $M_N-f(S)$ as $f(\tau)$. Translating this to graphs, all of the blow-ups of $f(S)$ by elements of $f(\Sigma)$ appear at the same vertex $v$. As $f(S)\cup f(\Sigma)$ is maximal, this implies that every other vertex $w \neq v$ of $f(S)$ has valence 3.
\end{proof}

The next step is to show that the quotient graph associated to the splitting $f(S)$ does not contain any valence three vertices. Given the work above, this will imply that the splitting has only one vertex and is therefore a rose.

\subsubsection{The image of a rose does not contain any valence 3 vertex.}

A second technical point in our proof of Proposition~\ref{p:roses_sent_to_roses} is to show the following fact. This will be the only place in this section where the argument for $N=3$ differs from the general case. 

\begin{prop}\label{prop:valence-3-excluded}
Let $N\ge 3$, and let $f$ be an injective graph map of $\ens$. Assume that either $N\ge 4$, or else that $f$ extends to an injective graph map of $\ns$.

Let $\sigma_1,\sigma_2$ and $\sigma_3$ be three one-edge nonseparating free splittings of $F_N$ which refine to a $3$-petal rose. 

Then $f(\sigma_1),f(\sigma_2)$ and $f(\sigma_3)$ are compatible and their common refinement does not contain any vertex of valence $3$.
\end{prop}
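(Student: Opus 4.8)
The plan is to reduce the statement to excluding a single quotient graph, and then to rule that graph out using the rigid blow-up lemma (Lemma~\ref{l:rigid_blow_up}) together with the fact that cagey pairs are preserved by $f$ (Corollary~\ref{cor:cagey-pairs-preserved}).

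\textbf{Step 1 (a dichotomy).} Since $f$ is a graph map of $\ens$, the splittings $f(\sigma_1),f(\sigma_2),f(\sigma_3)$ are pairwise rose compatible; in particular they are pairwise compatible, so they admit a common refinement $R$ whose quotient graph has exactly three edges, all with trivial edge group. I would run through the connected three-edge multigraphs, keeping only those all of whose one-edge collapses are nonseparating (forced, since the $f(\sigma_i)$ are vertices of $\ens$) and all of whose pairs of one-edge collapses are rose compatible rather than circle compatible (forced by $\ens$-adjacency). A short case check -- eliminating any graph with a valence-one vertex (the pendant edge collapses to a separating splitting), then the ``handcuff'' (the bridge collapses to a separating splitting), then the ``bigon with a loop'' and the triangle (these contain a circle-compatible pair of collapses) -- shows that the only survivors are the three-petal rose, whose unique vertex has valence six, and the \emph{theta graph}, whose two vertices have valence three. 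Since a theta graph is the only survivor possessing a valence-three vertex, it suffices to show that $R$ is not a theta graph.

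\textbf{Step 2 (loop-type blow-ups).} Suppose for contradiction that $R$ is a theta graph with vertices $a$ and $b$; then $M_N$ cut along $f(\sigma_1)\cup f(\sigma_2)\cup f(\sigma_3)$ has exactly two components $C_a$ and $C_b$, each with three boundary spheres (one from each $f(\sigma_i)$) and with $\pi_1(C_a)\cong G_a$, $\pi_1(C_b)\cong G_b$ free. The key local fact is: if a sphere $\phi$ is disjoint from $f(\sigma_1),f(\sigma_2),f(\sigma_3)$ and rose compatible with each of them, then $\phi$ is nonseparating in whichever of $C_a,C_b$ contains it. Indeed, if $\phi$ separated that component $C$, it would split the three boundary walls of $C$ into two nonempty subsets (the three-zero and four-zero splittings are impossible because $\phi$ is nonseparating in $M_N$), and a pigeonhole over the two sides of $\phi$ -- two sides cannot separate all three pairs of half-edges of $f(\sigma_1),f(\sigma_2),f(\sigma_3)$ -- forces the minimal common refinement of $\phi$ with some $f(\sigma_i)$ to be either a two-edge loop or a ``lollipop'', contradicting the rose compatibility of $\phi$ and $f(\sigma_i)$. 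This is carried out cleanly in the ideal-edge language of Lemmas~\ref{l:combinatorial_conditions_1} and~\ref{l:combinatorial_conditions_2}.

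\textbf{Step 3 (the contradiction, via the rigid blow-up lemma).} When $N\ge 4$ I would complete $\{\sigma_1,\sigma_2,\sigma_3\}$ to an $N$-petal rose $\hat S$ by blowing up the vertex group $F_{N-3}$ (this is exactly where $N\ge 4$ is needed: for $N=3$ there is nothing to blow up), and apply Lemma~\ref{l:rigid_blow_up} to obtain a system $\Sigma$ of size $2N-3$ and a sphere $\tau$ such that $\hat S\cup\Sigma$ is a maximal clique of $\ens$, $\tau$ is adjacent to every element of $\hat S$, and $\tau$ forms a cagey pair with every element of $\Sigma$. Applying $f$ and Corollary~\ref{cor:cagey-pairs-preserved}: $f(\tau)$ is disjoint from the $f(\sigma_i)$, hence lies in one component, say $C_a$; every element of $f(\Sigma)$ crosses $f(\tau)$, hence also lies in $C_a$; and by Step~2 every element of $f(\Sigma)$, and $f(\tau)$ itself, is nonseparating in $C_a$. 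Together with the images of the remaining petals, $f(\Sigma)$ extends to a maximal sphere system of $M_N$, so these $2N-3$ pairwise rose compatible nonseparating spheres of $C_a$ cut $C_a$ into three-holed spheres; promoting the argument of Step~2 from individual spheres to the whole group of twists about $\tau$ shows that the three boundary walls of $C_a$ are attached within a single two-edge-connected block of the dual graph, and the cagey-pair relation forced between $f(\tau)$ and \emph{every} element of $f(\Sigma)$ then cannot be realised inside a piece whose fundamental group is free of the rank dictated by the Euler-characteristic count. This contradiction shows $R$ is the three-petal rose. For $N=3$ the completion above is unavailable, and here I would use instead the hypothesis that $f$ extends to a decoration-preserving graph map of $\ns$: circle-compatible neighbours of $\sigma_1,\sigma_2,\sigma_3$ supply the extra incidences, preserved by $f$, needed to play the role of the vertex-group blow-ups, and the same argument rules out the theta graph.

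I expect Step~3 to be the main obstacle: making precise the assertion that a system of $2N-3$ pairwise rose compatible nonseparating spheres in $C_a$, each forming a cagey pair with the single nonseparating sphere $f(\tau)$, is incompatible with the topology of $C_a$ is essentially a local rigidity statement about the link of a nonseparating sphere, and it will require careful bookkeeping with ideal edges rather than the Euler-characteristic count alone, which by itself leaves the theta graph formally open.
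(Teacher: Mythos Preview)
Your Steps~1 and~2 are correct: the dichotomy reducing to the theta graph, and the observation that any sphere disjoint from and $\ens$-adjacent to all three $f(\sigma_i)$ must be nonseparating in its component of the theta cut, are both sound. The genuine gap is Step~3, which you yourself flag. The configuration you reach --- all of $f(\Sigma)$ together with $f(\tau)$ lying in a single component $C_a$, with each element of $f(\Sigma)$ cagey with $f(\tau)$ --- is exactly the information already extracted in Proposition~\ref{prop:higher-valence}, and that proposition is compatible with a theta graph (one vertex of valence three, the other carrying all the blow-ups). There is no evident rank or Euler-characteristic obstruction: when the trivial-rank vertex is $b$, the component $C_a$ has rank $N-2$, which comfortably accommodates the $2N-3$ spheres of $f(\Sigma)$ as part of a maximal system; and since this same cagey configuration is realised by $\tau$ and $\Sigma$ inside the full manifold $M_N$, there is no purely numerical reason it should fail in $C_a$. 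The vague reference to ``promoting the argument of Step~2 to the whole group of twists about $\tau$'' does not supply the missing local rigidity statement.

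The paper's proof takes a different and more targeted route. Rather than reusing the single sphere $\tau$ from the rigid blow-up lemma, one constructs (Lemma~\ref{lemma:3-rose}) a \emph{pair} of spheres $\tau_1,\tau_2$ with the following asymmetric incidences: $\tau_1,\tau_2$ are rose compatible with each other; $\tau_i$ forms a cagey pair with $\sigma_i$ for $i=1,2$; and $\tau_i$ is compatible with $\sigma_j$ for $j\neq i$ (rose compatible when $N\ge 4$, merely compatible when $N=3$, which is precisely why the extension to $\ns$ is needed there). These incidences are preserved by $f$. One then shows (Lemma~\ref{lemma:valence-3-excluded}) by a short normal-form argument that no such $\tau_1,\tau_2$ can exist if the common refinement of $\sigma_1',\sigma_2',\sigma_3'$ contains a pants piece $P$: the piece $D_1=\tau_1'\cap P$ is a disk with boundary on $\sigma_1'$ separating $\sigma_2'$ from $\sigma_3'$, and then the disk piece of $\tau_2'$ in $P$ (with boundary on $\sigma_2'$, disjoint from $D_1$) is forced to be inessential. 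This two-sphere configuration, tailored so that each $\tau_i$ meets exactly one of the three boundary spheres of the putative pants, is the idea your approach is missing.
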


In the same spirit as the rigid blow-up lemma above, we look for useful configurations of disjoint spheres and cagey pairs that are preserved under injective graph maps.

\begin{lemma}\label{lemma:3-rose}
Let $N\ge 3$, and let $\sigma_1,\sigma_2,\sigma_3$ be three one-edge nonseparating free splittings of $F_N$ which refine to a $3$-petal rose. Then there exist two one-edge nonseparating free splittings $\tau_1,\tau_2$ of $F_N$ such that
\begin{enumerate}
\item[$(P_1)$] the splittings $\tau_1$ and $\tau_2$ are rose compatible,
\item[$(P_2)$] for every $i\in\{1,2\}$, the splittings $\sigma_i$ and $\tau_i$ form a cagey pair,
\item[$(P_3)$] for every $i\in\{1,2\}$ and every $j\in\{1,2,3\}$ with $i\neq j$, the splittings $\tau_i$ and $\sigma_j$ are compatible.
\end{enumerate}
Furthermore, if $N \geq 4$, we can choose $\tau_1$ and $\tau_2$ so that for every $i\in\{1,2\}$ and every $j\in\{1,2,3\}$ with $i\neq j$, the splittings $\tau_i$ and $\sigma_j$ are rose compatible.
\end{lemma}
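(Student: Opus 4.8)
The overall plan is to construct $\tau_1,\tau_2$ by an explicit sphere‑system construction, in the spirit of the Rigid blow‑up lemma (Lemma~\ref{l:rigid_blow_up}), and then to read off $(P_1)$, $(P_2)$, $(P_3)$ and the rose‑compatibility strengthening from the combinatorial criteria for compatibility and cageyness (Lemmas~\ref{l:combinatorial_conditions_1}, \ref{l:combinatorial_conditions_2} and~\ref{l:cagey_pairs_preserved}). To set up a normal form: since $\sigma_1,\sigma_2,\sigma_3$ refine to a $3$‑petal rose, their common refinement is a free splitting of $F_N$ whose quotient graph of groups is a $3$‑petal rose carrying a free vertex group $H\cong F_{N-3}$ at its unique vertex, trivial precisely when $N=3$. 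Fixing a basis $b_1,\dots,b_{N-3}$ of $H$ and a stable letter $a_i$ for $\sigma_i$ gives a basis $(a_1,a_2,a_3,b_1,\dots,b_{N-3})$ of $F_N$ for which $\sigma_i$ is the one‑edge splitting with vertex group the free factor spanned by the remaining basis elements; in the model $F_N=\pi_1(M_N)$, $\sigma_i$ is the belt sphere $s_i$ of the $a_i$‑handle, and $s_1,s_2,s_3$ may be taken disjoint with $M_N\smallsetminus(s_1\cup s_2\cup s_3)$ connected. (Since $\Out(F_N)$ acts transitively on ordered triples of one‑edge nonseparating free splittings refining to a $3$‑petal rose and the conclusions are $\Out(F_N)$‑invariant, one may also just fix a standard triple and argue once.)

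Next I would write down $\tau_1,\tau_2$. I would take $\tau_1$ to be $s_1$ \emph{re‑routed through a single auxiliary handle} --- a $b$‑handle when $N\geq4$, and the $a_2$‑handle when $N=3$ --- chosen so that $\tau_1$ meets $s_1$ in a single circle, is disjoint from $s_2$ and $s_3$, and has the following feature: there is a maximal sphere system $\Sigma'$ of $M_N$ extending $\{s_1,s_2,s_3\}$ such that $\Sigma'\smallsetminus\{s_1\}$ (which has a unique valence‑$4$ vertex) also becomes a maximal system, i.e.\ a $(3N-3)$‑clique of $\ens$, when one attaches $\tau_1$ there instead of $s_1$. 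This is exactly the configuration of Lemma~\ref{l:cagey_pairs_preserved}, which then forces $(\sigma_1,\tau_1)$ to be a cagey pair. One defines $\tau_2$ symmetrically near $s_2$, re‑routed through a second auxiliary handle (again a $b$‑handle when $N\geq4$), with the two choices coordinated so that $\tau_1$ and $\tau_2$ are rose compatible. Making all of this precise is cleanest if one records each of the splittings in play as a partition of the direction set of an $N$‑rose adapted to the sub‑configuration one is analyzing --- separately for the pair $\{\sigma_1,\tau_1\}$, for $\{\sigma_2,\tau_2\}$, and for the mutually compatible family $\{\sigma_2,\sigma_3,\tau_1,\tau_2\}$ (and its mirror) --- since, $\tau_i$ crossing $\sigma_i$, no single rose sees all of them at once.

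The verification then proceeds as follows. Property $(P_2)$ follows from Lemma~\ref{l:cagey_pairs_preserved}, once one checks via the partition criterion of Lemma~\ref{l:combinatorial_conditions_2} at the relevant valence‑$4$ vertex --- that each intersection set $P_i\cap Q_j$ is an ideal edge and the union of any two of them separates some pair $\{x_m^+,x_m^-\}$ --- that the boundary splitting of $(\sigma_i,\tau_i)$ is the cage of Case~3 of Figure~\ref{fig:boundary-spheres} rather than one of the other five types. Property $(P_3)$, compatibility of $\tau_i$ with $\sigma_j$ for $j\neq i$, is built into the construction since $\tau_i$ is disjoint from $s_j$; the strengthening to rose compatibility for $N\geq4$ is a short computation with Lemma~\ref{l:combinatorial_conditions_1}, using that the re‑routing of $\tau_i$ can be absorbed by a \emph{spare} $b$‑handle --- precisely the room one lacks when $N=3$, which is why the last clause of the statement is restricted to $N\geq4$. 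Property $(P_1)$ is again a direct application of Lemma~\ref{l:combinatorial_conditions_1} to the common refinement of $\tau_1$ and $\tau_2$.

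The main obstacle I expect is producing a \emph{single} coherent construction --- one maximal sphere system $\Sigma'$ extending $\{s_1,s_2,s_3\}$, together with the auxiliary handles and the two blow‑ups --- that simultaneously makes both $(\sigma_1,\tau_1)$ and $(\sigma_2,\tau_2)$ cagey (Case~3, ruling out the other five boundary‑splitting types) while keeping $\tau_1,\tau_2$ rose compatible and each $\tau_i$ compatible (rose compatible when $N\geq4$) with both remaining $\sigma_j$'s; note that $\sigma_3$ must fit simultaneously with $\tau_1$, with $\tau_2$, and with the cagey structure of both pairs, so the constraints interact. A secondary difficulty is bookkeeping: because $\tau_i$ is incompatible with $\sigma_i$, the combinatorial checks have to be carried out in several different ambient roses, and one must match up the corresponding direction sets carefully across them.
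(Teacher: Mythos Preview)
Your overall strategy---explicit construction followed by verification via Lemmas~\ref{l:combinatorial_conditions_1} and~\ref{l:combinatorial_conditions_2}---matches the paper's, and your proposal would likely succeed with enough care. However, you miss a simplification that dissolves both of the difficulties you flag at the end.

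You set up the $\sigma_i$ as the \emph{petals} (belt spheres) of an $N$-rose adapted to the basis $(a_1,a_2,a_3,b_1,\dots,b_{N-3})$. As you correctly observe, no ideal edge of a rose can cross a petal, so $\tau_i$ cannot live in that rose and you are forced to juggle several ambient roses. The paper avoids this by realizing the $\sigma_i$ \emph{not} as petals but as non-trivial ideal edges of a single $N$-rose with petals $x_1,\dots,x_N$: concretely $\sigma_1$ is the partition with small side $\{x_1^-,x_2^+\}$, $\sigma_2$ has small side $\{x_2^-,x_3^+\}$, and $\sigma_3$ is the petal $x_3$. (Transitivity of $\Out(F_N)$ on ordered $3$-rose triples, which you already invoke, justifies this normalization.) Since crossing ideal edges of a single rose are perfectly allowed, one can then take $\tau_1,\tau_2$ as further ideal edges of the \emph{same} rose---for $N\ge 4$, small sides $\{x_2^+,x_4^-\}$ and $\{x_3^+,x_4^+\}$; for $N=3$, small sides $\{x_2^+,x_3^-\}$ and $\{x_1^+,x_2^-\}$. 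All of $(P_1)$--$(P_3)$ and the rose-compatibility upgrade then reduce to direct checks of Lemmas~\ref{l:combinatorial_conditions_1} and~\ref{l:combinatorial_conditions_2} in that one rose, with no need for the clique characterization of Lemma~\ref{l:cagey_pairs_preserved} or for matching direction sets across multiple roses. Your ``main obstacle'' (a single coherent construction) and ``secondary difficulty'' (bookkeeping across roses) both evaporate.
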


\begin{proof}
The proof when $N=3$ and $N=4$ is depicted in Figure~\ref{fig:lemma-3-rose}. We will pick a standard rose $S$ with petals $x_1, \ldots, x_N$ as in the previous section and take $\sigma_1$ and $\sigma_2$ to be the ideal edges given by the partitions $\calp^1$ and $\calp^2$ in the proof of the rigid blow-up lemma. Hence $P^1_1=\{x_1^-, x^+_{2}\}$ and $P^2_1=\{x_2^-, x_3^+\}$. We then take $\sigma_3=x_3$. These three splittings determine a $3$-petal rose and, as $\Out(F_N)$ acts transitively on the set of $3$-petal roses (and their petals), we suffer no loss in generality by completing the proof in this setting. 

When $N=3$, we define $\tau_1$ to be the splitting determined by the partition $\mathcal{Q}=\{Q_1,Q_2\}$ with $Q_1=\{x_2^+,x_3^-\}$ and $\tau_2$ to be the splitting determined by the partition $\mathcal{R}=\{R_1,R_2\}$ with $R_1=\{x_1^+, x_2^-\}$. Using the combinatorial conditions in Lemma~\ref{l:combinatorial_conditions_1} and Lemma~\ref{l:combinatorial_conditions_2}, 
one verifies that these splittings satisfy conditions $(P_1)$ to $(P_3)$. Note, however, that $\tau_1$ and $\sigma_2$ are disjoint but their common refinement is a circle splitting (the same is true of $\tau_2$ and $\sigma_1$). When $N \geq 4$ we can make use of the extra room and ensure that the final statement of the proposition holds by defining $\tau_1$ to be the splitting determined by the partition $\mathcal{Q}=\{Q_1,Q_2\}$ with $Q_1=\{x_2^+,x_4^-\}$ and $\tau_2$ to be the splitting determined by the partition $\mathcal{R}=\{R_1,R_2\}$ with $R_1=\{x_3^+, x_4^+\}$. In this case, for every $i\in\{1,2\}$ and every $j\in\{1,2,3\}$ with $i\neq j$, the splittings $\tau_i$ and $\sigma_j$ are rose compatible.
\end{proof}

\begin{figure}
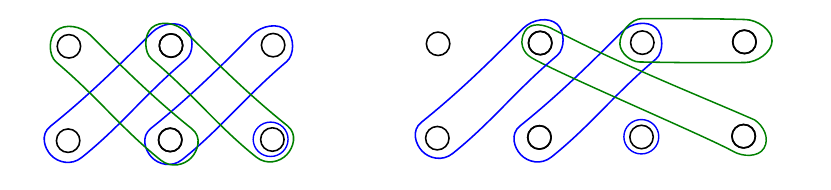 \caption{The proof of Lemma~\ref{lemma:3-rose} in the case of $N=3$ (on the left) and $N=4$ (on the right). In each case, the spheres $\sigma_1,\sigma_2$, and $\sigma_3$ are given in blue and the spheres $\tau_1$ and $\tau_2$ are given in green.}
\label{fig:lemma-3-rose}
\end{figure}

\begin{lemma}\label{lemma:valence-3-excluded}
Let $N\ge 3$. Let $\sigma_1,\sigma_2,\sigma_3$ be three pairwise compatible one-edge nonseparating free splittings of $F_N$. Assume that there exist two one-edge nonseparating free splittings $\tau_1,\tau_2$ of $F_N$ such that
\begin{enumerate}
\item[$(P_1)$] the splittings $\tau_1$ and $\tau_2$ are rose compatible,
\item[$(P_2)$] for every $i\in\{1,2\}$, the splittings $\sigma_i$ and $\tau_i$ form a cagey pair,
\item[$(P_3)$] for every $i\in\{1,2\}$ and every $j\in\{1,2,3\}$ with $i\neq j$, the splittings $\tau_i$ and $\sigma_j$ are compatible.
\end{enumerate}
Then $\sigma_1,\sigma_2$ and $\sigma_3$ refine to a splitting which does not contain any valence $3$ vertex.
\end{lemma}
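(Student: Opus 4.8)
The plan is to first classify the shape of the common refinement, thereby reducing the statement to excluding a single graph, and then to rule out that graph in the sphere model using the cagey hypotheses.

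Since $\sigma_1,\sigma_2,\sigma_3$ are pairwise compatible they refine to a splitting $S$ whose quotient graph $S/F_N$ has exactly three edge orbits $e_1,e_2,e_3$, and $\sigma_i$ is the one-edge splitting obtained from $S$ by collapsing $e_j$ and $e_k$ (for $\{i,j,k\}=\{1,2,3\}$). As each $\sigma_i$ is nonseparating, collapsing $e_j$ and $e_k$ must produce a single loop; equivalently $\{e_j,e_k\}$ spans a connected subgraph meeting every vertex of $S/F_N$. Imposing this for all $i$ forces $S/F_N$ to have at most three vertices and to be one of the following: the $3$-petal rose, the theta graph (two vertices joined by three edges), a vertex carrying a loop and joined by two edges to a second vertex, or the triangle. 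Among these only the theta graph has a vertex of valence $3$, so it suffices to show that $S/F_N$ is not the theta graph; we suppose it is and seek a contradiction.

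By $(P_2)$ the boundary splitting $B_1$ of the cagey pair $(\sigma_1,\tau_1)$ is a four-edge cage; write $M_N\smallsetminus B_1=P_0\sqcup P_1$, where $P_0$ is simply connected with four boundary spheres (of which $\sigma_1$ and $\tau_1$ are blow-ups) and $\pi_1(P_1)$ is free of rank $N-3$ with four boundary spheres. As noted in the proof of Lemma~\ref{l:cagey_pairs_preserved}, any sphere disjoint from both $\sigma_1$ and $\tau_1$ is disjoint from $B_1$; by $(P_1)$ and $(P_3)$ this applies to $\sigma_2,\sigma_3,\tau_2$, so each of them lies in $P_0$, lies in $P_1$, or coincides with a sphere of $B_1$. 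The only essential spheres of the four-holed $3$-sphere $P_0$ are the three spheres pairing up its boundary, two of which are $\sigma_1$ and $\tau_1$ and the third of which crosses both; since $\sigma_2,\sigma_3,\tau_2$ are distinct from $\sigma_1,\tau_1$ and compatible with $\sigma_1$, none of them lies in $P_0$. If $\sigma_2$ were a sphere of $B_1$ then $\tau_2$, being disjoint from $\sigma_1$ and $\tau_1$ hence from all of $B_1$, would be disjoint from $\sigma_2$, contradicting that $(\sigma_2,\tau_2)$ is cagey; so $\sigma_2\subset P_1$, and then also $\tau_2\subset P_1$ (it cannot lie in $P_0$ nor on $B_1$, as it crosses $\sigma_2\subset P_1$). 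Thus $\sigma_2,\tau_2\subset P_1$, and $\sigma_3$ either lies in $P_1$ or is a sphere of $B_1$. The symmetric argument applied to $(\sigma_2,\tau_2)$ shows that the boundary cage $B_2$ is disjoint from $B_1$ and contained in $P_1$, that $\sigma_1$ lies in the rank-$(N-3)$ piece $P_1'$ of $M_N\smallsetminus B_2$ (so that $P_0\subset P_1'$ and $B_1\subset P_1'$), and that $\sigma_3$ either lies in $P_1'$ or is a sphere of $B_2$.

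It remains to extract a contradiction from this picture, and this is where the real work lies. For $N=3$ it is quick: if $\sigma_3$ lies on neither $B_1$ nor $B_2$, then $\sigma_2$ and $\sigma_3$ are two distinct compatible essential spheres of the four-holed $3$-sphere $P_1$, which is impossible since any two essential spheres there cross; and if $\sigma_3$ lies on $B_1$ (or, symmetrically, on $B_2$), following the positions of $\sigma_1$ and $\sigma_2$ relative to that cage shows directly that $S/F_N$ is one of the three graphs other than the theta graph. For general $N$ one implements the same idea by tracking the decomposition of $M_N$ along $B_1\cup B_2$: the containments $P_0\subset P_1'$, $B_1\subset P_1'$ and $B_2\subset P_1$, together with the fact that $\sigma_3$ is disjoint from every sphere in sight and with the theta structure of $\{\sigma_1,\sigma_2,\sigma_3\}$, constrain the complementary pieces and their ranks so tightly that the theta splitting cannot occur. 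Carrying this out in full — in particular treating the degenerate cases in which some $\sigma_i$ or $\tau_j$ coincides with a sphere of one of the cages — is the main technical obstacle; the graph classification of the first step and the surgery facts about boundary splittings used above are standard.
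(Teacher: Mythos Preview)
Your opening classification is correct and tidy, but the proof is explicitly unfinished: in your final paragraph you write that ``carrying this out in full\ldots is the main technical obstacle,'' and even the $N=3$ subcase where $\sigma_3$ lies on one of the boundary cages is only asserted, not argued. What you have is a strategy, not a proof, and the strategy via the boundary cages $B_1,B_2$ leads to a proliferating case analysis (where do $\sigma_2,\sigma_3,\tau_2$ sit relative to $B_1$; which of them coincide with cage spheres; then redo everything relative to $B_2$; then combine) that you have not shown terminates in a contradiction for general $N$.

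The paper's argument is both different and much shorter. It does not classify the quotient graph at all: it simply assumes a valence-$3$ vertex, takes the corresponding pair-of-pants component $P\cong S^3\smallsetminus(3\text{ balls})$ of $M_N\smallsetminus\Sigma$, and puts $T=\{\tau_1,\tau_2\}$ in Hatcher normal form relative to $\Sigma$. Because $(\sigma_1,\tau_1)$ is cagey, $\tau_1$ meets $\sigma_1$ in a single circle and misses $\sigma_2,\sigma_3$, so $\tau_1\cap P$ is a single disk $D_1$; normal form forces $D_1$ to separate $\sigma_2$ from $\sigma_3$ in $P$, so each component of $P\smallsetminus D_1$ is a product $S^2\times(0,1)$. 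Then $\tau_2\cap P$ is likewise a single disk with boundary on $\sigma_2$ and disjoint from $D_1$; but any such disk in $S^2\times(0,1)$ is homotopic rel boundary into $\sigma_2$, contradicting normal form. The point you are not exploiting is that a cagey pair intersects in \emph{exactly one circle}: this single-circle condition (rather than the full four-sphere cage structure) is what makes each $\tau_i\cap P$ a single disk, and the contradiction then falls out in two lines with no case analysis at all.
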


\begin{figure}\centering \def\svgwidth{150pt} 
\begingroup%
  \makeatletter%
  \providecommand\color[2][]{%
    \errmessage{(Inkscape) Color is used for the text in Inkscape, but the package 'color.sty' is not loaded}%
    \renewcommand\color[2][]{}%
  }%
  \providecommand\transparent[1]{%
    \errmessage{(Inkscape) Transparency is used (non-zero) for the text in Inkscape, but the package 'transparent.sty' is not loaded}%
    \renewcommand\transparent[1]{}%
  }%
  \providecommand\rotatebox[2]{#2}%
  \newcommand*\fsize{\dimexpr\f@size pt\relax}%
  \newcommand*\lineheight[1]{\fontsize{\fsize}{#1\fsize}\selectfont}%
  \ifx\svgwidth\undefined%
    \setlength{\unitlength}{238.64209994bp}%
    \ifx\svgscale\undefined%
      \relax%
    \else%
      \setlength{\unitlength}{\unitlength * \real{\svgscale}}%
    \fi%
  \else%
    \setlength{\unitlength}{\svgwidth}%
  \fi%
  \global\let\svgwidth\undefined%
  \global\let\svgscale\undefined%
  \makeatother%
  \begin{picture}(1,0.65946926)%
    \lineheight{1}%
    \setlength\tabcolsep{0pt}%
    \put(0,0){\includegraphics[width=\unitlength,page=1]{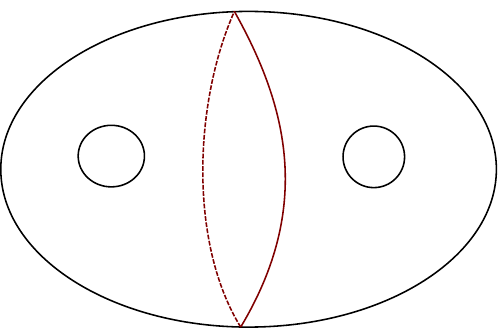}}%
    \put(0.06405098,0.59253166){\color[rgb]{0,0,0}\makebox(0,0)[lt]{\lineheight{1.25}\smash{\begin{tabular}[t]{l}$\sigma_1$\end{tabular}}}}%
    \put(0.19649689,0.22662197){\color[rgb]{0,0,0}\makebox(0,0)[lt]{\lineheight{1.25}\smash{\begin{tabular}[t]{l}$\sigma_2$\end{tabular}}}}%
    \put(0.74648361,0.22662179){\color[rgb]{0,0,0}\makebox(0,0)[lt]{\lineheight{1.25}\smash{\begin{tabular}[t]{l}$\sigma_3$\end{tabular}}}}%
    \put(0.44567427,0.31417099){\color[rgb]{0,0,0}\makebox(0,0)[lt]{\lineheight{1.25}\smash{\begin{tabular}[t]{l}$D_1$\end{tabular}}}}%
    \put(0,0){\includegraphics[width=\unitlength,page=2]{pants2.pdf}}%
  \end{picture}%
\endgroup%
 \caption{The proof of Lemma~\ref{lemma:valence-3-excluded}. The diagram shows that pants component $P$ with boundary spheres $\sigma_1$, $\sigma_2$, $\sigma_3$ and disk piece $D_1$ separating $\sigma_2$ from $\sigma_3$.}
\label{fig:pants}
\end{figure}

\begin{proof}
This proof is most easily understood in the language of sphere systems.  Suppose for a contradiction that the dual graph to the sphere system $\Sigma= \{\sigma_1,\sigma_2,\sigma_3\}$ has a trivalent vertex. In other words, $M_N - \Sigma$ has a component $P$ homeomorphic to $S^3$ with three balls removed (a three-dimensional version of a pair of pants). 
	
	Now consider the sphere system $T=\{\tau_1, \tau_2\}$. Following the terminology of \cite{Hat}, $T$ can be put normal form with respect to $\Sigma$ (one can either enlarge $\Sigma$ and $T$ to maximal systems or use the extension to nonmaximal systems given in \cite[Section~7.1]{HOP}).
	In particular, no component of $T-\Sigma$ (called a piece in the terminology of \cite{Hat}) is homotopic, fixing its boundary, into $\Sigma$. A piece of $T -\Sigma$ in $P$ is of one of three types: it is either a disk lying on one boundary component and separating the remaining two boundary components, an annulus between two distinct boundary components, or a genuine pair of pants with a boundary circle on each boundary sphere of $P$. 
	
	By assumptions $(P_2), (P_3)$ we have that $\tau_1$ is disjoint from $\sigma_2, \sigma_3$, and intersects
	$\sigma_1$ in a single circle. This implies that there is a single piece $D_1 = \tau_1 \cap P$ of $\tau_1$ 
	in $P$, and it has a single boundary component. Hence, $D_1$ is in fact a disk.
	
	By minimal position, $D_1$ separates $\sigma_2$ from $\sigma_3$ in $P$ (as depicted in Figure~\ref{fig:pants}). Namely, otherwise $D_1$ bounds a ball in $P$	together with a disk in $P$, which would allow to homotope $D_1$ (fixing its boundary) into $\sigma_1$.
	Since $P$ is homeomorphic to $S^3$ with three balls removed, each component of $P - D_1$ is therefore homeomorphic to
	the product $S^2\times(0,1)$.
	
	The sphere $\tau_2$ is disjoint from $D_1$, $\sigma_1$ and $\sigma_3$ and intersects  $\sigma_2$ in a single essential circle. Hence $\tau_2$ also has a single disk piece in $P$. However any disk in $P - D_1$ with boundary on $\sigma_2$ is homotopic fixing its boundary into $\sigma_2$. This contradicts $T$ being in normal form with respect to $\Sigma$ and proves the lemma.
\end{proof}

Armed with these two lemmas, we can now complete our proof of Proposition~\ref{prop:valence-3-excluded}.

\begin{proof}[Proof of Proposition~\ref{prop:valence-3-excluded}]
Let $N\ge 3$, and let $f$ be an injective graph map of $\ens$. Assume that either $N\ge 4$, or else that $f$ extends to an injective graph map of $\ns$. As $f$ preserves edges, the splittings $f(\sigma_1),f(\sigma_2)$ and $f(\sigma_3)$ are compatible.

By Lemma~\ref{lemma:3-rose}, there exist two one-edge nonseparating free splittings $\tau_1,\tau_2$ of $F_N$ such that the splittings $\sigma_i$ and $\tau_j$ satisfy Properties~$(P_1)$--$(P_3)$ from Lemma~\ref{lemma:3-rose}. Furthermore, if $N \geq 4$ we may choose $\tau_1$ and $\tau_2$ so that $\tau_i$ is rose compatible with $\sigma_j$ whenever $i \neq j$. Property~$(P_1)$ is preserved as $f$ is a graph map and property $(P_2)$ is preserved in view of Corollary~\ref{cor:cagey-pairs-preserved}. When $N=3$, property $(P_3)$ is preserved as $f$ extends to an injective graph map of $\ns$, and when $N\geq 4$ property $(P_3)$ is preserved as the splittings were chosen so that $\tau_i$ is rose compatible with $\sigma_j$ whenever $i \neq j$. So $f(\sigma_1),f(\sigma_2),f(\sigma_3),f(\tau_1),f(\tau_2)$ satisfy Properties~$(P_1)$--$(P_3)$ from Lemma~\ref{lemma:3-rose}, hence Lemma~\ref{lemma:valence-3-excluded} implies that the common refinement of $f(\sigma_1)$, $f(\sigma_2)$ and $f(\sigma_3)$ does not contain any valence $3$ vertex.
\end{proof}

We can finally show that $N$-roses are sent to $N$-roses in the desired setting. 

\begin{proof}[Proof of Proposition~\ref{p:roses_sent_to_roses}] 
Let $S=\{\sigma_1, \ldots, \sigma_N\}$ be a set of one-edge free splittings forming an $N$-rose. Let $f$ be an injective graph map of $\ens$, and assume that either $N\ge 4$, or else that $f$ extends to an injective graph map of $\ns$. By Proposition~\ref{prop:higher-valence}, the splittings $f(\sigma_1), \ldots, f(\sigma_N)$ are compatible and refine to a splitting whose quotient graph contains at most one vertex of valence greater than three.  If $f(S)$ contains a valence three vertex, then the three splittings corresponding to its adjacent edges are distinct (otherwise one would be separating). Hence some 3-element subset of $S' \subseteq S$ has the property that $f(S')$ contains a vertex of valence 3. This contradicts Proposition~\ref{prop:valence-3-excluded}. It follows that the splitting $f(S)$ determined by $f(\sigma_1), \ldots, f(\sigma_N)$ contains only one vertex, so is an $N$-rose.
\end{proof}

\subsection{Surjectivity via local finiteness}\label{s:surjectivity} 

Let $K$ be the graph whose vertices correspond to sets $\{\sigma_1,\dots,\sigma_N\}$ of pairwise compatible one-edge nonseparating free splittings refining to an $N$-rose, and whose edges correspond to sets $\{\sigma_1,\dots,\sigma_{N+1}\}$ of pairwise rose compatible  one-edge nonseparating free splittings so that $\sigma_1, \ldots, \sigma_N$ and $\sigma_1, \ldots, \sigma_{N-1}, \sigma_{N+1}$ both refine to an $N$-rose. In other words, an edge is given by an $(N+1)$-clique in $\ens$ with two distinct $N$-element rose subsets. The graph $K$ can be seen as the graph of roses in outer space which are adjacent if they have a `fat' one-edge common refinement (the two vertices of the refinement have to be connected by at least three edges in order for any two edges to be rose compatible). 

\begin{figure}[h]\centering 
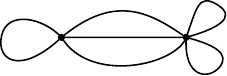 \caption{An example of a graph connecting adjacent roses in $K$ when $N=5$.}
\label{fig:Kgraph}
\end{figure}

\begin{lemma}
  The graph $K$ is connected and locally finite.
\end{lemma}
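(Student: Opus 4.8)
The plan is to establish the two properties separately, using the dictionary between roses, maximal trees, and ideal-edge partitions from Section~\ref{sec:blowup}.

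\textbf{Local finiteness.} Fix a vertex of $K$, i.e.\ a rose $S=\{\sigma_1,\dots,\sigma_N\}$. Any neighbour $S'$ is obtained by replacing a single petal $\sigma_i$ by a new splitting $\sigma_{N+1}$ such that $S\cup\{\sigma_{N+1}\}$ consists of pairwise rose compatible splittings and $\{\sigma_1,\dots,\hat\sigma_i,\dots,\sigma_N,\sigma_{N+1}\}$ again refines to an $N$-rose. The key point is that $\sigma_{N+1}$ is compatible with $\sigma_1,\dots,\hat\sigma_i,\dots,\sigma_N$, so it is disjoint from those $N-1$ spheres and hence determines a free splitting of the rank-one free group obtained by cutting $M_N$ along the other petals — equivalently, an ideal edge (in the sense of Section~\ref{sec:blowup}) supported on the complement. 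There are only finitely many partitions of the finite direction set $D=\{x_1^\pm,\dots,x_N^\pm\}$, so only finitely many choices of $\sigma_{N+1}$ for each of the $N$ choices of $i$. Thus the vertex $S$ has finite degree in $K$. (Concretely, one can phrase this via the spine of reduced Outer space: a neighbour of $S$ in $K$ corresponds to a "fat" one-edge simplex of the spine containing the open simplex of $S$, and local finiteness of the spine gives the bound.)

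\textbf{Connectedness.} Here the plan is to reduce to connectedness of a known complex. The set of $N$-roses in Outer space, together with moves that collapse a forest in a common refinement, is exactly the vertex set of (a subcomplex of) the spine of Outer space, which is connected; so any two roses $S,S''$ are joined by a finite sequence of "elementary" moves, each replacing one petal by another via a one-edge common refinement. The only subtlety is that $K$ requires the common refinement to be \emph{fat} — its two vertices joined by at least three edges — so that all pairs of petals involved are rose compatible rather than merely circle compatible. One handles this by showing that any elementary move through a thin refinement (a two-edge loop) can be replaced by a bounded-length path of moves through fat refinements: after performing an auxiliary blow-up at the rose vertex using an extra ideal edge (possible since $N\ge 3$ gives enough directions, as in the construction in Lemma~\ref{l:rigid_blow_up}), one can realize the same change of petal by moves all of whose intermediate common refinements are two-petal roses with a multi-edge structure. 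I would make this precise with an explicit local picture (as in Figure~\ref{fig:Kgraph}), checking the rose-compatibility conditions via Lemma~\ref{l:combinatorial_conditions_1}.

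\textbf{Main obstacle.} Local finiteness is essentially immediate from finiteness of the direction set. The real work is in connectedness, and specifically in upgrading connectedness "up to circle-compatible moves" to genuine connectedness of $K$: one must verify that whenever two roses differ by a move whose common refinement is a two-edge loop rather than a fat rose, there is a detour within $K$. I expect this local surgery argument — replacing a thin refinement by a chain of fat ones — to be the crux, and it is exactly the place where the hypothesis $N\ge 3$ (enough petals to make room for the auxiliary ideal edge) gets used.
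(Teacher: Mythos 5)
Your local-finiteness argument is essentially the same as the paper's and is correct: a neighbour of $S$ is determined by a choice of petal to drop and a new ideal edge, and there are only finitely many ideal edges (equivalently, only finitely many isotopy classes of spheres in the complement of a filling sphere system).

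For connectivity you propose a genuinely different and more roundabout route than the paper, and the decisive step is left unproved. The paper's argument is a one-liner: Bridson--Vogtmann observed that Nielsen transformations together with the stabilizer of an $N$-petal rose generate $\Out(F_N)$, and Nielsen-adjacent roses are already adjacent in $K$. The second clause is where the ``fatness'' requirement is discharged: blowing up an $N$-rose along the ideal edge $\calp$ with $P_1=\{x_i^-,x_j^+\}$ (the ideal edge realizing the Nielsen move $x_i\mapsto x_j x_i$) produces a two-vertex graph in which the petals $x_i$ and $x_j$ and the new edge are all non-loop edges, so the two vertices are joined by at least three edges and the refinement is fat. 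Thus the standard Nielsen moves are already fat moves; there is nothing to upgrade.

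Your plan instead goes through connectivity of the spine and then proposes to replace each ``thin'' elementary move by a bounded detour through fat ones. There are two problems. First, the reduction to the spine is not quite a reduction: connectivity of the spine does not immediately give connectivity of the graph whose vertices are roses and whose edges are one-extra-edge refinements, since a path in the spine need not stay among roses, and the statement that it can be rerouted through roses is itself a claim needing proof. Second, and more seriously, the ``local surgery'' converting a thin move into a chain of fat moves is asserted as a plan, not carried out --- you identify it yourself as the crux, and it is precisely the missing content of the proof. (There is also a minor terminological slip: the ``thin'' refinements in question are $(N+1)$-edge graphs in which only two of the edges join the two vertices, not two-edge loops, and ``two-petal roses with a multi-edge structure'' does not describe the relevant common refinements.) In short, your framework could in principle be made to work, but as written it has a genuine gap, whereas the paper sidesteps the entire issue by observing that the moves coming from Nielsen generation are already of the required type.
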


\begin{proof}
  Local finiteness is clear (in the complement of a filling sphere
  system there are only finitely many isotopy classes of spheres).
  Connectivity easily follows from the fact that Nielsen transformations together with the stabilizer of an $N$-petal rose generate $\Out(F_N)$, as was observed in \cite[p.~395]{BV} (if two roses are Nielsen adjacent in the language of \cite{BV}, then they are adjacent in $K$).
\end{proof}

Let $f$ be an injective graph map of $\ens$. Assume that either $N\ge 4$, or else that $f$ extends to a graph map of $\ns$. It follows from Proposition~\ref{p:roses_sent_to_roses} that $f$ induces an injective graph map $f_K$ of the graph $K$.

\begin{lemma}
Let $N\ge 3$, and let $f$ be an injective graph map of $\ens$. Assume that either $N\ge 4$, or else that $f$ extends to a graph map of $\ns$. 

Then $f_K$ is surjective.
\end{lemma}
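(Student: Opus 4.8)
The surjectivity of $f_K$ will follow from a soft argument once we add one observation to the two facts already in hand (namely: $K$ is connected and locally finite, and $f_K$ is a well-defined injective graph map of $K$ induced by Proposition~\ref{p:roses_sent_to_roses}). The observation is that $K$ is \emph{vertex-transitive}: the data defining $K$ --- roses, their fat one-edge refinements, and rose-compatibility --- is $\Out(F_N)$-equivariant, so $\Out(F_N)$ acts on $K$ by graph automorphisms, and this action is transitive on vertices because $\Out(F_N)$ acts transitively on the set of free splittings of $F_N$ whose quotient graph is an $N$-petal rose. Combined with local finiteness, this forces every vertex of $K$ to have one and the same \emph{finite} valence $d$. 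The general principle we then invoke is that an injective graph self-map of a connected, locally finite, vertex-transitive graph is automatically an isomorphism.

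\textbf{Key steps.} First I would record that $f_K$ is injective on the vertex set of $K$ (this is part of the statement ``$f$ induces an injective graph map $f_K$'' obtained from Proposition~\ref{p:roses_sent_to_roses}, and in any case follows from injectivity of $f$ on $\ens$, since a vertex of $K$ is just a set of $N$ one-edge nonseparating free splittings). Second, I would establish the vertex-transitivity of the $\Out(F_N)$-action on $K$ as above, and conclude via local finiteness that there is a finite integer $d$ with $|\mathrm{lk}_K(v)| = d$ for every vertex $v$. Third, fix a vertex $v$: since $f_K$ is a graph map it carries $\mathrm{lk}_K(v)$ into $\mathrm{lk}_K(f_K(v))$, and since $f_K$ is injective this restriction is an injection between two sets of the same finite cardinality $d$, hence a bijection $f_K\colon \mathrm{lk}_K(v)\to\mathrm{lk}_K(f_K(v))$.

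\textbf{Propagation.} Finally, fix a basepoint $v_0\in K$ and set $w_0=f_K(v_0)$. I would show by induction on the combinatorial distance $d_K(\cdot,w_0)$ that every vertex of $K$ lies in the image of $f_K$. The base case is $w_0=f_K(v_0)$. For the inductive step, a vertex $w$ with $d_K(w,w_0)=n+1$ has a neighbour $w'$ with $d_K(w',w_0)=n$, which by the inductive hypothesis equals $f_K(v')$ for some $v'$; then $w\in\mathrm{lk}_K(w')=\mathrm{lk}_K(f_K(v'))=f_K(\mathrm{lk}_K(v'))$ by the link-bijectivity of the previous step, so $w$ is in the image. Connectedness of $K$ then gives surjectivity on vertices, and one more application of link-bijectivity gives surjectivity on edges: if $\{w,w'\}$ is an edge, write $w=f_K(v)$; then $w'\in\mathrm{lk}_K(w)=f_K(\mathrm{lk}_K(v))$, so $\{w,w'\}=f_K(\{v,v'\})$ for a suitable neighbour $v'$ of $v$. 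Hence $f_K$ is onto, as claimed.

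\textbf{Main obstacle.} Essentially all the genuine difficulty lies upstream, in Proposition~\ref{p:roses_sent_to_roses}, which makes $f_K$ well-defined; the surjectivity step itself is formal. The one point that must not be overlooked is the use of vertex-transitivity (equivalently, constancy of valence): injectivity alone never forces surjectivity on an infinite graph, so the crux of this lemma is precisely the passage from ``$f_K$ injective'' to ``$f_K$ bijective on every link'', for which the constant finite valence supplied by vertex-transitivity together with local finiteness is indispensable.
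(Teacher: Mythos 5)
Your argument is correct and is essentially the paper's proof, just spelled out in more detail: the paper also observes that all vertex links of $K$ are finite and isomorphic to one another, deduces that local injectivity of $f_K$ implies local surjectivity, and then concludes by connectivity of $K$. Your explicit appeal to vertex-transitivity of the $\Out(F_N)$-action and the induction on combinatorial distance are exactly the intended substance behind those two sentences.
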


\begin{proof}
  As all links of vertices of $K$ are isomorphic to one another and finite, local
  injectivity of $f_K$ implies local surjectivity of
  $f_K$. Connectivity of $K$ then implies surjectivity of $f_K$.
\end{proof}

We are now in position to complete the proof of the main theorems of the section.

\begin{proof}[Proof of Theorems~\ref{theo:graph-maps-ens} and~\ref{theo:graph-maps-ens-2}]
Let $\sigma$ be a one-edge nonseparating free splitting of $F_N$. Then $\sigma$ belongs to a finite set $\{\sigma_1,\dots,\sigma_N\}$ which determines a vertex of $K$. Since $f_K$ is surjective, by definition of $f_K$, it follows that $\sigma$ belongs to the image of $f$, showing surjectivity of $f$ on vertices. 
  
We now know that $f$ is an injective graph map of $\ens$ which is bijective on vertices, and we are left showing that its inverse is also a graph map, in other words that two nonadjacent vertices $v_1,v_2\in\ens$ cannot be sent to two adjacent vertices by $f$.
  
So assume that $f(v_1)$ and $f(v_2)$ are adjacent, and let us prove that $v_1$ and $v_2$ are adjacent. There exists a collection $\{f(v_1),f(v_2),v'_3,\dots,v'_N\}$ of vertices of $\ens$ which determines an $N$-rose. As $f_K$ is bijective, there exist vertices $w_1,\dots,w_N$ of $\ens$ which refine to an $N$-rose, with $v'_i=f(w_i)$ for every $i\in\{1,\dots,N\}$. As $f$ is injective, both $v_1$ and $v_2$ belong to $\{w_1,\dots,w_N\}$. In particular $v_1$ and $v_2$ are adjacent, as claimed.
\end{proof}

\subsection{More about rank $3$}

In the rank $3$ case, we will need to know a little more about the combinatorial properties of $\ens$ in order to prove our subgroup rigidity results later on. The following statement is specific to rank $3$.

\begin{lemma}\label{lemma:clique-rank-3}
Let $N=3$ and let $\Sigma$ be a clique of size 4 in $\ens$. Then the quotient graph of the splitting given by $\Sigma$ is either a cage with 4 edges or a theta graph with a loop attached at one of its vertices. In particular, complementary components of $\Sigma$ are simply connected.
\end{lemma}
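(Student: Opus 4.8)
The plan is to translate the hypotheses into purely graph-theoretic conditions on the quotient graph $\Gamma := S/F_N$, where $S$ is the common refinement of the four splittings in $\Sigma$, and then classify $\Gamma$.

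First I would set things up. Since the four one-edge nonseparating free splittings $\sigma_1,\dots,\sigma_4$ forming $\Sigma$ are pairwise rose compatible (hence pairwise compatible), they are simultaneously realized by four pairwise disjoint, pairwise non-homotopic essential spheres in $M_N$; equivalently they admit a common refinement $S$, a free splitting with exactly four orbits of edges $e_1,\dots,e_4$ (distinct, because distinct vertices of $\ens$ give non-homotopic spheres), and collapsing all but $e_i$ recovers $\sigma_i$. Thus $\Gamma=S/F_N$ is a finite connected graph with $4$ edges and some number $V$ of vertices, carrying a graph-of-groups structure with trivial edge groups, free vertex groups, and fundamental group $F_3$. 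The key translation is: for a pair $i\neq j$, collapsing the two edges other than $e_i,e_j$ in $\Gamma$ produces the quotient graph of the common refinement of $\sigma_i$ and $\sigma_j$ (this common refinement is unique and is precisely that sub-collapse of $S$); since $\sigma_i$ and $\sigma_j$ are adjacent in $\ens$, this quotient is a two-petal rose, i.e. a \emph{one-vertex} graph. As collapsing a set of edges of $\Gamma$ yields a one-vertex graph exactly when that edge set is a connected spanning subgraph of $\Gamma$, and as all $\binom{4}{2}=6$ pairs in $\Sigma$ are $\ens$-adjacent, I conclude that \emph{every} $2$-element subset of $\{e_1,\dots,e_4\}$ is a connected spanning subgraph of $\Gamma$.

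Now the classification is elementary. A connected spanning subgraph has at least $V-1$ edges, so $V\leq 3$; on the other hand the first Betti number is $b_1(\Gamma)=4-V+1=5-V$, and since the vertex groups are free of nonnegative rank with $3=(5-V)+\sum_v\rk(G_v)$, we get $V\geq 2$. If $V=3$, then $\Gamma$ has no loop (a loop at a vertex $u$ together with any other edge $e$ is either disconnected or touches only $\{u\}\cup\partial e$, at most two vertices, so is never connected and spanning when $V=3$); hence each of the four edges joins distinct vertices, but there are only $\binom{3}{2}=3$ such pairs, so two edges are parallel, and a pair of parallel edges spans only two vertices — contradiction. Therefore $V=2$; writing $a,b$ for the numbers of loops at the two vertices and $c\geq 1$ for the number of joining edges, $a+b+c=4$, and if $a+b\geq 2$ one can pick two loops whose union is neither spanning nor connected — contradiction. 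So $a+b\leq 1$: either $(a,b,c)=(0,0,4)$, in which case $\Gamma$ is a cage with four edges, or $\{a,b\}=\{1,0\}$ and $c=3$, in which case $\Gamma$ is a theta graph with a loop attached at one of its vertices. In both cases $b_1(\Gamma)=3$, forcing $\sum_v\rk(G_v)=0$, so all vertex groups of $S$ are trivial (equivalently, every complementary region of the sphere system is simply connected); that is, $\Sigma$ is simple.

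The main obstacle, and the only place real care is needed, is the translation step: one must be sure (i) that the four vertices of $\ens$ contribute four genuinely distinct edges to $S$ and that the common refinement of a compatible pair is the obvious sub-collapse of $S$ (uniqueness of common refinements of compatible one-edge free splittings), and (ii) that an edge-collapse of $\Gamma$ yields a one-vertex graph precisely when the collapsed edges span $\Gamma$ connectedly. It is worth noting that the ``nonseparating'' hypothesis on the individual $\sigma_i$ is not actually used in the classification of the underlying graph: it is automatic, since both a four-edge cage and a theta-plus-loop are $2$-edge-connected, so collapsing all edges but one always leaves a loop. One could instead run the same argument entirely inside the ideal-edge combinatorics of Section~\ref{sec:blowup} via Lemmas~\ref{l:combinatorial_conditions_1} and~\ref{l:combinatorial_conditions_2}, but the direct graph-theoretic route above appears cleanest.
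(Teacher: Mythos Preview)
Your proof is correct, and it takes a genuinely different route from the paper's. The paper argues incrementally: it builds up $\Sigma$ one edge at a time, first observing that $\Sigma_2$ is a two-petal rose with $\mathbb{Z}$ vertex group, then that $\Sigma_3$ is either a three-petal rose or a theta graph with a $\mathbb{Z}$ vertex, and finally enumerating the possible one-edge blow-ups of each (via a figure) and discarding those containing a separating edge or a separating edge-pair. Your argument is global: you translate the clique condition into the single graph-theoretic statement that every two-element edge subset of $\Gamma$ is a connected spanning subgraph, and then classify such graphs directly. This buys you a cleaner, figure-free argument and makes transparent why simplicity ($b_1(\Gamma)=3$) falls out automatically; it also shows, as you note, that the nonseparating hypothesis on the individual $\sigma_i$ is not needed for the classification. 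The paper's incremental approach, by contrast, stays closer to the ideal-edge / blow-up machinery used elsewhere in Section~\ref{sec:blowup}, so it dovetails with the surrounding arguments even if it is less self-contained.
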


\begin{figure}[h]\centering 
\begingroup%
  \makeatletter%
  \providecommand\color[2][]{%
    \errmessage{(Inkscape) Color is used for the text in Inkscape, but the package 'color.sty' is not loaded}%
    \renewcommand\color[2][]{}%
  }%
  \providecommand\transparent[1]{%
    \errmessage{(Inkscape) Transparency is used (non-zero) for the text in Inkscape, but the package 'transparent.sty' is not loaded}%
    \renewcommand\transparent[1]{}%
  }%
  \providecommand\rotatebox[2]{#2}%
  \newcommand*\fsize{\dimexpr\f@size pt\relax}%
  \newcommand*\lineheight[1]{\fontsize{\fsize}{#1\fsize}\selectfont}%
  \ifx\svgwidth\undefined%
    \setlength{\unitlength}{417.30222206bp}%
    \ifx\svgscale\undefined%
      \relax%
    \else%
      \setlength{\unitlength}{\unitlength * \real{\svgscale}}%
    \fi%
  \else%
    \setlength{\unitlength}{\svgwidth}%
  \fi%
  \global\let\svgwidth\undefined%
  \global\let\svgscale\undefined%
  \makeatother%
  \begin{picture}(1,0.11450147)%
    \lineheight{1}%
    \setlength\tabcolsep{0pt}%
    \put(0,0){\includegraphics[width=\unitlength,page=1]{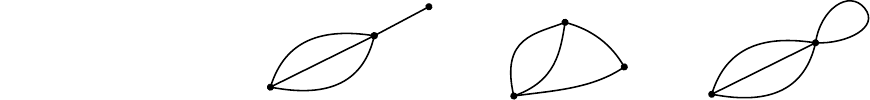}}%
    \put(0.50239242,0.10548711){\color[rgb]{0,0,0}\makebox(0,0)[lt]{\lineheight{1.25}\smash{\begin{tabular}[t]{l}$\mathbb{Z}$\end{tabular}}}}%
    \put(0.73218474,0.03616425){\color[rgb]{0,0,0}\makebox(0,0)[lt]{\lineheight{1.25}\smash{\begin{tabular}[t]{l}$\mathbb{Z}$\end{tabular}}}}%
    \put(0,0){\includegraphics[width=\unitlength,page=2]{blow-ups.pdf}}%
    \put(0.13780568,0.09521708){\color[rgb]{0,0,0}\makebox(0,0)[lt]{\lineheight{1.25}\smash{\begin{tabular}[t]{l}$\mathbb{Z}$\end{tabular}}}}%
  \end{picture}%
\endgroup%
 \caption{The possible one-edge blow-ups of a theta graph with one $\mathbb{Z}$-vertex when $N=3$.}
\label{fig:blow-ups}
\end{figure}

\begin{proof}
Let $\sigma_1, \sigma_2, \sigma_3$, and $\sigma_4$ be the four one-edge splittings in $\Sigma$, and let $\Sigma_i$ be the splitting given by the first $i$ elements of $\Sigma$. As elements of $\Sigma$ are pairwise rose compatible, the quotient graph of $\Sigma_2$ is a rose with 2 petals (and a copy of $\mathbb{Z}$ at its vertex when viewed as a graph of groups). As $\Sigma_3$ is a one-edge blow-up of $\Sigma_2$ and the elements of $\Sigma_3$ are pairwise rose compatible, the quotient graph of $\Sigma_3$ is either a 3-petal rose or a theta graph with $\mathbb{Z}$ at one of its vertices. In the first case, $\Sigma=\Sigma_4$ is a one-edge blow-up of the 3-rose, so is one of the `fat' graphs described in Section~\ref{s:surjectivity}. The only such graphs are the cage with 4 edges and the theta graph with a loop attached at one of its vertices. In the second case, $\Sigma_3$ is a theta graph with $\mathbb{Z}$ at one of its vertices, and has three possible one-edge blow-ups given in Figure~\ref{fig:blow-ups}. The first has a separating edge and the second contains a pair of edges that separate. As $\Sigma$ is a clique in $\ens$ this leaves the third case, which is a theta graph with a loop attached at one of its vertices. Both possibilities for $\Sigma$ have trivial vertex stabilizers.
\end{proof}

The following statement will also be useful.

\begin{lemma}\label{lemma:complex-rank-3}
Let $N=3$. Let $f$ be an injective graph map of $\ens$. Assume that no two compatible splittings are sent to a pair of splittings that intersect exactly once by $f$. 

Then $f$ extends to a graph map of $\ns$ (necessarily preserving the decoration $\calc_r$ given by the edges in $\ens$).
\end{lemma}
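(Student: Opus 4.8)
The goal is to show that, under the hypothesis that $f$ does not send a compatible pair to a pair of splittings meeting in exactly one circle, the map $f$ on vertices of $\ens$ (equivalently, on one-edge nonseparating free splittings) also respects the \emph{circle compatibility} relation, and hence extends to a graph map of $\ns$. Since $\ens$ and $\ns$ have the same vertex set, and the edges of $\ns$ are exactly the rose-compatible edges (already preserved by $f$, as $f$ is a graph map of $\ens$) together with the circle-compatible edges, what must be shown is: if $\sigma,\tau$ are circle compatible, then $f(\sigma),f(\tau)$ are adjacent in $\ns$.

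\begin{proof}
Let $\sigma$ and $\tau$ be two one-edge nonseparating free splittings of $F_3$ that are circle compatible, so they admit a common refinement whose quotient graph is a two-edge loop (a cage with two edges). We must show that $f(\sigma)$ and $f(\tau)$ are adjacent in $\ns$.

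First I would produce, from the circle-compatible pair $(\sigma,\tau)$, an auxiliary one-edge nonseparating free splitting $\rho$ that is rose compatible with both $\sigma$ and $\tau$, and such that the three splittings $\{\sigma,\tau,\rho\}$ refine to a $3$-petal rose. Concretely, the two-edge loop refining $\sigma\cup\tau$ can be blown up at its unique free vertex to a graph whose quotient is a rose if we insert an appropriate third petal; taking $\rho$ to be that third petal, the triple $\{\sigma,\tau,\rho\}$ forms a $3$-rose (this is the usual picture: two of the petals of a $3$-rose, collapsed down pairwise, give rose-compatible one-edge splittings, while suitable circle collapses produce circle-compatible pairs). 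So I can choose $\rho$ with $\sigma,\tau$ circle compatible, $\sigma,\rho$ rose compatible, and $\tau,\rho$ rose compatible, all three refining to a $3$-rose.

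Now apply $f$. Since $f$ is a graph map of $\ens$, the pairs $\{\sigma,\rho\}$ and $\{\tau,\rho\}$ go to rose-compatible pairs, so $f(\sigma),f(\rho),f(\tau)$ are pairwise compatible one-edge nonseparating free splittings, with $f(\sigma)f(\rho)$ and $f(\tau)f(\rho)$ spanning edges of $\ens$. By Proposition~\ref{p:roses_sent_to_roses} (using that $f$ is an injective graph map of $\ens$ and $N=3$ is not covered directly, so here one must instead use the part of the analysis available in rank $3$ — namely Proposition~\ref{prop:higher-valence} together with Lemma~\ref{lemma:clique-rank-3} applied to a suitable clique containing the triple), the common refinement $R$ of $f(\sigma),f(\rho),f(\tau)$ has quotient graph that is either a $3$-petal rose or a theta graph. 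In the rose case, any two of $f(\sigma),f(\tau),f(\rho)$ are rose compatible, so in particular $f(\sigma)$ and $f(\tau)$ are adjacent in $\ens\subseteq\ns$ and we are done. In the theta case, $f(\sigma)$ and $f(\tau)$ are the two one-edge collapses of $R$ that are \emph{not} already known to be rose compatible with $f(\rho)$; inspecting the one-edge collapses of a theta graph, the remaining collapse of a theta graph after removing two edges is a two-edge loop (circle), so $f(\sigma)$ and $f(\tau)$ are circle compatible and again span an edge of $\ns$.

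The one remaining possibility to exclude is that, although $f(\sigma)$ and $f(\tau)$ are compatible as required above, a \emph{different} circle-compatible (or, worse, a compatible-but-refining-to-a-separating-configuration) pair fails to be sent to a compatible pair — i.e. that $f(\sigma)$ and $f(\tau)$ intersect in exactly one circle rather than being disjoint. This is precisely where the standing hypothesis of the lemma enters: since $\sigma$ and $\tau$ are compatible and $f$ sends no compatible pair to a pair intersecting exactly once, $f(\sigma)$ and $f(\tau)$ are disjoint, hence admit a common refinement. The common refinement has quotient graph one of the one-edge blow-ups of a two-petal rose with appropriate structure; because $f(\sigma),f(\tau),f(\rho)$ refine to a rose or theta, the refinement of $f(\sigma)$ and $f(\tau)$ alone is a collapse of that, which (ruling out separating configurations exactly as in the proof of Lemma~\ref{lemma:clique-rank-3}) must be a two-petal rose or a two-edge loop; either way $f(\sigma)$ and $f(\tau)$ are adjacent in $\ns$. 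Finally, since $f$ already preserves the rose-compatible edges and we have just shown it preserves the circle-compatible edges, $f$ extends to a graph map of $\ns$, and this extension preserves the decoration $\calc_r$ because its edges in $\ens$ are, by definition, sent to edges in $\ens$.

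The main obstacle I anticipate is the bookkeeping in the theta-versus-rose case analysis: one must carefully match the combinatorial blow-up data (which collapse of a theta graph is a circle, which is a rose) to the sphere-system picture, and invoke the rank-$3$ structural result Lemma~\ref{lemma:clique-rank-3} to rule out refinements with separating edges or separating edge-pairs. The hypothesis on compatible pairs is exactly the input needed to upgrade ``$f(\sigma),f(\tau)$ compatible'' (automatic) to ``$f(\sigma),f(\tau)$ disjoint,'' which is what allows the clean conclusion.
\end{proof}
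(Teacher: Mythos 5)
Your construction of the auxiliary splitting $\rho$ is flawed, and this gap is fatal. You want $\{\sigma,\tau,\rho\}$ to refine to a $3$-petal rose while $\sigma$ and $\tau$ are circle compatible. But these conditions are contradictory: if three one-edge nonseparating free splittings refine to a $3$-petal rose, then collapsing any one petal of that rose gives a $2$-petal rose, so \emph{every pair} among $\sigma,\tau,\rho$ is rose compatible. In particular $\sigma$ and $\tau$ would be rose compatible, not circle compatible. The ``two-edge loop refining $\sigma\cup\tau$'' has two vertices each carrying a rank-$1$ vertex group (their ranks must sum to $2$ since the quotient graph has first Betti number $1$), and neither vertex is free, so there is no ``unique free vertex'' to blow up into a third petal. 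The triple $\{\sigma,\tau,\rho\}$ you need simply does not exist.

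There is also a circularity problem downstream: you invoke Proposition~\ref{p:roses_sent_to_roses} in the case $N=3$, but that proposition's hypothesis for $N=3$ is precisely that $f$ extends to a graph map of $\ns$ --- which is the conclusion of the lemma you are trying to prove. Similarly, your early assertion that ``$f(\sigma),f(\rho),f(\tau)$ are pairwise compatible'' is unjustified at that stage, since $f$ is only known to be a graph map of $\ens$ and $\sigma,\tau$ are not adjacent in $\ens$.

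The paper's actual argument is shorter and sidesteps all of this: two circle-compatible splittings $S,S'$ share a $4$-clique $\Sigma$ in their common link in $\ens$; since $f$ is a graph map of $\ens$, $f(\Sigma)$ is again a $4$-clique, and Lemma~\ref{lemma:clique-rank-3} shows it is \emph{simple} (trivial vertex groups). Once the vertex groups are trivial, any two one-edge splittings compatible with $f(\Sigma)$ either are disjoint or meet in exactly one circle, and the standing hypothesis excludes the latter. You should locate such a $4$-clique instead of the nonexistent rose refinement.
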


\begin{proof}
As we already know that $f$ gives an injective graph map of $\ens$, we only need to show that any two circle compatible one-edge nonseparating free splittings are sent to compatible splittings by $f$. Such splittings $S$ and $S'$ contain a clique $\Sigma$ of size four in their common link in $\ens$ (see, for example, the proof of \cite[Theorem~3.4]{HW}). The image of this clique under $f$ gives a splitting $f(\Sigma)$ compatible with both $f(S)$ and $f(S')$. Since $f(\Sigma)$ is also a clique of size $4$ in $\ens$, it follows from Lemma~\ref{lemma:clique-rank-3} that every complimentary component of $f(\Sigma)$ is simply connected, so that any two one-edge splittings that are both compatible with $f(\Sigma)$ are either compatible or determine spheres that intersect exactly once. As we have assumed that no two compatible splittings are sent to splittings that intersect exactly once by $f$, it follows that $f(S)$ and $f(S')$ are compatible, as desired. 
\end{proof}

\section{Commuting normal subgroups and invariant splittings}\label{sec:commuting-normal}

\emph{Stabilizers of one-edge nonseparating free splittings in $\ia$ contain two commuting normal free groups coming from the group of twists. The goal of this section is to use actions on free factor complexes to establish partial converses to this. In particular, we explain how commuting infinite normal subgroups can lead to a group having invariant splittings: this is the contents of Proposition~\ref{prop:splitting-stabilized}.}

\subsection{Commuting normal subgroups and actions on hyperbolic spaces}

We start with a general statement asserting that commuting normal subgroups give restrictions on possible actions on hyperbolic spaces. This is a variation of \cite[Proposition~4.2]{HW}.

\begin{lemma}\label{lemma:action-on-hyperbolic}
Let $X$ be a Gromov hyperbolic space, and let $H$ be a group acting by isometries on $X$. Assume that $H$ contains two normal subgroups $K_1$ and $K_2$ that centralize each other.

If $H$ has unbounded orbits in $X$ then there exists $i\in\{1,2\}$ such that $K_i$ has a finite orbit in $\partial_\infty X$. 
\end{lemma}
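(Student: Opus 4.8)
The plan is to classify the actions of the normal subgroups $K_1$ and $K_2$ according to the standard trichotomy for group actions on Gromov hyperbolic spaces (elliptic/bounded orbits, parabolic or lineal with a fixed point or pair of points at infinity, or non-elementary with unbounded orbits and no finite orbit in $\partial_\infty X$), and to rule out the last possibility for at least one of them.

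First I would recall that if a group $K$ acting on a hyperbolic space $X$ has unbounded orbits and does not fix a finite subset of $\partial_\infty X$, then $K$ contains two loxodromic isometries $g_1, g_2$ whose fixed-point sets in $\partial_\infty X$ are disjoint; moreover the centralizer of such a $K$ acts with bounded orbits. Indeed, an isometry $h$ commuting with a loxodromic $g$ must preserve the pair $\{g^+, g^-\}$ of fixed points of $g$; if $h$ commutes with both $g_1$ and $g_2$ with $\{g_1^\pm\} \cap \{g_2^\pm\} = \emptyset$, then $h$ fixes four points of $\partial_\infty X$, hence fixes $\partial_\infty X$ pointwise (in a hyperbolic space an isometry fixing three points at infinity is elliptic/has bounded orbits and in fact any loxodromic moves at least one of them), so $h$ has bounded orbits; and a group all of whose elements have bounded orbits has bounded orbits by a standard argument (e.g.\ this is where one uses that $X$ is hyperbolic — the orbit of a point under such a group is quasiconvex and bounded).

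So the argument runs as follows. Suppose neither $K_1$ nor $K_2$ has a finite orbit in $\partial_\infty X$. If $K_1$ also has unbounded orbits, then by the dichotomy above $K_1$ is non-elementary and contains two independent loxodromics; since $K_2$ centralizes $K_1$, the previous paragraph shows $K_2$ has bounded orbits in $X$. Symmetrically, if $K_2$ has unbounded orbits then $K_1$ has bounded orbits. Hence at least one $K_i$, say $K_2$, has bounded orbits. But then, since $K_2 \trianglelefteq H$, the $H$-orbit of a $K_2$-bounded set is a union of bounded sets permuted by $H$; using that $K_1$ centralizes $K_2$ and an acylindricity-free argument — actually more simply: a bounded $K_2$-orbit has a coarsely well-defined "center" (quasi-center) in $X$ which is $K_2$-invariant up to bounded error, hence the set of such quasi-centers is $H$-invariant and... here one must be careful. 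The cleanest route is: a group with bounded orbits on a hyperbolic space fixes a point in $\partial_\infty X \cup \{\text{quasi-centers}\}$; if $K_2$ is bounded it has a quasi-center set of bounded diameter, which is permuted by the normalizer $H$, so $H$ has bounded orbits too — contradicting the hypothesis that $H$ has unbounded orbits. Wait, that would contradict the hypothesis directly, which would make the conclusion vacuous; so the genuinely correct statement must be that $K_2$ bounded forces $K_1$ (being normal in $H$ and centralizing $K_2$) into a constrained position — one shows $K_1$ has unbounded orbits (else $H = K_1 K_2 \cdot(\dots)$ reasoning fails), is therefore non-elementary or lineal, and if non-elementary we again get $K_2$ was already handled; if $K_1$ is lineal/parabolic it has a finite orbit in $\partial_\infty X$, which is the desired conclusion. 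I would therefore organize the proof as: (1) either some $K_i$ is non-elementary, in which case the other has bounded orbits; (2) reduce to the case that the non-bounded $K_i$'s are all elementary (lineal or parabolic), which by definition means they have a finite orbit in $\partial_\infty X$; (3) check the edge case where one $K_i$ is bounded — then it suffices that the other has a finite orbit at infinity, which holds unless it is non-elementary, but a non-elementary group centralized by a normal subgroup... and conclude.

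The main obstacle I expect is the bookkeeping in the case analysis and getting the centralizer-of-non-elementary-group lemma stated and proved cleanly in the level of generality needed (arbitrary, possibly non-proper, non-geodesic Gromov hyperbolic $X$), since one cannot invoke properness; this is exactly the content of \cite[Proposition~4.2]{HW}, so I would lean on the classification of isometric actions on hyperbolic spaces (à la Gromov, Caprace--Cornulier--Monod--Tessera) and cite it, then spend the bulk of the proof on extracting the statement we need: if $K \trianglelefteq H$, $K' = C_H(K) \trianglelefteq H$, and $H$ has unbounded orbits, then one of $K, K'$ has a finite orbit in $\partial_\infty X$. The key is that two commuting groups cannot both be non-elementary on a hyperbolic space, and a normal subgroup of $H$ that is elliptic (bounded) or lineal or parabolic either has a finite orbit in $\partial_\infty X$ or forces $H$ to have bounded orbits; tracking which of these alternatives occurs for $K_1$ versus $K_2$ is the crux.
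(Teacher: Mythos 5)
Your overall strategy --- running a case analysis on the Gromov classification of isometric actions on hyperbolic spaces --- is the same as the paper's, and you correctly identify the key mechanism that commuting with a loxodromic isometry forces preservation of its pair of fixed points in $\partial_\infty X$. However, your write-up leaves a genuine gap in the critical case and also takes an unnecessary detour in the easy case.

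\textbf{The easy case: some $K_i$ contains a loxodromic.} You argue that if $K_1$ is non-elementary then its centralizer, and hence $K_2$, has \emph{bounded orbits}. But the conclusion you actually need is far cheaper, and you already have it in hand: if $\Phi \in K_1$ is loxodromic, then every element of $K_2$ commutes with $\Phi$ and therefore preserves the set $\{\Phi^{-\infty}, \Phi^{+\infty}\}$, so $K_2$ already has a finite orbit in $\partial_\infty X$ and you are done at once. Routing instead through ``$K_2$ has bounded orbits'' does not immediately contradict your supposition that $K_2$ has no finite orbit at infinity (a bounded-orbit action may have empty limit set), which is precisely why you then get stuck and start backtracking. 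Note also that the paper's version of the classification is ``contains a loxodromic, or has a finite orbit in $\partial_\infty X$, or has bounded orbits in $X$'' --- a trichotomy that dovetails with the loxodromic shortcut more cleanly than the elliptic/parabolic/lineal/non-elementary taxonomy.

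\textbf{The hard case: both $K_1$ and $K_2$ have bounded orbits.} This is where your proposal has an actual gap. You attempt to argue that a normal subgroup $K_2 \unlhd H$ with bounded orbits forces $H$ to have bounded orbits (via quasi-centers), notice that this would make the hypothesis of the lemma vacuous, sense that something is wrong, and then trail off without a resolution. The quasi-center argument is in fact false: the quasi-center of a single $K_2$-orbit is $K_2$-invariant but not $H$-invariant, and the full set of quasi-centers of all $K_2$-orbits can be unbounded (take, e.g., $H = \mathbb{Z}^2$ acting on $\mathbb{R}$ through one coordinate, with $K_2$ the kernel). What is actually true, and what the paper invokes as \cite[Lemma~4.3]{HW}, is this: if $K \unlhd H$ has bounded orbits in a hyperbolic space $X$ and $H$ has unbounded orbits, then $K$ fixes the limit set of $H$ in $\partial_\infty X$ pointwise --- and that limit set is nonempty since $H$ has unbounded orbits. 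The proof is a three-line normality trick: pick $\xi$ in the limit set and $h_n \in H$ with $h_n x_0 \to \xi$; for $k \in K$, normality gives $d(k h_n x_0, h_n x_0) = d(h_n^{-1} k h_n\, x_0, x_0) \le \mathrm{diam}(K x_0)$, so $k h_n x_0 \to \xi$ and hence $k \xi = \xi$. With this lemma both $K_1$ and $K_2$ have a fixed point (and in particular a finite orbit) in $\partial_\infty X$, completing the proof. Until you supply this ingredient, your argument does not close the case on which the whole lemma actually turns.
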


\begin{proof}
Gromov's classification of actions on hyperbolic spaces (see e.g.\ \cite[Proposition~3.1]{CCMT}) tells us that each $K_i$ either contains a loxodromic element, has a finite orbit in $\partial_\infty X$, or has bounded orbits in $X$. If $\Phi\in K_1$ acts loxodromically on $X$, then the pair $\{\Phi^{-\infty},\Phi^{+\infty}\}$ is $K_2$-invariant in  $\partial_\infty X$. Likewise, if $K_2$ contains a loxodromic element then $K_1$ has a finite orbit in  $\partial_\infty X$. This leaves the case where both $K_1$ and $K_2$ have bounded orbits in $X$. As $H$ has unbounded orbits, and as each $K_i$ is normal in $H$ and has bounded orbits in $X$, it follows from \cite[Lemma~4.3]{HW} that each $K_i$ fixes a point in $\partial_\infty X$ (in fact each $K_i$ fixes the entire limit set of $H$ in $\partial_\infty X$, which is nonempty). 
\end{proof}

\subsection{Commuting normal subgroups and invariant factors}

\begin{prop}\label{prop:commuting-normal-factor}
Let $H\subseteq\ia$ be a noncyclic subgroup. Assume that $H$ contains two normal infinite subgroups $K_1$ and $K_2$ that centralize each other.

Then there exists a proper free factor of $F_N$ whose conjugacy class is $H$-invariant. 
\end{prop}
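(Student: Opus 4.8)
The plan is to feed the hypothesis into the hyperbolicity machinery of the previous subsection, applied to free factor graphs. First I would set $\calf$ to be a maximal $H$-invariant free factor system of $F_N$ (this exists since the trivial free factor system is $H$-invariant, and one can enlarge). The goal is equivalent to showing $\calf\neq\emptyset$: indeed if $\calf$ contains the conjugacy class of some free factor $A$, then as $A$ is a point stabilizer in a nontrivial free splitting it is a proper free factor (it is nontrivial and, since the splitting is nontrivial with trivial edge stabilizers, $A\neq F_N$), and its conjugacy class is $H$-invariant by construction. So suppose for contradiction that $\calf=\emptyset$. Then $\calf$ is in particular nonsporadic, so Proposition~\ref{prop:unbounded-orbits-in-ff} applies: the $H$-action on $\FF(F_N,\calf)=\FF(F_N,\emptyset)$ has unbounded orbits. (Here one uses $H\subseteq\ia$ crucially, both for this proposition and for the Handel--Mosher input below.)

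Next I would apply Lemma~\ref{lemma:action-on-hyperbolic} to the action of $H$ on the hyperbolic graph $\FF(F_N,\calf)$, with the two commuting normal subgroups $K_1,K_2$. Since the $H$-orbits are unbounded, the lemma yields an index $i\in\{1,2\}$ such that $K_i$ has a finite orbit in $\partial_\infty\FF(F_N,\calf)$. Passing to a finite-index subgroup $K_i'\unlhd K_i$ that fixes a point of the boundary, Lemma~\ref{lemma:boundary-tree} provides a further finite-index subgroup of $K_i'$ — hence a finite-index subgroup $K_i''\subseteq K_i$ — that fixes the homothety class of an arational $(F_N,\calf)$-tree $T$; since $\calf=\emptyset$, this $T$ is an arational tree in the absolute sense. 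Now I would invoke the structure of stabilizers of arational trees: by the description of equivalence classes of arational trees (the reference \cite{GH2} used for Lemma~\ref{lemma:boundary-tree}), such a stabilizer either is virtually cyclic, or virtually fixes a proper free factor, or... — more precisely, the key dichotomy is that the stabilizer of the homothety class of an arational tree, if it is not virtually cyclic, fixes the conjugacy class of a proper free factor. Applying this to $K_i''$: if $K_i''$ virtually fixes a proper free factor $A$, then since $K_i''$ has finite index in the infinite group $K_i$ which is normal in $H$, the conjugacy class of $A$ is invariant under a finite-index subgroup of $H$, and Theorem~\ref{theo:hm} (Handel--Mosher) upgrades this to $H$-invariance of $[A]$, contradicting $\calf=\emptyset$.

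The remaining case is that $K_i''$ — and hence $K_i$ up to finite index — is virtually cyclic. This is where I expect the main obstacle to lie, and it is the reason the hypothesis ``$H$ noncyclic'' appears. The resolution should go as follows: if $K_i$ is virtually cyclic and normal in $H$, then $H$ virtually normalizes a cyclic group, so $H$ has a finite-index subgroup $H_0$ normalizing (hence, after a further finite-index pass, centralizing or acting with order at most $2$ on) an infinite cyclic subgroup $\langle\psi\rangle$ with $\psi$ of infinite order. An infinite-order element of $\ia$ that is normalized up to finite index is either loxodromic on some $\FF(F_N,\calf')$ or fixes a free factor system; running the same dichotomy on $\langle\psi\rangle$ (which does fix a point at infinity, being virtually cyclic with unbounded... ) — actually the cleanest route is: a virtually-cyclic normal subgroup $K_i$ of $H$ forces, via the finite orbit in $\partial_\infty\FF$ combined with Lemma~\ref{lemma:boundary-tree}, that the fixed arational tree $T$ has virtually cyclic stabilizer containing $H$ up to... no. Let me instead argue directly: since both $K_1$ and $K_2$ are infinite, if $K_i$ is virtually cyclic then the centralizer of $K_i$ in $\Out(F_N)$ contains the infinite group $K_{3-i}$; but one can show (using that an infinite-order element of $\ia$ with virtually cyclic centralizer-behaviour is either fully irreducible or preserves a free factor system, and fully irreducible elements have virtually cyclic centralizer) that this is incompatible with $H$ being noncyclic unless some proper free factor is invariant. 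I would make this precise by splitting on whether $\psi$ (a generator of the infinite cyclic part of $K_i$) is fully irreducible: if it is, its commensurator in $\Out(F_N)$ is virtually cyclic, forcing $H$ virtually cyclic, contradicting the hypothesis; if it is not fully irreducible, then $\psi$ preserves some nontrivial free factor system, and a standard argument (its canonical ``smallest'' invariant free factor system, cf.\ Handel--Mosher) produces a $\psi$-invariant proper free factor whose conjugacy class is then invariant under a finite-index subgroup of $H$, and Theorem~\ref{theo:hm} finishes as before. Either way we contradict $\calf=\emptyset$, completing the proof.
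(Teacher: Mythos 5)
Your proposal is correct and takes essentially the same route as the paper: form a maximal $H$-invariant free factor system $\calf$, assume it is empty, get unbounded $H$-orbits on $\FF(F_N,\emptyset)$ by Proposition~\ref{prop:unbounded-orbits-in-ff}, use Lemma~\ref{lemma:action-on-hyperbolic} to find $K_i$ with a finite orbit in $\partial_\infty\FF$, pass via Lemma~\ref{lemma:boundary-tree} to a stabilized arational tree, and derive a contradiction from the fact that normalizers of cyclic subgroups generated by fully irreducible elements are virtually cyclic. The one inefficiency is your extra case analysis: for $\calf=\emptyset$, the $\Out(F_N)$-stabilizer of the projective class of an arational tree is \emph{always} virtually cyclic (by Reynolds' classification of arational trees together with Kapovich--Lustig in the free case, or containment in a mapping class group in the surface-dual case), and a nontrivial element of it is automatically fully irreducible; so the ``fixes a proper free factor'' horn of your dichotomy and your ``$\psi$ not fully irreducible'' subcase never occur. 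This is fortunate, since your sketch for the latter subcase --- that a $\psi$-invariant proper free factor would be invariant under a finite-index subgroup of $H$ --- is not straightforward: $H$ virtually normalizing $\langle\psi\rangle$ only lets $H$ permute the (generally non-canonical) $\psi$-invariant free factor systems, so one would need a canonical choice to run Theorem~\ref{theo:hm}.
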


\begin{proof}
Assume for a contradiction that $H$ does not preserve the conjugacy class of any proper free factor.  
By Proposition~\ref{prop:unbounded-orbits-in-ff}, the group $H$ acts on the free factor graph $\FF(=\FF(F_N,\emptyset))$ with unbounded orbits. Lemma~\ref{lemma:action-on-hyperbolic} then implies that either $K_1$ or $K_2$ fixes a point in $\partial_\infty\FF$. But stabilizers of points in $\partial_\infty\FF$ are virtually cyclic: indeed, if $K\subseteq\Out(F_N)$ fixes a point in $\partial_\infty\FF$, then Lemma~\ref{lemma:boundary-tree} shows that $K$ has a finite-index subgroup that fixes the projective class $[T]$ of an arational $F_N$-tree; by \cite{Rey}, either $T$ is a free $F_N$-action, in which case \cite{KL} implies that the $\Out(F_N)$-stabilizer of $[T]$ is virtually cyclic, or else $T$ is dual to an arational foliation on a surface with one boundary component, and the $\Out(F_N)$-stabilizer of $[T]$ is contained in the corresponding mapping class group, whence virtually cyclic. So either $K_1$ or $K_2$ (say $K_1$) is cyclic (being virtually cyclic and contained in the torsion-free group $\ia$), and in fact generated by a fully irreducible automorphism. Then $H$, which is contained in the normalizer of $K_1$ in $\ia$, is cyclic and generated by a fully irreducible automorphism, a contradiction. 
\end{proof}

\subsection{Commuting normal subgroups and invariant splittings}

Given $\Phi\in\Out(F_N)$, a splitting $U$ of $F_N$ is $\Phi$-invariant if for every lift $\phi$ of $\Phi$ in $\Aut(F_N)$, there exists a (unique) isometry $I_\phi$ of $U$ such that for every $x\in U$ and every $g\in F_N$, one has $I_\phi(gx)=\phi(g)I_\phi(x)$. The following proposition, which develops on \cite{GH}, will be of crucial importance in the present paper.

\begin{prop}\label{prop:splitting-stabilized}
Let $H\subseteq\ia$ be a subgroup. Assume that $H$ contains two normal infinite subgroups $K_1$ and $K_2$ that centralize each other. Assume in addition that $H$ contains a direct product of $2N-4$ nonabelian free groups. 

Then there exists a nontrivial $H$-invariant splitting $U$ of $F_N$ such that $H$ acts trivially on the quotient graph $U/F_N$. Furthermore, the vertex set $V(U)$ of $U$ has an $F_N$-invariant partition $V(U)=V_1\dunion V_2$ with the following properties: 
\begin{enumerate}
\item there exists $i\in\{1,2\}$ such that for every vertex $v\in V_1$, every outer automorphism in $K_i$ has a representative in $\Aut(F_N)$ which restricts to the identity on $G_v$, and
\item the collection of all conjugacy classes of stabilizers of vertices in $V_2$ is a free factor system of $F_N$.
\end{enumerate}
\end{prop}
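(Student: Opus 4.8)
The plan is to combine Proposition~\ref{prop:commuting-normal-factor} with the construction from \cite{GH} that associates an invariant splitting to a family of invariant free factors. First I would apply Proposition~\ref{prop:commuting-normal-factor}: since $H\subseteq\ia$ contains a direct product of $2N-4$ nonabelian free groups, it is certainly noncyclic, so there is a proper free factor of $F_N$ whose conjugacy class is $H$-invariant. Then I would look at the set $\mathcal{S}$ of all one-edge free splittings of $F_N$ that are invariant under some finite-index subgroup of $H$; by Theorem~\ref{theo:ia} each such splitting is in fact $H$-invariant and $H$ acts trivially on its quotient graph. The existence of an invariant proper free factor means $\mathcal{S}$ is nonempty. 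The next step is to invoke the construction of \cite{GH}: to a collection of free splittings with infinite elementwise stabilizer, one can canonically associate a nontrivial splitting $U$ of $F_N$, which will inherit $H$-invariance and the property that $H$ acts trivially on $U/F_N$. (Here one must check that the relevant elementwise stabilizer is infinite — but $K_1$, say, stabilizes everything in sight and is infinite, so this holds.)

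Next I would unpack the structure of the splitting $U$ produced by the \cite{GH} construction. The key point of that construction is that $U$ has a natural bipartite-like structure: its vertices split into those whose stabilizers are the "free factor" pieces and those which are the "collapse" vertices. Concretely I expect to get an $F_N$-invariant partition $V(U)=V_1\sqcup V_2$, where the stabilizers of vertices in $V_2$ form a free factor system (this is built into the \cite{GH} output: the $V_2$-stabilizers are exactly the elliptic subgroups coming from the original free splittings, hence a free factor system), and the $V_1$-vertices are the ones on which the automorphisms act by (inner) automorphisms that can be taken to restrict to the identity. For conclusion (1), I would argue as follows: $K_1$ and $K_2$ centralize each other and are both normal in $H$; applying Lemma~\ref{lemma:action-on-hyperbolic} together with the analysis inside Proposition~\ref{prop:commuting-normal-factor}, one learns that one of the $K_i$ — say $K_1$ — fails to act "spreadingly" and in particular, by the \cite{GH} machinery, has representatives acting as the identity on the $V_1$-vertex groups. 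The content is that $K_i$ is, up to conjugacy, supported away from the $V_1$ part.

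The main obstacle I anticipate is establishing conclusion (1) with the correct quantifiers, namely that \emph{every} outer automorphism in $K_i$ has an $\Aut(F_N)$-representative restricting to the identity on $G_v$ for \emph{every} $v\in V_1$ simultaneously. This requires more than just $H$-invariance of $U$: it requires understanding how $K_i$ acts on vertex groups, which is where the finer properties of the \cite{GH} construction (and the fact that $U$ is built from \emph{free} splittings, so vertex groups in $V_1$ are free factors on which twisting is controlled) come into play. One route is to note that $U$ can be chosen so that collapsing $V_2$-to-$V_1$ edges recovers the original invariant free splittings, and then the action of $K_i$ on a $V_1$-vertex group is constrained because $K_i$ acts trivially on the quotient graph of each such collapse and lies in $\ia$; a careful application of Theorem~\ref{theo:ia} to each collapse, combined with the identification of which side $K_i$ "moves," should pin down that $K_i$ acts by the identity on the $G_v$ with $v\in V_1$. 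I would also need to double-check that the partition into $V_1$ and $V_2$ is genuinely $F_N$-invariant and that the $V_2$-stabilizers really do form a free factor system rather than merely a collection of free factors — this is a standard feature of the \cite{GH} output but deserves an explicit citation. The hypothesis that $H$ contains a direct product of $2N-4$ nonabelian free groups is, I expect, used only to rule out degenerate outcomes (e.g. to guarantee the invariant free factor is proper and that $U$ is genuinely nontrivial), so it should not enter the delicate part of the argument.
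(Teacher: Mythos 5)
There is a genuine gap, and it occurs at the very first step. You claim that ``the existence of an invariant proper free factor means $\mathcal{S}$ is nonempty,'' where $\mathcal{S}$ is the set of (virtually) $H$-invariant one-edge free splittings. This does not follow. Proposition~\ref{prop:commuting-normal-factor} produces an $H$-invariant conjugacy class of a proper free factor $A$, but unless $A$ has corank one (or the maximal $H$-invariant free factor system $\calf$ is sporadic), there is no canonical free splitting attached to $[A]$: complementary factors are not unique, and the whole deformation space of free splittings with elliptic family $\calf$ is $H$-invariant without any individual tree in it being $H$-invariant. So $\mathcal{S}$ can perfectly well be empty, and the entire construction you want to feed into Theorem~\ref{theo:canonical-splitting} is missing its input.

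This is precisely the gap that the actual proof is designed to close. The paper does \emph{not} look for $H$-invariant free splittings; instead, it shows that one of the normal subgroups $K_i$ alone fixes a free splitting, and then takes $\calc$ to be the (nonempty) set of $K_i$-invariant free splittings --- normality of $K_i$ then yields $H$-invariance of $U_\calc$. To show that some $K_i$ fixes a free splitting one runs the action of $H$ on the relative free factor graph $\FF(F_N,\calf)$: Lemma~\ref{lemma:action-on-hyperbolic} gives a finite orbit for some $K_i$ in $\partial_\infty\FF$, and one then rules out the ``loxodromic'' case (which would force $H$ to virtually fix the homothety class of an arational tree) using the $2N-4$ product hypothesis via \cite[Lemma~6.5]{HW}. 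This is exactly the opposite of your parenthetical remark that the $2N-4$ hypothesis ``should not enter the delicate part of the argument'' --- it is the pivot of the delicate part. Finally, once one is in the correct setting, your worry about conclusion (1) is actually the easy part: it is literally item (1)(b) of Theorem~\ref{theo:canonical-splitting} applied to the elementwise stabilizer of $\calc$, which contains $K_i$ by construction; no separate collapse-and-invoke-Theorem~\ref{theo:ia} argument is needed there. Where Theorem~\ref{theo:ia} (and $H\subseteq\ia$) genuinely gets used is in the last step, to upgrade ``$H$ permutes the quotient graph'' to ``$H$ fixes it pointwise.''
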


\begin{remark}
We warn the reader that we do not claim anything about edge stabilizers of the splitting $U$, in particular $U$ need not be a free splitting or even a cyclic splitting of $F_N$.  
\end{remark}

In our proof of Proposition~\ref{prop:splitting-stabilized}, we will make use of the following theorem of Guirardel and the second named author (given by applying \cite[Theorem~6.12]{GH} with $H$ equal to the elementwise stabilizer of $\mathcal{C}$).

\begin{theo}[{\cite[Theorem~6.12]{GH}}]\label{theo:canonical-splitting}
There exists an $\Out(F_N)$-equivariant map which assigns to every collection $\mathcal{C}$ of free splittings of $F_N$ whose elementwise $\Out(F_N)$-stabilizer is infinite, a nontrivial splitting $U_\calc$ of $F_N$ whose vertex set $V(U_\calc)$ has an $F_N$-invariant partition $V(U_\calc)=V_1\dunion V_2$ with the following properties: 
\begin{enumerate}
\item For every vertex $v\in V_1$, the following hold:
\begin{enumerate}
\item either there exists an edge with trivial stabilizer incident on $v$, or the vertex stabilizer $G_v$ has a nontrivial Grushko decomposition relative to its incident edge stabilizers,
\item every outer automorphism in the elementwise stabilizer of the collection $\calc$ in $\ia$ has a representative in $\Aut(F_N)$ which restricts to the identity on $G_v$.
\end{enumerate}
\item The collection of all conjugacy classes of stabilizers of vertices in $V_2$ is a free factor system of $F_N$.
\end{enumerate}
\end{theo}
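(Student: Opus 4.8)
The plan is to construct $U_\calc$ by a recipe that is visibly canonical in $\calc$, so that $\Out(F_N)$-equivariance is automatic: if $U_\calc$ is singled out by a universal property among splittings satisfying constraints depending only on $\calc$, then $\Phi\cdot U_\calc$ satisfies the corresponding constraints for $\Phi\cdot\calc$, whence $U_{\Phi\calc}=\Phi\cdot U_\calc$. Throughout, write $H=H_\calc$ for the elementwise $\Out(F_N)$-stabilizer of $\calc$, which is infinite by hypothesis, and $H_0=H\cap\ia$, still infinite since $\ia$ has finite index; note $H_\calc$ (and $H_0$) depends only on $\calc$, with $\Phi H_\calc\Phi^{-1}=H_{\Phi\calc}$, so it is legitimate to use $H$ in the construction.

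\emph{Step 1: a canonical free factor system.} For each $S\in\calc$, let $\calf_S$ be the free factor system formed by the conjugacy classes of nontrivial vertex stabilizers of $S$. Each element of $H$ acts on $S$ by an isometry permuting the orbits of vertices, so $H$ preserves $\calf_S$; hence $H$ preserves the meet $\calf_{\min}:=\bigwedge_{S\in\calc}\calf_S$, i.e.\ the maximal elements among conjugacy classes of free factors elliptic in every $S\in\calc$. By Handel--Mosher (Theorem~\ref{theo:hm}), since $H_0\subseteq\ia$ preserves the finite set of components of $\calf_{\min}$, it preserves the conjugacy class of each component. The hypothesis $|H|=\infty$ enters precisely to guarantee that $\calf_{\min}$ is nontrivial: if some $S\in\calc$ had $\calf_S=\emptyset$ --- for instance an $N$-petal rose --- then by Levitt's exact sequence (Proposition~\ref{prop:Levitt}) $\Stab_{\Out(F_N)}(S)$ would be finite, contradicting $|H|=\infty$; more generally, if $\calf_{\min}$ were trivial one shows the common stabilizer of $\calc$ is finite. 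So $\calf_{\min}$ is a proper, $H$-invariant free factor system naturally attached to $\calc$.

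\emph{Step 2: assembling $U_\calc$ and properties (1)(a), (2).} Build $U_\calc$ by taking the Grushko decomposition of $F_N$ relative to $\calf_{\min}$ and, inside each of its vertex groups that is freely indecomposable relative to $\calf_{\min}$, replacing that vertex by a canonical decomposition relative to the incident $\calf_{\min}$-peripheral structure (a JSJ-type or tree-of-cylinders-type construction --- this is where non-free, indeed non-cyclic, edge groups can appear, as warned in the Remark); then glue along the $\calf_{\min}$-peripheral subgroups. The resulting splitting is nontrivial since $\calf_{\min}$ is proper, and $H$-invariant by canonicity. Set $V_2$ to be the vertices whose stabilizer is trivial or conjugate to a component of $\calf_{\min}$, and $V_1$ the remaining vertices. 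Property~(2) is immediate: the stabilizers of $V_2$-vertices are exactly the members of $\calf_{\min}$ (up to conjugacy) together with trivial groups, a free factor system. Property~(1)(a) holds because the $V_1$-vertices are rigid only with respect to splittings having non-trivial edge groups, so each such $G_v$ still splits freely relative to its incident edge stabilizers. (As a sanity check: when $\calc=\{S\}$ with $S$ a one-edge nonseparating free splitting $F_N=A\ast$, one has $\calf_{\min}=\{[A]\}$ and $U_\calc=S$, with $V_1=\emptyset$.)

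\emph{Step 3 and the main obstacle: the action of $H_0$ on $V_1$-vertex groups.} This is Property~(1)(b) and the crux of the argument. Using that $H_0$ fixes each $S\in\calc$ while acting trivially on $S/F_N$ (Theorem~\ref{theo:ia}) and preserves the conjugacy class of each $\calf_{\min}$-component, together with canonicity of the construction, one checks that $H_0$ preserves $U_\calc$ and acts trivially on the quotient graph $U_\calc/F_N$; thus each $\Phi\in H_0$ has a representative preserving the conjugacy class of every $G_v$ and inducing $\overline{\Phi}_v\in\Out(G_v)$. Fix $v\in V_1$. Restricting each $S\in\calc$ to the action of $G_v$ on its minimal subtree yields a collection $\calc_v$ of free splittings of $G_v$ (relative to its incident edge groups) whose elementwise stabilizer contains the image of $H_0$. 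If the $H_0$-action on the corresponding relative free factor graph of $G_v$ had unbounded orbits, then $H_0$ would preserve no proper relative free factor of $G_v$, and one could enlarge $\calc_v$ to a strictly finer canonical decomposition --- contradicting the maximality built into Step~2. Hence $H_0|_{G_v}$ has bounded orbits there, and the relative analogues of the arguments recalled in Section~\ref{sec:background} (boundary of the relative free factor graph via relatively arational trees, Lemma~\ref{lemma:boundary-tree}, \cite{Rey}, \cite{KL}), together with Handel--Mosher (Theorem~\ref{theo:hm}) to kill finite-order phenomena inside $\ia$, force $\overline{\Phi}_v$ to be trivial; choosing representatives coherently (possible since $H_0$ acts trivially on $U_\calc/F_N$) yields Property~(1)(b). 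The main difficulty is exactly this step: Handel--Mosher periodicity, actions on free factor graphs, and the classification of arational trees must all be run \emph{relative to} the possibly non-free incident edge groups of $G_v$, and this has to be interlocked with the correct notion of ``canonical and maximal'' in Step~2, so that the $V_1$-vertices are precisely the blocks on which no further canonical structure --- and hence no residual $\ia$-action --- can survive.
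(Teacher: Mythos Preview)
This theorem is not proved in the paper: it is quoted from \cite{GH}, and the paper only supplies a one-sentence remark on the idea. That idea is quite different from yours. One lets $K$ be the elementwise stabilizer of $\calc$ and $\tilde K$ its preimage in $\Aut(F_N)$; every tree in $\calc$ is then a splitting of $\tilde K$, these all live in a single $\tilde K$-deformation space $\cald_\calc$, and $U_\calc$ is the \emph{tree of cylinders} of that deformation space. The bipartition $V_1\sqcup V_2$ is the intrinsic bipartition of a tree of cylinders, and property~(1)(b) falls out because the $V_1$-stabilizers are, by construction, subgroups of $\tilde K$ on which the $F_N$-coordinate already records the whole action.

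Your construction, by contrast, tries to build $U_\calc$ purely from the free factor system $\calf_{\min}$ and a subsequent JSJ-type refinement, and this is where the argument breaks. First, Step~2 is not well defined: in the Grushko decomposition of $F_N$ relative to $\calf_{\min}$ the nontrivial vertex groups are exactly the members of $\calf_{\min}$, so the phrase ``vertex groups that are freely indecomposable relative to $\calf_{\min}$'' does not pick out anything meaningful to blow up. More seriously, your justification of~(1)(a) is inverted: if a vertex is ``rigid only with respect to splittings having non-trivial edge groups'', that says $G_v$ does \emph{not} split freely relative to its incident edges, the opposite of what~(1)(a) asserts. In the actual construction the $V_1$-vertices are cylinder vertices, and their relative free decomposability is a feature of the tree-of-cylinders formalism, not of a JSJ rigid piece.

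Second, Step~3 does not establish~(1)(b). Bounded orbits on a relative free factor graph do not by themselves force the outer action to be trivial (this is exactly the subtle situation analysed in Section~\ref{sec:commuting-normal}), and your appeal to ``maximality built into Step~2'' is circular since Step~2 never pinned down a maximality property. In \cite{GH} the triviality on $G_v$ is not deduced from dynamics on a factor graph at all: it comes structurally, because elements of $H_0$ lift to elements of $\tilde K$ that fix the $V_1$-vertex groups pointwise in the $\tilde K$-tree. That passage to $\tilde K$ is the missing idea.
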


\begin{remark}
The idea behind the proof of Theorem~\ref{theo:canonical-splitting} is the following: denoting by $\tilde{K}$ the preimage of $K$ in $\Aut(F_N)$, every tree in $\calc$ can be viewed as a splitting of $\tilde{K}$. There is a $\tilde{K}$-deformation space $\mathcal{D}_\calc$ naturally associated to $\calc$, and the tree $U_\calc$ is then constructed as the tree of cylinders of this deformation space. 
\end{remark}

\begin{proof}[Proof of Proposition~\ref{prop:splitting-stabilized}]
If $H$ fixes a free splitting $U$ of $F_N$, we are done by adding the midpoint of every edge of $U$ in the vertex set and letting $V_1$ be the set of these extra valence two vertices and $V_2$ be the set of all original vertices in $U$. From now on, we will assume that $H$ does not fix any free splitting.

Let $\calf$ be a maximal $H$-invariant free factor system. As $H$ does not preserve any free splitting, $\calf$ is nonsporadic. Let $\FF:=\FF(F_N,\calf)$ be the corresponding free factor graph, which is hyperbolic. By Proposition~\ref{prop:unbounded-orbits-in-ff}, the group $H$ acts on $\FF$ with unbounded orbits. Therefore, Lemma~\ref{lemma:action-on-hyperbolic} ensures that there exists $i\in\{1,2\}$ such that $K_i$ virtually fixes a point in $\partial_\infty\FF$. 

If $K_i$ contains a loxodromic element $\Phi$, then there is a unique maximal $K_i$-invariant finite set in $\partial_\infty\FF$, which has to be contained in $\{\Phi^{-\infty},\Phi^{+\infty}\}$. By normality, this set is $H$-invariant, so Lemma~\ref{lemma:boundary-tree} implies that $H$ virtually fixes the homothety class of an arational $(F_N,\calf)$-tree $T$. As $H$ contains a direct product of $2N-4$ nonabelian free groups, this yields a contradiction to \cite[Lemma~6.5]{HW}.

We can therefore assume that $K_i$ contains no loxodromic element. Then $K_i$ virtually fixes the isometry class of an arational tree (rel.\ $\calf$), as opposed to only fixing its homothety class \cite[Proposition~6.3]{GH1}. By \cite[Proposition~6.10]{GH1}, it follows that $K_i$ fixes a free splitting of $F_N$ (notice that the assumption from \cite[Proposition~6.10]{GH1} that $K_i$ has \emph{finite fix type} is automatically satisfied in view of \cite[Lemma~6.7]{GH}). Let $\calc$ be the (nonempty) collection of all $K_i$-invariant free splittings, and let $U_\calc$ be the splitting provided by Theorem~\ref{theo:canonical-splitting}, which is nontrivial because $K_i$ is infinite. By normality, this splitting is $H$-invariant.

We are left showing that $H$ acts trivially on the quotient graph $U_\calc/F_N$. Let us first prove that $H$ preserves the orbit of every vertex $v$ in $V_1$. Let $\cala_v$ be the collection of all conjugacy classes of stabilizers of vertices of $U_\calc$ outside of the orbit of $v$. As $H$ does not preserve any free splitting, all edges in $U$ have nontrivial stabilizer. Therefore, Theorem~\ref{theo:canonical-splitting} ensures that $G_v$ is freely decomposable relative to the incident edge stabilizers. Blowing up $U_\calc$ at $G_v$ into a free splitting shows that $F_N$ is freely decomposable relative to $\cala_v$. Let $\calf_v$ be the free factor system of $F_N$ determined by a Grushko decomposition of $F_N$ relative to $\cala_v$. Since $H$ preserves $U_\calc$, the $H$-orbit of $\calf_v$ is finite. As $H\subseteq\ia$ we deduce that $H$ preserves $\calf_v$. But $\calf_v$ entirely determines $v$, as $G_v$ is (up to conjugation) the only vertex stabilizer of $U_\calc$ which is not elliptic in $\calf_v$. This shows that $H$ preserves the orbit of $v$ in $U_\calc$. 

So far we have proved that $H$ acts on $U_\calc/F_N$ fixing all vertices in $V_1/F_N$, and therefore permuting the vertices in $V_2/F_N$. As the stabilizers of vertices in $V_2$ form a free factor system of $F_N$ and $H\subseteq\ia$, it follows that $H$ also fixes every vertex in $V_2/F_N$. This completes our proof.     
\end{proof}

\section{Twist-rich subgroups}\label{sec:twist-rich}

\emph{In this section we introduce a slightly weaker notion for twist-rich subgroups of $\Out(F_N)$ than the one used in \cite{HW}. In contrast to the previous section, we show that twist-rich subgroups do not preserve any free factors and do not contain commuting infinite normal subgroups. Crucially, we also show that stabilizers of corank one free factors in twist-rich subgroups do not preserve any other free factors, in particular they do not preserve any free splittings other than the obvious one defined by the factor.}
\\

Recall that an element $w$ of a finitely generated free group $F$ is \emph{simple} if it is contained in some proper free factor of $F$, and \emph{nonsimple} otherwise. We now introduce our definition of \emph{twist-rich} subgroups of $\Out(F_N)$, which are the subgroups to which our methods apply.

\begin{de}[Twist-rich subgroups]\label{de:twist-rich}
Let $N\ge 4$. A subgroup $\Gamma\subseteq\ia$ is \emph{twist-rich} if for every free splitting $S$ of $F_N$ such that $S/F_N$ is a rose with nonabelian vertex stabilizer $G_v$, and every half-edge $e$ adjacent to $v$, the intersection of $\Gamma$ with the group of twists about $e$ is nonabelian and viewed as a subgroup of $G_v$, it is not contained in any proper free factor of $G_v$. 
\end{de}

We allow a `rose with a single petal' for $S/F_N$, in which case $S$ is simply a  one-edge nonseparating free splitting. Notice that the last assumption that the group of twists about $e$ is not contained in any proper free factor of $G_v$, can equivalently be replaced by the intersection of $\Gamma$ with the group of twists about $e$ contains a nonsimple element   -- see the proof of \cite[Proposition~7.3(4)]{HW}. Notice that finite-index subgroups of twist-rich subgroups of $\Out(F_N)$ are again twist-rich.  

\begin{remark}
This definition of twist-richness is different from, and \emph{a priori} weaker than (though we do not have explicit new examples) the one given in \cite[Definition~7.1]{HW}: Hypothesis~$(H_1)$ from there has been removed. The condition about existence of nonsimple elements in the group of twists is slightly weaker than the condition in Hypothesis~$(H_2)$ from \cite[Definition~7.1]{HW}, and we only need to look at roses rather than arbitrary free splittings.
\end{remark}

\subsection{Stabilizers of free factors and free splittings in twist-rich groups} 

Roughly speaking, twist-rich subgroups mix up free factors and free splittings as much as  possible. We make this idea precise via some lemmas below, and use this to deduce some algebraic consequences for twist-rich groups.

\begin{lemma}\label{lemma:twist-rich-factor}
Let $N\ge 3$, and let $\Gamma$ be a twist-rich subgroup of $\ia$. Then $\Gamma$ does not preserve the conjugacy class of any proper free factor of $F_N$.
\end{lemma}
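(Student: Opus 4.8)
The plan is to argue by contradiction: suppose $\Gamma$ preserves the conjugacy class of a proper free factor $A$ of $F_N$, and derive a contradiction with the twist-richness hypothesis. The key observation is that twist-richness forces $\Gamma$ to contain many twists about one-edge nonseparating free splittings, and these twists are Nielsen-type automorphisms that move free factors around too aggressively for any single conjugacy class to be preserved.

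First I would reduce to a convenient splitting. Choose a one-edge nonseparating free splitting $S$ of $F_N$ (the `rose with a single petal' allowed in Definition~\ref{de:twist-rich}), with vertex group $G_v$ of corank one. Twist-richness applied to $S$ says that $\Gamma$ intersected with the group of twists about each half-edge $e$ is nonabelian and, viewed inside $G_v$, is not contained in any proper free factor of $G_v$. Recall from the Example in Section~\ref{sec:twists-background} that the elements of the group of twists about $e$ are precisely the outer classes of automorphisms fixing $G_v=A$ pointwise and sending the stable letter $x_N$ to $a_1 x_N a_2$ with $a_i\in A$. The next step is to use these twists to produce, for any prescribed proper free factor $B$ of $F_N$, a twist $D$ in $\Gamma$ with $D\cdot[B]\neq[B]$. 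Concretely, if $[B]$ were $\Gamma$-invariant then in particular $[B]$ would be invariant under every twist $D_{e,z}$ with $z\in\Gamma\cap(\text{twists about }e)$. One then analyses how such a twist acts on conjugacy classes of free factors — either $B$ is conjugate into $A$, in which case a twist about the \emph{other} half-edge (which conjugates $A$-side data by elements of $A$ but nontrivially interacts with the stable letter) can be arranged to move $[B]$; or $B$ is not conjugate into $A$, in which case $B$ must use the stable letter, and twisting $x_N\mapsto a_1 x_N a_2$ changes the conjugacy class of $B$ unless the twisting elements lie in a very restricted subgroup of $A$. Because $\Gamma\cap(\text{twists about }e)$ is not contained in any proper free factor of $A=G_v$, we can always find an element $z$ avoiding that restricted subgroup, giving the desired $D$ with $D\cdot[B]\neq[B]$.

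The cleanest way to organise the case analysis is probably to invoke the machinery already cited: apply Theorem~\ref{theo:ia} or the Handel--Mosher result (Theorem~\ref{theo:hm}) to pass to the statement that a $\Gamma$-invariant proper free factor would have to be compatible with the splittings $\Gamma$ preserves, then observe that the group of twists $\Gamma\cap\calt$ acting on the relevant (sporadic or non-sporadic) free factor system forces the factor system to be trivial or all of $F_N$. Alternatively, one can cite the corresponding computation from the proof of \cite[Proposition~7.3]{HW}, since our weaker definition of twist-richness still supplies the nonsimple twisting elements that argument requires — the remark after Definition~\ref{de:twist-rich} already flags this equivalence. I expect the main obstacle to be the bookkeeping in the case where the putative invariant factor $B$ is conjugate into $A$: here one must genuinely use twists about \emph{both} half-edges of $S$ and track how a composition of such twists acts on $[B]$, using nonabelianity of $\Gamma\cap\calt$ to guarantee enough independence. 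Once that case is dispatched, the non-peripheral case is a routine ping-pong/length argument showing that $x_N\mapsto a_1x_Na_2$ with $a_1a_2$ nonsimple in $A$ cannot fix the conjugacy class of any free factor containing $x_N$ in an essential way.
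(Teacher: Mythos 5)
Your overall strategy (use twist-richness to produce a twist that moves the putative invariant factor) is on the right track, but the case decomposition is arranged in a way that makes the hard case genuinely impossible to resolve with the tools you propose. You fix a one-edge nonseparating splitting $S$ with vertex group $A = G_v$ first, and then consider an arbitrary invariant free factor $B$. In the case ``$B$ conjugate into $A$'' you propose to use twists about both half-edges and track a composition. This cannot work: by definition every twist $D_{e,z}$ about a half-edge of $S$ has a representative in $\Aut(F_N)$ that restricts to the identity on $G_v = A$. Hence every twist -- and every composition of twists -- fixes $B$ pointwise (up to an inner automorphism), and therefore preserves $[B]$. No amount of nonabelianity or independence in $\Gamma\cap\mathcal{T}$ helps; the obstruction is structural, not a bookkeeping issue.

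The paper sidesteps this entirely by choosing the splitting \emph{after} the factor is given, in such a way that the factor is never elliptic. Concretely: given the (supposedly invariant) proper free factor $A$, extend a basis $x_1,\dots,x_k$ of $A$ to a basis $x_1,\dots,x_N$ of $F_N$, and take the splitting whose vertex group is $B = \langle x_2,\dots,x_N\rangle$ with stable letter $x_1 \in A$. Because $A$ contains the stable letter, $A$ is hyperbolic in the Bass--Serre tree and in particular is \emph{not} conjugate into the vertex group; so the ``hard case'' you identified simply never occurs. Twist-richness supplies a twist $\phi\colon x_1\mapsto x_1 b$ with $b$ nonsimple (filling) in $B$, and a short argument (cyclic length when $\mathrm{rk}(A)=1$; malnormality of free factors when $\mathrm{rk}(A)\ge 2$, using that $\langle x_2,\dots,x_k\rangle\subseteq A\cap\phi(A)$) shows $\phi(A)$ is not conjugate to $A$. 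Your proposal also suggests citing \cite[Proposition~7.3]{HW}, but that result is proved under the stronger definition of twist-richness from that paper, and one point of the present lemma is to re-establish the conclusion under the weaker definition; the Remark after Definition~\ref{de:twist-rich} concerns a different equivalence and does not license that shortcut. So the missing idea is precisely the order of quantifiers: the splitting must be adapted to the given factor so that the factor meets the stable letter.
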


\begin{proof}
Let $A$ be a proper free factor of $F_N$, and let $x_1, \ldots, x_N$ be a basis of $F_N$ extending a basis $x_1, \ldots, x_k$ of $A$. Let $B$ be the corank one factor generated by $x_2, \ldots, x_N$, so that there is a one-edge nonseparating free splitting determined by $B$ where $x_1$ is a possible choice of stable letter. As $\Gamma$ is twist-rich, there exists a filling element $b \in B$ and a right twist $\Phi \in \Gamma$ represented by an automorphism $\phi$ sending $x_1$ to $x_1b$ and fixing all other $x_i$. In particular, $b$ is represented by a word in $\{x_2, \ldots, x_N\}$ that is not supported on any smaller subset. It follows that $x_1b$ is not supported on any subset of the basis $x_1, \ldots, x_N$, so that $\phi(A) \neq A$. In fact, $\phi(A)$ is not even conjugate to $A$. If $A$ is cyclic, this is because both $x_1$ and $x_1b$ are cyclically reduced when viewed as words, and they have different lengths. If $A$ is not cyclic, then the intersection $\phi(A) \cap A$ is nontrivial, containing $\langle x_2, \ldots, x_k \rangle$. As free factors are malnormal, this implies that $A$ and $\phi(A)$ are not conjugate. 
\end{proof}

Furthermore, stabilizers of one-edge nonseparating free splittings (or equivalently, stabilizers of conjugacy classes of corank one factors) in twist-rich subgroups do not preserve any other free splittings. 

\begin{lemma}\label{lemma:twist-rich-unique-splitting}
Let $N\ge 3$, let $\Gamma$ be a twist-rich subgroup of $\ia$, and let $S$ be a one-edge nonseparating free splitting of $F_N$. Then $S$ is the only one-edge nonseparating free splitting of $F_N$ which is invariant by $\Stab_\Gamma(S)$.
\end{lemma}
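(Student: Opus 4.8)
The plan is to suppose that $S'$ is another one-edge nonseparating free splitting fixed by $\Stab_\Gamma(S)$ and to derive a contradiction by exhibiting a twist in $\Stab_\Gamma(S)$ that does not preserve $S'$. Write $S$ as the Bass--Serre tree of an HNN extension $F_N = A\ast$ with $A$ a corank one free factor and stable letter $t$, so that $\Stab_{\Out(F_N)}(S)$ contains the group of twists $K = K_1\times K_2$, where each $K_i$ is identified with $A$ (the twist $D_{t,a}$ sends $t$ to $a_1 t a_2$). Because $\Gamma$ is twist-rich, $\Gamma\cap K_i$ is nonabelian and, viewed inside $A$, not contained in any proper free factor of $A$; in particular $\Gamma\cap K_i$ contains elements $D_{t,a}$ with $a\in A$ nonsimple in $A$, hence nonsimple in $F_N$ as well (since $A$ is a free factor of $F_N$, a proper free factor of $F_N$ containing $a$ would intersect $A$ in a proper free factor of $A$ containing $a$). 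The first step is therefore to record that $\Stab_\Gamma(S)$ contains twists $\Phi = D_{t,a}$ about each half-edge with $a$ as nonsimple as we like.

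The second step is to translate "$\Phi$ fixes $S'$" into a constraint on $a$. A one-edge nonseparating free splitting $S'$ is determined by the conjugacy class of a corank one free factor $A'$ of $F_N$; if $\Phi$ fixes $S'$ then $\phi(A')$ is conjugate to $A'$ for the chosen representative $\phi$ of $\Phi$. I would argue this is impossible when $a$ is nonsimple: since $\phi$ is the identity on $A$ and sends $t\mapsto ta$ (or $t\mapsto at$, depending on the half-edge), its only effect on any word is to replace occurrences of $t^{\pm 1}$; analysing the cyclic word representing a generator of $A'/[F_N,F_N]$ or, more robustly, a carefully chosen element of $A'$ not lying in $A$, one sees that $\phi(A')$ cannot be conjugate to $A'$ once $a$ is a sufficiently complicated (nonsimple) element of $A$ — the conjugacy would force $a$ to be conjugate into a proper free factor, using malnormality of free factors as in the proof of Lemma~\ref{lemma:twist-rich-factor}. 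Here one should split into the cases $A'$ cyclic versus $A'$ noncyclic and use a length/malnormality argument exactly parallel to that proof; the case $A\cap A'$ of various ranks may need separate bookkeeping.

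The main obstacle I expect is the bookkeeping in the second step: unlike Lemma~\ref{lemma:twist-rich-factor}, where we controlled the basis adapted to $A$, here $A'$ is an \emph{arbitrary} corank one free factor and we have no a priori compatible basis with the splitting $S$. The clean way around this is probably to note that $\Stab_\Gamma(S)$ contains twists about \emph{both} half-edges, i.e.\ elements sending $t\mapsto t a$ and $t\mapsto a' t$ with $a, a'$ independently chosen nonsimple elements of $A$, and that $A'$, being corank one and not equal to $A$, cannot be elliptic in $S$ (else $A'$ would be conjugate into $A$, forcing $A' = A$ by a rank count and malnormality), hence the $A'$-minimal subtree in $S$ is a line or a more complicated subtree on which $t$ acts nontrivially; one then plays the two twists against each other — if both fixed the conjugacy class of $A'$ one could compose them to get a twist $D_{t,a}$ with $a$ ranging over a nonabelian, non-free-factor subgroup of $A$ all fixing $[A']$, and push this to a contradiction with malnormality. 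Alternatively, and perhaps more cleanly, one invokes that $\Stab_\Gamma(S)$ contains a direct product $K_1'\times K_2'$ of two nonabelian free groups that is normal in $\Stab_\Gamma(S)$, so by Theorem~\ref{theo:ia} any $\Stab_\Gamma(S)$-invariant free splitting $S'$ is fixed by this product acting trivially on $S'/F_N$, and then the twist relations force $S' = S$; I would flesh out whichever of these routes keeps the case analysis shortest.
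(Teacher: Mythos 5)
Your first step is correct and matches the paper: twist-richness produces a twist $D$ about $S$ whose twisting element $z\in A$ is nonsimple in $A$ (equivalently, filling in $A$). But the key step — showing that such a $D$ cannot preserve any other one-edge nonseparating free splitting $S'$ — is the actual content of the lemma, and none of your sketched routes fills it in. Your main route tries to argue directly that $\phi(A')$ cannot be conjugate to $A'$ via a Whitehead-graph/malnormality analysis as in Lemma~\ref{lemma:twist-rich-factor}, but that lemma had the crucial advantage of a basis adapted to the free factor in question; here $A'$ is arbitrary and in no controlled position relative to the basis adapted to $A$ and $t$, and you acknowledge this without resolving it. Your alternative (a), ``compose the left and right twists and push to a contradiction with malnormality,'' is vague: a product of a left twist and a right twist is no longer a twist of the form $D_{t,a}$, and the claimed contradiction is not derived. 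Your alternative (b) merely re-records that the twist subgroup fixes $S'$ and acts trivially on $S'/F_N$ (which is automatic from Theorem~\ref{theo:ia} once $S'$ is $\Stab_\Gamma(S)$-invariant) and then asserts ``the twist relations force $S'=S$'' — this assertion is exactly what needs to be proved.

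The paper closes the gap by a different mechanism, which you do not use: it passes to the attracting tree of the twist. Explicitly, with $w$ the root of $z$, the twist $D$ has attracting $\mathbb{R}$-tree $U=(A\ast\langle twt^{-1}\rangle)\ast_{\langle w\rangle}$ in the boundary of Outer space. A cited result (\cite[Lemma~2.7]{HW}) then gives that any $D$-invariant free splitting $S'$ must be compatible with $U$, and another cited result (\cite[Proposition~5.1]{HW2}) gives that, precisely because $w$ is nonsimple in $A$, the only free splitting compatible with $U$ is $S$. This cleanly sidesteps all the bookkeeping about the relative position of $A$ and $A'$ that blocks your direct approach. To repair your proof you would essentially need to import this attracting-tree/compatibility argument (or an equivalent); the combinatorial Whitehead-graph route does not obviously terminate, since preserving the conjugacy class of an unknown corank-one factor is a much weaker and more slippery constraint than preserving a factor adapted to your chosen basis.
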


\begin{proof}
Let $S'$ be a free splitting of $F_N$ which is invariant by $\Stab_\Gamma(S)$. Let $A\subseteq F_N$ be a corank one free factor such that $S$ is the Bass--Serre tree of the HNN decomposition $F_N=A\ast$. Let $t$ be a stable letter for this HNN decomposition. As $\Gamma$ is twist-rich, there exists a filling element $z \in A$ such that the twist $D$ mapping $t$ to $tz$ and fixing $A$ belongs to $\Gamma$. Let $w$ be the unique smallest (positive) root of $z$ and let $U=(A \ast \langle twt^{-1} \rangle )\ast_ {\langle w \rangle}$ be the cyclic splitting obtained by fully folding the edge $e$ given by this stable letter with its translate $we$. As $D(S')=S'$ and $U$ is an attracting tree for the twist, it follows from \cite[Lemma~2.7]{HW} that $S'$ is compatible with $U$. But as $z$, and therefore $w$, is nonsimple in $A$, the only free splitting which is compatible with $U$ is $S$ (see \cite[Proposition~5.1]{HW2}). Therefore $S'=S$, which completes our proof. 
\end{proof}

It is also crucial to note that the stabilizer $\Stab_\G(S)$ of a one-edge nonseparating free splitting in $\G$ (and therefore $\G$ itself) is of maximal product rank in $\Out(F_N)$:

\begin{lemma}\label{lemma:product-rank-stabilizer-twist-rich}
Let $N\ge 3$, let $\Gamma$ be a twist-rich subgroup of $\ia$, and let $S$ be a one-edge nonseparating free splitting of $F_N$.  Then $\Stab_\Gamma(S)$ contains a direct product of $2N-4$ nonabelian free groups.
\end{lemma}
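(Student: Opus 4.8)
The idea is to realize the entire $2N-4$-fold product inside the group of twists of an auxiliary free splitting that is compatible with $S$. Since $S$ is a one-edge nonseparating free splitting, we may fix a basis $x_1,\dots,x_N$ of $F_N$ so that $S$ is the Bass--Serre tree of the HNN decomposition $F_N=\langle x_1,\dots,x_{N-1}\rangle\ast$ with stable letter $x_N$. Let $R$ be the free splitting whose quotient graph of groups is the rose with $N-2$ petals $x_3,\dots,x_N$ and vertex group $G_v=\langle x_1,x_2\rangle\cong F_2$ (when $N=3$ this rose has a single petal, and one may take $R=S$). The vertex $v$ of $R$ has $d_v=2(N-2)=2N-4$ incident half-edges, and $Z(G_v)$ is trivial, so by Levitt's description (Proposition~\ref{prop:Levitt}) the group of twists $\mathcal{T}_R$ is the internal direct product of the $2N-4$ subgroups $T_e$ of twists about the individual half-edges $e$ of $R$, each of which is isomorphic to $G_v\cong F_2$. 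In particular the $T_e$ pairwise commute and $\langle T_e:e\rangle=\prod_e T_e$.

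The first point to verify is that $\mathcal{T}_R\subseteq\Stab_{\Out(F_N)}(S)$. A generator of $\mathcal{T}_R$ is a twist $\phi$ about some half-edge of a petal $x_i$ ($3\le i\le N$): it acts as the identity on $\langle x_1,\dots,\widehat{x_i},\dots,x_N\rangle$ and sends $x_i$ to $zx_i$ (or to $x_iz$) for some $z\in\langle x_1,x_2\rangle$. If $i<N$, then $\phi$ fixes $x_N$ and preserves $\langle x_1,\dots,x_{N-1}\rangle$, hence preserves the HNN decomposition defining $S$; if $i=N$, then $\phi$ is itself a twist about $S$. Either way $\phi\in\Stab_{\Out(F_N)}(S)$, so $\mathcal{T}_R\subseteq\Stab_{\Out(F_N)}(S)$.

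The second point is an application of twist-richness. The splitting $R$ is a free splitting of $F_N$ whose quotient graph is a rose with nonabelian vertex stabilizer $G_v\cong F_2$, so for every half-edge $e$ of $R$ the intersection $\Gamma\cap T_e$ is nonabelian; being a nonabelian subgroup of $T_e\cong F_2$, it is automatically not contained in any proper free factor and is itself a nonabelian free group. Since the $T_e$ generate their internal direct product, so do the $\Gamma\cap T_e$, and $\prod_e(\Gamma\cap T_e)$ is a direct product of $2N-4$ nonabelian free groups contained in $\Gamma\cap\mathcal{T}_R\subseteq\Gamma\cap\Stab_{\Out(F_N)}(S)=\Stab_\Gamma(S)$, which is exactly what we want. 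I do not expect a genuine obstacle in this argument; the only points requiring care are the compatibility verification $\mathcal{T}_R\subseteq\Stab_{\Out(F_N)}(S)$ and the identification, via Proposition~\ref{prop:Levitt}, of $\mathcal{T}_R$ with the internal direct product of the single-half-edge twist groups (immediate since $Z(G_v)$ is trivial), both of which are routine.
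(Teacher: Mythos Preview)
Your proof is correct and follows essentially the same approach as the paper: both take the same $(N-2)$-petal rose refinement $R$ of $S$ (with rank-$2$ vertex group) and apply twist-richness to the $2N-4$ half-edge twist groups. The only cosmetic difference is that the paper observes $R$ is a refinement of $S$ and invokes Theorem~\ref{theo:ia} (elements of $\ia$ fixing a splitting fix all its collapses) to get $\Stab_\Gamma(R)\subseteq\Stab_\Gamma(S)$, whereas you verify $\mathcal{T}_R\subseteq\Stab_{\Out(F_N)}(S)$ by a direct computation on generators; both are valid and yield the same conclusion.
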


\begin{proof}
Every nonseparating free splitting $S$ has a refinement $S'$ whose quotient graph is a rose with $N-2$ petals. Each vertex stabilizer $G_v$ in this refinement is rank 2 and the group of twists of $S'$ is a direct product of $2N-4$ copies of $G_v$ (see Section~\ref{sec:twists-background}). As $\G$ is twist-rich, $\Stab_\Gamma(S')$ contains a direct product of $2N-4$ free groups appearing as a subgroup of these twists, and as we are in $\ia$, we have $\Stab_\Gamma(S') \subset \Stab_\G(S)$.
\end{proof}

We attain the following algebraic consequences for twist-rich subgroups:

\begin{cor}\label{cor:normal-centralized-in-twist-rich}
Let $N\ge 3$, and let $\Gamma$ be a twist-rich subgroup of $\ia$. Then $\Gamma$ does not contain two infinite normal subgroups that centralize each other. 
\end{cor}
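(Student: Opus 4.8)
The plan is to derive this as an immediate consequence of Proposition~\ref{prop:commuting-normal-factor} together with Lemma~\ref{lemma:twist-rich-factor}. Suppose for contradiction that $\Gamma$ contains two infinite normal subgroups $K_1$ and $K_2$ that centralize each other. First I would observe that $\Gamma$ cannot be cyclic: indeed, by Lemma~\ref{lemma:product-rank-stabilizer-twist-rich} (applied to any one-edge nonseparating free splitting $S$), $\Gamma$ contains a direct product of $2N-4$ nonabelian free groups, and since $N\ge 3$ this is a noncyclic group; alternatively, twist-richness directly forces $\Gamma$ to contain nonabelian free subgroups. Hence $\Gamma$ is a noncyclic subgroup of $\ia$ satisfying the hypotheses of Proposition~\ref{prop:commuting-normal-factor}.

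Next I would apply Proposition~\ref{prop:commuting-normal-factor}: it yields a proper free factor of $F_N$ whose conjugacy class is $\Gamma$-invariant. But this directly contradicts Lemma~\ref{lemma:twist-rich-factor}, which asserts that a twist-rich subgroup preserves the conjugacy class of no proper free factor. This contradiction completes the proof.

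The only subtlety — and it is minor — is the case $N=3$ versus $N\ge 4$. The definition of twist-rich subgroup (Definition~\ref{de:twist-rich}) is only stated for $N\ge 4$, yet the statement of the corollary is phrased for $N\ge 3$; this is consistent with the other lemmas in this subsection (Lemmas~\ref{lemma:twist-rich-factor}, \ref{lemma:twist-rich-unique-splitting}, \ref{lemma:product-rank-stabilizer-twist-rich}) which are also stated for $N\ge 3$, the understanding being that for $N=3$ one simply uses whatever properties of the relevant subgroups are needed (in the rank-$3$ arguments of Section~\ref{sec:rank-3}). So there is essentially nothing to do beyond citing the two results; no genuine obstacle arises. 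If one wanted to be fully self-contained one could instead note that $\Gamma$ being noncyclic follows because a twist-rich subgroup contains, for every corank-one free factor $A$, a nonabelian free group inside the group of twists about the corresponding splitting, and conclude exactly as above.

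In summary, the proof is: assume such $K_1,K_2$ exist; note $\Gamma$ is noncyclic (e.g.\ by Lemma~\ref{lemma:product-rank-stabilizer-twist-rich}); apply Proposition~\ref{prop:commuting-normal-factor} to obtain a $\Gamma$-invariant conjugacy class of a proper free factor; contradict Lemma~\ref{lemma:twist-rich-factor}. I do not expect any step here to be an obstacle — the work has all been front-loaded into Proposition~\ref{prop:commuting-normal-factor} (which in turn rested on the free-factor-graph machinery and the classification of arational trees) and into Lemma~\ref{lemma:twist-rich-factor}.
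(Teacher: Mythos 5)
Your proof is correct and is essentially the paper's own: both argue that $\Gamma$ is noncyclic, invoke Proposition~\ref{prop:commuting-normal-factor} to produce a $\Gamma$-invariant conjugacy class of a proper free factor, and contradict Lemma~\ref{lemma:twist-rich-factor}. You merely spell out the noncyclicity (via Lemma~\ref{lemma:product-rank-stabilizer-twist-rich} or twist-richness directly), which the paper takes as evident.
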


\begin{proof}
If $\Gamma$ contains two infinite normal subgroups that centralize each other, then as $\Gamma$ is noncyclic, Proposition~\ref{prop:commuting-normal-factor} implies that $\Gamma$ fixes the conjugacy class of a free factor. This cannot happen by Lemma~\ref{lemma:twist-rich-factor}.
\end{proof}

Corollary~\ref{cor:normal-centralized-in-twist-rich} implies in particular that every twist-rich subgroup $\Gamma$ of $\Out(F_N)$ is centerless (as otherwise $\Gamma$ and its center would be two infinite normal subgroups of $\Gamma$ that centralize each other). In fact we have the following, where we recall that given a group $G$ and a subgroup $H\subseteq G$, the \emph{weak centralizer} of $H$ in $G$ is the subgroup of $g$ made of all elements $g\in G$ such that for every $h\in H$, the element $g$ commutes with some nontrivial power of $h$. 

\begin{cor}\label{cor:twist-rich-centerless}
Let $N\ge 3$, and let $H\subseteq\ia$ be a twist-rich subgroup. Then the weak centralizer of $H$ in $\Out(F_N)$ is trivial. 
\end{cor}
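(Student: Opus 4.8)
The plan is to show that any $\Phi$ lying in the weak centralizer of $H$ must fix \emph{every} one-edge nonseparating free splitting of $F_N$, and then to conclude that $\Phi=1$ using faithfulness of the $\Out(F_N)$-action on such splittings.

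First I would fix such a $\Phi$ and an arbitrary one-edge nonseparating free splitting $S$ of $F_N$, written as the Bass--Serre tree of an HNN decomposition $F_N=A\ast$ with $A$ a corank one free factor and $t$ a stable letter; let $e$ be a half-edge of $S/F_N$. Since $H$ is twist-rich, the intersection of $H$ with the group of twists about $e$ contains a nonsimple element (the ``rose with one petal'' case of Definition~\ref{de:twist-rich}, in the reformulation recalled there), so there is a nonsimple $z\in A$ such that the twist $D_{e,z}$ sending $t$ to $tz$ and restricting to the identity on $A$ lies in $H$. By definition of the weak centralizer, $\Phi$ commutes with $D_{e,z}^{n}$ for some $n\ge 1$.

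The key step is then to note that $D_{e,z}^{n}$ is the twist about $e$ sending $t$ to $tz^{n}$ and fixing $A$; since proper free factors of a free group are closed under taking roots, $z^{n}$ is again nonsimple in $A$, and $z^n$ has the same (nonsimple) root $w$ as $z$. The argument in the proof of Lemma~\ref{lemma:twist-rich-unique-splitting} uses \emph{only} invariance of a free splitting under a single nonsimple twist: one passes to the attracting cyclic splitting $U=(A\ast\langle twt^{-1}\rangle)\ast_{\langle w\rangle}$, invokes \cite[Lemma~2.7]{HW} to see that every $D_{e,z}^{n}$-invariant splitting is compatible with $U$, and then \cite[Proposition~5.1]{HW2} to see that $S$ is the only free splitting compatible with $U$. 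Hence $S$ is the unique one-edge nonseparating free splitting invariant by $\langle D_{e,z}^{n}\rangle$. Since $\Phi$ commutes with $D_{e,z}^{n}$, it sends $D_{e,z}^{n}$-invariant one-edge nonseparating free splittings to $D_{e,z}^{n}$-invariant ones, so $\Phi(S)=S$. As $S$ was arbitrary, $\Phi$ fixes every one-edge nonseparating free splitting of $F_N$, and since $\ens$ is a simple graph with these splittings as its vertices and $\Out(F_N)\to\Aut(\ens)$ is an isomorphism (Theorem~\ref{theo:auto-ens}, Remark~\ref{rk:faithful}), we get $\Phi=1$.

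The main obstacle I anticipate is making rigorous the claim that the uniqueness assertion of Lemma~\ref{lemma:twist-rich-unique-splitting} applies to the cyclic group $\langle D_{e,z}^{n}\rangle$ rather than to the full stabilizer $\Stab_H(S)$ for which it is stated. This should go through without difficulty because that proof genuinely only uses invariance under one nonsimple twist, and $D_{e,z}^{n}$ is such a twist about the same half-edge with the same root $w$, hence the same attracting splitting $U$; the auxiliary fact that free factors are root-closed is standard.
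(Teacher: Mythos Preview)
Your proof is correct and follows the same line as the paper's: both show that any $\Phi$ in the weak centralizer fixes every one-edge nonseparating free splitting of $F_N$ and then conclude via Theorem~\ref{theo:auto-ens}. The only difference is that the paper applies Lemma~\ref{lemma:twist-rich-unique-splitting} to the full stabilizer $\Stab_H(S)$ and then asserts that uniqueness persists when one only knows invariance under a power of each element, whereas you make this step explicit by opening up the proof of that lemma and working directly with a single power $D_{e,z}^n$ of a nonsimple twist---which is indeed all that proof actually uses.
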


\begin{proof}
Let $\Gamma$ be a twist-rich subgroup of $\ia$, and let $\Phi\in\Out(F_N)$ be contained in the weak centralizer of $\Gamma$. We will prove that $\Phi$ preserves every one-edge nonseparating free splitting of $F_N$, from which it follows that $\Phi=\id$.

Let $S$ be such a splitting. By Lemma~\ref{lemma:twist-rich-unique-splitting}, the splitting $S$ is the only one-edge nonseparating free splitting which is invariant by $\Stab_\Gamma(S)$. As we are working in $\ia$, the splitting $S$ is also the only one-edge nonseparating free splitting which is invariant by a power of every element in  $\Stab_\Gamma(S)$. As $\Phi$ weakly centralizes $\Stab_\Gamma(S)$, it permutes the set of such splittings, and by uniqueness $S$ is $\Phi$-invariant, which concludes our proof. 
\end{proof}

\subsection{More on stabilizers of corank one free factors} 

The goal of the present section is to prove the following statement, which will complete our analysis of stabilizers of corank one factors in twist-rich groups in rank $N\ge 4$. It will be used in Section~\ref{sec:injectivity}.

\begin{prop}\label{prop:unique-invariant-factor}
Let $N\ge 4$, let $\Gamma$ be a twist-rich subgroup of $\ia$, and let $A\subseteq F_N$ be a corank one free factor. If  $B\subseteq F_N$ is a proper free factor such that $[B]$ is preserved by $\Stab_\Gamma([A])$, then $B$ is conjugate to $A$.
\end{prop}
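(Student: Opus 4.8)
The plan is to argue by contradiction: suppose $[B]$ is preserved by $\Stab_\Gamma([A])$, and derive a contradiction using the structure of twists about the splitting $S$ associated to $A$. Let $S$ be the one-edge nonseparating free splitting given by the HNN decomposition $F_N = A\ast$, with stable letter $t$. Since $\Gamma\subseteq\ia$, by Theorem~\ref{theo:ia} the group $\Stab_\Gamma([A])$ equals $\Stab_\Gamma(S)$ and fixes the two half-edges of $S/F_N$; in particular $\Stab_\Gamma(S)$ contains the intersection of $\Gamma$ with the group of twists about each half-edge. By the twist-rich hypothesis, for each half-edge $e$ adjacent to the vertex $v$ (with $G_v$ a conjugate of $A$), the group $\Gamma\cap T_e$ is nonabelian and, viewed inside $A$, is not contained in any proper free factor of $A$; in particular it contains a nonsimple element $z\in A$, giving a twist $D\in\Gamma$ which fixes $A$ and sends $t\mapsto tz$ (or $t\mapsto zt$ for the other half-edge).

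The key step is to analyze the action of such a twist $D$ on the conjugacy class $[B]$. Since $D$ fixes $[B]$, following the argument in the proof of Lemma~\ref{lemma:twist-rich-unique-splitting}, let $w$ be the smallest positive root of $z$ and let $U=(A\ast\langle twt^{-1}\rangle)\ast_{\langle w\rangle}$ be the attracting cyclic splitting for $D$. By \cite[Lemma~2.7]{HW}, any $D$-invariant splitting — in particular a free splitting in which $B$ is elliptic — is compatible with $U$. Since $z$, hence $w$, is nonsimple in $A$, by \cite[Proposition~5.1]{HW2} the only free splitting compatible with $U$ is $S$. Therefore $B$ must be elliptic in $S$, i.e.\ $B$ is conjugate into $A$ (it acts elliptically on the Bass–Serre tree of $F_N=A\ast$, and vertex stabilizers are precisely the conjugates of $A$).

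It remains to upgrade "$B$ conjugate into $A$" to "$B$ conjugate to $A$", which contradicts the hypothesis that $B$ is a proper free factor not conjugate to $A$ — unless $B$ is a proper free factor of $A$. To rule this last case out, I would use the twist about the \emph{other} half-edge, or more robustly the transitivity-type freedom in choosing the twist: since $\Stab_\Gamma([A])$ also contains the subgroup $\Stab^0(S)\cap\Gamma$ surjecting onto a twist-rich subgroup of $\Out(A)$ (via Proposition~\ref{prop:Levitt} and the fact that $\Gamma$ twist-rich forces enough of $\Out(G_v)$ to be realized), the stabilizer of $[A]$ cannot preserve the conjugacy class of any proper free factor of $A$ by Lemma~\ref{lemma:twist-rich-factor} applied inside $A$. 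More precisely: the image of $\Stab_\Gamma(S)$ in $\Out(A)$ contains a twist-rich subgroup of $\Out(A)$ — here one uses that twists about half-edges at a rose refinement of $S$ realize, inside each rank-$2$ vertex group, nonsimple elements, and these push forward appropriately — so by Lemma~\ref{lemma:twist-rich-factor} no proper free factor of $A$ has $\Stab_\Gamma([A])$-invariant conjugacy class. Combined with the previous paragraph, $B$ would have to be conjugate to $A$ itself, contradiction.

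The main obstacle I anticipate is the bookkeeping in the last paragraph: making precise that $\Stab_\Gamma(S)$ induces a twist-rich (or at least factor-system-mixing) subgroup of $\Out(A)$, since the definition of twist-rich refers to roses with \emph{nonabelian} vertex stabilizers and one must check that the relevant twists survive the projection $\Stab^0(S)\to\Out(A)$ and are not killed. An alternative route avoiding this, which may be cleaner, is to directly exhibit \emph{two} twists $D, D'$ in $\Gamma$ — one about each half-edge, or one fixing $A$ and one conjugating $A$ — whose attracting trees $U, U'$ have $S$ as the \emph{unique} common compatible splitting, and argue that a free factor $B$ fixed by both $D$ and $D'$ and elliptic in $S$ must, by a malnormality argument as in Lemma~\ref{lemma:twist-rich-factor}, be conjugate to $A$ rather than a proper subfactor — because the images $\phi(B), \phi'(B)$ under the two twist representatives cannot all be simultaneously conjugate to a fixed proper subfactor of $A$ unless $B$ fills $A$, i.e.\ $B=A$.
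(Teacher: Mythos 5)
There is a genuine gap in your first paragraph, at the step ``any $D$-invariant splitting --- in particular a free splitting in which $B$ is elliptic --- is compatible with $U$.'' You are implicitly asserting that there \emph{exists} a $D$-invariant free splitting of $F_N$ in which $B$ is elliptic, and this does not follow from $D$ fixing $[B]$. When $B$ has corank $1$, the one-edge nonseparating free splitting with vertex group $B$ is uniquely determined and hence $D$-invariant, so the argument closes; but when $B$ has corank $\ge 2$, the free splittings in which $B$ is elliptic form an infinite family on which $D$ simply acts, and a twist fixing $[B]$ generically fixes none of them. Without such an invariant splitting you cannot invoke~\cite[Lemma~2.7]{HW}, and the intended conclusion ``$B$ is elliptic in $S$'' is not reached.

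The paper's proof uses a different mechanism precisely to avoid this. It first considers the minimal subtree $S_B\subseteq S$ for the $B$-action, notes that the vertex stabilizers of $S_B$ are intersections $A^h\cap B$ giving a finite $H$-invariant collection of subfactors, and applies Lemma~\ref{lemma:twist-rich-restriction-factor} (the very restriction fact you anticipate needing, and which the paper has already proved via Lemma~\ref{lemma:twist-rich-restriction}) to conclude these intersections are \emph{trivial}, i.e.\ $B$ acts freely on $S_B$ --- hence the image $H_B$ of $H$ in $\Out(B)$ is trivial. Only then does it bring in the twist $\phi\colon t\mapsto tw$: a power of $\phi$ lies in $H$ and therefore acts trivially on $B$, so the $B$-translation lengths stay bounded under iteration and $B$ must be elliptic in the \emph{limiting} tree $T_\phi$ of the twist, whose vertex groups are conjugates of $A\ast\langle twt^{-1}\rangle$. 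Lemma~\ref{lemma:whitehead} then forces every (simple) element of $B$ to be conjugate into $A$, contradicting the free action on $S_B$. Note that your second paragraph would also have you conclude $B$ is conjugate into a proper free factor of $A$ and only then derive the contradiction, whereas the paper uses the restriction fact \emph{upfront} to establish the free action of $B$ on $S_B$; the order matters, because the triviality of $H_B$ is exactly what licenses the translation-length argument that replaces your unjustified ``$B$ elliptic in a $D$-invariant free splitting.''
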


We first require the following lemma, which states that the twist-rich property is inherited under some restriction maps.

\begin{lemma}\label{lemma:twist-rich-restriction} 
Let $N\ge 4$, let $\Gamma$ be a twist-rich subgroup of $\ia$, let $A$ be a corank one free factor of $F_N$, and let $\Gamma_A$ be the image of $\Stab_\Gamma([A])$ in $\Out(A)$ (under the natural map).

Then $\Gamma_A$ is twist-rich in $\Out(A)$. 
\end{lemma}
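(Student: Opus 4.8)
The plan is to take a free splitting $S$ of $A$ whose quotient graph $S/A$ is a rose with nonabelian vertex group $G_v$ and a half-edge $e$ at $v$, and to show that the intersection of $\Gamma_A$ with the group of twists about $e$ is nonabelian and not contained in any proper free factor of $G_v$. The key observation is that $S$ together with the HNN/amalgam data defining $A$ inside $F_N$ can be \emph{promoted} to a free splitting $\hat S$ of $F_N$: concretely, write $F_N=A\ast\langle t\rangle$ for a stable letter $t$, choose a free splitting $\bar S$ of $F_N$ whose quotient graph is a rose refining the rose $S/A$ by one extra petal corresponding to $t$, arranged so that the vertex group of $\bar S/F_N$ is exactly $G_v$ and $e$ is still a half-edge at that vertex. (If $S$ is a single-petal rose one simply takes $\bar S$ to be the two-petal rose with vertex group $G_v$.) Then the group of twists about $e$ computed inside $\Out(F_N)$ using $\bar S$ is, by the explicit description in Section~\ref{sec:twists-background}, literally a copy of a subgroup of $\Out(F_N)$ whose elements restrict to the identity outside $G_v$ and act on $G_v$ by the corresponding $G_v$-twist about $e$; this matches the group of twists about $e$ computed inside $\Out(A)$ using $S$, under the restriction map $\Stab_\Gamma([A])\to\Out(A)$.

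First I would make this matching precise: I would produce, for each twist $D_{e,z}\in\Out(A)$ about $e$ with $z\in G_v$, an explicit lift to a twist $\hat D_{e,z}\in\Out(F_N)$ about the corresponding half-edge of $\bar S$, lying in $\Stab_{\Out(F_N)}([A])$ and restricting to $D_{e,z}$. This shows the group of twists about $e$ in $\Out(A)$ is contained in the image of $\Gamma_A\cap(\text{twists about }e\text{ in }\Out(F_N))$ provided those twists lie in $\Gamma$ — but that is exactly the twist-richness hypothesis for $\Gamma$ applied to the rose $\bar S/F_N$ (which has nonabelian vertex group $G_v$) and the half-edge $e$. Second, I would transfer the two conclusions: since $\Gamma\cap(\text{twists about }e)$ is nonabelian and, viewed in $G_v$, is not contained in any proper free factor of $G_v$, and since the restriction map carries this subgroup isomorphically onto the corresponding subgroup of twists about $e$ in $\Out(A)$ (the twist homomorphism $G_v/Z(G_v)\to\mathcal T$ is injective on each petal's factor, and $G_v$ is nonabelian so $Z(G_v)=1$), the image $\Gamma_A\cap(\text{twists about }e\text{ in }\Out(A))$ is again nonabelian and not contained in any proper free factor of $G_v$. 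That is the defining property of twist-richness for $\Gamma_A$ in $\Out(A)$.

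The main obstacle I anticipate is bookkeeping rather than conceptual: one must check that the refinement $\bar S$ can genuinely be chosen so that (a) its quotient is a rose, (b) its vertex group is exactly $G_v$ and not some proper free factor or larger group, and (c) collapsing the extra petal recovers $S$ as a splitting of $A$ with the half-edge $e$ surviving intact, so that "twists about $e$" means the same thing on both sides. This is essentially the content of the example in Section~\ref{sec:twists-background} (the group of twists of a one-edge nonseparating free splitting is $K_1\times K_2$ with each $K_i\cong A$), combined with the functoriality of twists under refinement. I would also need the elementary fact that an element of $G_v$ is simple in $G_v$ iff its associated twist lies in a "smaller" twist subgroup — already used in the proof of Lemma~\ref{lemma:twist-rich-factor} — to make sure "not contained in a proper free factor of $G_v$" is the invariant being transported. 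A final routine point: every element of $\Gamma_A$ lands in $\ia[A]$ (i.e.\ acts trivially on $H_1(A;\mathbb Z/3)$), because $\Gamma\subseteq\ia$ and the restriction-to-$A$ map is compatible with the homology action; this is needed so that $\Gamma_A$ is eligible to be a twist-rich subgroup of $\Out(A)$ in the sense of Definition~\ref{de:twist-rich}.
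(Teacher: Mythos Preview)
Your proposal is correct and follows exactly the same approach as the paper: given a rose splitting $S_A$ of $A$ with nonabelian vertex group $G_v$, add one extra petal (for a stable letter realizing $F_N=A\ast\langle t\rangle$) to obtain a rose splitting of $F_N$ with the same vertex group $G_v$, and then apply twist-richness of $\Gamma$ to this larger rose. The paper's proof is a terse three-line version of what you wrote; your additional bookkeeping (that twists about $e$ match under restriction, and that $\Gamma_A$ lands in $\mathrm{IA}(A,\mathbb{Z}/3\mathbb{Z})$) is implicit there and correctly identified by you as routine.
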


\begin{proof}
Let $S_A$ be a free splitting of $A$ such that $S_A /A$ is a rose with nonabelian vertex group. We can add another petal to $S_A$ to obtain a free splitting $S$ of $F_N$ whose quotient graph of groups is also a rose with the same vertex stabilizer, and whose minimal $A$-invariant subtree is $S_A$. The conclusion then follows by applying twist-richness of $\Gamma$ to the splitting $S$.
\end{proof}

This allows us to deal with the case where $B$ is conjugate into $A$.

\begin{lemma}\label{lemma:twist-rich-restriction-factor}
Let $N\ge 4$, let $\Gamma$ be a twist-rich subgroup of $\ia$, let $A$ be a corank one free factor of $F_N$, and let $\Gamma_A$ be the image of $\Stab_\Gamma([A])$ in $\Out(A)$ (under the natural map).

Then $\Gamma_A$ does not preserve the conjugacy class of any proper free factor of $A$.
\end{lemma}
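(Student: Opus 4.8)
The plan is to deduce this statement directly from the combination of Lemma~\ref{lemma:twist-rich-restriction} and Lemma~\ref{lemma:twist-rich-factor}. First I would invoke Lemma~\ref{lemma:twist-rich-restriction}: since $\Gamma$ is twist-rich in $\ia$ and $A$ is a corank one free factor of $F_N$, the image $\Gamma_A$ of $\Stab_\Gamma([A])$ in $\Out(A)$ is twist-rich in $\Out(A)$. Note that $\rk(A) = N-1 \ge 3$, so $\Out(A)$ plays the role of ``$\Out(F_N)$'' for a free group of rank at least $3$, and the hypothesis $N \ge 4$ in Lemma~\ref{lemma:twist-rich-restriction} is exactly what guarantees $\rk(A) \ge 3$ so that twist-richness makes sense (the definition requires rank at least $3$, and really at least $4$ for the statement about roses with nonabelian vertex stabilizers — but here we only need the output of Lemma~\ref{lemma:twist-rich-restriction}, which already asserts twist-richness).

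Then I would apply Lemma~\ref{lemma:twist-rich-factor} with $F_N$ replaced by $A$: a twist-rich subgroup of $\ia \cap \Out(A)$ — here $\Gamma_A$ — does not preserve the conjugacy class of any proper free factor of $A$. Strictly, one should check that $\Gamma_A$ lands in the relevant congruence subgroup $\mathrm{IA}_{N-1}(\mathbb{Z}/3\mathbb{Z})$ of $\Out(A)$; this follows because $\Stab_\Gamma([A]) \subseteq \Gamma \subseteq \ia$, and an element of $\ia$ restricting to $A$ acts trivially on $H_1(A;\mathbb{Z}/3\mathbb{Z})$ — indeed $H_1(A;\mathbb{Z}/3\mathbb{Z})$ is a direct summand of $H_1(F_N;\mathbb{Z}/3\mathbb{Z})$ compatibly with the restriction of the automorphism, so triviality on the larger group forces triviality on the summand. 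Hence Lemma~\ref{lemma:twist-rich-factor} applies verbatim and gives the conclusion: $\Gamma_A$ preserves no conjugacy class of proper free factor of $A$.

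Since both ingredients are already proved earlier in the paper, there is no real obstacle here; the only subtlety is the bookkeeping point that $\Gamma_A \subseteq \mathrm{IA}_{N-1}(\mathbb{Z}/3\mathbb{Z})$, which is where one must be slightly careful. The statement is essentially a transfer of the ``no invariant free factor'' property from $F_N$ down to the corank one factor $A$, packaged so that it can be fed into the proof of Proposition~\ref{prop:unique-invariant-factor} to handle the case where $B$ is conjugate into $A$.
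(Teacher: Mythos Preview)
Your proposal is correct and matches the paper's proof essentially verbatim: apply Lemma~\ref{lemma:twist-rich-restriction} to get that $\Gamma_A$ is twist-rich, then conclude via Lemma~\ref{lemma:twist-rich-factor}, noting that $N\ge 4$ ensures $A$ has rank at least $3$. The extra bookkeeping you include about $\Gamma_A \subseteq \mathrm{IA}_{N-1}(\mathbb{Z}/3\mathbb{Z})$ is a reasonable sanity check that the paper leaves implicit.
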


\begin{proof}
By Lemma~\ref{lemma:twist-rich-restriction}, the group $\Gamma_A$ is twist-rich. The conclusion thus follows from Lemma~\ref{lemma:twist-rich-factor}  (notice that we are using $N\ge 4$ to ensure that $A$ has rank at least $3$).
\end{proof}

We finally need a short technical lemma before completing the proof of Proposition~\ref{prop:unique-invariant-factor}.

\begin{lemma}\label{lemma:whitehead}
Let $A$ be a corank one free factor of $F_N$, and let $\langle t\rangle$ be a rank one complementary free factor to $A$. Let $w\in A$ be an element which is nonsimple in $A$. 

Then every simple element of $F_N$ contained in $A\ast\langle twt^{-1}\rangle$ is either conjugate into $A$ or conjugate to $twt^{-1}$, in particular it is conjugate into $A$ in $F_N$.  
\end{lemma}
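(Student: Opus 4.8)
The statement concerns the free group $G := A \ast \langle twt^{-1}\rangle$, where $w \in A$ is nonsimple in $A$, and asks us to classify simple elements of $F_N$ that happen to lie in $G$. The plan is to first work entirely inside $G$: I claim that $\{[A], [twt^{-1}]\}$ is in fact a free factor system of $G$ that is ``as non-free as possible'' relative to the element $w$, and that any element of $G$ which is simple \emph{in $G$} must be conjugate (in $G$) into one of the two factors. This is a statement purely about the free group $G$ of rank $N$ (same rank as $F_N$, since $A$ has corank one and we have glued on one extra generator $t$), and it is essentially the content of \cite[Proposition~5.1]{HW2} combined with the fact that $w$ nonsimple in $A$ forces $w$ to be nonsimple in $G$ as well — indeed if $w$ were contained in a proper free factor of $G$, intersecting with $A$ (which is a free factor of $G$) and using that an intersection of free factors is a free factor would exhibit $w$ inside a proper free factor of $A$, a contradiction. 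So the cyclic splitting $U = (A \ast \langle twt^{-1}\rangle)\ast_{\langle w\rangle}$ used elsewhere in the paper has the property that its only compatible free splitting is the HNN splitting defined by $A$; phrased in terms of elements, every element of $G$ that is simple \emph{in $G$} is conjugate in $G$ into $A$ or into $\langle twt^{-1}\rangle$.

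Next I would upgrade ``simple in $F_N$'' to ``simple in $G$''. Since $G = A \ast \langle twt^{-1}\rangle$ sits inside $F_N = A \ast \langle t\rangle$ as the subgroup generated by $A$ and $twt^{-1}$, and $G$ is itself a free factor of $F_N$ — concretely, $\{x_1,\dots,x_k, twt^{-1}\}$ extends to a basis of $F_N$ when $\{x_1,\dots,x_k\}$ is a basis of $A$ and $t$ is the extra generator, \emph{provided} $w$ can be taken to be a primitive element, which is \emph{not} the case here. So I need to be slightly more careful: $G$ need not be a free factor of $F_N$. The correct statement is that $G$ is a \emph{vertex group} of the splitting $U$ of $F_N$, hence the one-edge free splitting $F_N = A \ast$ refines $U$ after blowing up, and $G$ is elliptic in this free splitting. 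The key fact I want is: if $g \in G$ is simple in $F_N$ then $g$ is simple in $G$. This follows because a proper free factor $C$ of $F_N$ containing $g$, when we look at the action of $G$ on the minimal subtree of the $F_N$-tree dual to the free splitting associated to $C$, either has $G$ elliptic — so $G$ is conjugate into $C$, but $G$ has full rank $N$ so $C = F_N$, contradiction — or induces a nontrivial free splitting of $G$ relative to nothing, in which the vertex groups are proper free factors of $G$ and $g$ (being elliptic, as $g \in C$ is elliptic in the $C$-tree... ) lies in one of them; a standard Bass–Serre argument along these lines shows $g$ is simple in $G$. I would make this precise using the subgroup intersection / Kurosh-type machinery for free factors, the cleanest route being: a simple element of $F_N$ that lies in the full-rank subgroup $G$ is simple in $G$ because any proper free factor witnessing simplicity intersects $G$ in a subgroup that is a free factor of $G$ of smaller rank, and this intersection is proper since it cannot be all of $G$ (rank reasons).

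Finally I would assemble the pieces. Given $g \in F_N$ simple in $F_N$ with $g \in G$: by the previous paragraph $g$ is simple in $G$; by the first paragraph, $g$ is conjugate in $G$ (hence in $F_N$) into $A$ or into $\langle twt^{-1}\rangle$. Since $twt^{-1}$ is conjugate to $w \in A$ in $F_N$, in either case $g$ is conjugate into $A$ in $F_N$, which is exactly the conclusion. I would also note the refinement stated in the lemma, that $g$ is conjugate \emph{in $F_N$} to an element of $A$ or to $twt^{-1}$ itself (up to conjugacy the latter is just $w$); this follows since conjugacy in $G$ into a factor of a free product of $G$ can be taken to land in the factor itself, and conjugacy in $G$ implies conjugacy in $F_N$.

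\medskip

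\noindent\textbf{Main obstacle.} The delicate point is the implication ``simple in $F_N$ $\Rightarrow$ simple in $G$'': one must argue carefully that $G$, although of full rank $N$ inside $F_N$ and \emph{not} a free factor of $F_N$ (precisely because $w$ is not primitive), still has the property that restricting a proper free factor of $F_N$ to $G$ produces a proper free factor of $G$. This requires invoking a Kurosh subgroup / intersection-of-free-factors argument relative to the action of $G$ on the tree dual to the free factor in question, and checking that ellipticity of $g$ in the ambient tree forces $g$ into a bona fide proper free factor of $G$ — the rank bookkeeping ($\mathrm{rk}(G) = N = \mathrm{rk}(F_N)$) is what prevents $G$ itself from being elliptic and hence conjugate into the proper factor. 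Once that is in hand, the appeal to \cite[Proposition~5.1]{HW2} (or equivalently the structure of splittings compatible with $U$) and the observation that $w$ nonsimple in $A$ implies $w$ nonsimple in $G$ finish the argument routinely.
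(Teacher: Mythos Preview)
Your two-step strategy has a fatal error in Step~2. You claim that every element of $G = A \ast \langle u\rangle$ (with $u = twt^{-1}$) which is simple \emph{in $G$} is conjugate in $G$ into $A$ or into $\langle u\rangle$. This is false: if $a_1$ is any basis element of $A$, then $a_1 u$ is primitive in $G$ (it belongs to the basis $\{a_1 u, a_2,\dots,a_{N-1}, u\}$), hence simple in $G$, yet it is cyclically reduced of syllable length~$2$ in the free product $A \ast \langle u\rangle$ and therefore not conjugate into either factor. Your justification for Step~2 also contains an error of sign: you assert that ``$w$ nonsimple in $A$ forces $w$ to be nonsimple in $G$'', but the opposite is true --- $w$ lies in $A$, which is a proper free factor of $G$, so $w$ is \emph{simple} in $G$. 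The citation of \cite[Proposition~5.1]{HW2} does not help: that result concerns free splittings of $F_N$ compatible with the cyclic splitting $U$, not a classification of simple elements inside the vertex group $G$.

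The counterexample above shows the approach cannot be repaired by strengthening Step~2: the element $a_1 u = a_1 twt^{-1}$ is simple in $G$ but, by the very lemma you are trying to prove, is \emph{not} simple in $F_N$ (it is neither conjugate into $A$ nor conjugate to $twt^{-1}$ in $F_N$, as one checks by computing its translation length in the Bass--Serre tree of $F_N = A\ast$). So ``simple in $F_N$'' is strictly stronger than ``simple in $G$'' for elements of $G$, and passing to simplicity in $G$ discards exactly the information you need. Separately, your justification in Step~1 that $G$ cannot be elliptic in a free splitting of $F_N$ ``because $G$ has rank $N$'' is not a valid argument (free groups of rank $N$ embed in free groups of smaller rank); the correct reason is that $G$ contains the corank-one free factor $A$, so any free factor containing $G$ contains $A$ and hence equals $F_N$.

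The paper's proof is entirely different and more direct: it works with Whitehead graphs. One chooses a basis $\calb_A$ of $A$ in which the Whitehead graph of $w$ is connected without cut vertex, extends to a basis $\calb = \calb_A \cup \{t\}$ of $F_N$ (chosen to minimise the length of the given element $g$), and verifies by hand that for any $g \in G$ not conjugate into $A$ or to $twt^{-1}$, the Whitehead graph $\Wh_{\calb}(g)$ is connected without cut vertex --- hence $g$ is nonsimple in $F_N$.
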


\begin{proof}
Our proof relies on Whitehead's algorithm \cite{Whi, Sta}. Associated to each free basis $\calb$ of $F_N$ and element $g \in F_N$, there is a \emph{Whitehead graph} $\Wh_\calb(g)$ whose vertex set is $\calb \cup \calb^{-1}$ and edge set consists of the pairs $(a,b)$ such that $ab^{-1}$ is a subword of the cyclic word given by the cyclic reduction of $g$. If $\Wh_\calb(g)$ is connected without a cut vertex, then $g$ is nonsimple, and for every nonsimple $g$ there is some basis in which $\Wh_\calb(g)$ has this property \cite[Theorem~2.4 and Algorithm~2.5]{Sta}.

Let $g\in A\ast\langle twt^{-1}\rangle$ be an element which is neither conjugate into $A$ nor conjugate to $twt^{-1}$; we aim to prove that $g$ is nonsimple. As $w$ is nonsimple, there exists a free basis $\calb_A$ of $A$ in which the Whitehead graph  $\Wh_{\calb_A}(w)$ is connected without a cut point, and $w$ is cyclically reduced. Write $w=x_1\ldots x_n$ in the basis $\calb_A$. For all $a \in A$, the set $\calb:= \calb_A \cup \{at\}$ is a free basis of $F_N$. We choose $a \in A$ such that the word length $|g|_\mathcal{B}$ is minimal, and set $t'=at$.  As $g$ is not conjugate into $A$ and $g\in A\ast\langle twt^{-1}\rangle=A\ast\langle t'w{t'}^{-1}\rangle$, the Whitehead graph $\Wh_\calb(g)$ contains a copy of $\Wh_{\calb_A}(w)$, except that the edge between $x_n$ and $x_1^{-1}$ might be removed. As $w$ is cyclically reduced in $\mathcal{B}_A$ there are two distinct edges from $t'$ in $\Wh_\calb(g)$ to $x_1^{-1}$ and to $x_n$. 
It follows that $\Wh_\calb(g)$ is connected without a cut vertex as long as there are edges from two distinct vertices in $\calb_A \cup \calb_A^{-1}$ to $t'^{-1}$; this is the case as otherwise, there would exist $a_0 \in \calb_A \cup \calb_A^{-1}$ which always precedes $t'$ and follows $t'^{-1}$, and either $g$ would be contained in a conjugate of $\langle t'w{t'}^{-1}\rangle$ or be shorter in the basis  $\calb_A\cup\{a_0t'\}$ than in $\calb$. Both of these lead to contradictions. This shows that $\Wh_\calb(g)$ is connected without a cut vertex, and therefore $g$ is nonsimple.
\end{proof}

\begin{proof}[Proof of Proposition~\ref{prop:unique-invariant-factor}] 
Suppose that $[B]$ is preserved by $H:=\Stab_\Gamma([A])$, and assume for a contradiction that $[B]\neq[A]$. Let $S$ be the Bass--Serre tree of the free splitting $F_N=A\ast$. Let $S_B \subseteq S$ be the minimal subtree of $B$ with respect to its action on $S$ and let $H_B$ be the image of $H$ in $\Out(B)$. As both $[B]$ and $S$ are $H$-invariant, the tree $S_B$ is $H_B$-invariant.

The vertex stabilizers of $S_B$ are given by the intersections $A^h \cap B$ with $h \in F_N$. This gives a finite collection of conjugacy classes of subfactors of $A$ and $B$, which are invariant by a finite-index subgroup of $H$ (in fact by $H$ because we are working in $\ia$). By Lemma~\ref{lemma:twist-rich-restriction-factor}, the group $H$ does not preserve a proper free factor of $A$ and so these intersections are trivial. Hence the action of $B$ on $S_B$ is free and its stabilizer in $\mathrm{IA}(B,\mathbb{Z}/3\mathbb{Z})$ is trivial. As $H_B$ is contained in this stabilizer, it follows that the $H$-action on $B$ is trivial (i.e.\ $H_B$ is trivial).

If $t$ is chosen to be a stable letter for $S$ (so that $\langle A, t \rangle = F_N$), then as $\Gamma$ is twist-rich, there exists a nonsimple, root-free $w \in A$ such that the twist $\phi$ sending $t \mapsto tw$ and fixing $A$ has some power belonging to $H$. Any group fixed by a representative of $\phi$, or indeed fixed by any power of $\phi$, is conjugate into $A\ast\langle twt^{-1}\rangle$ (this can be seen either by directly computing fixed subgroups of representatives via train-tracks \cite{CT, BH}, or by looking at the limiting tree $T_\phi$ of $\phi$ in the boundary of outer space, whose vertex groups are the conjugates of $A\ast\langle twt^{-1}\rangle$ \cite{CL}).

Therefore, Lemma~\ref{lemma:whitehead} implies that every element $b \in B$ (which is simple because $B$ is a free factor) is conjugate into $A$. This contradicts the fact (proven in the previous paragraph) that the action of $B$ on $S_B$ is free.
\end{proof}

\section{From algebra to graph maps and rigidity}\label{sec:algebra}

\emph{In this section we prove our main theorem in the case $N \geq 4$, utilizing the analysis of stabilizers of corank one factors in twist-rich subgroups from the previous two sections.}
\\

The first goal of the present section is to prove the following statement.

\begin{theo}\label{theo:algebra}
Let $N\ge 4$, let $\Gamma$ be a twist-rich subgroup of $\ia$, and let $f:\Gamma\to\ia$ be an injective homomorphism.
\begin{enumerate}
\item For every one-edge nonseparating free splitting $S$ of $F_N$, there exists a unique one-edge nonseparating free splitting $f_\ast(S)$ of $F_N$ such that $f(\Stab_\Gamma(S))\subseteq\Stab_{\ia}(f_\ast(S))$.  
\item The map $S\mapsto f_\ast(S)$ is injective.
\item If $S_1$ and $S_2$ are rose compatible, then $f_\ast(S_1)$ and $f_\ast(S_2)$ are rose compatible.
\end{enumerate}
\end{theo}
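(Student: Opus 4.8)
The strategy is to verify the hypotheses of the blueprint of Section~\ref{sec:blueprint}: once Theorem~\ref{theo:algebra} is proved, $S\mapsto f_\ast(S)$ will be an injective graph map of $\ens$ preserving its (trivial, for $N\ge 4$) decoration, so that Corollary~\ref{cor:strongly-rigid} and Proposition~\ref{prop:blueprint} conclude. For the existence in part~(1), fix a one-edge nonseparating free splitting $S$ with corank-one vertex group $A$, so $\Stab_\Gamma(S)=\Stab_\Gamma([A])$, and let $\mathcal T=K_1\times K_2$ be its group of twists, $K_i$ the twists about one of the two half-edges. Since $\Gamma$ is twist-rich, $K_i^\Gamma:=\Gamma\cap K_i$ is nonabelian; since $\Gamma\subseteq\ia$, Theorem~\ref{theo:ia} shows $\Stab_\Gamma(S)$ acts trivially on $S/F_N$, so conjugation by its elements preserves each $K_i$, whence $K_1^\Gamma,K_2^\Gamma$ are infinite normal subgroups of $\Stab_\Gamma(S)$ centralizing one another. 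Moreover $\Stab_\Gamma(S)$ contains a direct product of $2N-4$ nonabelian free groups (Lemma~\ref{lemma:product-rank-stabilizer-twist-rich}), so $\rk_\pro(\Stab_\Gamma(S))=2N-4$, maximal in $\Out(F_N)$ by Theorem~\ref{theo:product-rank-out-aut}(1). Injectivity of $f$ transports all of this to $f(\Stab_\Gamma(S))\subseteq\ia$, with $K_i^\Gamma$ replaced by $f(K_i^\Gamma)$, so Proposition~\ref{prop:splitting-stabilized} yields a nontrivial $f(\Stab_\Gamma(S))$-invariant splitting $U$, on which $f(\Stab_\Gamma(S))$ acts trivially on $U/F_N$, with an $F_N$-invariant partition $V(U)=V_1\dunion V_2$ such that (after relabeling) every element of $f(K_1^\Gamma)$ restricts to the identity on each $V_1$-vertex group, and the $V_2$-vertex stabilizers form a free factor system $\calf$.

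The heart of part~(1), which I expect to be the main obstacle, is to upgrade $U$ to a one-edge nonseparating free splitting: concretely, to show $\calf$ is a single conjugacy class $[A']$ of a corank-one free factor and that $U$ has trivial edge stabilizers, so $U$ collapses onto the one-edge nonseparating free splitting $S'$ associated to $A'$; one then sets $f_\ast(S):=S'$, which is $f(\Stab_\Gamma(S))$-invariant since a finite-index subgroup fixes it and we are inside $\ia$. To pin down $\calf$ and the edge groups I would run a product-rank bookkeeping argument: $f(\Stab_\Gamma(S))$ lies in the subgroup of $\Stab_{\Out(F_N)}(U)$ acting trivially on $U/F_N$, and a Levitt-type exact sequence for $U$ (in the spirit of Proposition~\ref{prop:Levitt}) together with Lemma~\ref{lemma:rank-product-extension} forces, from $\rk_\pro=2N-4$, the combinatorics of $U/F_N$, the ranks of the vertex groups, and the edge stabilizers to be extremal; the classification of maximal direct products of $2N-4$ nonabelian free groups in $\Out(F_N)$ from~\cite{BW}, and Proposition~\ref{prop:unique-invariant-factor} (a stabilizer of a corank-one factor in a twist-rich subgroup preserves no other proper free factor), are the inputs that rigidify this bookkeeping. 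Ruling out nontrivial edge stabilizers and higher-complexity free factor systems is the delicate point.

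For the uniqueness in part~(1), suppose $f(\Stab_\Gamma(S))\subseteq\Stab_{\ia}(S')\cap\Stab_{\ia}(S'')$ for one-edge nonseparating free splittings $S',S''$. Since $f(\Stab_\Gamma(S))$ has product rank $2N-4$, Theorem~\ref{theo:product-rank-out-aut}(3) forces $S'$ and $S''$ to be nonseparating and rose compatible; if $S'\ne S''$, their unique two-petal rose common refinement is again $f(\Stab_\Gamma(S))$-invariant, and restricting to its rank-$(N-2)$ vertex group and comparing product ranks across the resulting exact sequence---using that $f(K_1^\Gamma),f(K_2^\Gamma)$ are commuting normal \emph{nonabelian free} subgroups, hence cannot be full twist subgroups (the latter have product rank $2$), together with Proposition~\ref{prop:unique-invariant-factor} applied on the source---yields a contradiction, so $S'=S''$ and $f_\ast$ is well defined. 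Part~(3) is then the most direct: if $S_1,S_2$ are rose compatible with common two-petal rose refinement $U_{12}$, then $\Stab_\Gamma(U_{12})\subseteq\Stab_\Gamma(S_1)\cap\Stab_\Gamma(S_2)$---no element swaps the two petals since $\Gamma\subseteq\ia$ (Theorem~\ref{theo:ia})---and blowing up the rank-$(N-2)$ vertex group of $U_{12}$ into a rose with rank-two vertex group, exactly as in Lemma~\ref{lemma:product-rank-stabilizer-twist-rich}, shows $\Stab_\Gamma(U_{12})$ contains a direct product of $2N-4$ nonabelian free groups; its image under $f$ has product rank $2N-4$ and lies in $\Stab_{\ia}(f_\ast(S_1))\cap\Stab_{\ia}(f_\ast(S_2))$, so Theorem~\ref{theo:product-rank-out-aut}(3) shows $f_\ast(S_1)$ and $f_\ast(S_2)$ are rose compatible.

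It remains to prove the injectivity of $f_\ast$ in part~(2). If $f_\ast(S_1)=f_\ast(S_2)=:S'$ with $S_1\ne S_2$, then $f$ maps both $\Stab_\Gamma(S_1)$ and $\Stab_\Gamma(S_2)$ into $\Stab_{\ia}(S')$; since by Lemma~\ref{lemma:twist-rich-unique-splitting} each $S_i$ is the \emph{unique} one-edge nonseparating free splitting preserved by $\Stab_\Gamma(S_i)$, and since each $\Stab_\Gamma(S_i)$ already realizes the maximal product rank and carries its own pair of commuting normal nonabelian free subgroups from the twists of $S_i$, a comparison of these two pieces of structure inside $\Stab_{\ia}(S')$---along the lines of the uniqueness argument above---shows they cannot be simultaneously accommodated unless $S_1=S_2$, a contradiction. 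This yields the injectivity of $f_\ast$ and completes the proof of Theorem~\ref{theo:algebra}; feeding it into Corollary~\ref{cor:strongly-rigid} and Proposition~\ref{prop:blueprint} then gives Theorem~\ref{theo:intro-main}.
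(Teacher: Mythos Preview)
Your overall architecture matches the paper: establish $(P_1)$--type product-rank properties and commuting normal twist subgroups, invoke Proposition~\ref{prop:splitting-stabilized} to get an invariant splitting $U$, and then use Theorem~\ref{theo:product-rank-out-aut}(3) for part~(3). Part~(3) is essentially correct as you wrote it. But parts~(1) and~(2) have genuine gaps.

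\textbf{The missing ingredient for existence and uniqueness is a third algebraic property.} You only transport $(P_1)$ (product rank $2N-4$) and $(P_2)$ (two commuting normal nonabelian free subgroups) across $f$. The paper also establishes, and crucially uses, a property $(P_3)$: \emph{whenever $\Stab_\Gamma(S)$ contains two infinite normal subgroups centralizing each other, both must be free}. This is proved (Proposition~\ref{prop:stab-splitting-property}) by passing to the image in $\Out(A)$ via Lemma~\ref{lemma:twist-rich-restriction}, applying Corollary~\ref{cor:normal-centralized-in-twist-rich} to force one image trivial, and then a JSJ argument to force the other trivial as well, so both $K_i$ lie in the twist group. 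Your product-rank bookkeeping alone does not rule out the case where $U$ has nontrivial (cyclic) edge stabilizers: the paper's mechanism here is that a cyclic-edge twist would be central in $f(\Stab_\Gamma(S))$, contradicting centerlessness, which in turn follows from $(P_3)$ (Remark~\ref{rk:centerless}). Similarly, the final contradiction in Proposition~\ref{prop:stab-splitting-characterization}---that the kernel $K_2'$ has product rank $\ge 2$---is a contradiction \emph{to $(P_3)$}, not to a pure rank count. For uniqueness (Proposition~\ref{prop:uniqueness}), the paper again invokes $(P_3)$: if $f(\Stab_\Gamma(S))$ stabilized a two-petal rose with twist group $A_1\times\cdots\times A_4$, nontrivial intersection with three of the $A_i$ would give three pairwise-commuting infinite normal subgroups, violating $(P_3)$; this is what drives the product-rank contradiction. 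Your sentence ``cannot be full twist subgroups (the latter have product rank $2$)'' does not supply this.

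\textbf{Injectivity needs a different idea.} Your sketch---comparing the two stabilizer structures inside $\Stab_{\ia}(S')$---is too vague to work: both $f(\Stab_\Gamma(S_i))$ individually sit perfectly well inside $\Stab_{\ia}(S')$, and nothing you have written distinguishes them. The paper's argument (Proposition~\ref{prop:injectivity}) passes instead to the group $H=\langle\Stab_\Gamma(S_1),\Stab_\Gamma(S_2)\rangle$. By Proposition~\ref{prop:unique-invariant-factor} (which you cite, but elsewhere), when $S_1\ne S_2$ this $H$ fixes no proper free factor; hence by Proposition~\ref{prop:commuting-normal-factor} it has no pair of infinite commuting normal subgroups, and neither does $f(H)$. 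Therefore $f(H)$ meets at most one of the two twist factors of $S'$ nontrivially, so the kernel of $f(H)\to\Out(F_{N-1})$ has product rank $\le 1$, forcing $\rk_\pro(f(H))\le 2N-5$, contradicting $\rk_\pro(\Stab_\Gamma(S_1))=2N-4$.
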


Our main theorem (Theorem~\ref{theo:intro-main}) will then easily follow from Theorem~\ref{theo:algebra} and the strong rigidity of $\ens$, as explained in Section~\ref{sec:conclusion} below. The most involved part in the proof of Theorem~\ref{theo:algebra} is the existence of the splitting $f_\ast(S)$: this will be done in Section~\ref{sec:existence} through an algebraic characterization of stabilizers of one-edge nonseparating free splittings. Uniqueness of $f_\ast(S)$ will be proved in Section~\ref{sec:uniqueness}, and injectivity of the map $S\mapsto f_\ast(S)$ will be established in Section~\ref{sec:injectivity}, by taking advantage of maximal direct products of free groups. The last point follows from work of Bridson and the third named author \cite{BW}. All of this is summarized in Section~\ref{sec:conclusion}. In Section~\ref{sec:complements}, we will explain how to derive the corollaries announced in the introduction about abstract commensurators and the co-Hopfian property.

\subsection{Free splitting stabilizers are sent into free splitting stabilizers.}\label{sec:existence}

In this section, we will give a proof of the following proposition.

\begin{prop}\label{prop:stab-split-to-stab-split}
Let $N\ge 4$. Let $\Gamma\subseteq\ia$ be a twist-rich subgroup, and let $f:\Gamma\to\ia$ be an injective homomorphism. Then for every one-edge nonseparating free splitting $S$ of $F_N$, there exists a one-edge nonseparating free splitting $S'$ of $F_N$ such that $f(\Stab_\Gamma(S))\subseteq\Stab_{\ia}(S')$.
\end{prop}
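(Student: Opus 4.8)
The plan is to give an algebraic characterization of the subgroups of $\ia$ that arise as stabilizers $\Stab_\Gamma(S)$ of a one-edge nonseparating free splitting, and to show this characterization is preserved under the injective homomorphism $f$. The key structural feature of $\Stab_\Gamma(S)$ is that it contains two infinite normal subgroups $K_1,K_2$ that centralize each other — namely the intersections of $\Gamma$ with the two factors of the group of twists about the two half-edges of $S$ — and, by Lemma~\ref{lemma:product-rank-stabilizer-twist-rich}, it also contains a direct product of $2N-4$ nonabelian free groups. Since $\Gamma$ is twist-rich, $K_1$ and $K_2$ are in fact nonabelian free groups (each being the intersection of $\Gamma$ with a twist group which, by Definition~\ref{de:twist-rich}, is nonabelian). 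So the data travels: $f(K_1)$ and $f(K_2)$ are infinite, normal in $f(\Stab_\Gamma(S))$, centralize each other, and $f(\Stab_\Gamma(S))$ still contains a direct product of $2N-4$ nonabelian free groups (all these properties are preserved by an injective homomorphism).

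\smallskip

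The heart of the argument is then to feed $H := f(\Stab_\Gamma(S)) \subseteq \ia$ into Proposition~\ref{prop:splitting-stabilized}: there is a nontrivial $H$-invariant splitting $U$ of $F_N$ on which $H$ acts trivially on the quotient graph, with an $F_N$-invariant partition $V(U)=V_1\sqcup V_2$ of the vertex set such that one of the two (images of the) $K_i$ acts as the identity on every $G_v$ with $v\in V_1$, and the conjugacy classes of stabilizers of vertices in $V_2$ form a free factor system $\calf$. What must be shown is that in fact $U$ can be taken to be a one-edge nonseparating free splitting. First I would use the product-rank input: since $\rk_{\pro}(H) = 2N-4$ is maximal (Theorem~\ref{theo:product-rank-out-aut}(1) gives this is the max in $\Out(F_N)$, and it is attained), Lemma~\ref{lemma:rank-product-extension} applied to the decomposition of $\Stab^0(U)$ via Proposition~\ref{prop:Levitt} (group of twists, then $\oplus_v \Out(G_v)$) forces strong constraints on the ranks of the vertex groups and the combinatorial type of $U/F_N$ — in particular it should force $U$ to be obtainable from a free splitting, and for the free factor system $\calf$ coming from $V_2$ to be sporadic, so that after collapsing we land on a one-edge free splitting $S'$. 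Then I would show $S'$ is nonseparating: a separating one-edge free splitting has a group of twists (partial conjugations) whose product rank is strictly less than $2N-4$, and combined with the $\oplus_v \Out(G_v)$ contribution this would violate $\rk_{\pro}(H)=2N-4$ (this is exactly the content behind Theorem~\ref{theo:product-rank-out-aut}(2), which says every direct product of $2N-4$ nonabelian free groups in $\ia$ fixes a one-edge \emph{nonseparating} free splitting; I would apply that theorem to the maximal direct product sitting inside $H$ to directly produce a nonseparating one-edge free splitting $S'$ fixed by it, and then argue $H$ itself fixes $S'$ using that $H\subseteq\ia$ preserves any free factor system preserved by a finite-index subgroup — Theorem~\ref{theo:ia} and Theorem~\ref{theo:hm}).

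\smallskip

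More precisely, the cleanest route is probably: (i) by Lemma~\ref{lemma:product-rank-stabilizer-twist-rich}, $\Stab_\Gamma(S)$ contains $P \cong F_\infty^{2N-4}$, so $f(P)\subseteq H$ is such a product; (ii) by Theorem~\ref{theo:product-rank-out-aut}(2), $f(P)$ fixes a one-edge nonseparating free splitting $S'$, and by Theorem~\ref{theo:product-rank-out-aut}(3) any two one-edge free splittings it fixes are nonseparating and rose compatible — I would want to pin down $S'$ canonically, e.g.\ as the unique one-edge free splitting fixed by $f(P)$ that is collapse-compatible with the whole invariant splitting $U$ produced by Proposition~\ref{prop:splitting-stabilized}; (iii) show $H$ (not just the finite-index-in-nothing subgroup $f(P)$) fixes $S'$: since $f(P)$ is normal in $H$ and $S'$ is $f(P)$-invariant, $H$ permutes the (finite, by local finiteness / Theorem~\ref{theo:product-rank-out-aut}(3)) set of one-edge nonseparating free splittings fixed by $f(P)$; as $H\subseteq\ia$, Theorem~\ref{theo:hm} upgrades this to $H$-invariance of the corresponding free factor, hence of $S'$. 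This yields $f(\Stab_\Gamma(S)) = H \subseteq \Stab_{\ia}(S')$, as desired.

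\smallskip

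The main obstacle I expect is step (ii)–(iii): controlling the possibly many one-edge nonseparating free splittings fixed by the maximal product $f(P)$ and selecting one that is genuinely $H$-invariant — i.e.\ getting from "$f(P)$ fixes $S'$" to "$H$ fixes $S'$" without a priori knowing $f(P)$ has finite index in $H$. This is where the normality of $K_1,K_2$ in $\Gamma$ (hence of their images in $H$) and the Handel--Mosher invariance result (Theorem~\ref{theo:hm}), together with the rigidity of the combinatorial configuration of maximal products from \cite{BW} (Theorem~\ref{theo:product-rank-out-aut}(3)), must be combined carefully. A secondary subtlety is ensuring the splitting $U$ from Proposition~\ref{prop:splitting-stabilized} is not genuinely non-free (nontrivial edge stabilizers): this is precisely ruled out by the maximal-product-rank bound, since a non-free invariant splitting would leave too little room in the $\oplus_v\Out(G_v)$ factors to house $F_\infty^{2N-4}$.
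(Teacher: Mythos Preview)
Your overall strategy---isolate algebraic properties of $\Stab_\Gamma(S)$ that transfer under an injective $f$, then show any $H\subseteq\ia$ with these properties fixes a one-edge nonseparating free splitting---is exactly the paper's approach. You correctly identify properties $(P_1)$ (product rank $2N-4$) and $(P_2)$ (two commuting normal nonabelian free subgroups), and feeding $H=f(\Stab_\Gamma(S))$ into Proposition~\ref{prop:splitting-stabilized} is the right move. However, there are two genuine gaps.

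\textbf{First}, your ``cleanest route'' asserts that $f(P)$ is normal in $H$, where $P$ is the direct product of $2N-4$ nonabelian free groups from Lemma~\ref{lemma:product-rank-stabilizer-twist-rich}. This is false: $P$ arises as twists of an $(N-2)$-petal rose \emph{refinement} $S'$ of $S$, and while the twist group of $S'$ is normal in $\Stab^0(S')$, it is not normal in the larger group $\Stab_\Gamma(S)$. So $H$ need not permute the splittings fixed by $f(P)$, and the Handel--Mosher upgrade does not apply.

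\textbf{Second}, and more fundamentally, you are missing a third property that the paper isolates and that is essential for ruling out the case where the splitting $U$ from Proposition~\ref{prop:splitting-stabilized} has nontrivial edge stabilizers. This is property $(P_3)$: \emph{whenever $H$ contains two infinite normal subgroups that centralize each other, both are free}. Your claim that ``a non-free invariant splitting would leave too little room in the $\oplus_v\Out(G_v)$ factors to house $F_\infty^{2N-4}$'' is not correct as stated. The paper's argument shows that if $U$ has no trivial edge stabilizers, then using the partition $V_1\sqcup V_2$ one builds two commuting normal subgroups $K_1',K_2'\unlhd H$ (kernels of the maps $H\to\prod_{v\in V_i}\Out(G_v)$) with $K_1'$ infinite and $\rk_{\pro}(K_2')\geq 2$---so the product rank \emph{is} accommodated, and the contradiction comes precisely from $(P_3)$ (since $K_2'$ is not free). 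Establishing that $(P_3)$ holds for $\Stab_\Gamma(S)$ is itself nontrivial: it requires showing (via Lemma~\ref{lemma:twist-rich-restriction} and a JSJ argument) that any pair of commuting infinite normal subgroups is forced into the two twist factors. Property $(P_3)$ then transfers under $f$ because it is purely group-theoretic; this is what closes the argument.
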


To prove Proposition~\ref{prop:stab-split-to-stab-split}, we will first establish in Proposition~\ref{prop:stab-splitting-property} a few algebraic properties of stabilizers of one-edge nonseparating free splittings in twist-rich subgroups of $\ia$. We will then show (Proposition~\ref{prop:stab-splitting-characterization}) that conversely, any subgroup of $\Out(F_N)$ that satisfies these properties has to virtually stabilize a one-edge nonseparating free splitting. This will be enough to conclude.

\begin{prop}\label{prop:stab-splitting-property}
Let $N\ge 4$. Let $\Gamma\subseteq\ia$ be a twist-rich subgroup, let $S$ be a one-edge nonseparating free splitting of $F_N$, and let $H:=\Stab_\Gamma(S)$. Then
\begin{enumerate}
\item[$(P_1)$] $H$ contains a direct product of $2N-4$ nonabelian free groups.
\item[$(P_2)$] $H$ contains a direct product $K_1\times K_2$ of two nonabelian free groups with $K_1$ and $K_2$ both normal in $H$.
\item[$(P_3)$] Whenever $H$ contains two infinite normal subgroups $K_1$ and $K_2$ that centralize each other, then both $K_1$ and $K_2$ are free. 
\end{enumerate}
\end{prop}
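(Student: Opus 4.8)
The plan is to treat $(P_1)$ and $(P_2)$ directly and to concentrate the real work on $(P_3)$. For $(P_1)$ there is nothing to do: it is exactly Lemma~\ref{lemma:product-rank-stabilizer-twist-rich} applied to $S$. For $(P_2)$, I would realize $S$ as the Bass--Serre tree of an HNN decomposition $F_N=A\ast$ with $A$ a corank one free factor, and invoke the Example of Section~\ref{sec:twists-background}: the group of twists of $S$ splits as a direct product $\mathcal{T}(S)=K_1^0\times K_2^0$, with each $K_i^0$ a nonabelian free group (identified with $A$) consisting of the twists about one of the two half-edges of the one-petal rose $S/F_N$. Since $\Gamma\subseteq\ia$, Theorem~\ref{theo:ia} gives $H\subseteq\Stab^0(S)$, and $\Stab^0(S)$ fixes each half-edge orbit, hence normalizes $K_1^0$ and $K_2^0$ separately. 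Therefore $K_i:=\Gamma\cap K_i^0=H\cap K_i^0$ is normal in $H$; it is free, being a subgroup of $A$; it is nonabelian by twist-richness applied to the rose $S/F_N$ (its vertex group $A$ has rank $N-1\ge 3$, so is nonabelian); and $K_1\cap K_2\subseteq K_1^0\cap K_2^0=1$. Thus $K_1\times K_2\subseteq H$ with both factors normal in $H$, which is $(P_2)$.

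For $(P_3)$, suppose $H$ contains infinite normal subgroups $K_1,K_2$ that centralize each other. The first step is to push one of them into the twist group of $S$. Let $\rho\colon\Stab^0(S)\to\Out(A)$ be Levitt's restriction map (Proposition~\ref{prop:Levitt}), with kernel $\mathcal{T}(S)$, and set $\bar H:=\rho(H)$. One checks that $\bar H\subseteq\mathrm{IA}(A,\mathbb{Z}/3\mathbb{Z})$, since $H_1(A;\mathbb{Z}/3\mathbb{Z})$ is a direct summand of $H_1(F_N;\mathbb{Z}/3\mathbb{Z})$ on which $\ia$ acts trivially. Moreover $\bar H$ preserves the conjugacy class of no proper free factor of $A$: if it preserved $[C]$ with $C$ a proper free factor of $A$, then, choosing representatives fixing $A$, $H$ would preserve the $F_N$-conjugacy class of $C$; but $C$ is a proper free factor of $F_N$ of rank $<N-1$, hence not conjugate to $A$, and $H\subseteq\Stab_\Gamma([A])$, so this contradicts Proposition~\ref{prop:unique-invariant-factor}. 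Consequently, by Proposition~\ref{prop:unbounded-orbits-in-ff} (the maximal $\bar H$-invariant free factor system being the empty, nonsporadic one, and $\mathrm{rk}(A)=N-1\ge 3$), the action of $\bar H$ on $\FF(A,\emptyset)$ has unbounded orbits. Applying Lemma~\ref{lemma:action-on-hyperbolic} to the commuting normal subgroups $\rho(K_1),\rho(K_2)$ of $\bar H$, one of them, say $\rho(K_1)$, has a finite orbit in $\partial_\infty\FF(A,\emptyset)$. Arguing exactly as in the proof of Proposition~\ref{prop:commuting-normal-factor}, boundary point stabilizers are virtually cyclic, so either $\rho(K_1)$ is trivial, or $\bar H$ normalizes a virtually cyclic (fully irreducible or surface-type) subgroup and is itself virtually cyclic, contradicting unboundedness of its orbits. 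Hence $\rho(K_1)=1$, i.e.\ $K_1\subseteq\mathcal{T}(S)=K_1^0\times K_2^0$.

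It remains to deduce that $K_1$ and $K_2$ are free, and this is where I expect the main obstacle to lie. Because $H$ normalizes $K_1^0$ and $K_2^0$ separately, the coordinate projections $p_i\colon\mathcal{T}(S)\to K_i^0$ are $H$-equivariant, so $L_i:=K_1\cap K_i^0=\ker(p_{3-i}|_{K_1})$ is normal in $H$. If $L_1$ or $L_2$ is trivial, then $K_1$ embeds into a free group and is free; so the crux is to rule out $L_1\ne 1\ne L_2$, in which case $K_1\supseteq L_1\times L_2$ contains a copy of $\mathbb{Z}^2$ and $L_1,L_2$ are infinite commuting normal subgroups of $H$. To get a contradiction I would use that centralizers of nontrivial subgroups of a free group are cyclic — this bounds $K_2\cap\mathcal{T}(S)\subseteq Z_{\mathcal{T}(S)}(K_1)$ severely — together with the normality of $L_1,L_2$ in $H$, the presence of the nonabelian normal twist subgroups $\Gamma\cap K_i^0$ produced in $(P_2)$, and a further appeal to Proposition~\ref{prop:unique-invariant-factor}. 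Once $K_1$ is shown to be free, one still has to handle $K_2$: this should follow either by re-running the first step with $K_1$ and $K_2$ interchanged where applicable, or by exploiting that $K_2$ centralizes the now-understood $K_1$ to place $K_2$ inside a free group (possibly after one more application of the hyperbolicity dichotomy of Lemma~\ref{lemma:action-on-hyperbolic}). Ensuring that \emph{both} subgroups are free, rather than just one, is the delicate point; everything else is an assembly of results already in place.
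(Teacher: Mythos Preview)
Your treatment of $(P_1)$ and $(P_2)$ is fine and matches the paper. For $(P_3)$, your first reduction---pushing one of the $K_i$, say $K_1$, into the twist group $\mathcal{T}(S)$---is also correct; the paper reaches the same conclusion slightly more directly by noting that $\bar H=\rho(H)$ is itself twist-rich in $\Out(A)$ (Lemma~\ref{lemma:twist-rich-restriction}) and then invoking Corollary~\ref{cor:normal-centralized-in-twist-rich}. One small correction: a virtually cyclic group can certainly act with unbounded orbits on a hyperbolic space, so ``contradicting unboundedness'' is not the reason $\bar H$ cannot be virtually cyclic; the right reason is that $\bar H$ is twist-rich (equivalently, has product rank $\ge 2N-6\ge 2$).

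The genuine gap is exactly where you locate it: showing that \emph{both} $K_1$ and $K_2$ are free. Your sketch for ruling out $L_1\neq 1\neq L_2$ does not lead to a contradiction (all you get is $K_2\cap\mathcal{T}(S)=1$, which says nothing decisive), and ``re-running the first step'' cannot work because Lemma~\ref{lemma:action-on-hyperbolic} only forces \emph{one} of the images $\rho(K_i)$ to have a finite boundary orbit. The paper supplies the missing idea, and it is of a different nature from anything in your outline. Once $K_1\subseteq\mathcal{T}(S)$, normality of $K_1$ in $H$ together with twist-richness forces $T_1:=K_1\cap K_i^0$ to be nonabelian for some $i$ and, crucially, not contained in any proper free factor of $A$. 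Since $K_2$ centralizes these twists, a short computation shows $\rho(K_2)$ fixes the $A$-conjugacy class of every element of $T_1$; as $T_1$ is filling, $A$ is freely indecomposable relative to $\Fix(\rho(K_2))$, and one can form the $\zmax$ JSJ tree of $A$ relative to these fixed classes. If $\rho(K_2)$ were infinite this tree would be nontrivial (the single-QH-vertex case is excluded because $T_1$ is nonabelian), and its cyclic edge groups, being simple in $A$, would yield an $\bar H$-invariant proper free factor of $A$, contradicting twist-richness of $\bar H$. Hence $\rho(K_2)=1$ as well, so $K_2\subseteq\mathcal{T}(S)$; an elementary centralizer argument in $A\times A$ then forces $K_1$ and $K_2$ into distinct factors $K_1^0,K_2^0$, so both are free. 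This JSJ step is the heart of $(P_3)$ and is absent from your proposal.
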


\begin{remark}\label{rk:centerless}
Property~$(P_3)$ implies in particular that $H$ is centerless: indeed, letting $K_1=H$ (which is not free in view of Property~$(P_1)$), and $K_2$ be the center of $H$, Property~$(P_3)$ implies that $K_2$ is finite, whence trivial as $H\subseteq\ia$. 
\end{remark}

\begin{proof}
Property~$(P_1)$ follows from the definition of twist-rich subgroups applied to an $(N-2)$-petal rose refinement of $S$: the group of twists of such a refinement is a direct product of $2N-4$ nonabelian free groups, and therefore $H$ will also contain a direct product of $2N-4$ nonabelian free groups. 

We now prove Property~$(P_2)$. By \cite{Lev}  (as recalled in Section~\ref{sec:twists-background}), the group of twists in $\Out(F_N)$ of the splitting $S$ is isomorphic to a direct product $\mathcal{T}_1\times \mathcal{T}_2$ of two nonabelian free groups (isomorphic to $F_{N-1}$). As $\Gamma\subseteq\ia$, the intersections of $\calt_1$ and $\calt_2$ with $\Gamma$ are both normal in $H$, and these intersections are nonabelian because $\Gamma$ is twist-rich.

Finally, we prove Property~$(P_3)$. Let $K_1$ and $K_2$ be two infinite normal subgroups of $H$ that centralize each other. Let $K_1^A$ and $K_2^A$ be their respective images in $\mathrm{IA}(A,\mathbb{Z}/3\mathbb{Z})$, and let $H^A$ be the image of $H$, which is twist-rich by Lemma~\ref{lemma:twist-rich-restriction}.
The subgroups $K_1^A$ and $K_2^A$ are both normal in the twist-rich subgroup $H^A$, and they centralize each other. It thus follows from Corollary~\ref{cor:normal-centralized-in-twist-rich} that either $K_1^A$ or $K_2^A$, say $K_1^A$, is trivial. This implies that $K_1$ is contained in the group of twists of $S$. As $H$ has non-abelian intersection with both the group of left and right twists, normality of $K_1$ implies that $K_1$ intersects either the group of left twists or the group of right twists (say right) in a nonabelian subgroup $T_1$. In particular $T_1$ is normal in the group of right twists in $H$. Since the group $T$ of right twists in $H$, viewed as a subgroup of $A$, is not contained in any proper free factor of $A$, and $T_1$ is normal in $T$, it follows that $T_1$ is not contained in any proper free factor of $A$ (otherwise the smallest free factor of $A$ containing $T_1$ would be $T$-invariant).

We claim that $K_2^A$ is also trivial. Indeed, assume towards a contradiction that $K_2^A$ is infinite. We fix a choice of a stable letter $t$ of the splitting $F_N=A\ast$ and use it to identify $T_1$ with a subgroup of $A$. For every $w\in T_1$, every  $\Phi\in K_2$ commutes with the right twist $t\mapsto tw$. For  $\Phi\in K_2$, letting $a,b\in A$ be such that $\Phi$ has a representative $\phi$ in $\Aut(F_N)$ that preserves $A$ and sends $t$ to $a tb$, we deduce that $a t b \phi(w)=a t w b$ for every $w\in T_1$. Thus every  element of $K_2^A$ preserves the conjugacy class of every element $w\in T_1$. Let $\Fix(K_2^A)$ be the set of all conjugacy classes of elements of $A$ which are fixed by $K_2^A$. In particular $\Fix(K_2^A)$ contains all conjugacy classes of elements of $T_1$, and therefore $A$ is freely indecomposable relative to $\Fix(K_2^A)$. We can therefore apply \cite[Theorem~9.5]{GL-jsj}, and get a $\zmax$ JSJ tree $J$ of $A$ relative to $\Fix(K_2^A)$ which is $H^A$-invariant. This tree $J$ is nontrivial: otherwise, as $K_2^A$ is infinite, it would be reduced to a QH vertex relative to $\Fix(K_2^A)$ (as defined in \cite[Definition~5.13]{GL-jsj}). This would imply that all conjugacy classes in $\Fix(K_2^A)$ correspond to boundary subgroups under some identification of $A$ with the fundamental group of a compact surface with boundary. This is impossible because $T_1$ is nonabelian. Since $J$ is a $\zmax$ splitting of $A$, all edge stabilizers of $J$ are simple in $A$ (see e.g.\ \cite{She,Swa}). Therefore $H^A$ fixes the conjugacy class of a proper free factor of $A$ -- namely, the smallest proper free factor of $A$ that contains one of the edge stabilizers. As $H^A$ is twist-rich, this contradicts Lemma~\ref{lemma:twist-rich-factor}. This contradiction shows that $K_2^A$ is trivial. 

Therefore $K_2$ is also contained in the group of twists of $S$. Now, as $K_1$ and $K_2$ are two nonabelian subgroups of $A\times A$ such that each of them has nonabelian intersection with at least one of the factors, and centralize each other, it follows that up to exchanging their roles $K_1$ is contained in the group of right twists and $K_2$ is contained in the group of left twists. In particular they are free.    
\end{proof}

The following statement is a converse to Proposition~\ref{prop:stab-splitting-property}.

\begin{prop}\label{prop:stab-splitting-characterization}
Let $N\ge 4$. Let $H\subseteq\ia$ be a subgroup. Assume that $H$ satisfies the following three conditions.
\begin{enumerate}
\item[$(P_1)$] $H$ contains a direct product of $2N-4$ nonabelian free groups.
\item[$(P_2)$] $H$ contains a subgroup which is isomorphic to a direct product $K_1\times K_2$ of two nonabelian free groups, with $K_1$ and $K_2$ both normal in $H$.
\item[$(P_3)$] Whenever $H$ contains two infinite normal subgroups $K'_1$ and $K'_2$ that centralize each other, then both $K'_1$ and $K'_2$ are free. 
\end{enumerate}
Then $H$ fixes a one-edge nonseparating free splitting of $F_N$. 
\end{prop}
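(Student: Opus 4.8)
The plan is to produce an $H$-invariant proper free factor system and then, among maximal such, to locate a corank-one free factor: the one-edge nonseparating free splitting it defines will be $H$-invariant. First I would record that by $(P_1)$ and Theorem~\ref{theo:product-rank-out-aut}(1) one has $\rk_{\pro}(H)=2N-4$, a property passed on to all finite-index subgroups of $H$, and that $(P_1)$ together with Theorem~\ref{theo:product-rank-out-aut}(2) yields a subgroup $P\subseteq H$ isomorphic to a direct product of $2N-4$ nonabelian free groups fixing some one-edge nonseparating free splitting $S_0$. By $(P_2)$ the subgroups $K_1,K_2$ are infinite, normal in $H$, and centralize each other, so with $(P_1)$ the hypotheses of Proposition~\ref{prop:splitting-stabilized} are met; this gives a nontrivial $H$-invariant splitting $U$ on which $H$ acts trivially on $U/F_N$, a partition $V(U)=V_1\dunion V_2$, the free factor system $\calf$ formed by the conjugacy classes of $V_2$-vertex stabilizers, and (say) $K_1$ restricting to the identity on every $V_1$-vertex group.

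The next step is to reduce to the case where $H$ preserves the conjugacy class of a proper free factor. If $\calf\neq\emptyset$ this is immediate: every $[A_i]\in\calf$ is a proper free factor, and since $H\subseteq\ia$ it cannot permute nontrivially the finitely many members of $\calf$, so each $[A_i]$ is $H$-invariant. If instead $\calf=\emptyset$, then all $V_2$-vertex stabilizers are trivial, so $K_1$ restricts to the identity on every vertex group of $U$ and acts trivially on $U/F_N$; hence $K_1$ lies in the group of twists of $U$. Now every half-edge of $U$ with nontrivial stabilizer contributes at most a cyclic group of twists (the centralizer in the adjacent vertex group of a nontrivial subgroup of a free group being cyclic, and trivial when that subgroup is nonabelian), and twists along distinct half-edges commute; so if $U$ had no edge orbit with trivial stabilizer its group of twists would be abelian, contradicting that it contains the nonabelian free group $K_1$. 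Therefore $U$ has an edge orbit with trivial stabilizer; collapsing all other edge orbits produces a one-edge free splitting $U_1$, which is $H$-invariant and fixed by $P$. Since $P$ also fixes $S_0$, Theorem~\ref{theo:product-rank-out-aut}(3) forces $U_1$ to be nonseparating (either $U_1=S_0$, or else $U_1$ and $S_0$ are two distinct one-edge free splittings fixed by $P$ and hence nonseparating), so $U_1$ is the desired $H$-invariant one-edge nonseparating free splitting and we are done in this case.

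So we may assume $H$ preserves a proper free factor; let $\calf_{\max}$ be a maximal $H$-invariant free factor system (it exists and automatically consists of proper free factors). If $\calf_{\max}$ is nonsporadic, then $H$ acts on the hyperbolic graph $\FF(F_N,\calf_{\max})$ with unbounded orbits by Proposition~\ref{prop:unbounded-orbits-in-ff}; applying Lemma~\ref{lemma:action-on-hyperbolic} to the commuting normal subgroups $K_1,K_2$ and arguing as in the proof of Proposition~\ref{prop:splitting-stabilized} — so that, via Lemma~\ref{lemma:boundary-tree}, a finite-index subgroup of $H$ fixes the homothety class of an arational $(F_N,\calf_{\max})$-tree while still having product rank $2N-4$ — contradicts \cite[Lemma~6.5]{HW}. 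Hence $\calf_{\max}$ is sporadic. If $\calf_{\max}$ contains a corank-one free factor $[A]$, then $H$ preserves the one-edge nonseparating free splitting $F_N=A\ast$ and we are done. Otherwise $\calf_{\max}=\{[A],[B]\}$ with $F_N=A\ast B$ and $\rk A,\rk B\ge 2$; then $H\subseteq\Out(F_N,\{[A],[B]\})$, which lies in the stabilizer of the one-edge separating free splitting $A\ast B$, so by Proposition~\ref{prop:Levitt} and Lemma~\ref{lemma:rank-product-extension} its product rank is at most $2+(2\rk A-4)+(2\rk B-4)=2N-6$, contradicting $\rk_{\pro}(H)=2N-4$. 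This exhausts the cases.

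The step I expect to be the main obstacle is making the last paragraph's nonsporadic case airtight: one must check that the hyperbolic-action argument of Proposition~\ref{prop:splitting-stabilized} truly applies to a \emph{maximal} $H$-invariant free factor system (not merely reproduces the splitting $U$), carefully handling the various cases for the action of $K_1$ and $K_2$ on $\FF(F_N,\calf_{\max})$ and the passages to finite-index subgroups needed to turn a finite invariant set in the Gromov boundary into an honest fixed point while preserving the product-rank bound.
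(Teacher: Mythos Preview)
Your argument has a genuine gap in the nonsporadic case (your Step 5), and the paper's proof takes a substantially different route that avoids this issue by using the missing third hypothesis $(P_3)$ --- note that the statement says ``three conditions'' but only enumerates two; the paper's proof relies crucially on $(P_3)$: whenever $H$ contains two infinite normal subgroups centralizing each other, both are free.

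Concretely, in your Step~5 you invoke the argument of Proposition~\ref{prop:splitting-stabilized} to claim that a finite-index subgroup of $H$ fixes the homothety class of an arational $(F_N,\calf_{\max})$-tree. But that argument has two branches: if some $K_i$ contains a loxodromic, one does get an $H$-invariant finite set in the boundary and hence the desired contradiction with \cite[Lemma~6.5]{HW}; however, if neither $K_i$ contains a loxodromic, the proof of Proposition~\ref{prop:splitting-stabilized} only shows that $K_i$ fixes a free splitting and then feeds this into Theorem~\ref{theo:canonical-splitting} to produce a new $H$-invariant splitting --- it does \emph{not} produce an $H$-fixed boundary point, and there is no contradiction. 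Maximality of $\calf_{\max}$ does not obviously help: the free splitting fixed by $K_i$ need not be $H$-invariant, and the $V_2$-factor system of the resulting canonical splitting need not properly contain $\calf_{\max}$. You flag this yourself, and it is indeed the obstruction.

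The paper sidesteps the whole maximal-free-factor-system discussion. After obtaining $U$ from Proposition~\ref{prop:splitting-stabilized}, it assumes $U$ has no edge with trivial stabilizer and works directly with $U$: twists about cyclic edges are central in $\Stab(U)$ (\cite[Lemma~5.3]{CL2}), so $(P_3)$ (via centerlessness of $H$) forces $H\cap\calt=\{1\}$, and bitwists are absent by malnormality of edge groups; hence $H$ embeds in $\prod_v\Out(G_v)$. One then lets $K_i'$ be the kernel of $H\to\prod_{V_i}\Out(G_v)$; these are normal in $H$ and centralize each other (by injectivity of the product map), $K_1'$ is infinite since it contains the relevant $K_i$, and a product-rank count using Lemma~\ref{lemma:rank-product-extension} shows $\rk_{\pro}(K_2')\ge 2$, so $K_2'$ is not free --- contradicting $(P_3)$. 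Your $\calf=\emptyset$ case is fine and in fact gives a nice alternative for that sub-situation, but the $\calf\neq\emptyset$ branch needs $(P_3)$ and the paper's direct analysis of $U$ rather than the attempt to force $\calf_{\max}$ sporadic. A minor additional point: your product-rank bound $2N-6$ in Step~7 fails when one factor has rank~$2$ (since $\rk_{\pro}(\Out(F_2))=1$, not $0$); the paper handles this by citing \cite[Theorem~6.1]{HW} directly for separating splittings.
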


\begin{proof}
Let $U$ be a $H$-invariant splitting of $F_N$ provided by Proposition~\ref{prop:splitting-stabilized} (using Properties~$(P_1)$ and~$(P_2)$); its vertex set comes with a partition $V(U)=V_1\dunion V_2$, keeping the notation from that proposition. 
 
If $U$ is a free splitting, then as $H\subseteq\ia$ it fixes every one-edge collapse of $U$ (Theorem~\ref{theo:ia}). Since $H$ contains a direct product of $2N-4$ nonabelian free groups by assumption (Hypothesis~$(P_1)$), it follows from \cite[Theorem~6.1]{HW} that $H$ cannot fix a one-edge separating free splitting of $F_N$. So $H$ fixes a one-edge nonseparating free splitting of $F_N$ and we are done.

We now assume that $U$ is not a free splitting, and actually does not contain any edge with trivial stabilizer (otherwise such an edge determines a $H$-invariant free splitting). We aim for a contradiction. Recall that $H$ acts trivially on the quotient graph $U/F_N$. 

The intersection of $H$ with the group of twists in $U$ is trivial: we are assuming that there are no edges in $U$ with trivial stabilizer, and $H$ cannot intersect the twist subgroup coming from cyclic edges as such elements would be central in $H$ (see \cite[Lemma~5.3]{CL2}), contradicting Hypothesis~$(P_3)$ (see Remark~\ref{rk:centerless}). Higher rank edge groups have trivial center and do not contribute to the group of twists. It follows  from \cite[Proposition~2.2]{Lev} that the map $$H\to\prod_{v\in V(U)/F_N}\Out(G_v),$$ given by the action on the vertex groups, is injective. (There is a further technical point here, as we also need to rule out \emph{bitwists} in the sense of \cite[Section~2.4]{Lev}. Bitwists of a splitting that are not twists come from non-inner automorphisms of an edge group $G_e$ induced by its normalizers in both adjacent vertex groups.  They do not appear in the stabilizer of $U$ as every edge group is a free factor, whence malnormal, in one of the incident vertex groups.)  

For every $i\in\{1,2\}$, we let $K_i'$ be the subgroup of $H$ made of all automorphisms whose image in $\prod_{v\in V_i/F_N}\Out(G_v)$ is trivial. Then $K_1'$ and $K_2'$ are normal in $H$, and they centralize each other.  
It follows from the first property of $U$ given by Proposition~\ref{prop:splitting-stabilized} that some $K_i \cap H$ is an infinite subgroup of $K_1'$. We claim that $K_2'$ is not a free group (in fact that its product rank is at least $2$). This claim will contradict Hypothesis~$(P_3)$, and conclude our proof.

Recall that $K_2'$ is the kernel of the map $$H\to\prod_{v\in V_2/F_N}\Out(G_v).$$ By the second property of $U$ given in Proposition~\ref{prop:splitting-stabilized}, the collection of all conjugacy classes of groups $G_v$ with $v\in V_2$ is a free factor system of $F_N$. This either consists of a single corank 1 factor, or at least two factors whose ranks sum to at most $N$, and we have: $$\rk_{\pro}\left(\prod_{v\in V_2/F_N}\Out(G_v)\right)\le 2N-6.$$ Since $\rk_{\pro}(H)=2N-4$, using the bound on the product rank for extensions (Lemma~\ref{lemma:rank-product-extension}), we deduce that $\rk_{\pro}(K_2')\ge 2$, which proves our claim.
\end{proof}

We are now in position to complete our proof of Proposition~\ref{prop:stab-split-to-stab-split}.

\begin{proof}[Proof of Proposition~\ref{prop:stab-split-to-stab-split}]
Let $S$ be a one-edge nonseparating free splitting of $F_N$. By Proposition~\ref{prop:stab-splitting-property}, the group $\Stab_\Gamma(S)$ satisfies Properties~$(P_1)$--$(P_3)$. As $f$ is an injective homomorphism, it follows that $f(\Stab_\Gamma(S))$ also satisfies Properties~$(P_1)$--$(P_3)$. Proposition~\ref{prop:stab-splitting-characterization} thus ensures that $f(\Stab_\Gamma(S))$ fixes a one-edge nonseparating free splitting of $F_N$, as desired.
\end{proof}

\subsection{Uniqueness of $S'$}\label{sec:uniqueness}

We now prove that the free splitting $S'$ provided by Proposition~\ref{prop:stab-split-to-stab-split} is unique, namely.

\begin{prop}\label{prop:uniqueness}
Let $N\ge 4$. Let $\Gamma\subseteq\ia$ be a twist-rich subgroup, and let $f:\Gamma\to\ia$ be an injective homomorphism. Let $S$ be a one-edge nonseparating free splitting of $F_N$. 

Then there exists at most one one-edge nonseparating free splitting $S'$ of $F_N$ such that $f(\Stab_\Gamma(S))\subseteq\Stab_{\ia}(S')$.
\end{prop}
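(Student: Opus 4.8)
Suppose, for contradiction, that $S_1'$ and $S_2'$ are two \emph{distinct} one-edge nonseparating free splittings of $F_N$ with $f(\Stab_\Gamma(S))\subseteq\Stab_{\ia}(S_i')$ for $i=1,2$, and write $H:=\Stab_\Gamma(S)$ and $H':=f(H)$. By Proposition~\ref{prop:stab-splitting-property}, $H$ satisfies properties $(P_1)$--$(P_3)$; these are group-theoretic properties and $f$ is an isomorphism onto $H'$, so $H'$ satisfies them too. In particular, by $(P_1)$ and Theorem~\ref{theo:product-rank-out-aut}(1), $\rk_{\pro}(H')=2N-4$.

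The first step is to pass to the canonical common refinement of $S_1'$ and $S_2'$. Since $H'$ fixes both splittings and $\rk_{\pro}(H')=2N-4$, Theorem~\ref{theo:product-rank-out-aut}(3) shows that $S_1'$ and $S_2'$ are rose compatible; let $U$ be the free splitting dual to the union of the two corresponding sphere systems. Then $U$ depends only on the (unordered) pair $\{S_1',S_2'\}$, its quotient graph $U/F_N$ is a two-petal rose, and its central vertex stabilizer $G_v$ satisfies $F_N=G_v\ast F_2$, hence $G_v\cong F_{N-2}$ (the two one-edge collapses of $U$ are exactly $S_1'$ and $S_2'$). As $H'$ fixes $S_1'$ and $S_2'$ it fixes $U$; and since $H'\subseteq\ia$, Theorem~\ref{theo:ia} gives that $H'$ acts trivially on $U/F_N$, so $H'\subseteq\Stab^0(U)$.

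Next comes the product-rank bookkeeping against Levitt's structure theorem. By Proposition~\ref{prop:Levitt}, and using $Z(G_v)=1$ (valid since $N\ge 4$), the group of twists of $U$ decomposes as $\mathcal{T}_U\cong\mathcal{U}_1\times\mathcal{U}_2\times\mathcal{U}_3\times\mathcal{U}_4$ with each $\mathcal{U}_i\cong F_{N-2}$ (one factor for each half-edge at $v$), sitting in an exact sequence $1\to\mathcal{T}_U\to\Stab^0(U)\to\Out(G_v)\to 1$ with quotient map $\rho$. Because $H'$ acts trivially on $U/F_N$, conjugation by $H'$ preserves each $\mathcal{U}_i$; hence $H'\cap\mathcal{U}_i$ and $H'\cap\prod_{j\ne i}\mathcal{U}_j$ are normal in $H'$ and centralize one another. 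Write $M:=H'\cap\mathcal{T}_U=\ker(\rho|_{H'})$. Since $\rk_{\pro}(\Out(F_{N-2}))\le 2N-7$ for every $N\ge 4$ — this uses Theorem~\ref{theo:product-rank-out-aut}(1) when $N\ge 5$, and when $N=4$ that $\Out(F_2)=\GL_2(\mathbb{Z})$ is virtually free and so has product rank $1$ — Lemma~\ref{lemma:rank-product-extension} applied to $\rho|_{H'}$ gives $\rk_{\pro}(M)\ge(2N-4)-(2N-7)=3$.

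Finally I would extract a contradiction with $(P_3)$ via a short case analysis. If $H'\cap\mathcal{U}_i$ is infinite for some $i$, then projecting $M$ onto $\mathcal{U}_i$ (a free group, hence of product rank $\le 1$) with kernel $H'\cap\prod_{j\ne i}\mathcal{U}_j$ and applying Lemma~\ref{lemma:rank-product-extension} gives $\rk_{\pro}\big(H'\cap\prod_{j\ne i}\mathcal{U}_j\big)\ge\rk_{\pro}(M)-1\ge 2$, so this subgroup is not free; as it and $H'\cap\mathcal{U}_i$ are commuting infinite normal subgroups of $H'$, this contradicts $(P_3)$. Otherwise all four groups $H'\cap\mathcal{U}_i$ are finite, hence trivial since $\ia$ is torsion-free. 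In that case choose inside $M$ a direct product $A_1\times A_2\times A_3$ of nonabelian free groups (possible as $\rk_{\pro}(M)\ge 3$), and consider the coordinate projections $P_i\colon\mathcal{T}_U\to\mathcal{U}_i$. Since the centralizer of a nonabelian subgroup of a free group is trivial, for each $i$ at most one of $P_i(A_1),P_i(A_2),P_i(A_3)$ is nonabelian; thus the support sets $I_k:=\{\,i:P_i(A_k)\text{ is nonabelian}\,\}$ are pairwise disjoint and nonempty subsets of $\{1,2,3,4\}$. A pigeonhole argument on the three sets $I_k$ then forces some $A_k$ either into a single factor $\mathcal{U}_i$, in which case $A_k\subseteq H'\cap\mathcal{U}_i=1$, or into $\mathcal{U}_i\times\mathcal{U}_j$ with abelian (hence cyclic) image under $P_j$, in which case $1\ne[A_k,A_k]\subseteq A_k\cap\mathcal{U}_i\subseteq H'\cap\mathcal{U}_i=1$; either way this is absurd. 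I expect this last case — controlling how a direct product of free groups can embed in $(F_{N-2})^4$ with all coordinate intersections trivial — to be the most delicate point, while the remaining ingredients are routine applications of Levitt's theorem, the rose-compatibility input, and the product-rank inequality for extensions.
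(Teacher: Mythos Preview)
Your proof is correct and follows the same broad outline as the paper's: both use $\rk_{\pro}(H')=2N-4$ together with Theorem~\ref{theo:product-rank-out-aut}(3) to pass to the two-petal rose $U$, and then exploit the four-fold direct product structure of $\mathcal{T}_U$ together with property $(P_3)$ to reach a contradiction. The organization of the endgame differs. The paper first argues (via $(P_3)$) that at most two of the intersections $H'\cap\mathcal{U}_i$ can be nontrivial, and from this concludes $\rk_{\pro}(M)\le 2$, so that Lemma~\ref{lemma:rank-product-extension} yields $\rk_{\pro}(H')\le 2+\rk_{\pro}(\Out(F_{N-2}))<2N-4$. You instead establish $\rk_{\pro}(M)\ge 3$ first and then split on whether any $H'\cap\mathcal{U}_i$ is infinite; your Case~1 is essentially the paper's use of $(P_3)$ run in the contrapositive direction, while your Case~2 (all coordinate intersections trivial) supplies an explicit pigeonhole-and-commutator argument that the paper's terser ``by the above, $\rk_{\pro}(M)\le 2$'' does not spell out. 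In that sense your treatment of this last step is the more detailed of the two.

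One small point of phrasing in Case~2: the two sub-cases you name (``$A_k$ into a single factor $\mathcal{U}_i$'' or ``into $\mathcal{U}_i\times\mathcal{U}_j$ with abelian image under $P_j$'') do not literally exhaust the possibilities, since $A_k$ may have nontrivial abelian projections on three of the factors simultaneously. What the pigeonhole actually gives is that some $I_k$ is a singleton $\{i\}$; then $P_j(A_k)$ is abelian for every $j\ne i$, whence $[A_k,A_k]\subseteq\bigcap_{j\ne i}\ker P_j=\mathcal{U}_i$, and your commutator argument $1\ne[A_k,A_k]\subseteq H'\cap\mathcal{U}_i=1$ finishes as written. So the logic is sound once the two-case distinction is replaced by this single observation.
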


\begin{proof}
As $\Gamma$ is twist-rich, we have $\rk_{\pro}(\Stab_\Gamma(S))=2N-4$ (Lemma~\ref{lemma:product-rank-stabilizer-twist-rich}). Therefore $\rk_{\pro}(f(\Stab_\Gamma(S)))=2N-4$. 

Assume towards a contradiction that $f(\Stab_\Gamma(S))$ is contained in the $\Out(F_N)$-stabilizers of two distinct splittings $S'_1$ and $S'_2$. The third  assertion from Theorem~\ref{theo:product-rank-out-aut} implies that $S'_1$ and $S'_2$ are rose compatible.

There remains to exclude the possibility that $f(\Stab_\Gamma(S))$ be contained in the stabilizer of a two-petal rose $R$. The group of twists of the splitting $R$ in $\Out(F_N)$ is a direct product $A_1\times A_2\times A_3\times A_4$ of four nonabelian free groups (each isomorphic to $F_{N-2}$). We claim that the intersection of $f(\Stab_\Gamma(S))$ with two of the groups $A_i$ is trivial. Otherwise, $f(\Stab_\Gamma(S))$ contains a subgroup isomorphic to a direct product of three infinite  groups $B_1\times B_2\times B_3$, with each $B_i$ normal in $f(\Stab_\Gamma(S))$. As $f$ is injective, it follows that $\Stab_\Gamma(S)$ contains three infinite commuting normal subgroups. Taking $K_1 = f^{-1}(B_1)$ and $K_2=f^{-1}(B_2 \times B_3)$  contradicts Property~$(P_3)$ from Proposition~\ref{prop:stab-splitting-property}, and proves our claim. 

Now considering the action on the vertex group of the two-petal rose,  there is a map $$\Stab_{\ia}(R)\to \out(F_{N-2}),$$ whose kernel $K$ is contained in $A_1\times A_2\times A_3\times A_4$. By the above, the product rank of $K\cap f(\Stab_\Gamma(S))$ is at most $2$. It follows from Lemma~\ref{lemma:rank-product-extension} that the product rank of $f(\Stab_\Gamma(S))$ is at most $3$ if $N=4$, and at most $2N-6$ if $N\ge 5$, a contradiction.   
\end{proof}

\subsection{Injectivity}\label{sec:injectivity}

The above two sections yield a map $S\mapsto S'$ at the level of the vertex set of $\ens$. We now aim to prove that this map is injective. We show the following.

\begin{prop}\label{prop:injectivity}
Let $N\ge 4$. Let $\Gamma$ be a twist-rich subgroup of $\ia$, and let $f:\Gamma\to\ia$ be an injective homomorphism. Let $S_1$ and $S_2$ be two distinct one-edge nonseparating free splittings.

Then $\langle f(\Stab_\Gamma(S_1)), f(\Stab_\Gamma(S_2))\rangle$ does not fix any one-edge nonseparating free splitting.
\end{prop}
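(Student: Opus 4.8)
The plan is to argue by contradiction. Suppose $G := \langle f(\Stab_\Gamma(S_1)), f(\Stab_\Gamma(S_2))\rangle$ fixes a one-edge nonseparating free splitting $T$; writing $H_i = \Stab_\Gamma(S_i)$ and $G_0 = \langle H_1, H_2\rangle$, the map $f$ restricts to an isomorphism $G_0 \to G$. I would realize $T$ as the Bass--Serre tree of $F_N = A\ast$ for a corank one free factor $A \cong F_{N-1}$, and write its group of twists as $\mathcal{T} = K_1 \times K_2 \cong F_{N-1} \times F_{N-1}$, where $K_1, K_2$ are the twist subgroups about the two half-edges of $T/F_N$. Since $G \subseteq \ia$ fixes $T$, Theorem~\ref{theo:ia} gives that $G$ acts trivially on $T/F_N$, so $G \subseteq \Stab^0(T)$ and each $K_i$ is normalized by $G$; and Levitt's exact sequence (Proposition~\ref{prop:Levitt}) supplies a homomorphism $\rho\colon G \to \Out(A)$ with kernel $G \cap \mathcal{T}$.

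The core of the argument is a product-rank count. By Lemma~\ref{lemma:product-rank-stabilizer-twist-rich} we have $\rk_{\pro}(H_1) = 2N-4$, hence $\rk_{\pro}(f(H_1)) = 2N-4$ since $f$ is injective. Applying Lemma~\ref{lemma:rank-product-extension} to the extension $1 \to f(H_1) \cap \mathcal{T} \to f(H_1) \to \rho(f(H_1)) \to 1$ and using $\rk_{\pro}(\Out(F_{N-1})) = 2N-6$ (Theorem~\ref{theo:product-rank-out-aut}(1), valid as $N\ge 4$) yields $2N-4 \le \rk_{\pro}(f(H_1) \cap \mathcal{T}) + (2N-6)$, so $\rk_{\pro}(f(H_1) \cap \mathcal{T}) \ge 2$; as $\mathcal{T} \cong F_{N-1}^2$ has product rank $2$, equality holds. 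Now a product-rank-$2$ subgroup of $K_1 \times K_2$ cannot meet either factor trivially: if $(f(H_1) \cap \mathcal{T}) \cap K_1$ were trivial, the projection would embed $f(H_1)\cap\mathcal{T}$ into the free group $\mathcal{T}/K_1 \cong K_2$, forcing product rank $\le 1$. Hence $(f(H_1)\cap\mathcal{T})\cap K_i$ is a nontrivial, therefore infinite, subgroup of the torsion-free group $K_i$ for $i = 1,2$.

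Consequently $N_i := G \cap K_i$ is infinite for $i = 1,2$; moreover $N_1, N_2$ are normal in $G$ (intersections of $G$ with subgroups normalized by $\Stab^0(T)$), and they centralize each other since $K_1$ and $K_2$ do. Pulling these back through the isomorphism $f\colon G_0 \to G$, the group $G_0 \subseteq \ia$ — which is noncyclic, containing $H_1$ and hence a nonabelian free group — contains two infinite normal subgroups that centralize one another. Proposition~\ref{prop:commuting-normal-factor} then shows that $G_0$ preserves the conjugacy class of some proper free factor $B$ of $F_N$. Finally I would close the loop using Proposition~\ref{prop:unique-invariant-factor}: writing $S_i$ as the Bass--Serre tree of $F_N = A_i\ast$ and using the standard identification $\Stab_\Gamma(S_i) = \Stab_\Gamma([A_i])$ for corank one factors, the $G_0$-invariance of $[B]$ gives in particular that $[B]$ is fixed by $\Stab_\Gamma([A_1])$, so $B$ is conjugate to $A_1$; symmetrically $B$ is conjugate to $A_2$. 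Hence $[A_1] = [A_2]$ and $S_1 = S_2$, contradicting the hypothesis.

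The step I expect to be the main obstacle is the product-rank bookkeeping together with the passage from "$G$ meets $\mathcal{T}$ in a large subgroup" to "$G$ meets \emph{each} half-edge twist subgroup $K_i$ in an \emph{infinite} subgroup": this is precisely where one must exploit that $\mathcal{T}$ splits as a product of the two individual half-edge twist groups and that a product-rank-$2$ subgroup of a product of two free groups is forced to spread across both factors. The remaining inputs — that $G$ fixes $T/F_N$ pointwise (so that the $K_i$ are $G$-normalized), delivered cleanly by Theorem~\ref{theo:ia}, and the rigidity of corank one factor stabilizers from Proposition~\ref{prop:unique-invariant-factor} — are then essentially bookkeeping.
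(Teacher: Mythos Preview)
Your proof is correct and uses the same ingredients as the paper's (Propositions~\ref{prop:commuting-normal-factor} and~\ref{prop:unique-invariant-factor}, Lemma~\ref{lemma:product-rank-stabilizer-twist-rich}, and the product-rank extension bound), just with the logical flow reversed: the paper first argues on the $\Gamma$-side that $H=\langle\Stab_\Gamma(S_1),\Stab_\Gamma(S_2)\rangle$ fixes no free factor and hence has no pair of commuting infinite normals, then transports this to $f(H)$ to force $\rk_{\pro}(K\cap f(H))\le 1$ and derive a product-rank contradiction; you instead run the product-rank count on $f(H_1)$ to produce commuting infinite normals in $G$, pull them back, and contradict Proposition~\ref{prop:unique-invariant-factor}. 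The two arguments are contrapositives of one another at the key step.
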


\begin{proof}
Let $H=\langle\Stab_\Gamma(S_1),\Stab_\Gamma(S_2)\rangle$, so that $f(H)=\langle f(\Stab_\Gamma(S_1)),f(\Stab_\Gamma(S_2))\rangle$. Let us assume that $f(H)$ fixes a one-edge nonseparating free splitting $S$ and work towards a contradiction. 

As $\Gamma$ is twist-rich, Proposition~\ref{prop:unique-invariant-factor} ensures that the only conjugacy class of a proper free factor fixed by each group $\Stab_\Gamma(S_i)$  is the one given by the single $F_N$-orbit of vertices in $S_i$. As $S_1$ and $S_2$ are distinct, this implies that $H$ does not fix the conjugacy class of any proper free factor of $F_N$.  Proposition~\ref{prop:commuting-normal-factor} then implies that $H$ cannot contain two infinite normal subgroups that centralize each other. As $f$ is injective, the same is true of $f(H)$.

Let $K$ be the group of twists about $S$ in $\Out(F_N)$, which is isomorphic to a direct product of two nonabelian free groups. It follows from the above that the group $f(H)$ intersects at most one of the two factors of $K$ nontrivially.  Hence $$\rk_{\pro}(K\cap f(H))\le 1.$$ The action on the vertex group of $S$ gives a homomorphism $$ f(H) \to \out(F_{N-1}), $$ 
whose kernel is $K\cap f(H)$. As the image has product rank at most $2N-6$, Lemma~\ref{lemma:rank-product-extension} implies that the product rank of $f(H)$ is at most $2N-5$. This is a contradiction because $\Gamma$ is twist-rich, so the product rank of $\Stab_{\Gamma}(S_1)$ is equal to $2N-4$ (Lemma~\ref{lemma:product-rank-stabilizer-twist-rich}).
\end{proof}

\subsection{Conclusion} \label{sec:conclusion}

\begin{proof}[Proof of Theorem~\ref{theo:algebra}]
The first point of Theorem~\ref{theo:algebra} was proved in Propositions~\ref{prop:stab-split-to-stab-split} (existence) and~\ref{prop:uniqueness} (uniqueness). Injectivity of the map $S\mapsto f_\ast(S)$ was established in Proposition~\ref{prop:injectivity}. As $\Gamma$ is twist-rich, the $\Gamma$-stabilizer of a free splitting $S$ such that $S/F_N$ is a two-petal rose contains a direct product of $2N-4$ nonabelian free groups. Therefore the fact that $f_\ast(S_1)$ and $f_\ast(S_2)$ are rose compatible whenever $S_1$ and $S_2$ are follows from the third assertion of Theorem~\ref{theo:product-rank-out-aut}.
\end{proof}

We are now in position to complete the proof of our main theorem.

\begin{theo}
Let $N\ge 4$. Then every twist-rich subgroup of $\Out(F_N)$ is rigid in $\Out(F_N)$.
\end{theo}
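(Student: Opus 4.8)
The plan is to deduce this final statement directly from the general framework of Section~\ref{sec:blueprint}, feeding it the two substantive inputs already in hand: the strong rigidity of $\ens$ (Corollary~\ref{cor:strongly-rigid}) and the algebraic package of Theorem~\ref{theo:algebra}. Concretely, I would let $\Gamma$ be a twist-rich subgroup of $\Out(F_N)$ — so $\Gamma\subseteq\ia$ by Definition~\ref{de:twist-rich} — and apply Proposition~\ref{prop:blueprint} with $G=\Out(F_N)$, with the decorated $G$-graph $(\ens,\emptyset)$ carrying the trivial decoration, and with the subgroup $\Gamma$. Since $\ens$ is simple (Remark~\ref{rk:faithful}) and, for $N\ge 4$, strongly rigid as an $\Out(F_N)$-graph by Corollary~\ref{cor:strongly-rigid}, it remains only to verify the hypothesis of Proposition~\ref{prop:blueprint}: for every injective homomorphism $f:\Gamma\to\Out(F_N)$ one must exhibit an injective graph map $\theta\colon\ens\to\ens$ and a normal subgroup $\Gamma'\unlhd\Gamma$ meeting the two stabilizer conditions at every vertex.

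For the first step I would pass to the subgroup $\Gamma':=f^{-1}(\ia)$. Since $\ia$ is normal of finite index in $\Out(F_N)$, the subgroup $\Gamma'$ is normal of finite index in $\Gamma$, and as finite-index subgroups of twist-rich subgroups are twist-rich, $\Gamma'$ is itself a twist-rich subgroup of $\ia$. By construction $f|_{\Gamma'}$ is an injective homomorphism $\Gamma'\to\ia$, so Theorem~\ref{theo:algebra} applies to it: it produces an injective map $\theta:=(f|_{\Gamma'})_\ast$ on the vertex set of $\ens$ that carries rose-compatible pairs to rose-compatible pairs — hence is an injective graph map of $\ens$ — and that satisfies $f(\Stab_{\Gamma'}(S))\subseteq\Stab_{\ia}(\theta(S))\subseteq\Stab_{\Out(F_N)}(\theta(S))$ for every one-edge nonseparating free splitting $S$.

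The second step is the uniqueness clause. Suppose $w$ is a vertex of $\ens$ with $f(\Stab_{\Gamma'}(S))\subseteq\Stab_{\Out(F_N)}(w)$; since $f(\Gamma')\subseteq\ia$, the subgroup $f(\Stab_{\Gamma'}(S))$ lies in $\Stab_{\Out(F_N)}(w)\cap\ia=\Stab_{\ia}(w)$, and the uniqueness part of Theorem~\ref{theo:algebra}(1) then forces $w=\theta(S)$. Thus all hypotheses of Proposition~\ref{prop:blueprint} hold with this $\theta$ and this $\Gamma'$, and the proposition yields that $\Gamma$ is rigid in $\Out(F_N)$, which is precisely the claim.

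I do not expect a genuine obstacle at this stage, since the hard work lives in Theorem~\ref{theo:algebra} (existence, uniqueness and rose-compatibility of $f_\ast$), in the strong rigidity of $\ens$ for $N\ge 4$, and in the bookkeeping of Proposition~\ref{prop:blueprint}; the built-in flexibility of the latter — allowing the control subgroup $\Gamma'$ to be a proper normal finite-index subgroup of $\Gamma$ rather than $\Gamma$ itself — is exactly what bridges the gap between homomorphisms into $\ia$ and homomorphisms into all of $\Out(F_N)$ at no extra cost. The only point requiring a little care is this reduction to $\ia$ and the resulting identification of $\Stab_{\Out(F_N)}$ with $\Stab_{\ia}$ in the uniqueness clause, handled above. (One could instead argue by hand: use strong rigidity of $\ens$ to conjugate $f$ so that it restricts to the identity on $\Gamma'$, observe that for each $\gamma\in\Gamma$ the element $\gamma^{-1}f(\gamma)$ centralizes $\Gamma'$, and invoke the triviality of the weak centralizer of a twist-rich subgroup, Corollary~\ref{cor:twist-rich-centerless}, to conclude $f(\gamma)=\gamma$; but routing everything through Proposition~\ref{prop:blueprint} is cleaner.)
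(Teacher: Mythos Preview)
Your proposal is correct and follows essentially the same route as the paper: apply Proposition~\ref{prop:blueprint} with the strongly rigid $\Out(F_N)$-graph $\ens$ (Corollary~\ref{cor:strongly-rigid}), pass to a finite-index normal subgroup $\Gamma'$ landing in $\ia$ under $f$, and invoke Theorem~\ref{theo:algebra} for both the existence and uniqueness of $\theta$. Your choice $\Gamma':=f^{-1}(\ia)$ coincides with the paper's $\Gamma':=\Gamma\cap\ia\cap f^{-1}(\ia)$ since, as you observe, $\Gamma\subseteq\ia$ by definition; otherwise the arguments are identical, and you simply spell out the verification of the two bullets of Proposition~\ref{prop:blueprint} in slightly more detail than the paper does.
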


\begin{proof}
Let $f:\Gamma\to\Out(F_N)$ be an injective homomorphism. The group $\Out(F_N)$ acts on $\ens$ by graph automorphisms. We will check that the assumptions from Proposition~\ref{prop:blueprint} are satisfied. By Corollary~\ref{cor:strongly-rigid}, the graph $\ens$ is strongly rigid as an $\Out(F_N)$-graph. Let $\Gamma':=\Gamma\cap\ia\cap f^{-1}(\ia)$: this is a normal subgroup of $\Gamma$, and it is still twist-rich because it has finite index in $\Gamma$. The assumption from Proposition~\ref{prop:blueprint} is therefore exactly the content of Theorem~\ref{theo:algebra}. The conclusion thus follows from Proposition~\ref{prop:blueprint}.
\end{proof}

\subsection{Consequences: co-Hopf property and commensurations}\label{sec:complements}

We recall that a group $H$ is \emph{co-Hopfian} if every injective map $H\to H$ is an automorphism.

\begin{cor}\label{cor:co-hopf}
Let $N\ge 4$, and let $H\subseteq\Out(F_N)$ be a subgroup which is commensurable in $\Out(F_N)$ to a normal twist-rich subgroup of $\Out(F_N)$. Then $H$ is co-Hopfian.
\end{cor}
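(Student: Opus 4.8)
The plan is to deduce the co-Hopf property for $H$ from the rigidity of twist-rich subgroups (the theorem just established), via a standard commensurability argument. Let $\Gamma_0 \unlhd \Out(F_N)$ be a normal twist-rich subgroup to which $H$ is commensurable in $\Out(F_N)$, and set $\Gamma := H \cap \Gamma_0$, which by hypothesis has finite index in both $H$ and $\Gamma_0$. Since finite-index subgroups of twist-rich subgroups are twist-rich (noted right after Definition~\ref{de:twist-rich}), $\Gamma$ is twist-rich; hence $\Gamma$ is rigid in $\Out(F_N)$ by the theorem. The strategy is: given an injective homomorphism $g : H \to H$, restrict it to $\Gamma$, promote it to a conjugation using rigidity of $\Gamma$, and then argue that this conjugation must in fact preserve $H$ and implement $g$ on all of $H$, using normality of $\Gamma_0$ together with the fact that $H$ is contained in the relative commensurator of $\Gamma$ in $\Out(F_N)$.

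First I would iterate $g$ to land inside $\Gamma$: because $\Gamma$ has finite index $m$ in $H$, there are only finitely many subgroups of $H$ of index $m$, so some power $g^k$ maps $\Gamma$ into itself — more carefully, one replaces $\Gamma$ by the (finite-index, hence still twist-rich) subgroup $\Gamma' := \bigcap_{i \ge 0} g^{-i}(\Gamma)$, which is a finite intersection by the index bound and satisfies $g(\Gamma') \subseteq \Gamma'$. Restricting $g$ gives an injective homomorphism $\Gamma' \to \Gamma' \subseteq \Out(F_N)$; by rigidity of $\Gamma'$ there is $\Phi \in \Out(F_N)$ with $g|_{\Gamma'} = \mathrm{ad}_\Phi$. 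Now I would check that conjugation by $\Phi$ agrees with $g$ on all of $H$: for $h \in H$ and any $\gamma \in \Gamma'$, the element $h\gamma h^{-1}$ lies in $\Gamma'$ (using that $\Gamma'$ is normal — here one needs $\Gamma' \unlhd H$, which can be arranged since $\Gamma_0 \unlhd \Out(F_N)$ so $\Gamma_0 \cap H \unlhd H$, and the finite intersection defining $\Gamma'$ can be taken inside a characteristic-in-$H$ finite-index subgroup), so $g(h)\, g(\gamma)\, g(h)^{-1} = g(h\gamma h^{-1}) = \Phi (h\gamma h^{-1}) \Phi^{-1}$, i.e. $g(h)\Phi\, \gamma\, \Phi^{-1} g(h)^{-1} = (\mathrm{ad}_\Phi \circ \mathrm{ad}_h)(\gamma) = \Phi h \gamma h^{-1} \Phi^{-1}$. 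Thus $\Phi^{-1} g(h)^{-1} \Phi h$ centralizes $\Gamma'$. But the weak centralizer — a fortiori the centralizer — of a twist-rich subgroup in $\Out(F_N)$ is trivial (Corollary~\ref{cor:twist-rich-centerless}), so $g(h) = \Phi h \Phi^{-1}$ for every $h \in H$. In particular $\mathrm{ad}_\Phi(H) = g(H) \subseteq H$.

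It remains to see that $\mathrm{ad}_\Phi$ is onto $H$, i.e. $\Phi H \Phi^{-1} = H$; since $g$ was an arbitrary injection $H \to H$ this finishes the proof. From $\Phi H \Phi^{-1} \subseteq H$ we get $\Phi \Gamma' \Phi^{-1} \subseteq H \cap \Phi \Gamma_0 \Phi^{-1} = H \cap \Gamma_0$ (using $\Gamma_0 \unlhd \Out(F_N)$), and $\Phi\Gamma'\Phi^{-1} = g(\Gamma')$ has finite index in $g(H)$, hence in $H$, hence in $\Gamma_0$; so $\Phi$ lies in the relative commensurator $\Comm_{\Out(F_N)}(\Gamma_0)$. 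Combined with $\Phi \Gamma_0 \Phi^{-1} = \Gamma_0$ (again normality), the chain $[\Gamma_0 : \Phi \Gamma' \Phi^{-1}] = [\Phi\Gamma_0\Phi^{-1} : \Phi\Gamma'\Phi^{-1}] = [\Gamma_0 : \Gamma']$ is finite and equals $[g(H):g(\Gamma')] \cdot [\Gamma_0 : H\cap\Gamma_0]^{\pm}$ bookkeeping shows $[H : \Phi H \Phi^{-1}] = 1$: concretely, $\mathrm{ad}_\Phi$ is an injection of $H$ into itself that restricts to an \emph{iso} on the finite-index subgroup $\Gamma'$ onto $g(\Gamma')$ of the same index in $H$ as $\Gamma'$, and an injective endomorphism of a group that is surjective on a finite-index subgroup (with matching indices) is surjective. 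Hence $g$ is an automorphism of $H$.

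The main obstacle, and the only place real care is needed, is the normality bookkeeping in the second paragraph: one must choose $\Gamma'$ to be simultaneously twist-rich, $g$-forward-invariant, \emph{and} normal (or at least normalized by $H$) so that the commutator trick and the index-matching both go through; this is why the hypothesis that $\Gamma_0$ is \emph{normal} in $\Out(F_N)$ (rather than merely commensurated) is essential — it is exactly the point flagged in the acknowledgments. Everything else is either a direct appeal to the rigidity theorem, to Corollary~\ref{cor:twist-rich-centerless}, or to elementary finite-index combinatorics.
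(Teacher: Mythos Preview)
Your approach is sound at its core and close in spirit to the paper's, but you have introduced an unnecessary detour that creates an unjustified step. The problem is the construction of $\Gamma' = \bigcap_{i\ge 0} g^{-i}(\Gamma)$: your claim that this is a finite intersection (equivalently, has finite index in $H$) requires $H$ to have only finitely many subgroups of each given finite index, which holds for finitely generated groups but is not part of the hypotheses. The detour is moreover unnecessary on two counts. First, rigidity applies to any injective homomorphism $\Gamma \to \Out(F_N)$ with no requirement that the image land back in $\Gamma$, so you may apply it directly to $g|_\Gamma$ (and then $g(\Gamma)=\Phi\Gamma\Phi^{-1}\subseteq \Phi H\Phi^{-1}\cap\Phi\Gamma_0\Phi^{-1}\subseteq H\cap\Gamma_0=\Gamma$ comes out automatically from normality of $\Gamma_0$). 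Second, and more simply, the paper applies Theorem~\ref{theo:intro-main} directly to $g:H\to\Out(F_N)$, since $H$ contains the twist-rich subgroup $\Gamma=H\cap\Gamma_0$ and is therefore itself twist-rich; this gives $g=\ad_\Phi$ on all of $H$ at once and renders your centralizer extension argument (which essentially re-derives part of the proof of the main theorem via Proposition~\ref{prop:blueprint}) redundant.

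Your endgame for surjectivity, though the exposition is garbled, contains a valid and arguably cleaner idea than the paper's. Once $g=\ad_\Phi$ on $H$, normality of $\Gamma_0$ gives both $\Phi\Gamma\Phi^{-1}\subseteq H\cap\Gamma_0=\Gamma$ and $[\Gamma_0:\Phi\Gamma\Phi^{-1}]=[\Phi\Gamma_0\Phi^{-1}:\Phi\Gamma\Phi^{-1}]=[\Gamma_0:\Gamma]$, hence $\Phi\Gamma\Phi^{-1}=\Gamma$; then the index count $[H:\Gamma]=[g(H):g(\Gamma)]=[g(H):\Gamma]$ forces $g(H)=H$ directly. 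The paper instead establishes $gH_0g^{-1}=H_0$ by the same index computation and then iterates: the chain $H_0\subseteq g^kHg^{-k}\subseteq H$ has bounded index, hence stabilizes at some $H_1=g^KHg^{-K}$ on which $\ad_g$ is an automorphism, and injectivity of $g^K$ on $H$ together with $g^K(H)\subseteq H_1=g^K(H_1)$ forces $H=H_1$. Your one-line index shortcut avoids this iteration and is correct.
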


\begin{proof}
Let $G\unlhd\Out(F_N)$ be a normal twist-rich subgroup of $\Out(F_N)$ such that $H_0=H\cap G$ is finite-index in both $G$ and $H$. Let $f:H\to H$ be an injective homomorphism, and let $\iota:H\to\Out(F_N)$ be the inclusion map. Theorem~\ref{theo:intro-main} implies that $\iota\circ f$ is the conjugation by an element $g\in\Out(F_N)$. In particular $gHg^{-1}\subseteq H$. As $G$ is normal, we have $gGg^{-1}=G$, and $[G:gH_0g^{-1}]=[G:H_0]$. Therefore $gH_0g^{-1}=H_0$, showing that the conjugation by $g$ (which coincides with $f$) is an automorphism of $H_0$. We have the inclusion \[ H_0 \subseteq gHg^{-1}  \subseteq H \] Iterating the conjugation by $g$ gives a nested sequence of finite index subgroups \[H_0 \subseteq \cdots \subseteq g^{k+1}Hg^{-(k+1)} \subseteq g^{k}Hg^{-k} \subseteq \cdots \subseteq H\]
As the index of each $g^kHg^{-k}$ in $H$ is bounded above by the index of $H_0$ in $H$, the sequence stabilizes and there exists $K$ such that  $g^{k}Hg^{-k}=g^KHg^{-K}$ for all $k \geq K$. Let $H_1=g^KHg^{-K}$ be this finite index subgroup. Then $f$ is also an isomorphism restricted to $H_1$. If $h \in H$ then $g^K h g^{-K} \in H_1$. As $f^K$ is an automorphism of $H_1$ and $f^K$ is injective on $H$, this implies that $h \in H_1$, so that $H=H_1$ and $f$ is an automorphism of $H$. 
\end{proof}

We now turn to computing the automorphism group and the abstract commensurator of a twist-rich subgroup of $\Out(F_N)$, thus recovering the main result from \cite{HW} -- and extending it to an \emph{a priori} slightly larger class of subgroups. 

\begin{cor}
Let $N\ge 4$, and let $\Gamma$ be a twist-rich subgroup of $\Out(F_N)$. Then
\begin{enumerate}
\item The natural map $N_{\Out(F_N)}(\Gamma)\to\Aut(\Gamma)$ is an isomorphism.
\item The natural map $\Comm_{\Out(F_N)}(\Gamma)\to\Comm(\Gamma)$ is an isomorphism.
\end{enumerate}
\end{cor}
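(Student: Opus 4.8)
The plan is to deduce both isomorphisms formally from Theorem~\ref{theo:intro-main} (just established), together with the triviality of the weak centralizer of a twist-rich subgroup of $\ia$ in $\Out(F_N)$ (Corollary~\ref{cor:twist-rich-centerless}) and the stability property, recorded in Section~\ref{sec:twist-rich}, that finite-index subgroups of twist-rich subgroups are twist-rich. Throughout I would use that $\Gamma\cap\ia$ is finite-index in $\Gamma$, hence twist-rich as a subgroup of $\ia$, so that Corollary~\ref{cor:twist-rich-centerless} applies to it; similarly, every finite-index subgroup of $\Gamma$ is twist-rich.

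For statement (1): the natural homomorphism $\nu\colon N_{\Out(F_N)}(\Gamma)\to\Aut(\Gamma)$, $g\mapsto\ad_g|_\Gamma$, is injective because its kernel consists of elements centralizing $\Gamma$, in particular centralizing $\Gamma\cap\ia$, hence lying in the (trivial) weak centralizer of $\Gamma\cap\ia$ by Corollary~\ref{cor:twist-rich-centerless}. It is surjective because, given $\alpha\in\Aut(\Gamma)$, the composite of $\alpha$ with the inclusion $\Gamma\hookrightarrow\Out(F_N)$ is an injective homomorphism, so Theorem~\ref{theo:intro-main} furnishes $g\in\Out(F_N)$ with $\alpha(\gamma)=g\gamma g^{-1}$ for all $\gamma\in\Gamma$; then $g\Gamma g^{-1}=\alpha(\Gamma)=\Gamma$, so $g\in N_{\Out(F_N)}(\Gamma)$ and $\nu(g)=\alpha$.

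For statement (2): recall that $\Comm(\Gamma)$ consists of equivalence classes of isomorphisms between finite-index subgroups of $\Gamma$, and that $\Phi\in\Comm_{\Out(F_N)}(\Gamma)$ determines such a class via the isomorphism $\Phi^{-1}\Gamma\Phi\cap\Gamma\to\Gamma\cap\Phi\Gamma\Phi^{-1}$ induced by $\ad_\Phi$; this is the natural homomorphism $\mu$. Injectivity of $\mu$ is the argument of (1) applied one level down: if $\mu(\Phi)$ is trivial, then $\ad_\Phi$ restricts to the identity on some finite-index (hence twist-rich) subgroup $\Gamma'\subseteq\Gamma$, so $\Phi$ centralizes $\Gamma'\cap\ia$ and therefore $\Phi=\id$. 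For surjectivity, given an isomorphism $\varphi\colon\Gamma_1\to\Gamma_2$ between finite-index subgroups of $\Gamma$, apply Theorem~\ref{theo:intro-main} to the twist-rich group $\Gamma_1$ and the injective homomorphism $\varphi\colon\Gamma_1\to\Out(F_N)$ to get $\Phi$ with $\varphi=\ad_\Phi$ on $\Gamma_1$; then $\Phi\Gamma_1\Phi^{-1}=\Gamma_2\subseteq\Gamma$ has finite index in both $\Gamma$ and $\Phi\Gamma\Phi^{-1}$, so $\Phi\in\Comm_{\Out(F_N)}(\Gamma)$ and $\mu(\Phi)=[\varphi]$. I do not foresee a genuine obstacle here, since the hard work is contained in Theorem~\ref{theo:intro-main}; the only mild point is keeping track that the finite-index subgroups appearing remain twist-rich, which is precisely the stability property from Section~\ref{sec:twist-rich}.
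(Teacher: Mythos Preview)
Your proposal is correct and follows essentially the same approach as the paper: injectivity of both maps comes from Corollary~\ref{cor:twist-rich-centerless} (trivial centralizer of a twist-rich subgroup), and surjectivity comes from applying Theorem~\ref{theo:intro-main} to the given automorphism or commensuration viewed as an injection into $\Out(F_N)$, together with the fact that finite-index subgroups of twist-rich subgroups are twist-rich. The only cosmetic difference is that you pass explicitly to $\Gamma\cap\ia$, which is harmless but unnecessary since, by Definition~\ref{de:twist-rich}, twist-rich subgroups are already contained in $\ia$.
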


\begin{proof}
The injectivity of the first map follows from the fact that the centralizer of a twist-rich subgroup in $\Out(F_N)$ is trivial (Corollary~\ref{cor:twist-rich-centerless}). As finite index subgroups of $\G$ are also twist-rich, this also implies that the second map is injective. We concentrate on proving surjectivity in both cases.

For the first assertion, let $f:\Gamma\to\Gamma$ be an automorphism. Let $\iota:\Gamma\to\Out(F_N)$ be the inclusion. Theorem~\ref{theo:intro-main} implies that there exists $g\in\Out(F_N)$ such that $\iota\circ f$ is conjugation by $g$. In particular $g\Gamma g^{-1}=\Gamma$, so $g\in N_{\Out(F_N)}(\Gamma)$. This shows that the map $N_{\Out(F_N)}(\Gamma)\to\Aut(\Gamma)$ is surjective.

For the second assertion, let $\Gamma_1$ and $\Gamma_2$ be two finite-index subgroups of $\Gamma$, and let $f:\Gamma_1\to\Gamma_2$ be an isomorphism. Let $\iota:\Gamma_2\to\Out(F_N)$ be the inclusion map. Theorem~\ref{theo:intro-main} implies that there exists $g\in\Out(F_N)$ such that $\iota\circ f$ is conjugation by $g$. In particular $g\Gamma_1 g^{-1}=\Gamma_2$, so $g\in\Comm_{\Out(F_N)}(\Gamma)$. This shows that the map $\Comm_{\Out(F_N)}(\Gamma)\to\Comm(\Gamma)$ is surjective. 
\end{proof}

\section{The case of $\Out(F_3)$}\label{sec:rank-3}

\emph{In this section, we prove our main theorem in rank $3$.} 
\\

We recall that an outer automorphism $\Phi$ of $F_N$ is a \emph{Nielsen automorphism} if there exists a free basis $\{x_1,\dots,x_N\}$ of $F_N$ such that $\Phi$ has a representative in $\Aut(F_N)$ that sends $x_1$ to $x_1x_2$ and leaves all other basis elements invariant.

\begin{theo}\label{theo:rank-3}
Let $\Gamma\subseteq\Out(F_3)$ be a subgroup such that every Nielsen automorphism has a power contained in $\Gamma$. Then $\Gamma$ is rigid in $\Out(F_3)$.
\end{theo}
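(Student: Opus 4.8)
The plan is to deduce Theorem~\ref{theo:rank-3} from Proposition~\ref{prop:blueprint}, applied to the group $G=\Out(F_3)$ and the strongly rigid decorated $\Out(F_3)$-graph $(\ns,\calc_r)$ furnished by Corollary~\ref{cor:strongly-rigid}, by running through rank-$3$ analogues of the arguments of Sections~\ref{sec:commuting-normal}--\ref{sec:algebra}. Given an injective homomorphism $f\colon\Gamma\to\Out(F_3)$, I would first pass to $\Gamma':=\Gamma\cap\iat\cap f^{-1}(\iat)$, which is normal of finite index in $\Gamma$, and which still contains a power of every Nielsen automorphism (for each such $\Phi$ some power lies in $\Gamma$, and a further power then lies in the finite-index subgroup $\Gamma'$). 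The point is that in rank $3$ the only free splitting $S$ with $S/F_3$ a rose with nonabelian vertex stabilizer is a one-edge nonseparating free splitting $F_3=A\ast$ with $A\cong F_2$, and a Nielsen automorphism adapted to such a splitting twists by a primitive element of $A$; so the hypothesis on $\Gamma$ forces $\Gamma'$ to meet the group of twists about either half-edge in a nonabelian subgroup of $A$ not contained in any proper free factor of $A$. Thus $\Gamma'$ plays the role of a twist-rich subgroup in $\Out(F_3)$, and the rank-$3$ analogues of Lemmas~\ref{lemma:twist-rich-factor}--\ref{lemma:product-rank-stabilizer-twist-rich}, of Proposition~\ref{prop:unique-invariant-factor}, and of Corollaries~\ref{cor:normal-centralized-in-twist-rich}--\ref{cor:twist-rich-centerless} go through, using that $\rk_{\pro}(\Out(F_3))=\rk_{\pro}(\Stab_{\Gamma'}(S))=2$ by Theorem~\ref{theo:product-rank-out-aut} and that the group of twists about a one-edge nonseparating free splitting of $F_3$ is $F_2\times F_2$.

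The heart of the argument is to produce, from the algebra, an injective graph map $f_\ast$ of $\ens$: this is the rank-$3$ form of Theorem~\ref{theo:algebra}. As in Section~\ref{sec:algebra}, one records for $H:=\Stab_{\Gamma'}(S)$ the properties $(P_1)$--$(P_3)$ — a normal direct product of two nonabelian free groups from the two twist subgroups, maximal product rank $2$, and freeness of any two commuting infinite normal subgroups — then runs Proposition~\ref{prop:splitting-stabilized} and the characterization of Proposition~\ref{prop:stab-splitting-characterization} to conclude that any subgroup of $\iat$ with these properties virtually, hence (being inside $\iat$, by Theorem~\ref{theo:ia}) actually stabilizes a one-edge nonseparating free splitting; the separating case is excluded by Theorem~\ref{theo:product-rank-out-aut}(3), which in rank $3$ says a subgroup of product rank $2$ can only fix nonseparating, pairwise rose compatible one-edge free splittings. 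Applied to $f(H)$ this yields $f_\ast(S)$ with $f(H)\subseteq\Stab_{\iat}(f_\ast(S))$; uniqueness again uses Theorem~\ref{theo:product-rank-out-aut}(3) plus an argument as in Proposition~\ref{prop:uniqueness} excluding that $f(H)$ lie in the stabilizer of a two-petal rose — in rank $3$ that rose has twist group $\mathbb{Z}^3$, too small to contain the normal nonabelian free subgroups of $f(H)$. Injectivity of $S\mapsto f_\ast(S)$ is the rank-$3$ version of Proposition~\ref{prop:injectivity}, via the uniqueness of the invariant corank-one factor and the product-rank count. Finally one must show $f_\ast$ sends rose compatible splittings to rose compatible splittings, which is the most delicate point because the common refinement of a rose compatible pair of $F_3$ has \emph{abelian} twist group, so the $N\ge 4$ argument (a large product of free groups in the stabilizer of the common refinement) is unavailable; instead I would characterize rose versus circle compatibility of $\sigma,\tau$ in terms of their corank-one free factors $A_\sigma,A_\tau$ (recoverable from the stabilizers via the rank-$3$ uniqueness statement), using that a circle compatible pair shares its corank-one factor while a rose compatible pair shares a rank-one free factor of it.

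With $f_\ast$ in hand as an injective graph map of $\ens$ that preserves rose compatibility, the remaining rank-$3$-specific input is to verify the hypothesis of Lemma~\ref{lemma:complex-rank-3}: that no two compatible one-edge nonseparating free splittings are sent to a pair intersecting in exactly one circle. Here I would use that if $\sigma,\tau$ are compatible with common refinement $U$ then $\Stab_{\Gamma'}(\sigma)$ and $\Stab_{\Gamma'}(\tau)$ share the subgroup $\Stab_{\Gamma'}(U)$, and combine this with the classification of boundary splittings in Figure~\ref{fig:boundary-spheres} and the simplicity of size-$4$ cliques (Lemma~\ref{lemma:clique-rank-3}) to rule out that $f_\ast(\sigma)$ and $f_\ast(\tau)$ intersect once. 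Lemma~\ref{lemma:complex-rank-3} then extends $f_\ast$ to an injective graph map $\theta$ of $\ns$ preserving $\calc_r$; by Theorem~\ref{theo:graph-maps-ens-2} (equivalently the second half of Corollary~\ref{cor:strongly-rigid}) the map $\theta$ is the action of an element of $\Out(F_3)$, and Proposition~\ref{prop:blueprint} — applied with $\theta$, the normal subgroup $\Gamma'$, and the containments $f(\Stab_{\Gamma'}(S))\subseteq\Stab_{\Out(F_3)}(\theta(S))$ with $\theta(S)$ unique — shows that $f$ is conjugate to the inclusion, i.e.\ $\Gamma$ is rigid in $\Out(F_3)$.

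The main obstacle throughout is that the relevant product rank $2N-4$ collapses to $2$ when $N=3$, so the numerical estimates that drive the $N\ge 4$ argument — the characterization of splitting stabilizers, the uniqueness and injectivity of $S\mapsto f_\ast(S)$, and above all the preservation of rose compatibility — are all borderline and need sharper, sometimes ad hoc, replacements; in particular the preservation of rose compatibility must be re-proved by a direct analysis of corank-one factors rather than by a product-of-free-groups count, and one must additionally establish the intersecting-once hypothesis of Lemma~\ref{lemma:complex-rank-3} in order to exploit the strong rigidity of the decorated graph $(\ns,\calc_r)$, since $\ens$ itself is not known to be strongly rigid in rank $3$.
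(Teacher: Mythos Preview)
Your overall architecture matches the paper's: pass to $\Gamma'=\Gamma\cap\iat\cap f^{-1}(\iat)$, build a map $f_\ast$ on vertices of $\ens$, upgrade it to an injective decoration-preserving map of $\ns$ via Lemma~\ref{lemma:complex-rank-3}, and conclude with Corollary~\ref{cor:strongly-rigid} and Proposition~\ref{prop:blueprint}. The paper, however, does \emph{not} port the $N\ge 4$ machinery of Sections~\ref{sec:twist-rich}--\ref{sec:algebra}; it replaces it with short rank-$3$-specific statements (Propositions~\ref{prop:fix-splitting-unique-rank-3}--\ref{prop:characterization-stabilizer-rank-3}, Lemmas~\ref{lemma:common-stab-rank-3}, \ref{lemma:direct-product-stab-rank-3}, \ref{lemma:rank3-1}, \ref{lemma:rank3-3}). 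Your plan to run the rank-$3$ analogues of Lemmas~\ref{lemma:twist-rich-restriction}--\ref{lemma:twist-rich-restriction-factor} and Proposition~\ref{prop:unique-invariant-factor} is shaky: those proofs restrict to $\Out(F_{N-1})$ and invoke twist-richness there, but for $N=3$ this is $\Out(F_2)$, where the definition is vacuous.

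There is a genuine gap in your treatment of rose compatibility. Knowing that rose compatible $\sigma,\tau$ have distinct corank-one factors, and that circle compatible pairs share theirs, only lets you distinguish the two cases \emph{once compatibility of $f_\ast(\sigma)$ and $f_\ast(\tau)$ is already established}; your proposal gives no mechanism for the latter. Two one-edge nonseparating free splittings of $F_3$ whose corank-one factors share a rank-one free factor need not be compatible. The paper's fix is purely algebraic: Lemma~\ref{lemma:rank3-1} shows that $\sigma,\tau$ are rose compatible if and only if $\Stab_{\Gamma'}(\sigma)\cap\Stab_{\Gamma'}(\tau)\cong\mathbb{Z}^3$ (the converse uses a surgery-path argument and the fact that in rank $3$ the stabilizer of a two-petal rose is commensurable to this intersection). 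Since $f$ is injective, the image of this $\mathbb{Z}^3$ lands in $\Stab(f_\ast(\sigma))\cap\Stab(f_\ast(\tau))$, and applying the same lemma forces $f_\ast(\sigma),f_\ast(\tau)$ to be rose compatible. Similarly, your argument for the intersecting-once hypothesis of Lemma~\ref{lemma:complex-rank-3} is too vague: the paper observes (Lemma~\ref{lemma:common-stab-rank-3}) that compatible pairs have $\mathbb{Z}^2$ in their common $\Gamma'$-stabilizer, and (Lemma~\ref{lemma:rank3-3}) that splittings intersecting exactly once have virtually cyclic common stabilizer because they both fix the boundary splitting, whose stabilizer in rank $3$ is virtually cyclic by inspection of Figure~\ref{fig:boundary-spheres}.
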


Our proof of Theorem~\ref{theo:rank-3} will follow roughly the same steps as in higher rank, with a few modifications in the details of the arguments.

\subsection{Stabilizers of free splittings in $\out(F_3)$}

Recall that two one-edge nonseparating free splittings $S_1$ and $S_2$ are \emph{rose compatible} if they are compatible and have a common refinement $U$ such that $U/F_3$ is a two-petal rose, and are \emph{circle compatible} if they have a common refinement $U$ such that $U/F_3$ is a circle with two vertices. 

\begin{lemma}\label{lemma:common-stab-rank-3} 
Let $\Gamma$ be a subgroup of $\mathrm{IA}_3(\mathbb{Z}/3\mathbb{Z})$ such that every Nielsen automorphism has a power contained in $\Gamma$. Let $S_1$ and $S_2$ be two distinct one-edge nonseparating free splittings of $F_3$.

If $S_1$ and $S_2$ are rose compatible, then $\Stab_{\Gamma}(S_1)\cap\Stab_{\Gamma}(S_2)$ is isomorphic to $\mathbb{Z}^3$.

If $S_1$ and $S_2$ are circle compatible, then $\Stab_{\Gamma}(S_1)\cap\Stab_{\Gamma}(S_2)$ is isomorphic to $\mathbb{Z}^2$.
\end{lemma}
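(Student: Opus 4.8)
The plan is to reduce both statements to a computation inside the group of twists of a fixed splitting refining $S_1$ and $S_2$, combined with the fact (Theorem~\ref{theo:ia}) that a subgroup of $\mathrm{IA}_3(\mathbb{Z}/3\mathbb{Z})$ which stabilizes a free splitting stabilizes all of its collapses and acts trivially on its quotient graph. First I would observe that $\mathrm{Stab}_\Gamma(S_1)\cap\mathrm{Stab}_\Gamma(S_2)$ contains, up to finite index (which for us means: contains a power of each relevant Nielsen twist), the intersection of $\Gamma$ with the group of twists of the common refinement $U$; indeed any element of $\mathrm{Stab}_{\mathrm{IA}_3(\mathbb{Z}/3\mathbb{Z})}(S_1)\cap\mathrm{Stab}_{\mathrm{IA}_3(\mathbb{Z}/3\mathbb{Z})}(S_2)$ stabilizes $U$, acts trivially on $U/F_3$, and induces the trivial automorphism on each (cyclic or trivial) vertex group, hence lies in the group of twists of $U$ by Proposition~\ref{prop:Levitt}. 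So the intersection is, up to commensurability, exactly $\Gamma\cap\mathcal{T}(U)$.

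Next I would compute $\mathcal{T}(U)$ in each case using Proposition~\ref{prop:Levitt}. In the rose-compatible case, $U/F_3$ is a two-petal rose with trivial vertex group, so $\mathcal{T}(U)\cong (G_v)^{d_v}/Z(G_v)$ with $G_v$ trivial — wait, that is not right: a two-petal rose refining two nonseparating one-edge splittings in $F_3$ has trivial vertex stabilizer, so its group of twists is trivial, which is absurd. The resolution is that $U$ is \emph{not} a two-petal rose but the \emph{three}-petal rose (the full blow-up): a two-petal rose in $F_3$ has a rank-one vertex group. Concretely, rose-compatible one-edge nonseparating splittings $S_1,S_2$ admit a common refinement $U$ with $U/F_3$ a two-petal rose and vertex group $\mathbb{Z}$; then $d_v=2$ half-edges, $Z(G_v)=G_v=\mathbb{Z}$, and $\mathcal{T}(U)\cong \mathbb{Z}^2/\mathbb{Z}\cong\mathbb{Z}$ — still too small. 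I would instead take $U$ to be the common \emph{maximal} refinement, which by Lemma~\ref{lemma:clique-rank-3} (a clique of size $4$ in $\ens$, here of size $3$) or by direct inspection is the graph with the appropriate number of edges: for two rose-compatible petals one passes to the three-petal rose blow-up, computing $\mathcal{T}\cong\mathbb{Z}^3$; for circle-compatible ones the blow-up is a theta-type graph giving $\mathcal{T}\cong\mathbb{Z}^2$. The honest computation is via directly writing twist automorphisms: if $S_i$ is the HNN splitting with stable letter $t_i$ over a rank-$2$ factor, the elements of $\mathrm{Stab}_\Gamma(S_1)\cap\mathrm{Stab}_\Gamma(S_2)$ are generated (up to powers) by the commuting Nielsen transvections $t_i\mapsto t_i x$ allowed by \emph{both} splittings, and counting the independent such transvections gives $3$ in the rose case and $2$ in the circle case.

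Then I would verify that this $\Gamma\cap\mathcal{T}(U)$ is genuinely $\mathbb{Z}^3$ (resp. $\mathbb{Z}^2$) and not smaller: since every Nielsen automorphism has a power in $\Gamma$, and the relevant generators of $\mathcal{T}(U)$ are (conjugates of) Nielsen automorphisms, $\Gamma$ contains a power of each, so $\Gamma\cap\mathcal{T}(U)$ is a finite-index subgroup of the free abelian group $\mathcal{T}(U)$, hence isomorphic to it. Conversely it is not larger: any element of $\mathrm{Stab}_{\mathrm{IA}_3(\mathbb{Z}/3\mathbb{Z})}(S_1)\cap\mathrm{Stab}_{\mathrm{IA}_3(\mathbb{Z}/3\mathbb{Z})}(S_2)$ lies in $\mathcal{T}(U)$ by the first paragraph, so the intersection is \emph{contained in} $\mathcal{T}(U)$. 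The main obstacle is getting the bookkeeping of the refinement $U$ and its group of twists exactly right — identifying which blow-up of $S_1\cup S_2$ is the correct common refinement, confirming that all its edges have trivial stabilizer (so the only twists come from the $\mathbb{Z}$ vertex groups or stable letters), and checking that the resulting free abelian group has the claimed rank. I would handle this by explicitly listing, in a fixed basis $x_1,x_2,x_3$ of $F_3$, a pair of splittings in each compatibility class (e.g. as in Lemma~\ref{lemma:3-rose} or its proof) and exhibiting the $3$ (resp. $2$) independent commuting twists by hand, which also makes the abelianness transparent.
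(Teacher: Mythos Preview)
Your overall strategy matches the paper's exactly: identify $\Stab_\Gamma(S_1)\cap\Stab_\Gamma(S_2)$ with $\Stab_\Gamma(U)$ for the common refinement $U$, observe that inside $\iat$ this lies in the group of twists $\calt(U)$ (because the vertex groups are cyclic and $\Out(\mathbb{Z})$ dies in $\iat$), and then compute $\calt(U)$ and check that $\Gamma$ hits a finite-index subgroup via its powers of Nielsen automorphisms. Your final fallback --- writing the twists explicitly in a basis --- is precisely what the paper does.

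However, your middle paragraph contains a genuine miscalculation that sends you down a wrong path. In Levitt's formula $\calt\cong\bigoplus_v (G_v)^{d_v}/Z(G_v)$, the exponent $d_v$ is the number of \emph{half-edges} incident at $v$ in the quotient graph, not the number of edges. For the two-petal rose the unique vertex has valence $d_v=4$ (each petal contributes two half-edges), so with $G_v=\mathbb{Z}$ one gets $\calt(U)\cong\mathbb{Z}^4/\mathbb{Z}\cong\mathbb{Z}^3$ directly --- no further blow-up is needed. Your attempted fix via the three-petal rose is incorrect: that rose has \emph{trivial} vertex group in $F_3$, hence $\calt=0$ by the same formula, and indeed its stabilizer in $\iat$ is trivial, not $\mathbb{Z}^3$. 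Likewise, for the two-edge loop both vertices have $G_v\cong\mathbb{Z}$ and $d_v=2$, giving $\calt(U)\cong(\mathbb{Z}^2/\mathbb{Z})^2\cong\mathbb{Z}^2$. Once $d_v$ is corrected, the two-petal rose and the two-edge loop are already the right refinements $U$, and your first and third paragraphs finish the proof as in the paper.
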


\begin{proof}  
This is a consequence of work of Levitt \cite{Lev} described in Section~\ref{sec:twists-background}. For convenience, we describe these groups explicitly. If $S_1$ and $S_2$ are rose compatible, then the intersection $\Stab_{\Gamma}(S_1)\cap\Stab_{\Gamma}(S_2)$ is the stabilizer of a two-petal rose in $\G$. If $\{a,b,c\}$ is a generating set of $F_3$ such $\langle a \rangle$ is the vertex group and $b$ and $c$ represent the petals, then every twist in this splitting has a unique representative of the form $a \mapsto a$, $b \mapsto a^{k}ba^l$, $c \mapsto ca^m$. Hence $\G$ intersects this group in a finite-index subgroup. Furthermore, automorphisms in $\iat$ that stabilize this splitting are contained in the group of twists. A similar proof works in the circle compatible case, where the intersection is the stabilizer in $
\G$ of a two-edge loop  (here, one can choose a basis such that $a$ and $b$ stabilize the two vertices and every twist has a unique representative of the form $a \mapsto a$, $b \mapsto b$, $c \mapsto a^lcb^k$).\end{proof}

In particular, the above lemma implies that the joint stabilizer of two distinct, compatible, one-edge nonseparating free splittings in $\iat$ is abelian.

\begin{proposition} \label{prop:fix-splitting-unique-rank-3} 
Let $K=K_1 \times K_2$ be a direct product of nonabelian free groups in $\iat$. Then $K$ fixes a unique one-edge nonseparating free splitting.
\end{proposition}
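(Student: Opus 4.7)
The plan is to combine the three parts of Theorem~\ref{theo:product-rank-out-aut} with Lemma~\ref{lemma:common-stab-rank-3}. Note that with $N=3$ we have $2N-4=2$, so the hypothesis that $K$ contains a direct product of two nonabelian free groups is exactly the maximal-product-rank condition in rank~$3$. More precisely, since $K$ contains $K_1\times K_2$ we have $\rk_{\pro}(K)\ge 2$, while part~(1) of Theorem~\ref{theo:product-rank-out-aut} gives $\rk_{\pro}(K)\le\rk_{\pro}(\Out(F_3))=2$; so $\rk_{\pro}(K)=2=2N-4$.

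For existence of a fixed splitting, I would directly apply part~(2) of Theorem~\ref{theo:product-rank-out-aut} to the subgroup $K_1\times K_2$, which sits inside $\iat$ by assumption. This produces a one-edge nonseparating free splitting $S$ of $F_3$ which is fixed by $K_1\times K_2$; since the ambient group $K$ is contained in $\iat$, Theorem~\ref{theo:ia} then upgrades this to a splitting fixed by all of $K$ (or alternatively one argues directly that $K$ normalizes the finite set of splittings fixed by the finite-index subgroup $K_1\times K_2$ and uses Theorem~\ref{theo:ia}).

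For uniqueness, I would argue by contradiction: suppose $K$ fixes two distinct one-edge nonseparating free splittings $S_1$ and $S_2$. Since $\rk_{\pro}(K)=2N-4$, part~(3) of Theorem~\ref{theo:product-rank-out-aut} forces $S_1$ and $S_2$ to be (nonseparating and) rose compatible. Now I would apply Lemma~\ref{lemma:common-stab-rank-3} with $\Gamma=\iat$; this is legitimate because $\iat$ has finite index in $\Out(F_3)$, so every Nielsen automorphism has a power contained in $\iat$. The lemma then yields $\Stab_{\iat}(S_1)\cap\Stab_{\iat}(S_2)\cong\mathbb{Z}^3$, which is abelian. But $K$ is contained in this intersection and contains the nonabelian free group $K_1$, a contradiction.

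There is no real obstacle here beyond checking that the hypotheses of the quoted results are met; both key inputs (the existence/compatibility statements in Theorem~\ref{theo:product-rank-out-aut} and the abelian structure of the joint splitting stabilizer in Lemma~\ref{lemma:common-stab-rank-3}) are already in place. The only mild point to keep in mind is that Lemma~\ref{lemma:common-stab-rank-3} is phrased for subgroups $\Gamma$ containing powers of Nielsen automorphisms, which is why one must pass through $\iat$ itself rather than trying to work inside $K$.
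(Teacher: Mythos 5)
Your proposal is correct and follows essentially the same route as the paper: existence via part~(2) of Theorem~\ref{theo:product-rank-out-aut}, and uniqueness by noting that two fixed splittings would be rose compatible by part~(3), contradicting the abelian ($\mathbb{Z}^3$) structure of the joint stabilizer from Lemma~\ref{lemma:common-stab-rank-3}. The only superfluous step is the ``upgrade'' from $K_1\times K_2$ to $K$, since the hypothesis is that $K$ \emph{equals} $K_1\times K_2$.
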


\begin{proof}
As $K$ is a maximal direct product of nonabelian free groups in $\iat$, the existence of such an invariant splitting follows from the second assertion of Theorem~\ref{theo:product-rank-out-aut}. If $K$ fixed two such splittings, they would be rose compatible by the third assertion of Theorem~\ref{theo:product-rank-out-aut}. This contradicts Lemma~\ref{lemma:common-stab-rank-3}.
\end{proof}

\subsection{Free splitting stabilizers are sent into free splitting stabilizers.}

The algebraic characterization of stabilizers is simpler in the case $N=3$. We use the following condition:

\begin{prop}\label{prop:property-stabilizer-rank-3}
Let $\Gamma\subseteq\iat$ be a subgroup such that every Nielsen automorphism of $F_3$ has a power contained in $\Gamma$. Let $S$ be a one-edge nonseparating free splitting of $F_3$.

Then $\Stab_\Gamma(S)$ contains two normal nonabelian free subgroups that centralize each other.
\end{prop}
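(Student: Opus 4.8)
The plan is to find the two subgroups directly inside the group of twists of $S$. Realize $S$ as the Bass--Serre tree of a decomposition $F_3=A\ast$, where $A$ is a corank one free factor of rank two, and fix a stable letter $t$. As recalled in the Example of Section~\ref{sec:twists-background} (a special case of Proposition~\ref{prop:Levitt}), the group of twists of $S$ in $\Out(F_3)$ is a direct product $\calt_1\times\calt_2$, where $\calt_1$ is the group of left twists $t\mapsto wt$ and $\calt_2$ is the group of right twists $t\mapsto tw$, with $w$ ranging over $A$; choosing $t$ identifies each $\calt_i$ with $A\cong F_2$. Since twists fix $S$, we have $\calt_1\times\calt_2\subseteq\Stab_{\Out(F_3)}(S)$. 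I would set $K_i:=\calt_i\cap\Gamma\subseteq\Stab_\Gamma(S)$ and show that these are the desired subgroups.

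First I would check that $K_1$ and $K_2$ are nonabelian free. Freeness is automatic since each is a subgroup of the free group $\calt_i$, so it suffices to exhibit two non-commuting elements in each. Fix a free basis $\{x_1,x_2\}$ of $A$. For $i\in\{1,2\}$ the right twist $t\mapsto tx_i$ (fixing $A$) is a Nielsen automorphism of $F_3$: relative to the free basis $(t,x_i,x_j)$ of $F_3$, with $j\neq i$, it has the representative $y_1\mapsto y_1y_2$ fixing $y_2,y_3$; similarly $t\mapsto x_it$ is a Nielsen automorphism, now relative to the basis $(t^{-1},x_i^{-1},x_j)$. By hypothesis, suitable positive powers of these four automorphisms lie in $\Gamma$; the two powers coming from the right twists correspond, under the identification $\calt_2\cong A$, to a nontrivial power of $x_1$ and a nontrivial power of $x_2$, which do not commute in the free group $A$ (they have no common root, hence lie in no common cyclic subgroup). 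Hence $K_2$ is nonabelian, and symmetrically so is $K_1$.

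Next I would prove that $K_1,K_2\unlhd\Stab_\Gamma(S)$ and that they centralize each other. Since $\Gamma\subseteq\iat$, Theorem~\ref{theo:ia} applies to $\Stab_\Gamma(S)$ and shows that it acts trivially on the quotient graph $S/F_3$; in particular it lies in $\Stab^0(S)$ and preserves each of the two $F_3$-orbits of half-edges of $S$. Conjugating a twist about a half-edge $e$ by such an element produces a twist about a half-edge in the same orbit, so $\Stab_\Gamma(S)$ normalizes each factor $\calt_i$ and hence normalizes $K_i=\calt_i\cap\Gamma$. Finally $K_1$ and $K_2$ centralize each other because $\calt_1$ and $\calt_2$ do, being the two factors of the direct product $\calt_1\times\calt_2$. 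The main points requiring care are the verification that the elementary left and right twists really are Nielsen automorphisms in the strict sense demanded by the hypothesis (the change-of-basis computations indicated above), and the observation that it is precisely the $\iat$ hypothesis, via Theorem~\ref{theo:ia}, that rules out an automorphism swapping the two half-edges of the loop $S/F_3$ — which is what guarantees that each $\calt_i$ is separately normalized, not merely the product $\calt_1\times\calt_2$.
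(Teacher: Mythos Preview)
Your proof is correct and follows essentially the same approach as the paper: both take $K_i=\calt_i\cap\Gamma$, observe that the elementary twists $t\mapsto x_it$ and $t\mapsto tx_i$ are Nielsen automorphisms whose powers land in $\Gamma$, and conclude that each $K_i$ is nonabelian. You supply more detail than the paper on two points the paper leaves implicit---the basis change exhibiting each elementary twist as a Nielsen automorphism, and the use of Theorem~\ref{theo:ia} to justify normality of each $\calt_i$ in $\Stab_\Gamma(S)$---but the underlying argument is the same.
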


\begin{proof}
Consider the intersections of $\Gamma$ with the group of left twists and right twists about the splitting $S$. These are normal, commuting subgroups of $\Stab_\Gamma(S)$. In $\Out(F_3)$, if a vertex group of the splitting is the free factor generated by $a$ and $b$ and $t$ is a stable letter, then the group of left twists is a free group of rank 2 generated by a pair of Nielsen automorphisms fixing $a$ and $b$ and mapping $t$ to $at$ and $bt$ respectively. Hence if $\G$ contains a power of every Nielsen automorphism then the intersection of $\G$ with the group of left twists is nonabelian. Similarly, the same holds for the group of right twists. 
\end{proof}

\begin{prop}\label{prop:characterization-stabilizer-rank-3}
Let $H\subseteq\iat$ be a subgroup which contains two normal nonabelian free groups that centralize each other.

Then $H$ fixes a unique one-edge nonseparating free splitting of $F_3$. 
\end{prop}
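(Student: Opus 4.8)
The plan is to reduce the statement entirely to Proposition~\ref{prop:fix-splitting-unique-rank-3}, exploiting the rank-$3$ coincidence $2N-4=2$: a direct product of \emph{two} nonabelian free groups is already of maximal product rank in $\Out(F_3)$, so the two commuting normal free subgroups provided by the hypothesis are all that is needed.

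First I would check that the subgroup generated by the two normal nonabelian free subgroups $K_1,K_2$ is genuinely a direct product. If $x\in K_1\cap K_2$, then $x\in K_1$ commutes with every element of $K_2$; since moreover $x\in K_2$, this says $x$ lies in the centre of the nonabelian free group $K_2$, so $x=1$. Hence $K_1\cap K_2=\{1\}$ and $K:=\langle K_1,K_2\rangle\cong K_1\times K_2$ is a direct product of two nonabelian free groups contained in $\iat$; it is normal in $H$ because each $K_i$ is. Since $\rk_{\pro}(\Out(F_3))=2N-4=2$ by Theorem~\ref{theo:product-rank-out-aut}, $K$ is a maximal direct product of nonabelian free groups in $\iat$, so Proposition~\ref{prop:fix-splitting-unique-rank-3} applies and produces a one-edge nonseparating free splitting $S$ which is the \emph{unique} such splitting fixed by $K$.

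Next I would upgrade $K$-invariance of $S$ to $H$-invariance using normality. Under the natural action of $\Out(F_3)$ on one-edge nonseparating free splittings, the set of those fixed by $K$ is invariant under $N_{\Out(F_3)}(K)\supseteq H$: for $h\in H$, the splitting $h\cdot S$ is fixed by $hKh^{-1}=K$. As this set is the singleton $\{S\}$, we conclude $h\cdot S=S$ for every $h\in H$, i.e.\ $S$ is $H$-invariant (and, since $H\subseteq\iat$, $H$ then also acts trivially on $S/F_3$ by Theorem~\ref{theo:ia}, though this is not needed here). Uniqueness for $H$ is then immediate: any one-edge nonseparating free splitting fixed by $H$ is in particular fixed by $K\subseteq H$, hence equals $S$ by Proposition~\ref{prop:fix-splitting-unique-rank-3}. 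I do not expect any genuine obstacle in this argument: the real content has already been isolated in Proposition~\ref{prop:fix-splitting-unique-rank-3} and in Theorem~\ref{theo:product-rank-out-aut}, and the only point requiring a line of care is the verification that $\langle K_1,K_2\rangle$ is a direct product, which is where centrelessness of free groups is used.
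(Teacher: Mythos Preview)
Your proof is correct and follows essentially the same approach as the paper's: apply Proposition~\ref{prop:fix-splitting-unique-rank-3} to the direct product $K=K_1\times K_2$, then use normality of $K$ in $H$ together with uniqueness of the $K$-invariant splitting to upgrade to $H$-invariance and uniqueness for $H$. Your explicit verification that $K_1\cap K_2=\{1\}$ via centrelessness of nonabelian free groups is a detail the paper glosses over, but otherwise the arguments are the same.
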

 
\begin{proof}
Let $K=K_1 \times K_2$ be a normal direct product of nonabelian free groups in $H$. By Proposition~\ref{prop:fix-splitting-unique-rank-3}, $K$ fixes a unique one-edge nonseparating free splitting of $F_3$. As this splitting is unique, it is also invariant under the normalizer of $K$ in $\Out(F_3)$, which contains $H$. As $K \subseteq H$ and $K$ fixes a unique one-edge nonseparating free splitting, the same is true of $H$.
\end{proof}

As a consequence of Propositions~\ref{prop:property-stabilizer-rank-3} and~\ref{prop:characterization-stabilizer-rank-3}, we deduce the following fact.

\begin{cor}\label{cor:stab-split-to-stab-split-rank-3}
Let $\Gamma\subseteq\iat$ be a subgroup such that every Nielsen automorphism has a power contained in $\Gamma$, and let $f:\Gamma\to\iat$ be an injective  homomorphism. Then for every one-edge nonseparating free splitting $S$ of $F_3$, there exists a unique one-edge nonseparating free splitting $S'$ of $F_3$ such that $f(\Stab_\Gamma(S))\subseteq\Stab_{\iat}(S')$.
\end{cor}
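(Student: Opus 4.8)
The plan is to obtain this corollary as a formal consequence of the two preceding propositions, which between them provide a characterization of stabilizers of one-edge nonseparating free splittings inside $\iat$ that is manifestly stable under injective homomorphisms.

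First I would fix a one-edge nonseparating free splitting $S$ of $F_3$ and write $H:=\Stab_\Gamma(S)$. Since $\Gamma\subseteq\iat$ and every Nielsen automorphism of $F_3$ has a power in $\Gamma$, Proposition~\ref{prop:property-stabilizer-rank-3} applies to $H$ and yields two nonabelian free subgroups $K_1,K_2$, each normal in $H$, with $[K_1,K_2]=1$. I would then transport this structure through $f$: because $f$ is injective, $f(K_1)$ and $f(K_2)$ are again nonabelian free groups; because $f$ restricts to a homomorphism onto $f(H)$, they are normal in $f(H)$; and $f([K_1,K_2])=[f(K_1),f(K_2)]=1$, so they centralize each other. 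Moreover $f(H)\subseteq\iat$ since $f$ takes values in $\iat$. Thus $f(H)$ satisfies the hypotheses of Proposition~\ref{prop:characterization-stabilizer-rank-3}, which tells us that $f(H)$ fixes a unique one-edge nonseparating free splitting $S'$ of $F_3$.

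Finally I would note that ``$f(H)$ fixes $S'$'' is literally the assertion $f(H)\subseteq\Stab_{\iat}(S')$, so $S'$ witnesses existence, and the uniqueness clause in Proposition~\ref{prop:characterization-stabilizer-rank-3} is exactly the uniqueness claimed in the corollary. I do not expect any real obstacle here: all the substance has already been packaged into the two cited propositions, and the only thing to check — that the property ``contains two commuting normal nonabelian free subgroups'' passes to the image under an injective homomorphism — is immediate, with the injectivity of $f$ used only to preserve nonabelianness of the two free subgroups (normality and the commuting relation survive any homomorphism). If anything deserves a sentence of care in the write-up, it is precisely this last point.
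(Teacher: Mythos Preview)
Your proposal is correct and follows essentially the same approach as the paper: apply Proposition~\ref{prop:property-stabilizer-rank-3} to $\Stab_\Gamma(S)$, push the resulting pair of commuting normal nonabelian free subgroups through the injective homomorphism $f$, and then invoke Proposition~\ref{prop:characterization-stabilizer-rank-3} to obtain existence and uniqueness of $S'$. The paper's proof is a terse three-line version of exactly this argument; your expanded justification that the relevant property survives under injective homomorphisms is a welcome bit of care but not a different idea.
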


\begin{proof}
By Proposition~\ref{prop:property-stabilizer-rank-3}, the group $\Stab_\Gamma(S)$ contains two normal nonabelian free subgroups that centralize each other. Therefore, so does $f(\Stab_\Gamma(S))$. The conclusion then follows from Proposition~\ref{prop:characterization-stabilizer-rank-3}.
\end{proof}

\subsection{Injectivity}

Corollary~\ref{cor:stab-split-to-stab-split-rank-3}  
yields a well-defined map $S\mapsto S'$ on the vertex set of $\ens$. After a preliminary lemma, we now prove that this map is injective.

\begin{lemma}\label{lemma:direct-product-stab-rank-3}
Let $S$ be a one-edge nonseparating free splitting of $F_3$, and let $H\subseteq\Stab_{\iat}(S)$ be a subgroup that contains a direct product of two nonabelian free groups.

Then $H$ contains two normal, nonabelian free subgroups that centralize each other. 
\end{lemma}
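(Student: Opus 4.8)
We will prove that the two subgroups $\mathcal{T}_1\cap H$ and $\mathcal{T}_2\cap H$ do the job, where $\mathcal{T}=\mathcal{T}_1\times\mathcal{T}_2$ is the group of twists about $S$, with $\mathcal{T}_i$ the (nonabelian free) group of twists about one of the two half-edges of $S/F_3$ (see Section~\ref{sec:twists-background} and Proposition~\ref{prop:Levitt}); write $A\cong F_2$ for the vertex group. Since $H\subseteq\iat$ fixes $S$, Theorem~\ref{theo:ia} tells us that $H$ acts trivially on $S/F_3$, in particular preserves each half-edge, so conjugation by $H$ preserves each $\mathcal{T}_i$ (and $H\subseteq\Stab^0(S)$). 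Hence $\mathcal{T}_1\cap H$ and $\mathcal{T}_2\cap H$ are normal in $H$; they are free, being subgroups of free groups, and they centralize each other, being contained in the two commuting factors of $\mathcal{T}$. So the whole point is to show that \emph{both} $\mathcal{T}_1\cap H$ and $\mathcal{T}_2\cap H$ are nonabelian.

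I will use the following elementary facts, for a nonabelian free group $F$: every normal subgroup $N\trianglelefteq F$ with $F/N$ cyclic is nonabelian (a cyclic or trivial such $N$ would force $F$ to be virtually cyclic); and inside a direct product of free groups $\mathcal{T}_1\times\mathcal{T}_2$ one has $C_{\mathcal{T}_1\times\mathcal{T}_2}(N)=C_{\mathcal{T}_1}(p_1N)\times C_{\mathcal{T}_2}(p_2N)$, where $p_i$ is the projection to $\mathcal{T}_i$ and the centralizer of a nontrivial subgroup of a free group is cyclic. The key external input is: \emph{if $B$ is a nonabelian subgroup of $A\cong F_2$, then the set of outer classes of $\Out(A)$ admitting a representative that fixes $B$ pointwise is an infinite cyclic subgroup, or trivial.}

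Now let $L_1\times L_2\subseteq H$ with each $L_i$ nonabelian free, and let $\pi\colon\Stab^0(S)\to\Out(A)$ be the map induced by the action on the vertex group, with kernel $\mathcal{T}$. Since $H\subseteq\iat$, the image $\pi(H)$ lies in $\mathrm{IA}(A,\mathbb{Z}/3\mathbb{Z})$, a finite-index torsion-free — hence free — subgroup of $\Out(F_2)\cong\GL_2(\mathbb{Z})$. As $[\pi(L_1),\pi(L_2)]=1$ inside this free group, either both $\pi(L_1)$ and $\pi(L_2)$ lie in a common cyclic subgroup (possibly trivial), or one of them is nonabelian and then the other is trivial (a nonabelian subgroup of a free group has trivial centralizer). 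In the first case, set $N_i:=L_i\cap\mathcal{T}=\ker(\pi|_{L_i})$: this is a normal subgroup of $L_i$ with cyclic quotient, hence nonabelian, and $N_1,N_2\subseteq\mathcal{T}$ with $[N_1,N_2]=1$. Since $N_2$ is nonabelian and $N_2\subseteq C_{\mathcal{T}}(N_1)=C_{\mathcal{T}_1}(p_1N_1)\times C_{\mathcal{T}_2}(p_2N_1)$, one of the factors of this product is nonabelian, so $p_1N_1=1$ or $p_2N_1=1$; up to swapping $\mathcal{T}_1$ and $\mathcal{T}_2$ say $p_2N_1=1$, i.e.\ $N_1\subseteq\mathcal{T}_1$. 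Then $C_{\mathcal{T}_1}(N_1)=1$ because $N_1$ is nonabelian, so $C_{\mathcal{T}}(N_1)=\mathcal{T}_2$ and hence $N_2\subseteq\mathcal{T}_2$. Thus $\mathcal{T}_1\cap H\supseteq N_1$ and $\mathcal{T}_2\cap H\supseteq N_2$ are both nonabelian, and we are done. In the second case, say $\pi(L_1)=1$ and $\pi(L_2)$ nonabelian, set $N_1:=L_1\subseteq\mathcal{T}$; it is nonabelian, so some projection, say $p_1N_1\subseteq\mathcal{T}_1$, is nonabelian. Identifying $\mathcal{T}_1$ with $A$ so that the conjugation action of $\Stab^0(S)$ on $\mathcal{T}_1$ realizes, on each element, a representative of its image under $\pi$, the fact that $L_2$ centralizes $N_1$ (hence $p_1N_1$) shows that for every $\phi\in L_2$ the class $\pi(\phi)$ has a representative fixing the nonabelian subgroup $p_1N_1$ of $A$ pointwise. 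By the key input $\pi(L_2)$ is then infinite cyclic or trivial, contradicting that it is nonabelian; so the second case does not occur. This completes the proof.

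The main obstacle is the key input on pointwise fixators in $\Out(F_2)$ (together with the normalisation of the twist action of $\Stab^0(S)$ on the factors of $\mathcal{T}$, and the verification that $H$ preserves each half-edge so that the $\mathcal{T}_i$ are $H$-normal). The fixator statement should follow from $\Out(F_2)\cong\GL_2(\mathbb{Z})$ and the trichotomy for its elements: an outer class fixing the conjugacy class of every element of a nonabelian subgroup of $F_2$ cannot be fully irreducible (it fixes some conjugacy class), nor of finite order (its fixed subgroup would be a free factor of rank at most one, hence abelian), so it is a nontrivial power of a transvection; following the induced action on $H_1(F_2;\mathbb{Z})$ and the invariant direction cut out by $B$ then forces either that such a class acts trivially on homology — whence it is inner — or that all such classes lie in a common infinite cyclic subgroup.
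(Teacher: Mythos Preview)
Your candidates $\mathcal{T}_i\cap H$ are exactly the ones the paper uses, and your reductions (normality, freeness, centralizing) are fine. Your Case~1 argument is correct. The difficulty is in Case~2 and its ``key input''.

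First, your stated reason for excluding fully irreducible classes is wrong: a fully irreducible element of $\Out(F_2)$ \emph{does} fix some nontrivial conjugacy class (every such element is geometric, realized by a pseudo-Anosov on a once-punctured torus, and fixes the conjugacy class of the boundary commutator $[a,b]$). The correct reason is that any representative of a fully irreducible outer class has at most cyclic fixed subgroup, which does require an appeal to train-track/index theory rather than the one-line justification you give. Second, your concluding homology argument (``the invariant direction cut out by $B$'') tacitly assumes $B$ has nonzero image in $H_1(F_2)$; when $B\subseteq [F_2,F_2]$ you need the extra step that a subgroup of $\GL_2(\mathbb{Z})$ all of whose nontrivial elements are parabolic is virtually cyclic. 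Both gaps are repairable, but the argument is no longer self-contained.

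The paper bypasses all of this. It uses that $\Stab^0(S)$ splits as a semidirect product $K_1\rtimes\Aut(F_2)$ (with $K_1=\mathcal{T}_1$ and $K_2$ mapping onto $\mathrm{Inn}(F_2)$), together with the fact that $\rk_{\pro}(\Aut(F_2))=2\cdot 2-3=1$ from Theorem~\ref{theo:product-rank-out-aut}. Since $H$ contains a product of two nonabelian free groups while $\Aut(F_2)$ does not, Lemma~\ref{lemma:rank-product-extension} forces the kernel $H\cap K_1$ to contain a nonabelian free group; the symmetric decomposition gives the same for $H\cap K_2$. This is a two-line argument with no case analysis and no fixed-subgroup technology, and it handles your Case~2 automatically.
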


\begin{proof}
Let $A\subseteq F_3$ be a rank two free factor such that $S$ is the Bass--Serre tree of the decomposition $F_3=A\ast$, and let $\{a,b,c\}$ be a free basis of $F_3$ such that $A=\langle a,b\rangle$. The group of twists about $S$ in $\Out(F_3)$ splits as a direct product $K_1\times K_2$ of two nonabelian free groups (both isomorphic to $A$). It is enough to check that $H$ intersects both $K_1$ and $K_2$ in nonabelian free groups. 

Let $\Stab^0(S)$ be the finite-index subgroup of the $\Out(F_3)$-stabilizer of $S$ made of all automorphisms acting trivially on the quotient graph $S/F_3$. Then $\Stab^0(S)$ splits as a semidirect product \[ \Stab^0(S) \cong K_1 \rtimes \Aut(F_2), \] where $K_1$ is as before and $K_2$ projects to the inner automorphisms in $\Aut(F_2)$ (see, e.g., \cite[Lemma~1.28]{GS}). As $H$ contains a direct product of two nonabelian free groups and $\aut(F_2)$ does not (Theorem~\ref{theo:product-rank-out-aut}), the group $H$ intersects $K_1$ in a nonabelian free group. Using the analogous semidirect product decomposition with $K_2$ shows that $H$ also intersects $K_2$ is a nonabelian free group. 
\end{proof}

\begin{prop}\label{prop:injectivity-rank-3}
Let $\Gamma\subseteq\iat$ be a subgroup such that every Nielsen automorphism has a power contained in $\Gamma$. Let $f:\Gamma\to\iat$ be an injective map. Let $S_1$ and $S_2$ be two one-edge nonseparating free splittings of $F_3$.

If $\langle f(\Stab_\Gamma(S_1)),f(\Stab_\Gamma(S_2))\rangle$ fixes a one-edge nonseparating free splitting, then $S_1=S_2$. 
\end{prop}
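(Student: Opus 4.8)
The plan is to prove the statement directly: assuming that $f(H):=\langle f(\Stab_\Gamma(S_1)),f(\Stab_\Gamma(S_2))\rangle$ fixes some one-edge nonseparating free splitting $S'$ of $F_3$, I will deduce $S_1=S_2$ (the argument will not even use $S_1\neq S_2$). The driving idea is that, by Proposition~\ref{prop:fix-splitting-unique-rank-3}, any direct product of two nonabelian free groups inside $\iat$ determines a \emph{single} one-edge nonseparating free splitting; so it suffices to manufacture, inside $\Gamma$ itself, one direct product of two nonabelian free groups that "remembers" both $S_1$ and $S_2$.

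First I would fix notation: set $K=K_1\times K_2$ for the group of twists of $S'$ in $\Out(F_3)$, so that $K_1$ and $K_2$ are nonabelian free, intersect trivially, and are normal in $\Stab_\iat(S')$ (here we use $\Gamma\subseteq\iat$). For $i\in\{1,2\}$ the stabilizer $\Stab_\Gamma(S_i)$ contains a direct product of two nonabelian free groups (the intersections of $\Gamma$ with the left and right twist subgroups of $S_i$; see Proposition~\ref{prop:property-stabilizer-rank-3}, recalling that two nonabelian free subgroups that centralize each other meet trivially), and by hypothesis $f(\Stab_\Gamma(S_i))\subseteq f(H)\subseteq\Stab_\iat(S')$. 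I would then apply Lemma~\ref{lemma:direct-product-stab-rank-3} — more precisely, the argument in its proof, which uses the semidirect decomposition $\Stab^0(S')\cong K_1\rtimes\Aut(F_2)$ and the fact that $\rk_\pro(\Aut(F_2))=1$ (Theorem~\ref{theo:product-rank-out-aut}) — to get that $f(\Stab_\Gamma(S_i))$ meets \emph{each} of $K_1$ and $K_2$ in a nonabelian free group. Pulling back along the injective homomorphism $f$, the subgroups
\[ M^{(i)}_j:=\Stab_\Gamma(S_i)\cap f^{-1}(K_j)=f^{-1}\bigl(f(\Stab_\Gamma(S_i))\cap K_j\bigr) \]
are nonabelian free for all $i,j\in\{1,2\}$. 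This extraction of the $M^{(i)}_j$ is the step I expect to require the most care; the rest is bookkeeping with direct products.

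Next I would assemble the direct products. Since $K_1$ and $K_2$ centralize each other and meet trivially, the same is true of $f^{-1}(K_1)$ and $f^{-1}(K_2)$. Hence for each $i$ the product $M^{(i)}_1\times M^{(i)}_2$ is an internal direct product of two nonabelian free groups contained in $\Stab_\Gamma(S_i)$; by Proposition~\ref{prop:fix-splitting-unique-rank-3} it fixes a unique one-edge nonseparating free splitting, which — being contained in $\Stab_\Gamma(S_i)$ — must be $S_i$. Likewise $\langle M^{(1)}_1,M^{(2)}_1\rangle$ and $\langle M^{(1)}_2,M^{(2)}_2\rangle$ are nonabelian free (each lies in the free group $f^{-1}(K_j)$ and contains some $M^{(i)}_j$), they centralize each other and meet trivially, so $N:=\langle M^{(1)}_1,M^{(2)}_1\rangle\times\langle M^{(1)}_2,M^{(2)}_2\rangle$ is again a direct product of two nonabelian free groups inside $\iat$; Proposition~\ref{prop:fix-splitting-unique-rank-3} then furnishes a unique one-edge nonseparating free splitting $S''$ fixed by $N$. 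But $N$ contains both $M^{(1)}_1\times M^{(1)}_2$ and $M^{(2)}_1\times M^{(2)}_2$, so each of these fixes $S''$, while by the previous sentence the only one-edge nonseparating free splitting it fixes is $S_1$, resp.\ $S_2$. Therefore $S''=S_1$ and $S''=S_2$, whence $S_1=S_2$, as desired.

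It is worth stressing that, in contrast with the $N\ge4$ case, this route needs no analog of Proposition~\ref{prop:unique-invariant-factor} and no analysis of invariant free factors: the point is precisely that the product rank of $\Out(F_3)$ equals $2$, so Proposition~\ref{prop:fix-splitting-unique-rank-3} is available for \emph{every} direct product of two nonabelian free groups, which is exactly what lets the "doubling" argument above close up. The only genuine input from the hypothesis "$\Gamma$ contains a power of every Nielsen automorphism" is the presence of the twist-type direct products in $\Stab_\Gamma(S_i)$, fed in through Proposition~\ref{prop:property-stabilizer-rank-3}.
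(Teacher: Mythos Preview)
Your argument is correct. It follows the same overall strategy as the paper's proof---pull back a direct product of two nonabelian free groups from $\Stab_{\iat}(S')$ via Lemma~\ref{lemma:direct-product-stab-rank-3} and then invoke the uniqueness in Proposition~\ref{prop:fix-splitting-unique-rank-3}---but packages the final step a little differently.

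The paper applies Lemma~\ref{lemma:direct-product-stab-rank-3} once, directly to $f(H)$, to obtain two \emph{normal} nonabelian free subgroups of $f(H)$ that centralize each other; pulling back along $f$ yields the same in $H$, and then Proposition~\ref{prop:characterization-stabilizer-rank-3} gives a unique one-edge nonseparating splitting $S$ fixed by all of $H$, forcing $S=S_1=S_2$ via Proposition~\ref{prop:fix-splitting-unique-rank-3} applied to each $\Stab_\Gamma(S_i)$. You instead apply (the proof of) Lemma~\ref{lemma:direct-product-stab-rank-3} separately to each $f(\Stab_\Gamma(S_i))$, extract the concrete intersections $M^{(i)}_j$ with the twist factors $K_j$, and hand-build the product $N$. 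This sidesteps any appeal to normality or to Proposition~\ref{prop:characterization-stabilizer-rank-3}; the trade-off is that you need the internal statement of Lemma~\ref{lemma:direct-product-stab-rank-3} (nonabelian intersection with \emph{each} $K_j$) rather than just its black-box conclusion. Both routes are short and rest on the same two ingredients; the paper's is marginally slicker in that it uses Lemma~\ref{lemma:direct-product-stab-rank-3} as stated rather than reaching into its proof.
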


\begin{proof}
Let $H=\langle\Stab_\Gamma(S_1),\Stab_\Gamma(S_2)\rangle$, so that $f(H)=\langle f(\Stab_\Gamma(S_1)),f(\Stab_\Gamma(S_2))\rangle$. Suppose that $f(H)$ fixes a one-edge nonseparating free splitting. As each $\Stab_{\Gamma}(S_i)$ contains a direct product of two nonabelian free groups, so does $H$ and therefore so does $f(H)$. Therefore by Lemma~\ref{lemma:direct-product-stab-rank-3}, the group $f(H)$ contains two normal, nonabelian free subgroups that centralize each other. The same is then true of $H$, so that $H$ fixes a one-edge nonseparating free splitting $S$ by Proposition~\ref{prop:characterization-stabilizer-rank-3}. This splitting $S$ is fixed by both $\Stab_\Gamma(S_1)$ and $\Stab_\Gamma(S_2)$, however as these subgroups contain a direct product of two nonabelian free groups, they fix at most one such splitting (Proposition~\ref{prop:fix-splitting-unique-rank-3}). Hence $S=S_1=S_2$.
\end{proof}

\subsection{Compatibility}

The work so far is enough to show that an injective homomorphism $f:\G \to \iat$ induces an injective map on the vertex set of $\ens$. We now move on to looking at what happens to the edges (i.e. compatibility).

\begin{lemma}\label{lemma:rank3-1}
Let $\Gamma$ be a subgroup of $\ia$ such that every Nielsen automorphism has a power contained in $\Gamma$. Let $S$ and $S'$ be two one-edge nonseparating free splittings of $F_3$. 

Then $S$ and $S'$ are rose compatible if and only if $\Stab_\Gamma(S)\cap\Stab_\Gamma(S')$ is isomorphic to $\mathbb{Z}^3$. 
\end{lemma}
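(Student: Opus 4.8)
The forward implication is almost immediate: if $S$ and $S'$ are rose compatible then they are distinct (their common refinement is a two-petal rose, not a single loop), so Lemma~\ref{lemma:common-stab-rank-3} gives $\Stab_\Gamma(S)\cap\Stab_\Gamma(S')\cong\mathbb{Z}^3$. For the converse, suppose $H:=\Stab_\Gamma(S)\cap\Stab_\Gamma(S')\cong\mathbb{Z}^3$. First, $S\neq S'$: otherwise $H=\Stab_\Gamma(S)$ contains, by Proposition~\ref{prop:property-stabilizer-rank-3}, two nonabelian free subgroups centralizing one another; being centerless these intersect trivially and generate a direct product of two nonabelian free groups, so $H$ is non-abelian, a contradiction. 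Given $S\neq S'$, Lemma~\ref{lemma:common-stab-rank-3} says that \emph{if} $S$ and $S'$ are compatible then $H\cong\mathbb{Z}^3$ (rose compatible case) or $H\cong\mathbb{Z}^2$ (circle compatible case); since $H\cong\mathbb{Z}^3$, compatibility forces rose compatibility. So everything reduces to ruling out the case where $S$ and $S'$ are incompatible.

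So assume $S$ and $S'$ have no common refinement. Let $A$ be the corank-one vertex group of $S$. Since $\Gamma\subseteq\iat$, the group $H$ lies in the subgroup $\Stab^0(S)$ acting trivially on $S/F_3$, which by Levitt (Proposition~\ref{prop:Levitt}) is an extension of $\Out(A)\cong\GL_2(\mathbb{Z})$ by the group of twists $\mathcal{T}_S\cong F_2\times F_2$. Abelian subgroups of $F_2\times F_2$ have rank $\le 2$ and abelian subgroups of $\GL_2(\mathbb{Z})$ have rank $\le 1$, so $H\cong\mathbb{Z}^3$ forces $H\cap\mathcal{T}_S\cong\mathbb{Z}^2$ and the image $\bar H$ of $H$ in $\GL_2(\mathbb{Z})$ to be infinite cyclic, say $\bar H=\langle\bar\psi\rangle$. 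Because $A$ is corank one and $S\neq S'$, $A$ is not elliptic in $S'$ (an elliptic corank-one free factor would be conjugate to the vertex group of $S'$, forcing $S=S'$), so the minimal $A$-subtree $S'_A\subseteq S'$ is a nontrivial simplicial $A$-tree with trivial edge stabilizers, on which $\bar H$ acts. An infinite-order element of $\Out(F_2)\cong\GL_2(\mathbb{Z})$ is either fully irreducible or a power of a Dehn twist; the first is impossible, since a fully irreducible element of $\Out(F_2)$ fixes no nontrivial simplicial $F_2$-tree with trivial edge stabilizers, whereas $\bar\psi$ fixes $S'_A$. Hence $\bar\psi$ is a Dehn twist, so there is a rank-one free factor $\langle p_0\rangle$ of $A$ such that $\bar\psi$ fixes the one-edge nonseparating free splitting $U_0$ of $A$ with vertex group $\langle p_0\rangle$, and no nonzero power of $\bar\psi$ fixes the conjugacy class of any other rank-one free factor of $A$. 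Since each one-edge collapse of $S'_A$ is a one-edge free splitting of $A$ with proper-free-factor vertex groups fixed by a power of $\bar\psi$, this forces $S'_A$ to have a single orbit of edges, to be nonseparating, hence to equal $U_0$.

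It remains to upgrade the identification $S'_A=U_0$ to a description of $S'$ itself, which is the crux of the argument. Fix a basis $\{p_0,q_0\}$ of $A$ (with $q_0$ a stable letter for $U_0$) and a stable letter $r$ of $S$, so $F_3=\langle p_0,q_0,r\rangle$. The plan is to show that $H$ is, up to finite index, the group of twists of the two-petal rose $R$ with vertex group $\langle p_0\rangle$ and petals $q_0,r$: the element $\bar\psi$ lifts into this twist group, and the requirement that the twists in $H\cap\mathcal{T}_S$ (which are supported on the stable letter $r$ of $S$) also fix $S'$ should force their twisting elements into $\langle p_0\rangle$. Granting this, $H$ fixes $R$; since $\Gamma\subseteq\iat$ and $H$ has finite index in $\Stab_{\iat}(R)$, Theorem~\ref{theo:ia} shows that the whole twist group of $R$ fixes $S'$. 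But the only one-edge nonseparating free splittings fixed by the twist group of a two-petal rose are its two loop-collapses — indeed the only corank-one free factors whose conjugacy class is invariant under that twist group are the two vertex groups of these collapses — and these two splittings are rose compatible. As $S$ is one of them, $S'$ must be the other, contradicting the incompatibility of $S$ and $S'$. The two delicate points are the exclusion of a fully irreducible $\bar\psi$ (handled above) and, mainly, this last passage from $S'_A$ to $S'$; the rest is a rank count together with Lemma~\ref{lemma:common-stab-rank-3}, Propositions~\ref{prop:Levitt} and~\ref{prop:property-stabilizer-rank-3}, and Theorem~\ref{theo:ia}.
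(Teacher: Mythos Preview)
Your forward direction and the circle-compatible case are fine and match the paper. The genuine gap is exactly where you flag it: in the incompatible case you reduce to showing that $H$ is virtually the twist group of the two-petal rose $R$ with vertex group $\langle p_0\rangle$, but you do not prove this. The sentence ``the requirement that the twists in $H\cap\mathcal{T}_S$ \dots\ also fix $S'$ \emph{should} force their twisting elements into $\langle p_0\rangle$'' followed by ``Granting this'' is precisely the missing step, and it is not a formality. Concretely, $H\cap\mathcal{T}_S\cong\mathbb{Z}^2$ sits inside $A\times A\cong F_2\times F_2$; you need that both projections land in $\langle p_0\rangle$ up to conjugacy. Knowing only $S'_A=U_0$ constrains how $A$ sits in $S'$ but does not by itself pin down $S'$ (there are many one-edge nonseparating free splittings of $F_3$ whose $A$-minimal subtree is $U_0$), so you cannot yet read off which twists of $S$ fix $S'$. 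You also need to justify that your chosen lift $\psi\in H$ of $\bar\psi$ differs from an $R$-twist only by something in $\mathcal T_S\cap\mathcal T_R$, which is again part of the same unproved claim. Finally, the assertion that the only one-edge nonseparating free splittings fixed by the twist group of $R$ are its two loop-collapses is exactly \cite[Lemma~7.6]{HW}, which you would need to invoke explicitly.

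The paper's argument for the incompatible case sidesteps all of this with a one-line geometric idea: since $H\subseteq\iat$, it fixes every sphere appearing on a surgery sequence from $S$ to $S'$; in particular there is a one-edge splitting $S_1\neq S$ compatible with $S'$ and fixed by $H$. Since $\Stab_{\Out(F_3)}$ of a one-edge separating splitting is $\Aut(F_2)$ (no $\mathbb{Z}^3$), $S_1$ is nonseparating, and by Lemma~\ref{lemma:common-stab-rank-3} the common refinement $U$ of $S_1$ and $S'$ is a two-petal rose with $\Stab_{\Out(F_3)}(U)$ commensurable to $H$. Then \cite[Lemma~7.6]{HW} says the only one-edge free splittings virtually fixed by $\Stab_{\Out(F_3)}(U)$ are its two collapses $S_1$ and $S'$, contradicting that $S$ is also fixed. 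This is both shorter and complete; if you want to salvage your structural route you must actually prove the twisting-elements claim, which looks like real work.
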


\begin{proof}
If $S$ and $S'$ are rose compatible, then we have already seen in Lemma~\ref{lemma:common-stab-rank-3} that $\Stab_\Gamma(S)\cap\Stab_\Gamma(S')$ is isomorphic to $\mathbb{Z}^3$. 

If $S$ and $S'$ are circle compatible, then by Lemma~\ref{lemma:common-stab-rank-3} their common stabilizer in $\Gamma$ is isomorphic to $\mathbb{Z}^2$ so we are done. 

We now assume that $S$ and $S'$ are not compatible. By Lemma 5.4 of \cite{BW}, any subgroup of $\iat$ that fixes two incompatible free splittings, fixes a free splitting with at least two orbits of vertices. However, a case-by-case analysis shows that the size of the stabilizer of such a splitting in $\iat$ does not contain a $\mathbb{Z}^3$.
\end{proof}

For the extra conditions needed for graph rigidity in the $N=3$ case, we also require the following lemma.

\begin{lemma}\label{lemma:rank3-3}
Let $S$ and $S'$ be two one-edge nonseparating free splittings which intersect exactly once. Then $\Stab_{\Out(F_3)}(S)\cap\Stab_{\Out(F_3)}(S')$ does not contain any subgroup isomorphic to $\mathbb{Z}^2$.
\end{lemma}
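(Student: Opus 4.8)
The plan is to analyze the intersection $\Stab_{\Out(F_3)}(S)\cap\Stab_{\Out(F_3)}(S')$ directly using the sphere picture and the boundary splitting of $S$ and $S'$. Since $S$ and $S'$ are nonseparating one-edge free splittings intersecting in exactly one circle, their boundary splitting is one of the six possibilities in Figure~\ref{fig:boundary-spheres}; in $F_3$ the relevant cases have very small complexity and in each the boundary splitting together with $S$ and $S'$ determines a splitting $U$ of $F_3$ which simultaneously refines (or at least controls) both $S$ and $S'$. The key observation is that any automorphism $\Phi$ fixing both $S$ and $S'$ must permute the (finitely many) spheres in the boundary splitting and hence virtually fixes the associated splitting $U$; thus $\Stab_{\Out(F_3)}(S)\cap\Stab_{\Out(F_3)}(S')$ is commensurable to a subgroup of $\Stab_{\Out(F_3)}(U)$, and it suffices to check that $\Stab_{\Out(F_3)}(U)$ contains no $\mathbb{Z}^2$ (equivalently, that it is virtually cyclic or finite).

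First I would enumerate the possibilities for the boundary splitting of two once-intersecting nonseparating spheres in $M_3$: running through Figure~\ref{fig:boundary-spheres} with $N=3$, the unlabelled trivial-stabilizer vertex has valence $4$ and the rest of the graph must account for rank $3$, so the quotient graphs of groups $U/F_3$ are highly constrained — the vertex groups appearing are trivial or $\mathbb{Z}$, and in every case $U/F_3$ has first Betti number at most $2$ with at most one $\mathbb{Z}$-vertex. Then for each such $U$, I would compute $\Stab_{\Out(F_3)}(U)$ via Levitt's exact sequence (Proposition~\ref{prop:Levitt}, together with the bitwist discussion): the group of twists $\mathcal{T}$ is a product of copies of the (rank $\le 1$) vertex groups modulo centers, so $\mathcal{T}$ is at most $\mathbb{Z}$, the action on the finite quotient graph contributes only a finite group, and the $\oplus_v\Out(G_v)$ term is trivial since each $G_v$ is trivial or $\mathbb{Z}$ (for which $\Out(\mathbb{Z})=\mathbb{Z}/2$). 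Hence $\Stab_{\Out(F_3)}(U)$ is virtually cyclic, so contains no $\mathbb{Z}^2$.

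The main obstacle I anticipate is the bookkeeping in the case analysis: I must make sure that in each of the six boundary-splitting configurations the splitting $U$ one extracts genuinely has all vertex groups of rank $\le 1$, and that $\Stab_{\Out(F_3)}(S)\cap\Stab_{\Out(F_3)}(S')$ really is commensurable into $\Stab_{\Out(F_3)}(U)$ rather than just mapping there with possibly infinite kernel. To handle the commensurability point cleanly I would argue as in the proof of Lemma~\ref{l:cagey_pairs_preserved}: every sphere disjoint from both $S$ and $S'$ is disjoint from the boundary splitting, so an element fixing $S$ and $S'$ fixes the isotopy class of the (finite) boundary sphere system setwise, hence fixes a finite-index subgroup worth of the combinatorial data of $U$; passing to that finite-index subgroup preserves the non-existence of $\mathbb{Z}^2$. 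Alternatively — and perhaps more slickly — one can invoke the fact (already used in Lemma~\ref{lemma:rank3-1}, via \cite[Lemma~7.6]{HW} and Lemma~\ref{lemma:common-stab-rank-3}) that joint stabilizers of two distinct compatible nonseparating free splittings in $\Out(F_3)$ are virtually $\mathbb{Z}^3$ or $\mathbb{Z}^2$, and combine it with the observation that the boundary splitting exhibits $S$ and $S'$ as collapses of a common cage-like splitting in the cases that matter; I would pick whichever of these two routes produces the shortest rigorous argument once the case list is written down.
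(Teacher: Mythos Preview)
Your proposal is correct and follows essentially the same route as the paper: the common stabilizer fixes the boundary splitting, and a case analysis over Figure~\ref{fig:boundary-spheres} combined with Levitt's Proposition~\ref{prop:Levitt} shows that for $N=3$ each boundary splitting has virtually cyclic stabilizer. Your worry about only \emph{virtually} fixing the boundary splitting is unnecessary---the boundary sphere system is canonically determined by the homotopy classes of $S$ and $S'$, so anything fixing $S$ and $S'$ fixes the boundary splitting outright (permutations of the individual boundary spheres only affect the action on the quotient graph, which is a finite-order phenomenon already absorbed into ``virtually cyclic''); and your suggested alternative route via Lemma~\ref{lemma:common-stab-rank-3} is a detour you do not need.
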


\begin{proof}
In that case $\Stab_{\Out(F_3)}(S)\cap\Stab_{\Out(F_3)}(S')$ fixes the boundary splitting of $S$ and $S'$. However, by using a case-by-analysis of the possible boundary splittings given in Figure~\ref{fig:boundary-spheres} together with the result of Levitt given in Proposition~\ref{prop:Levitt}, one can show that the stabilizer of any boundary splitting in $\out(F_3)$ is virtually cyclic (possibly finite). 
\end{proof}

\begin{figure}[ht]  \centering 
\begingroup%
  \makeatletter%
  \providecommand\color[2][]{%
    \errmessage{(Inkscape) Color is used for the text in Inkscape, but the package 'color.sty' is not loaded}%
    \renewcommand\color[2][]{}%
  }%
  \providecommand\transparent[1]{%
    \errmessage{(Inkscape) Transparency is used (non-zero) for the text in Inkscape, but the package 'transparent.sty' is not loaded}%
    \renewcommand\transparent[1]{}%
  }%
  \providecommand\rotatebox[2]{#2}%
  \newcommand*\fsize{\dimexpr\f@size pt\relax}%
  \newcommand*\lineheight[1]{\fontsize{\fsize}{#1\fsize}\selectfont}%
  \ifx\svgwidth\undefined%
    \setlength{\unitlength}{298.93463615bp}%
    \ifx\svgscale\undefined%
      \relax%
    \else%
      \setlength{\unitlength}{\unitlength * \real{\svgscale}}%
    \fi%
  \else%
    \setlength{\unitlength}{\svgwidth}%
  \fi%
  \global\let\svgwidth\undefined%
  \global\let\svgscale\undefined%
  \makeatother%
  \begin{picture}(1,0.27873046)%
    \lineheight{1}%
    \setlength\tabcolsep{0pt}%
    \put(0,0){\includegraphics[width=\unitlength,page=1]{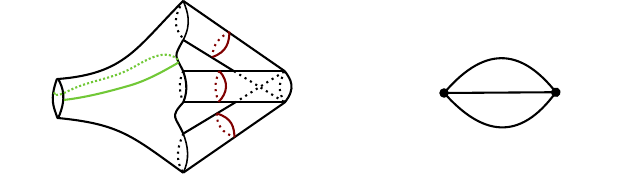}}%
    \put(0.90752685,0.12880792){\color[rgb]{0,0,0}\makebox(0,0)[lt]{\lineheight{1.25}\smash{\begin{tabular}[t]{l}$\mathbb{Z}$\end{tabular}}}}%
    \put(0.66011923,0.12694487){\color[rgb]{0,0,0}\makebox(0,0)[lt]{\lineheight{1.25}\smash{\begin{tabular}[t]{l}$F_3$\end{tabular}}}}%
    \put(0.79368296,0.14084139){\color[rgb]{0,0,0}\makebox(0,0)[lt]{\lineheight{1.25}\smash{\begin{tabular}[t]{l}$\mathbb{Z}$\end{tabular}}}}%
    \put(0.80180999,0.04018279){\color[rgb]{0,0,0}\makebox(0,0)[lt]{\lineheight{1.25}\smash{\begin{tabular}[t]{l}$\mathbb{Z}$\end{tabular}}}}%
    \put(0.79248608,0.19453679){\color[rgb]{0,0,0}\makebox(0,0)[lt]{\lineheight{1.25}\smash{\begin{tabular}[t]{l}$\mathbb{Z}$\end{tabular}}}}%
    \put(-0.0010413,0.1170448){\color[rgb]{0,0,0}\makebox(0,0)[lt]{\lineheight{1.25}\smash{\begin{tabular}[t]{l}$X=$\end{tabular}}}}%
    \put(0.15195839,0.21572959){\color[rgb]{0,0,0}\makebox(0,0)[lt]{\lineheight{1.25}\smash{\begin{tabular}[t]{l}$S$\end{tabular}}}}%
  \end{picture}%
\endgroup%
 \caption{One obtains a space $X$ with fundamental group $F_3$ by taking a four-holed sphere $S$ and identifying three of the four boundary components (depicted as annuli above). The three Dehn twists about curves in these annuli fix every arc in $S$ based on the unused boundary component and generate a subgroup of $\Out(F_3)$ isomorphic to $\mathbb{Z}^2$.  Even though the arcs determine nonseparating free splittings, this does not contradict Lemma~\ref{lemma:rank3-3}, as crossing arcs in $S$ intersect at least twice. The splitting induced by the three curves is given on the right.} 
\label{f:arc_example} 
\end{figure}

\begin{remark}
 We end this section with an important warning regarding Lemma~\ref{lemma:rank3-3}. If the common stabilizer of $S$ and $S'$ contains a copy of $\mathbb{Z}^2$, this is not enough to deduce that these splittings are compatible. Indeed, there are subgroups isomorphic to $\mathbb{Z}^2$ in $\out(F_3)$ that fix an infinite number of one-edge nonseparating free splittings. See Figure~\ref{f:arc_example}.    
\end{remark}

\subsection{End of the proof}

\begin{proof}[Proof of Theorem~\ref{theo:rank-3}]
Let $\calc_r=\{E_r\}$ be the decoration of $\ns$ given by the edges in $\ens$ (i.e.\ those representing rose compatibility). By Corollary~\ref{cor:strongly-rigid}, the decorated graph $(\ns,\calc_r)$ is a strongly rigid $\Out(F_3)$-graph.

Let $f:\Gamma\to\Out(F_3)$ be an injective map. Let \[ \Gamma':=\Gamma\cap\iat\cap f^{-1}(\iat).\] This is a finite-index, normal subgroup of $\Gamma$. In particular, $\G'$ still contains a power of every Nielsen automorphism. Corollary~\ref{cor:stab-split-to-stab-split-rank-3} gives a map  $\theta$ on the vertex set of $\ens$ such that for every $S \in \ens$ one has \[ f(\Stab_{\Gamma'}(S))\subseteq \Stab_{\Out(F_3)}(\theta(S)). \] Furthermore, the same corollary tells us that this choice of $\theta(S)$ is unique.  By Proposition~\ref{prop:injectivity-rank-3}, the map $\theta$ is also injective and by Lemma~\ref{lemma:rank3-1}, the map $\theta$ preserves rose compatibility (so is a graph map of $\ens$). 

If $S$ and $S'$ are compatible, then their common stabilizer in $\G'$ contains $\mathbb{Z}^2$ (by Lemma~\ref{lemma:common-stab-rank-3}), which means that $\theta(S)$ and $\theta(S')$ do not intersect exactly once (Lemma~\ref{lemma:rank3-3}). The extra work on graph rigidity in the case $N=3$ (Lemma~\ref{lemma:complex-rank-3}) tells us that $\theta$ extends to an injective graph map of $\ns$ preserving the decoration $\calc_r$. The conclusion therefore follows from the blueprint in  Proposition~\ref{prop:blueprint}.
\end{proof}

Corollary~\ref{cor:intro-co-hopf-3} follows from Theorem~\ref{theo:rank-3} in the same way as in higher rank.

\bibliographystyle{alpha}
\bibliography{hhw-bib-2}
~
\begin{flushleft}
Sebastian Hensel\\
Mathematisches Institut der Universität München\\
D-80333 München\\
\emph{e-mail: }\texttt{hensel@math.lmu.de}
\end{flushleft}
~
\begin{flushleft}
Camille Horbez\\ 
Laboratoire de Math\'ematique d'Orsay, Univ.\ Paris-Sud, CNRS, Universit\'e Paris-Saclay\\ 
F-91405 Orsay\\
\emph{e-mail: }\texttt{camille.horbez@universite-paris-saclay.fr}
\end{flushleft}
~
\begin{flushleft}
Richard D.\ Wade\\
Mathematical Institute, University of Oxford\\
Oxford OX2 6GG\\
\emph{e-mail: }\texttt{wade@maths.ox.ac.uk} 
\end{flushleft}

\end{document}